\documentclass[reqno]{amsart}
\usepackage[parfill]{parskip} 
\usepackage{graphicx}
\usepackage{mathabx}
\usepackage{amssymb}
\usepackage{epstopdf}
\usepackage{stmaryrd}
\usepackage{pgf,tikz}
\usetikzlibrary{positioning, arrows.meta}
\usepackage{esint}
\usepackage{mathrsfs} 
\usepackage[english]{babel}
\usepackage{amssymb,
enumerate}
\usepackage{color}
\usepackage[latin1]{inputenc}
\usepackage[T1]{fontenc}
\usepackage{amsfonts}
\usepackage{amssymb}
\usepackage{amsmath}
\usepackage{amsthm}
\usepackage{mathtools}
\usepackage{graphicx,xcolor}
\usepackage{afterpage}
\usepackage[colorlinks=true, linkcolor=red, citecolor=cyan, urlcolor=green]{hyperref} 
\usepackage{bbm}
\usepackage{latexsym}
\usepackage[foot]{amsaddr}

\newtheorem{Rem}[]{Remark}

\def\be{\begin{equation}}
\def\ee{\end{equation}}
\def\bea{\begin{eqnarray}}
\def\eea{\end{eqnarray}}
\def\beal{\begin{align}}
\def\eeal{\end{align}}

\def\nn{\nonumber}

\newcommand{\var}{\mathrm{Var}}
\newcommand{\OOO}[1]{O \left(#1\right)}

\newcommand{\op}{\textup{op}}

\newcommand{\dd}{\mathrm{d}}

\newcommand{\EE}{{\mathbb{E}}}
\newcommand{\PP}{{\mathbb{P}}}
\renewcommand{\P}{{\mathbb{P}}}
\newcommand{\R}{{\mathbb{R}}}

\newcommand{\NN}{{\mathbb{N}}}

\def\l{\lambda}

\def\nn{\nonumber}

\newcommand{\Prob}{{\mathrm{prob}}}

\newcommand{\ia}{_{i \rightarrow a}}
\newcommand{\ib}{_{i \rightarrow b}}
\newcommand{\ja}{_{j \rightarrow a}}
\newcommand{\jb}{_{j \rightarrow b}}
\newcommand{\ka}{_{k \rightarrow a}}
\newcommand{\ai}{_{a \rightarrow i}}

\newcommand{\bi}{_{b \rightarrow i}}
\newcommand{\bj}{_{b \rightarrow j}}

\newcommand{\ti}{^{(t)}}
\newcommand{\zzero}{^{(0)}}
\newcommand{\oone}{^{(1)}}
\newcommand{\ttwo}{^{(2)}}

\newcommand{\tti}{^{(t+1)}}
\newcommand{\Bal}{\mathcal{B}_a^\lambda}
\newcommand{\Jil}{\mathcal{J}_i^\lambda}

\newcommand{\err}{\mathtt{e}}
\newcommand{\ERR}{\mathtt{E}}

\def\R{\mathbb{R}}
\def\s{\sigma}
\def\a{\alpha}
\def\e{\varepsilon}
\def\d{\delta}
\def\g{\gamma}

\def\b{\beta}
\def\D{\Delta}
\def\r{\rho}
\def\t{\tau}

\def\sign{\operatorname{sign}}

\def\N{\mathbb{N}}

\newcommand{\edg}{\mathsf{EDG}}

\newcommand{\meanv}[1]{\left\langle#1\right\rangle}

\newcommand{\parav}[1]{\left(#1\right)}
\newcommand{\squarev}[1]{\left[#1\right]}

\newcommand{\absv}[1]{\left|#1\right|}

\newcommand{\nocontentsline}[3]{}
\newcommand{\tocless}[2]{\bgroup\let\addcontentsline=\nocontentsline#1{#2}\egroup}

\DeclareMathSymbol{\leqslant}{\mathalpha}{AMSa}{"36} 
\DeclareMathSymbol{\geqslant}{\mathalpha}{AMSa}{"3E}
\DeclareMathSymbol{\eset}{\mathalpha}{AMSb}{"3F}     
\renewcommand{\leq}{\;\leqslant\;}                  
\renewcommand{\geq}{\;\geqslant\;}

\newcommand{\ind}[1]{1_{\{#1\}}}

\newcommand{\E}[1]{E\left[#1\right]}

\def\a{\alpha}
\def\e{\varepsilon}
\def\d{\delta}
\def\g{\gamma}

\def\b{\beta}
\def\D{\Delta}
\def\l{\lambda}
\def\r{\rho}
\def\s{\sigma}
\def\t{\tau}

\def\R{\mathbb{R}}
\def\C{\mathbb{C}}
\def\E{E}

\theoremstyle{plain}

\newtheorem{remark}{Remark}[section]

\newtheorem{theorem}{Theorem}[section]

\newtheorem{lemma}[theorem]{Lemma}

\newtheorem{proposition}[theorem]{Proposition}

\newtheorem{corollary}[theorem]{Corollary}

\numberwithin{equation}{section}

\title[From belief propagation to the AMP equations]
{Derivation of the AMP equations from belief propagation for the $\ell_2$ minimisation problem}

\author{Giuseppe Genovese$^{1,2}$}
\address{$^1$Mathematisches Institut, Albert-Ludwigs Universit\"at Freiburg, Ernst-Zermelo-Strasse 1, 
D-79104 Freiburg, Germany.}
\address{$^2$Department of Mathematics, University of British Columbia, 1984 Mathematics Road V6T 1Z2 Vancouver BC, Canada.} 
\author{Arianna Piana$^3$}
\address{$^3$Department of Mathematics, Weizmann Institute of Science, Rehovot 76100, Israel.}
\date{\today}                     

\begin{document}
\maketitle

\begin{abstract}
We consider the $\ell_p$-minimisation, which consists of finding the vector $x\in\R^N$ which minimises $\|x\|_p$ subject to the linear constraint $y=Ax$, where $y\in\R^m$ is given and $A$ is a $m\times N$ random matrix with i.i.d. sub-Gaussian centred entries ($m<N$). This can be viewed as the zero temperature version of a statistical mechanics problem, in which one introduces a suitable Gibbs measure on $\R^N$. To such a Gibbs measure there are associated belief propagation equations. 

We prove in the easiest case $p=2$ that the means of the distributions obtained by the belief propagation iteration satisfy asymptotically the approximate message passing equations. 
\vspace{0.5cm}
\end{abstract}

\section{Introduction}

The aim of this paper is providing a rigorous derivation of the approximate message passing (AMP) equations starting from the belief propagation (BP) equations. BP consists of a set of recursive equations for the marginal distributions of a target probability distribution \cite{perla}. A brief description of the method in our case of interest is given below. The reader can look for instance at \cite{mezard} for a comprehensive discussion. 
We will take the BP equations as a starting point of our considerations. The characterisation of the convergence of the BP algorithm is a prominent open problem \cite{coja}, \cite{k} that will not concern us here. 

The AMP equations are recursive equations for the means of a target distribution. In the framework of statistical physics, they are known as TAP equations, after the seminal paper of Thouless, Anderson and Palmer \cite{tap}. The subject found renewed interest in the mathematical community mainly after Bolthausen devised a way to prove convergence of the iteration in the whole high temperature region \cite{bolt}. Bolthausen's method later was widely generalised by Montanari and coauthors, see for instance \cite{bayati2},
\cite{tantoarkg4}, \cite{tantoarkg5}, \cite{tantoarkg6}, \cite{far}, \cite{celentano}.

From the point of view of statistical physics, a fundamental problem related to the AMP equations is their derivation from the model of interest. More precisely, the problem is to show that the solution of the AMP equations describes the actual means under the Gibbs measure under investigation. In the context of the Sherrington-Kirkpatrick model (and its $p$-spin versions), the question has been studied in \cite{talabook}, \cite{chat}, \cite{yau}, \cite{chen2}, \cite{chen}. These results are mainly limited to the so-called high temperature phase (however, for a low temperature result, see \cite{tuca}). 

A different path was taken in the contributions \cite{DMM1}, \cite{DMM2}, \cite{bayati1} and \cite{mez}, where the AMP equations were not derived directly from the model distribution, but rather from the BP iteration, regardless of its convergence. This was done for model distributions related to compressed sensing (see also the discussion below) and for the Hopfield model, but such derivations are not mathematically rigorous. The present paper fits this line of research, providing, to the best of our knowledge, the first fully rigorous derivation of the AMP equations from the BP iteration. We note however that one crucial step of our derivation, namely the Gaussian approximation of the densities in the BP iterations, has been studied in the previous works \cite[Sect. 4]{sly}, \cite[Sect. 7]{mossel} and \cite[Appendix D]{celentano}). The common denominator of these articles seems to be the underlying tree structure of the models under consideration. This is a central difference with the present paper, as we take a purely analytical viewpoint and we do not investigate the structure of the factor graph associated to the BP algorithm (more details below). 

The model we are able to handle is in any case very simple, as we focus on the quadratic minimisation. Here many of the technical difficulties are already present, but the quadratic (i.e. Gaussian) structure of the problem helps greatly their resolution (more details are given in the following paragraphs). In this sense, our contribution is purely methodological and must be regarded as a first step towards the understanding of the link between the BP and AMP iteration. 

\subsection{Set up}
In this paper we will work with a random matrix $A\in \R^{m\times N}$ with $m/N\simeq \d\in(0,1)$, whose entries are i.i.d. sub-Gaussian r.vs with $E[A_{11}]=0$, $E[A^2_{11}]=\frac1m$. We assume two properties on the law of $A$: firstly, that the one-dimensional law of each entry $A_{ai}$ has no atom at $0$, so $\PP(A_{ai}=0)=0$; secondly, that the distribution of $A$ to satisfy the following hypercontractive inequality: for all multilinear forms of order $\ell$ in the entries of $A$ $\Psi_\ell$, there is a constant $C_\ell$ such that
\be\label{eq:hiper-intro}
\|\Psi_\ell\|_{L^p}\leq C_\ell p^{\frac{\ell}{2}}\|\Psi_\ell\|_{L^2}\,. 
\ee

Given an outcome vector $y\in \R^m$, a classical problem in statistics amounts to recover the vector $x\in\R^N$ such that $y=Ax$ under some non-linear constraint. We will be interested in the solutions of an $\ell_p$-minimisation 
\be\label{eq:basispursuit}
\mbox{minimise $\|x\|_p$ subject to $y=Ax$}\,
\ee
where $\|x\|_p\coloneqq \parav{\sum_{i=1}^N|x_i|^p}^{\frac1p}$. 
This problem is particularly interesting for the theory of compressive sensing, where the case $p=1$ is the most relevant \cite{DMM1}, \cite{DMM3}. It is well-known indeed that minimisation of any $\ell_p$ norm with $p>1$ under the same linear constraint will not give sparse solutions, while any $p\in[0,1)$ would do, but the recovery is NP-hard \cite{CS}. On the other hand, the simplest case is $p=2$:\be\label{eq:basispursuitp=2}
\mbox{minimise $\|x\|_2$ subject to $y=Ax$}\,.
\ee
The solution of \eqref{eq:basispursuitp=2} is explicitly given by $x^\star={A}^T(A{A}^T)^{-1}y$.

Let us introduce BP for the problem of our interest. 
We define the probability measure
\be\label{eq:mu}
\mu^{(p)}_{N,\b}(dx)=Z^{-1}_{N,\b}e^{-\b \|x\|_p^p}1_{\{x\,:\, y=Ax\}}\,.
\ee
Here $\b>0$ is a parameter and $Z_{N,\b}$ a normalisation factor. One expects that as $\b\to\infty$ the measure $\mu_{N,\b}$ gets closer and closer to the uniform measure on the solutions of \eqref{eq:basispursuit}. Instead of dealing with the measure $\mu^{(p)}_{N,\b}$, we look at the BP iteration, defined as follows.
For each $t\in\N$ and $i\in[N]$, $a\in[m]$, we defined the continuous and integrable functions $\nu^{(t)}\ia ,\hat \nu^{(t)}\ai $ according to the belief propagation equations
\begin{align}\label{eq:BP}
\hat \nu\ti\bi (x_i)&\simeq \int_{y-A^{[i]}x=\{A_{ai}x_i\}_{a\in[m]}}dx^{[i]}\prod_{j\neq i}\nu\ti\ja (x_j)\,,\\
\nu^{(t+1)}\ia (x_i)&\simeq \pi_{\b,p}(x_i)\prod_{b\neq a} \hat \nu\ti\bi (x_i)\,.\nn
\end{align}
Here $dx^{[i]}:=\prod_{j\neq i}dx_j$, $A^{[i]}\in\R^{m\times {N-1}}$ denotes the matrix obtained by $A$ erasing the $i$-th column and $x^{[i]}\in \R^{N-1}$ the vector obtained by $x\in\R^N$ erasing the $i$-th component; the symbol $\simeq$ here indicates identity modulo a normalisation constant.
In addition, we defined the $\ell_p$ prior by
\be
\pi_{\b,p}(s)\coloneqq Z'(\b,p)e^{-\b|s|^p}\,,\qquad \b>0,\quad s>0\,
\ee
where $Z'(\b,p)$ is a normalisation constant. For simplicity we choose the initial condition $\nu^{(0)}\ia (x_i)$ to be a centred distribution with variance independent on $(i,a)$. 

\begin{figure}[!h]
    \centering
    \begin{tikzpicture}
    [
    neuron/.style={circle, draw=black,  fill=black ,thick, minimum size=3mm, prefix after command= {\pgfextra{\tikzset{every label/.style={black}}}}},
    lambda/.style={rectangle, draw=black, fill=white, thick, minimum size=3mm, prefix after command= {\pgfextra{\tikzset{every label/.style={black}}}}}
    ]
        \node [lambda, label={$1$}](n1) [] {};
        \node [lambda](n2) [right of= n1, xshift=0.5cm] {};
        \node [lambda, label={$a$}](n3) [right of= n2, xshift=0.5cm] {};
        \node [lambda](n4) [right of= n3, xshift=0.5cm] {};
        \node [lambda, label={$m$}](n5) [right of= n4, xshift=0.5cm] {};
        \node [neuron, label=below:{$1$}](l1) [below of= n1, xshift=-1cm, yshift=-1.5cm] {};
        \node [neuron](l2) [right of= l1, xshift=0.5cm] {};
        \node [neuron, label=below:{$i$}](l3) [right of= l2, xshift=0.5cm] {};
        \node [neuron](l4) [right of= l3, xshift=0.5cm] {};
        \node [neuron](l5) [right of= l4, xshift=0.5cm] {};
        \node [neuron, label=below:{$N$}](l6) [right of= l5, xshift=0.5cm] {};
        \draw[-] (n1) -- (l1); \draw[-] (n1) -- (l2); \draw[-] (n1) -- (l3); \draw[-] (n1) -- (l4); \draw[-] (n1) -- (l5); \draw[-] (n1) -- (l6);
        \draw[-] (n2) -- (l1); \draw[-] (n2) -- (l2); \draw[-] (n2) -- (l3); \draw[-] (n2) -- (l4); \draw[-] (n2) -- (l5); \draw[-] (n2) -- (l6);
        \draw[-] (n3) -- (l1); \draw[-] (n3) -- (l2); \draw[-] (n3) -- (l3); \draw[-] (n3) -- (l4); \draw[-] (n3) -- (l6);
        \draw[-] (n4) -- (l1); \draw[-] (n4) -- (l2); \draw[-] (n4) -- (l3); \draw[-] (n4) -- (l4); \draw[-] (n4) -- (l6);
        \draw[-] (n5) -- (l1); \draw[-] (n5) -- (l2); \draw[-] (n5) -- (l3); \draw[-] (n5) -- (l4); \draw[-] (n5) -- (l6);
    \end{tikzpicture}
    \caption{Complete bipartite factor graph for the BP iteration. The squares are the factor nodes, the dots are variable nodes.}
    \label{fig:factor_graph_CS}
\end{figure}
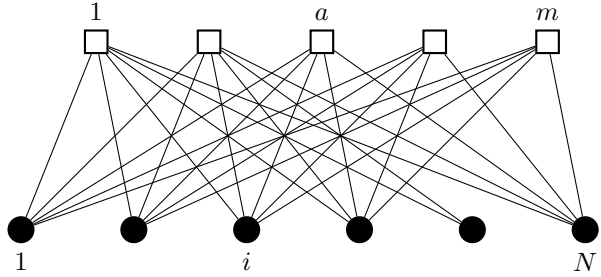

The usual interpretation of the BP equations assumes the presence of an underlying factor graph, that is a complete bipartite graph with edges linking only each factor node, labeled by $a\in[m]$, with all the variable nodes, labeled by $i\in[N]$. The marginals $\nu^{(t)}\ia$ are beliefs that propagate from the variable to the factor nodes and $\hat\nu^{(t)}\ai$ are beliefs that propagate from the factor to the variable nodes. If the factor graph is a tree it is not hard to show that the BP equations reconstruct the measure $\mu^{(p)}_{N,\b}$ exactly \cite{mezard}. For our proposes, the distributions $\hat\nu^{(t)}\ai$ can be thought just as ancillary objects in order to compute the $\nu^{(t)}\ia$. 

We take the following route. We shall derive a set of recursive equations for the means and the variances of the marginals $\nu^{(t)}\ia$ that are exact in the limit $N,m\to\infty$ (recall that $m\simeq\d N$). We will show that the variances are independent of the labels $(i,a)$, while the means are asymptotically independent of the factor labels. This asymptotic recurrence for the means are the AMP equations. Finally, we verify that the solution of the AMP equations for $\b$ large enough coincides with the true solution $x^\star$ of the problem \eqref{eq:basispursuitp=2}.


\subsection{Main Result}

We introduce mean and variance of the BP iterates
\bea
x\ia\ti\coloneqq \int s\nu\ti\ia(s)ds\,,&\qquad&v\ia\ti\coloneqq \int s^2\nu\ti\ia(s)ds-\parav{\int s\nu\ti\ia(s)ds}^2\,,\label{eq:xandv}\\
\hat x\ai\ti\coloneqq \int s\hat\nu\ti\ai(s)ds\,,&\qquad&
\hat v\ai\ti\coloneqq \int s^2 \hat \nu\ti\ai(s)-\parav{\int s\hat\nu\ti\ai(s)ds}^2\label{eq:xandv-HAT}\,.
\eea
We will also need 
\be\label{eq:rhovarsigma}
\r\ti\ja \coloneqq \int s^3\nu\ti_{ia}(s)ds\,.
\ee
It is not hard to show that (see the subsequent Lemma \ref{lemma:facilemaleki})
\bea
\hat{x}\ai \ti&=&x\ia \ti+\frac{y_a-\sum_{j=1}^NA_{aj}x\ti\ja }{A_{ai}}\label{eq:xx-hat}\\
\hat{v}\ai \ti& =& \frac{\sum_{k \neq i} A_{ak}^2 v_{\ka}\ti}{A_{ai}^2}\,\label{eq:v_hat}
\eea
(note that here dividing by the elements of $A$ is armless, due to our assumptions of the distribution of the entries). 
Much harder is to relate $\hat{x}\ai \ti, \hat{v}\ai \ti$ to $ x\ia ^{(t+1)}, v\ia ^{(t+1)}$, which will take most of our efforts. 
Our main result is the following. 

\begin{theorem}\label{TH:main}
Let $m<N\in\N$ and $\d:=m/N$. Let $A\in \R^{m\times N}$ be a random matrix with i.i.d. sub-Gaussian symmetric entries with $\mathbb E[A_{12}^2]=1/m$, $\mathbb P(A_{12}=0)=0$ and satisfying the condition (\ref{eq:hiper-intro}). Let also $y\in\R^m$ with $\max_{b\in[m]}|y_b|\leq1$. 

Consider the BP equations \eqref{eq:BP} with $p=2$ and initial conditions $\nu^{(0)}_{i\to a}$, that are bounded and continuous densities, with finite fourth moment and $x\ia\zzero=0$, $v\ia\zzero=v^{(0)}$. Set
\be\label{eq:defTH-bar-v^t}
v^{(t)}:=\frac{v\zzero(1-\d)}{\d^t(1-\d)+2\b v\zzero(1-\d^t)}\,. 
\ee
Recall \eqref{eq:xandv}. Then, for all $(i,a)\in[N]\times[m]$ we have
\be\label{eq:THvarianza}
v\ia^{(t)} \;=\; v^{(t)} \;+\; O_\P \parav{\frac1{\sqrt N}}\,.
\ee
Moreover, let $\Delta\ti \coloneqq \delta/(2\beta v\ti +\delta)$ and
$\Gamma_\lambda\ti =\prod_{\tau=1}^{\lambda}\Delta^{(t-\tau)}$.
It holds that
\be\label{eq:xAMP}
x\ia\tti=X_i\tti+O_\mathbb{P}(t\Delta\ti\Gamma_1\ti N^{-1/2})\qquad \forall a\in[m]\,,
\ee
where the sequence $\{X_i\ti\}_{t\in\N}$ satisfies the AMP equations
\be\label{eq:AMP-cora}
X_i\tti= X_i\ti
+\Delta\ti  \left(\sum_{b=1}^m y_bA_{bi}-\sum_{b=1}^m\sum_{j=1}^N A_{bi}A_{bj} X_j\ti \right)+O_\PP \Big(\Delta\ti  \Gamma\ti _{1} N^{-1/2}\Big)\,.
\ee
\end{theorem}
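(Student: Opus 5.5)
The plan is an induction on $t$ that tracks the Gaussian character of the BP messages. For $p=2$ the prior $\pi_{\b,2}$ is Gaussian with variance $1/(2\b)$. The map $\nu_{j\to a}\mapsto\hat\nu_{a\to i}$ is, up to the factor $1/A_{ai}$, the density of the affine combination $(y_a-\sum_{j\neq i}A_{aj}x_j)/A_{ai}$ of independent variables. The first step is therefore Lemma \ref{lemma:facilemaleki}, which gives \eqref{eq:xx-hat}--\eqref{eq:v_hat}. The second step uses a quantitative CLT (Berry--Esseen or Edgeworth, using the finite fourth moments and the third moments $\r\ti$) to show that $\hat\nu\ti_{b\to i}$ is close, in a weighted sense, to the Gaussian $\mathcal N(\hat x\ti_{b\to i},\hat v\ti_{b\to i})$. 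The error should be of relative order $N^{-1/2}$, because each $A_{aj}$ is of size $m^{-1/2}$.

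With this approximation in hand, $\nu\tti_{i\to a}$ is, up to controlled error, a product of Gaussians. Its precision and mean are then
\be
\frac1{v\tti_{i\to a}}\approx 2\b+\sum_{b\neq a}\frac1{\hat v\ti_{b\to i}},\qquad \frac{x\tti_{i\to a}}{v\tti_{i\to a}}\approx\sum_{b\neq a}\frac{\hat x\ti_{b\to i}}{\hat v\ti_{b\to i}}.
\ee
For the variance, I insert \eqref{eq:v_hat} and use the induction hypothesis $v\ti_{k\to a}=v\ti+O_\P(N^{-1/2})$. This gives $1/\hat v\ti_{b\to i}= A_{bi}^2/((1-1/N)\,v\ti\,\cdot N\cdot\frac1m\cdot m\text{-normalised sum})$. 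Concentration of $\sum_k A_{ak}^2$ around $N/m=1/\d$, which follows from sub-Gaussianity, then gives $\sum_b A_{bi}^2\approx 1$ and hence $1/v\tti=2\b+\d/v\ti$. This recursion is exactly solved by \eqref{eq:defTH-bar-v^t}, which yields \eqref{eq:THvarianza}. A union bound over $(i,a)$ via the hypercontractivity \eqref{eq:hiper-intro} keeps the $O_\P$ uniform.

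For the means, substituting \eqref{eq:xx-hat} gives
\be
x\tti_{i\to a}\approx \frac{v\tti}{v\ti/\d}\sum_{b\neq a}\Big(A_{bi}^2x\ti_{i\to b}+A_{bi}\big(y_b-\textstyle\sum_jA_{bj}x\ti_{j\to b}\big)\Big).
\ee
Note that $v\tti\d/v\ti=\d/(2\b v\ti+\d)=\Delta\ti$. From here I follow the standard Donoho--Maleki--Montanari ansatz $x\ti_{i\to a}=X_i\ti+\d x\ti_{i\to a}$ with $\d x\ti_{i\to a}=O(N^{-1/2})$, where $X_i\ti$ is the full sum over $b$. Removing the term $b=a$ produces a correction of size $\Delta\ti A_{ai}(y_a-\dots)=O(\Delta\ti N^{-1/2})$. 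Substituting this correction back into $\sum_b\sum_j A_{bi}A_{bj}\d x\ti_{j\to b}$ gives the Onsager-type term, which is absorbed as an $O_\P(\Delta\ti\Gamma_1\ti N^{-1/2})$ error. This error involves the product of the two consecutive damping factors and is controlled by moment bounds on multilinear forms in $A$ from \eqref{eq:hiper-intro}. Iterating the error over $t$ steps, I expect the geometric decay of the $\Gamma$'s to keep the accumulated error at $t\Delta\ti\Gamma_1\ti N^{-1/2}$, which gives \eqref{eq:xAMP} and \eqref{eq:AMP-cora}.

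The main obstacle is the Gaussian approximation step, and specifically its propagation through the product over $m-1$ factors. Small relative errors in each $\hat\nu_{b\to i}$ could accumulate multiplicatively and compromise the $N^{-1/2}$ accuracy of the mean and variance. My first attempt would be to work on the log-density or characteristic-function level: write $\hat\nu\ti_{b\to i}$ as a Gaussian times $(1+\text{Edgeworth correction of order } A_{bi}\r\ti\cdot N^{-1/2})$, and show that the product of these corrections over $b$ contributes only $O(N^{-1/2})$ to the first two moments. This should work because the leading corrections are odd polynomials whose signs are determined by the symmetric $A_{bi}$, so they average out by a further concentration argument. I also need to verify inductively that the third and fourth moments of $\nu\ti_{i\to a}$ stay bounded.
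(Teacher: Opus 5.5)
There is a genuine gap in your treatment of the means. It sits at the step where you absorb the Onsager-type term into the error, and that term is of order one, not $O_{\mathbb P}(N^{-1/2})$. The $b$-dependent part of a message is $\delta x^{(t)}_{j\to b}\approx-\Delta^{(t-1)}A_{bj}\,r^{(t-1)}_b$, with $r^{(t-1)}_b=y_b-\sum_kA_{bk}x^{(t-1)}_{k\to b}$. This correction is aligned with $A_{bj}$, so the summands of $\sum_jA_{bj}\,\delta x^{(t)}_{j\to b}$ add coherently to $\approx-\delta^{-1}\Delta^{(t-1)}r^{(t-1)}_b$. Consequently $\sum_b\sum_jA_{bi}A_{bj}\,\delta x^{(t)}_{j\to b}\approx-\delta^{-1}\Delta^{(t-1)}(A^{T}r^{(t-1)})_i$. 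Already for $t=1$ with Gaussian initialisation, $\delta x^{(1)}_{j\to b}=-\Delta^{(0)}y_bA_{bj}$, and the double sum equals $-\Delta^{(0)}\sum_by_bA_{bi}\sum_jA_{bj}^2$, which is of order one when, e.g., $y_b\equiv1$. There is a second problem in your own display: the term $\Delta^{(t)}\sum_bA_{bi}^2x^{(t)}_{i\to b}$ contributes $\Delta^{(t)}X^{(t)}_i$, not $X^{(t)}_i$. Carried out correctly, your ansatz gives the Donoho--Maleki--Montanari recursion with its memory term, $X^{(t+1)}=\Delta^{(t)}\bigl(X^{(t)}+A^{T}r^{(t)}\bigr)$ with $r^{(t)}=y-AX^{(t)}+\delta^{-1}\Delta^{(t-1)}r^{(t-1)}$, and not \eqref{eq:AMP-cora}. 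The paper's proof takes the same two steps, so the discrepancy is with the stated recursion itself. Lemma~\ref{lem:double_sum_Y} asserts that this double sum is $O_{\mathbb P}(N^{-1/2})$, which already fails for $\lambda=1$, where $z^{[1]}_{b\to j}=-y_bA_{bj}$. In the proof of Proposition~\ref{cor:finitebeta-component}, the term $\Delta^{(t)}\sum_bA_{bi}^2X^{(t)}_i$ is given coefficient one. As a concrete check, take Gaussian initialisation and $y_b\equiv1$. Then \eqref{eq:MP-gauss} gives $x^{(2)}_{i\to a}=\Delta^{(1)}\bigl(1+\Delta^{(0)}(1+\delta^{-1})\bigr)(A^{T}y)_i-\Delta^{(1)}\Delta^{(0)}(A^{T}AA^{T}y)_i+O_{\mathbb P}(N^{-1/2})$. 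By contrast, \eqref{eq:AMP-cora} started from $X^{(1)}=\Delta^{(0)}A^{T}y$ predicts $(\Delta^{(0)}+\Delta^{(1)})(A^{T}y)_i-\Delta^{(1)}\Delta^{(0)}(A^{T}AA^{T}y)_i$. The difference, $\Delta^{(0)}\bigl(\Delta^{(1)}(1+\delta^{-1})-1\bigr)(A^{T}y)_i$, is of order one because $2\beta v^{(1)}<1$.

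Independently of this, the remaining claims (the variance statement, and $x^{(t)}_{i\to a}=X^{(t)}_i+O_{\mathbb P}(N^{-1/2})$ with $X^{(t)}_i$ independent of $a$) rest on an a priori bound you never establish: $\sum_{j\neq i}A_{bj}x^{(t)}_{j\to b}=O_{\mathbb P}(1)$ uniformly. This bound keeps $\hat x^{(t)}_{b\to i}$ of order $\sqrt N$, hence inside the window $|s-\hat x^{(t)}_{b\to i}|\lesssim(\hat v^{(t)}_{b\to i})^{1/2}$ where a local CLT is accurate, and it also controls the sums in the variance and mean updates. For $t\le1$ it follows from Hoeffding, since $x^{(t)}_{j\to b}$ is then independent of row $b$ of $A$. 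From $t=2$ on that independence is lost. Moreover, \eqref{eq:hiper-intro} does not apply directly, because for a non-Gaussian initialisation the messages are not bounded-degree polynomials in $A$. This is the idea your plan is missing. The paper first solves the Gaussian initialisation exactly: all messages stay Gaussian, and by Lemma~\ref{lem:mean_gaussian_chaos} the means are an explicit non-backtracking chaos in $A$. Hypercontractivity and the Wick expansion then yield Proposition~\ref{prop:AMP_dec}. General initialisations are compared with this solvable case through the shadowing sequences of Proposition~\ref{prop:shadow}, and the CLT is used only perturbatively. At $t=1$ the paper uses an Edgeworth expansion; there your sign-cancellation heuristic becomes Lemma~\ref{lemma:Pprob}, which relies on the initial messages being independent of $A$. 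For $t\ge2$ it uses a near-Gaussian Fourier local limit theorem. Without such a reduction, your step of ``iterating the error over $t$ steps'' remains the heuristic DMM derivation rather than a proof.
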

Here $O_\P(N^{-\frac12})$ denotes a quantity that is smaller than $N^{-\frac12}$ with probability larger than $1-e^{-N^\e}$ for some $\e>0$. 
\begin{proof}
The fact that the variance remains constant along the iteration, that is (\ref{eq:THvarianza}), is proved in Proposition \ref{prop:shadow}. Moreover, the AMP equations (\ref{eq:xAMP}), (\ref{eq:AMP-cora}) are obtained combining Proposition \ref{cor:finitebeta-component} with Proposition \ref{prop:shadow}.
\end{proof}

As a corollary, we establish that the solution of the AMP equations \eqref{eq:AMP-cora} in the limit $N\to\infty$ and $\b$ large converges as $t\to\infty$ to the solution $x^\star$ of the $\ell_2$ minimisation problem that can be achieved by classical methods. This can be thought as a sanity check of the entire method, as we do not know a priori that the BP equations reconstruct the true distribution $\mu^{(2)}_{N,\b}$ of (\ref{eq:mu}). 

\begin{corollary}\label{cor:main}
Same assumptions of Theorem \ref{TH:main}. Let $x^\star={A}^T(A{A}^T)^{-1}y$. 
If $\b\geq (1-\d)/2v\zzero$, we have
\be\label{eq:cormain}
\lim_{t\to\infty} \lim_{N\to\infty}\ x\ia \ti   =  x_i^\star
\qquad\text{for every }i\in[N], \,\,a\in[m].
\ee
\end{corollary}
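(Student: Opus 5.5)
We prove Corollary \ref{cor:main} by working only with the AMP recursion \eqref{eq:AMP-cora}, since \eqref{eq:xAMP} says $x\ia\tti - X_i\tti\to 0$ in probability as $N\to\infty$ for fixed $t$, uniformly in $a$. The plan has three steps. First, study the scalar sequences $v\ti$ and $\Delta\ti$. Second, turn the vector recursion into a linear iteration $X\tti = X\ti + \Delta\ti A^T(y-AX\ti)$ plus negligible errors. Third, show that this preconditioned gradient iteration converges to the minimum-norm solution $x^\star$, because $X\zzero=0$ lies in the row space of $A$.

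\textbf{Step 1 (scalar dynamics).} From \eqref{eq:defTH-bar-v^t}, as $t\to\infty$ we have $\d^t\to0$, so $v\ti\to (1-\d)/(2\b)$. Hence
\[
\Delta\ti\to \Delta^\infty:=\frac{\d}{(1-\d)+\d}=\d .
\]
The hypothesis $\b\ge(1-\d)/2v\zzero$ is equivalent to $2\b v\zzero\ge 1-\d$. It makes the denominator of $v\ti$ at least $(1-\d)(\d^t+1-\d^t)=1-\d$, so $v\ti\le v\zzero$. Also $v\ti$ is monotone, and $\Delta\ti$ stays in a compact subinterval of $(0,1)$ uniformly in $t$. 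Consequently $\Gamma_1\ti\le 1$, and all the error terms $O_\PP(\Delta\ti\Gamma_1\ti N^{-1/2})$ in \eqref{eq:AMP-cora} and \eqref{eq:xAMP} are $O_\PP(N^{-1/2})$ uniformly in $t$. For a fixed $t$ they vanish as $N\to\infty$. Since the inner limit in \eqref{eq:cormain} is taken first, the factor $t$ in \eqref{eq:xAMP} is harmless.

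\textbf{Step 2 (linear iteration).} In vector form \eqref{eq:AMP-cora} reads
\[
X\tti=(I-\Delta\ti A^TA)X\ti+\Delta\ti A^Ty+\text{err}.
\]
Write $X\ti = x^\star + e\ti$. Since $Ax^\star=y$, we get $e\tti=(I-\Delta\ti A^TA)e\ti+\text{err}$. Because $X\zzero=0$, we have $e\zzero=-x^\star\in\mathrm{Range}(A^T)$, and the linear map preserves $\mathrm{Range}(A^T)$. On that subspace $A^TA$ has the same nonzero spectrum as $AA^T$. By Marchenko--Pastur with the normalisation $\EE A_{ab}^2=1/m$, and by the Bai--Yin edge bounds, which hold with overwhelming probability for sub-Gaussian entries, this spectrum concentrates on $[\d^{-1}(1-\sqrt\d)^2,\ \d^{-1}(1+\sqrt\d)^2]$.

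\textbf{Step 3 (contraction; main obstacle).} We need $\sup_\lambda|1-\Delta\ti\lambda|\le q<1$ for $t$ large, which requires $\Delta^\infty(1+\sqrt\d)^2/\d<2$, that is $(1+\sqrt\d)^2<2$. This holds only for $\d<(\sqrt2-1)^2$. This is the step I expect to be the real obstacle: the order of limits must be handled carefully, and the error terms enter in the $\ell_2$ norm over $N$ coordinates, where they are $O(1)$, not small. I would therefore argue coordinatewise for fixed $i$, and for fixed $t$ pass to $N\to\infty$ first in a deterministic-equivalent sense. One route is state evolution: it treats the error per coordinate and shows that the limiting scalar recursion for $X_i\ti - x_i^\star$ contracts, using the Onsager-type cancellation already built into \eqref{eq:AMP-cora}. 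If the spectral condition fails, I would instead check the exact stationarity: any fixed point satisfies $A^T(y-AX)=0$ and $X\in\mathrm{Range}(A^T)$, which forces $X=x^\star$. Convergence would then come from the specific value of $\Delta\ti$ obtained in the $\b$-regime of the hypothesis, possibly after sharpening Step 1. Combining Steps 1--3 with \eqref{eq:xAMP} gives \eqref{eq:cormain}.
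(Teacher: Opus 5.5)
\textbf{Steps 1--2.} These are the paper's reduction: drop the remainder via Lemma \ref{lem:finite-window-stability}, pass to $X^{(t+1)}=(I-\Delta^{(t)}A^TA)X^{(t)}+\Delta^{(t)}A^Ty$, and note that $e^{(t)}=X^{(t)}-x^\star$ obeys the homogeneous recursion because $A^TAx^\star=A^Ty$. Restricting to $\mathrm{Range}(A^T)$ is a needed refinement that the paper omits. On all of $\R^N$, the $N-m$ zero eigenvalues of $A^TA$ force $\|I-\Delta^{(t)}A^TA\|_{\op}\ge 1$.

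\textbf{Step 3 is a genuine gap.} You do not prove the contraction, and none of your fallbacks can supply it.
\begin{itemize}
\item Equation \eqref{eq:AMP-cora} contains no Onsager or memory term. It is a plain Richardson (Landweber) iteration with step $\Delta^{(t)}$. State evolution therefore has no cancellation to exploit, and the empirical variance of the coordinate errors is just $N^{-1}\|e^{(t)}\|_2^2$.
\item Uniqueness of the fixed point in $\mathrm{Range}(A^T)$ says nothing about convergence.
\item Since $2\beta v^{(t)}\to 1-\delta$ for every $\beta$, you have $\Delta^{(t)}\to\delta$ independently of $\beta$. No sharpening of Step 1 changes the asymptotic step.
\end{itemize}
In fact the reduced recursion diverges for $\delta>(\sqrt2-1)^2$. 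Write $\lambda$ for the eigenvalues of $\delta A^TA$ on $\mathrm{Range}(A^T)$; they fill $[\lambda_-,\lambda_+]=[(1-\sqrt\delta)^2,(1+\sqrt\delta)^2]$. The component of $e^{(0)}=-x^\star$ along an eigendirection is multiplied at step $\tau$ by $1-(\Delta^{(\tau)}/\delta)\lambda$, whose modulus tends to $\lambda-1$, which exceeds $1$ when $\lambda>2$. For $\delta>(\sqrt2-1)^2$ a positive fraction of the spectrum lies above $2$. For generic $y$ with $\|y\|_2\asymp\sqrt m$ (e.g.\ $y_b\equiv 1$), delocalisation of the singular vectors puts a positive fraction of $\|x^\star\|_2^2$ there. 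Hence $N^{-1}\|X^{(t)}-x^\star\|_2^2\to\infty$ as $t\to\infty$.

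\textbf{The paper's proof has the same hole.} In \eqref{eq:boundRt} the maximum of $|1-\lambda(1-\delta^t\gamma(\beta,t))|$ over $[\lambda_-,\lambda_+]$ is evaluated only at the lower edge. At the upper edge it equals $(1+\sqrt\delta)^2(1-\delta^t\gamma(\beta,t))-1\to 2\sqrt\delta+\delta$, which is $\ge 1$ exactly when $\delta\ge(\sqrt2-1)^2$; the zero eigenvalues are ignored as well. The same slip gives $\|R^{(\infty)}\|_{\op}\le\sqrt\delta(2-\sqrt\delta)$, whereas the norm on $\mathrm{Range}(A^T)$ is $\sqrt\delta(2+\sqrt\delta)$. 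This invalidates the Neumann series for $X^{(\infty)}$; the identity $X^{(\infty)}=x^\star$ survives only as a fixed-point statement.

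\textbf{What you can actually prove.} For $\delta<(\sqrt2-1)^2$ your argument is a correct, more careful version of the paper's. Use the uniform factor $\max\{1-(\Delta^{(0)}/\delta)\lambda_-,\ \lambda_+-1\}+o(1)<1$ on $\mathrm{Range}(A^T)$, which is valid because $\Delta^{(0)}\le\Delta^{(t)}\le\delta$ under the hypothesis on $\beta$. You still have to carry out the coordinatewise step you flag, which the paper also skips: Lemma \ref{lem:finite-window-stability} in the Euclidean norm only yields an $O(1)$ error. One route is exchangeability of the coordinates of $e^{(t)}$ on a permutation-invariant high-probability event, which gives $\EE|e^{(t)}_i|^2=N^{-1}\EE\|e^{(t)}\|_2^2$. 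For $\delta\ge(\sqrt2-1)^2$, neither your proposal nor the paper proves \eqref{eq:cormain}, and no argument based on \eqref{eq:AMP-cora} alone can.
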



\subsection{Oveview of the proof}

Next, we give a high level exposition of the main ideas of our proof. 
Looking at the BP equations (\ref{eq:BP}) one realises easily that the $\hat \nu\ti\ai$ are the law of the weighted sums (with the elements of the matrix $A$) of independent random variables distributed according to the $\nu\ia^{(t)}$. Therefore, as remarked in \cite{DMM2}, it can be computed by arguments based on the central limit theorem (CLT). In practice, we have that
\be\label{eq:heu-hat}
\hat \nu\ti\ai(s)\sim e^{-\frac{(s-\hat x\ai\ti)^2}{2\hat v\ai\ti}}\,.
\ee
The second BP equation takes the products of these approximated Gaussian densities and it multiplies the result by the $\ell_p$-prior. The product of Gaussian densities is a Gaussian density (apart from unimportant global factors), whose mean and the variance $\mu\ia\tti$ and $\s\ia\tti$ however can be large, namely of order $\sqrt N$. Ignoring for the moment this crucial issue, we continue and write then
\be\label{eq:heu-nu}
\nu\tti\ia(s)\sim e^{-\b|s|^p+\frac{(s-\mu\ia\tti)^2}{2 \s\ia\tti}}\,.
\ee
In principle this distribution is quite nice and allows us to perform also some explicit computation. Therefore we could keep going, alternating the CLT to compute \eqref{eq:heu-hat} and the product formula for \eqref{eq:heu-hat}. This is indeed the strategy suggested in \cite{DMM2}.

Such a route is however not so easy to implement, as several sources of difficulties arise. First of all, the errors coming from the CLT approximations need to be carefully estimated. We remark that the usual order of magnitude $N^{-\frac12}$ of the reminder coming from \eqref{eq:heu-hat} is not sufficient to our ends, as in the next BP step we have to take the $m$-fold product of these error terms, which can be large (as $m$ is proportional to $N$). To overcome this issue, we employ an Edgeworth expansion (see \cite{bobkov} for recent developments) to estimate the density. If on one hand the reminder of the Edgeworth expansion remains small after taking the product, on the other hand we need to deal with the extra terms of the expansion. Estimating them is computationally expensive, but feasible for the first few iterates (we do it only for $t=1$). However, it becomes immediately too complicated as $t$ grows, due to the complex dependencies on the elements of the matrix $A$. To fix the ideas, assume that the initial distributions $\nu\zzero\ia(s)$ are centred at some given values $x\zzero\ja$ (we will set $x\zzero\ja=0$ in the rest of the paper). For $t=0$ then, the Hoeffding inequality immediately gives
\be\label{eq:mepiacesse}
\sum_{j\neq i}A_{aj}x\ti\ja=O_\P(1)\,
\ee
(which in turn implies $\hat x\zzero\ai=O_\P(\sqrt N)$). To extend \eqref{eq:mepiacesse} to all $t\geq1$ one should track down the exact dependency of $x\ti\ja$ on the elements of $A$, as a direct inspection gives that the independence of $x\ti\ja$ on $A_{aj}$ is lost for $t\geq 3$. This looks prohibitive and we could find an alternative strategy only for $p=2$. 

The choice $p=2$ gives many advantages. One of them is certainly the possibility to analyse the special case of Gaussian initial distributions. In this case we do not need the CLT at any iteration step: the distributions remain always Gaussian and we need only to control the means and variances. In particular, we verify directly that with this initialisation the messages $x\ti\ja$ are $O_\P(1)$ for every $t\in\N$ (for which we need the hypercontractivity assumption on the distribution of $A$). We stress that this is a peculiarity of the $p=2$ case, as otherwise already the distribution $\nu\oone\ia(s)$ is of the form (\ref{eq:heu-nu}), thus not a Gaussian, and then a CLT argument is needed to compute $\hat \nu\oone\ai$ via (\ref{eq:heu-hat}). This is not incidental, but functional for the rest of the proof. Indeed in the general setting the sequence of means and variances can be derived perturbatively from the one obtained with Gaussian initialisation. Here the perturbative regime is given only by the CLT: for any initialisation the first iteration produces densities close to be Gaussian, so that for all successive iterations the BP updates always remain in the Gaussian basin of attraction. 
In practice, this means that we use a different version of the local CLT for all time steps \(t \ge 1\), exploiting the fact that the iteration of CLT steps makes the random variables in the play closer and closer to be Gaussian distributed.
For any other $p\neq2$, the BP equations oscillate between the Gaussian and the densities given by (\ref{eq:heu-hat}), hence the strategy described above does not apply.

\subsection{Outline of the paper} The paper is organised as follows. The sections \ref{sect:exactGaussian}-\ref{sect:exactGaussian-bis} deal with the special case of Gaussian initialisation. In Section \ref{sect:exactGaussian} we do some preliminary computations and derive the message passing equations from the BP iteration (i.e. the equations for the means, still dependent on the factor nodes); in Section \ref{sect:proofProp} we show that the messages $x\ti\ja$ are $O_\P(1)$ for every $t\in\N$ by obtaining an explicit combinatorial formula for them and using the Wick Theorem to estimate it; in Section \ref{sect:exactGaussian-bis} we conclude, deriving the AMP equations. The sections \ref{sect:GaussianI}-\ref{sect:shadow} deal with the case of generic initialisations. In Section \ref{sect:GaussianI} and \ref{sect:successive} we prove that the expectation values w.r.t. $\nu\ti\ia$ are close to Gaussian expectation values. In Section \ref{sect:GaussianI} we handle the most challenging first step of the BP iteration, in which one passes from the arbitrary initial distributions to the nearly-Gaussian distributions $\nu\oone\ia$ after the CLT. The results presented in Section \ref{sect:GaussianI} hold for all $p$; in Section \ref{sect:successive} we use a different argument for all $t\geq1$ that exploits the improved proximity of the output of the CLT to a Gaussian distribution at every step; Section \ref{sect:shadow} is the core of this work, where all pieces come together. Here we prove that the sequence of means and variances of the BP iterates with arbitrary initialisation are close to those with Gaussian initialisation. We do it by introducing a third auxiliary sequence that shadows the first two. Finally in Section \ref{sect:AnaAMP} we analyse the AMP equations and prove Corollary \ref{cor:main}. This part is pretty standard, but we include it for sake of completeness. We attach four appendices, containing important auxiliary statements and proofs. In Appendix \ref{App:equiv} we prove that the $L^2$ norm of certain tensors, formed by product of matrix elements of $A$, which are of interest in Section \ref{sect:proofProp}, are bounded by the case in which $A$ has standard Gaussian entries (it is here that the hypercontractivity assumption on the law of the entries of $A$ is used); In Appendix \ref{app:LLT} we present the versions of the local CLT used in Section \ref{sect:GaussianI} and Section \ref{sect:successive}. Appendix \ref{app:tail} contains few tail estimates, useful when we need to bound sums of powers of the entries of the matrix $A$; finally in Appendix \ref{App:prooflemma} we prove a technical lemma stated in Section \ref{sect:GaussianI}. 

\subsection{Notations}

All probabilities $\PP(\cdot)$ and expectations $\EE[\cdot]$ are taken with respect to the randomness of the design matrix $A$ (and any other explicitly declared sources). Everywhere $C,c$ will denote absolute constants possibly varying from line to line.  

We set
\be\label{eq:def-Gauss}
\phi_{\mu,\s}(x)\coloneqq \frac{e^{-\frac{(x-\mu)^2}{2\s}}}{\sqrt{2\pi\s}}\,.
\ee

\smallskip
\noindent\emph{Asymptotic notation for random variables.}
We say that a centered  random variable $Z$ is $O_\PP(N^\alpha)$ if for all $\varepsilon>0$, $
\PP\bigl(|Z|>N^{\alpha+\varepsilon}\bigr)\;\lesssim\; \exp(-N^{2\varepsilon}),$
with $\var(Z)$ bounded uniformly in $N$.  
Unless noted otherwise, all these bounds are uniformly over the indices.  
Finally, we write $Z_N \approx Z_N'$ to indicate that $Z_N-Z_N' = O_\PP(N^{-1/2})$, uniformly over all relevant indices and for any fixed time horizon. Equivalently, there exist constants $c,C>0$ such that $
\PP\bigl(|Z_N-Z_N'|> C N^{-1/2}\bigr)\;\le\; C e^{-cN}.$

Note that if $X=O_\PP(N^{a})$ and $Y=O_\PP(N^{b})$ then $XY=O_\PP(N^{a+b})$. Indeed this follows from
$$
\P(XY\geq t)\leq \P(X\geq t/s, Y\leq s)+\P(X\geq t/Y, Y\geq s)\leq \P(X\geq t/s)+\P(Y\geq s)
$$
valid for any $t,s>0$. We choose $s=N^{b+\e}$ and $t=N^{a+b+2\e}$ and we are done. 

{\em Concentration for row/column norms.}
We will repeatedly use the following consequence of the Bernstein inequality:
\begin{equation}\label{eq:concentration}
\sum_{b\neq a}A_{bi}^2 \approx 1,
\qquad
\sum_{j\neq i}A_{aj}^2 \approx \frac{1}{\delta},
\end{equation}
uniformly over  $(a,i) $, with probability tending to one as  $N\to\infty $. These approximations will be used throughout the iterative computations.

\smallskip
\noindent\emph{Deterministic comparators.}
For nonnegative quantities $f,g$, $f\lesssim g$ means $f\le Cg$ for a numerical constant $C$ independent of $N$ (and of $\beta$, unless indicated); $f\asymp g$ abbreviates $f\lesssim g$ and $g\lesssim f$. Subscripts (e.g.\ $\lesssim_{\delta,t}$) record allowable parameter dependence.

\smallskip
\noindent\emph{Norms.}
For a real random variable $X$, $\|X\|_{L^p}=(\EE|X|^p)^{1/p}$. For $z\in\R^N$, $\|z\|_p=(\sum_i|z_i|^p)^{1/p}$ and $\|z\|_\infty=\max_i|z_i|$. $\|M\|_{\op}$ denotes the operator norm of a matrix $M$.

\subsection{Acknowledgments.} The authors thank David Belius and Ofer Zeitouni for their helpful comments and Andrea Montanari for pointing out reference \cite{celentano}. They are especially grateful to Erwin Bolthausen, who was involved in an initial stage of the project and gave crucial inputs to this work. G.G. was partially supported by the NSERC Discovery Grant RGPIN-2025-04930. The paper was partially written while G.G. was in residence at the Simons Laufer Mathematical Sciences Institute in Berkeley, California, supported by the NSF Grant No. DMS-1928930.
A.P. is supported by the Israeli Council for Higher Education (CHE) via the Weizmann Data Science Research Center, and by a research grant from the Estate of Harry Schutzman. 

\section{Exact computations with a Gaussian initial distribution I: MP equations}\label{sect:exactGaussian}

In the present and in the next two sections we assume that the initialisation $\nu^{(0)}\ia (x_i)$ of the BP equations 
\begin{align}
\hat \nu\ti\bi (x_i)&\approx \int_{y-A^{[i]}x=\{A_{ai}x_i\}_{a\in[m]}}      \dd x^{[i]}_1\ldots \dd x^{[i]}_N \prod_{j\neq i}\nu\ti\ja (x_j)\,,\quad t\geq0\label{eq:BP-sect2}\\
\nu^{(t+1)}\ia (x_i)&\approx \pi_{q,\b}(x_i)\prod_{b\neq a} \hat \nu\ti\bi (x_i)\,.\label{eq:BPII-sect2}
\end{align}
is given by a centred Gaussian distribution uniformly over the variable and factor nodes, namely
$\nu\ia^{(0)}=\phi_{0, v^{(0)}}$ for some $v_0>0$. The subsequent main result in this section establishes the validity of the message passing equations (\ref{eq:MP-gauss}) in this easier case.

\begin{proposition}\label{prop:mean} 
Let $v_0>0$. Assume that for all $i\in[N], a\in[m]$ $\nu\ia^{(0)}=\phi_{0, v^{(0)}}$.
Then, for all $t\in\NN$, the variances are given by some values independent on the labels $v\ia\ti=v\ti $ and the means obey the message passing (MP) equations
\begin{equation}\label{eq:MP-gauss} 
    x\ia\tti = \Delta\ti\sum_{b\neq a}y_bA_{bi} -\Delta\ti\sum_{b\neq a}\sum_{j\neq i}A_{bi}A_{bj} x\jb\ti. 
\end{equation} 
Moreover,
\begin{equation}\label{eq:iterationX}
    x\ia \ti = \sum_{\lambda=1}^t (-1)^{\lambda+1}  \Gamma_{\lambda}\ti  \sum_{\Bal}\sum_{\Jil} y_{b_\lambda} A_{b_1j_1}\prod_{\iota=1}^{\lambda-1}A_{b_\iota j_{\iota+1}}A_{b_{\iota+1}j_{\iota+1}}\,,
\end{equation}
where
\begin{equation}\label{eq:defGamma}
    \Gamma\ti_\lambda\coloneqq \prod_{\tau=1}^\lambda \Delta ^{(t-\tau)}, \quad \text{ with }\quad \Delta\ti \coloneqq \frac{\delta}{2\beta v\ti+\delta}\,,\quad t\geq0\,.
\end{equation}
and we define the set of indices
    \begin{align}
    \Jil&=\{j_1,j_2, \dots, j_{\lambda} \in [N]: j_1=i \text{ and }j_\alpha \neq j_{\alpha+1} \text{ for } \alpha\in [\lambda-1]\}\,,\label{eq:defJi}\\
    \Bal&=\{b_1, b_2, \dots, b_{\lambda} \in [m]: b_1 \neq a \text{ and } b_\alpha\neq b_{\alpha+1} \text{ for } \alpha \in [\lambda-1]\}\,.\label{eq:defBa}
\end{align}
\end{proposition}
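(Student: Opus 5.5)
The plan is to argue by induction on $t$ that every message $\nu\ti\ia$ is a Gaussian density $\phi_{x\ia\ti,v\ti}$ whose variance does not depend on $(i,a)$. The base case $t=0$ is the assumption, with $x\ia\zzero=0$.

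\textbf{The factor-to-variable step.} Suppose $\nu\ti\ja=\phi_{x\ja\ti,v\ti}$. In the first BP equation \eqref{eq:BP-sect2}, the message $\hat\nu\ti\ai(x_i)$ is the density at $A_{ai}x_i$ of the linear combination $y_a-\sum_{j\neq i}A_{aj}X_j$, where the $X_j$ are independent with laws $\nu\ti\ja$. A weighted sum of independent Gaussians is Gaussian, so after the change of variable we get
\[
\hat\nu\ti\ai=\phi_{\hat x\ai\ti,\hat v\ai\ti},
\]
with mean and variance given by \eqref{eq:xx-hat}--\eqref{eq:v_hat}. These are the formulas of Lemma \ref{lemma:facilemaleki}, which I would reprove in one line here. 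Since all variances equal $v\ti$, we have $\hat v\ai\ti=v\ti\sum_{k\neq i}A_{ak}^2/A_{ai}^2$. The mean can be written as
\[
\hat x\ai\ti=\frac{y_a-\sum_{j\neq i}A_{aj}x\ja\ti}{A_{ai}}.
\]

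\textbf{The variable-to-factor step.} In the second BP equation \eqref{eq:BPII-sect2}, the prior $\pi_{2,\b}(s)\propto e^{-\b s^2}$ is itself Gaussian, so $\nu\tti\ia$ is a product of Gaussians. Completing the square, its inverse variance is
\[
2\b+\sum_{b\neq a}\frac{1}{\hat v\bi\ti}=2\b+\frac{1}{v\ti}\sum_{b\neq a}\frac{A_{bi}^2}{\sum_{k\neq i}A_{bk}^2},
\]
and its mean is the precision-weighted average
\[
x\ia\tti=v\tti\sum_{b\neq a}\frac{\hat x\bi\ti}{\hat v\bi\ti}.
\]

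\textbf{Main obstacle.} Taken literally, this inverse variance depends on $(i,a)$. To obtain a label-independent $v\ti$ and exactly the prefactor $\Delta\ti$ in \eqref{eq:MP-gauss}, I must replace $\sum_{k\neq i}A_{bk}^2$ by $1/\d$ and $\sum_{b\neq a}A_{bi}^2$ by $1$. These are exactly the concentration identities \eqref{eq:concentration}, which the paper uses throughout its iterative computations. With them the inverse variance becomes $2\b+\d/v\ti$, so
\[
v\tti=\frac{v\ti}{2\b v\ti+\d}.
\]
The mean becomes
\[
x\ia\tti=\frac{\d}{2\b v\ti+\d}\sum_{b\neq a}A_{bi}\Bigl(y_b-\sum_{j\neq i}A_{bj}x\jb\ti\Bigr),
\]
which is \eqref{eq:MP-gauss} with $\Delta\ti$ as in \eqref{eq:defGamma}. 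I would state explicitly that the equality is understood modulo the $\approx$ corrections. As a consistency check, iterating this recursion for $v\ti$ should reproduce \eqref{eq:defTH-bar-v^t}. This replacement, and keeping its errors uniform in the labels, is the delicate point of the argument. The remaining steps are bookkeeping.

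\textbf{The unrolled formula.} I would prove \eqref{eq:iterationX} by a second induction on $t$ using \eqref{eq:MP-gauss}. For $t=1$, since $x\zzero=0$, only the term $\l=1$ survives: $\Delta\zzero\sum_{b_1\neq a}y_{b_1}A_{b_1 i}$. For the inductive step, substitute the formula for $x\jb\ti$ into the double sum in \eqref{eq:MP-gauss}. Call $b_1\neq a$ the outer factor index and $j_2\neq i=j_1$ the variable index; the inner formula for $x_{j_2\to b_1}\ti$ sums over paths with $b_2\neq b_1$ and $j_3\neq j_2$. The term of order $\l$ then acquires the factor $A_{b_1j_1}A_{b_1j_2}$ and becomes a term of order $\l+1$ for time $t+1$. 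Its sign flips because of the minus in \eqref{eq:MP-gauss}, and its prefactor becomes $\Delta\ti\Gamma\ti_\l=\Gamma^{(t+1)}_{\l+1}$. Relabelling $b_\iota\to b_{\iota+1}$ along the path produces exactly the product $A_{b_1j_1}\prod_{\iota}A_{b_\iota j_{\iota+1}}A_{b_{\iota+1}j_{\iota+1}}$, and the non-backtracking constraints match the index sets \eqref{eq:defJi}--\eqref{eq:defBa}. The $y$-term of \eqref{eq:MP-gauss} supplies the new $\l=1$ term.
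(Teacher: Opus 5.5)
Your proposal is correct and follows the paper's own route: Gaussianity is propagated through both BP steps as in Lemmas \ref{lemma:facilemaleki}--\ref{lemma:gaussianfacile}, and the concentration identities \eqref{eq:concentration} are used at the same $\approx$ level as in Lemma \ref{lem:variance} and Corollary \ref{cor:vhat} to get $v^{(t+1)}=v^{(t)}/(2\beta v^{(t)}+\delta)$ and the MP equation; the path expansion is then proved by the same induction as Lemma \ref{lem:mean_gaussian_chaos}, via $\Delta^{(t)}\Gamma^{(t)}_{\lambda}=\Gamma^{(t+1)}_{\lambda+1}$. Two of your points are more accurate than the paper's wording: the exact inverse variance depends on $(i,a)$, so label-independence holds only up to the concentration error, whereas the paper asserts it exactly; and the base-case coefficient is $\Gamma^{(1)}_1=\Delta^{(0)}$, where the paper writes $\Delta^{(1)}$.
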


The first step of our analysis is the following observation from \cite{DMM2}. We report the proof for completeness. 

\begin{Lem}\label{lemma:facilemaleki}
    If $\nu\ia\ti$ has mean $x\ia\ti$ and variance $v\ia\ti$, then $\hat{\nu}\ai\ti$ has mean
    \begin{equation}\label{eq:x_ai}
        \hat{x}\ai\ti = \frac{y_a}{A_{ai}}-\frac{1}{A_{ai}}\sum_{j\neq i}A_{aj}x\ja\ti 
    \end{equation}
    and variance 
    \begin{equation}\label{eq:v_ai}
    \hat{v}\ai\ti=\frac{1}{A_{ai}^2}\sum_{j\neq i}A_{aj}^2 v\ja\ti\,.
    \end{equation}
\end{Lem}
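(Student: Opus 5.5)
The plan is to read the first BP equation \eqref{eq:BP-sect2} probabilistically and then apply linearity of expectation and independence. Let $X_j$, $j\neq i$, be independent random variables with densities $\nu\ja\ti$. The $a$-th constraint is $y_a-\sum_{j\neq i}A_{aj}x_j=A_{ai}x_i$, so the variable $x_i$ attached to the factor $a$ is determined by the others:
\be
x_i=\frac{y_a}{A_{ai}}-\frac{1}{A_{ai}}\sum_{j\neq i}A_{aj}x_j\,.
\ee
Up to normalisation, the function $\hat\nu\ai\ti$ is therefore the density of the random variable
\be
S\coloneqq \frac{y_a}{A_{ai}}-\frac{1}{A_{ai}}\sum_{j\neq i}A_{aj}X_j\,.
\ee
This is exactly the pushforward of $\prod_{j\neq i}\nu\ja\ti(x_j)\,dx^{[i]}$ under the affine map above, i.e. the integral over the fibre $\{x^{[i]}:\ y_a-\sum_{j\neq i}A_{aj}x_j=A_{ai}s\}$. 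Division by $A_{ai}$ is legitimate since $\PP(A_{ai}=0)=0$. The Jacobian factor $|A_{ai}|$ of the change of variables is constant in $s$, so it is absorbed into the symbol $\simeq$.

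First I would make this identification precise. Fix one index $k\neq i$ with $A_{ak}\neq0$ and solve the constraint for $x_k$. Integrating over the remaining variables then writes $\hat\nu\ai\ti(s)$, up to a constant, as a convolution of the rescaled densities $|A_{aj}|^{-1}\nu\ja\ti(\cdot/A_{aj})$, shifted by $y_a$ and rescaled by $A_{ai}$. This is precisely the density of $S$. Continuity and integrability of the $\nu\ja\ti$ ensure that the convolution is well defined and normalisable.

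The means and variances then follow in one line. Linearity gives
\be
E[S]=\frac{y_a}{A_{ai}}-\frac{1}{A_{ai}}\sum_{j\neq i}A_{aj}x\ja\ti\,,
\ee
which is \eqref{eq:x_ai}. Independence of the $X_j$ gives
\be
\mathrm{Var}(S)=\frac{1}{A_{ai}^2}\sum_{j\neq i}A_{aj}^2 v\ja\ti\,,
\ee
which is \eqref{eq:v_ai}. Finiteness of these moments follows from the assumed finite second moments of the $\nu\ja\ti$.

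The only delicate point is interpreting the constrained integral in \eqref{eq:BP} as a fibre integral, i.e. a disintegration of the product measure. Once one notes that only the $a$-th constraint is relevant for the message $\hat\nu\ai\ti$, this is a routine coarea or change-of-variables computation, and the rest is immediate.
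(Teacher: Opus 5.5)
Your proposal is correct and follows the paper's route. The paper also identifies $\hat\nu^{(t)}_{a\to i}$ as the law of $\frac{1}{A_{ai}}\bigl(y_a-\sum_{j\neq i}A_{aj}\xi^{(t)}_{j\to a}\bigr)$ with independent $\xi^{(t)}_{j\to a}\sim\nu^{(t)}_{j\to a}$, and reads off the mean by linearity and the variance by independence. It justifies that identification by a one-line Borel-set computation, which your fibre-integral/convolution argument simply makes more explicit.
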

\begin{proof}
    Define the random variables  $\xi_{i \rightarrow a}\ti \sim \nu_{i \rightarrow a}\ti $ and $\hat{\xi}_{a \rightarrow i}\ti \sim \hat{\nu}_{a \rightarrow i}\ti.$ Then 
    \begin{equation} \label{eq:xi_hat}
    \frac{1}{A_{ai}}\left(y_a-\sum_{j \neq i}A_{aj} \xi_{j \rightarrow a}\ti\right) \overset{(d)}{=} \hat{\xi}_{a \rightarrow i}\ti.
\end{equation}
In fact, for any Borel set $S$ we have 
\begin{align*}
    \hat{\nu}_{a \rightarrow i}\ti(S) = \Prob\left(y_a-\sum_{j \neq i}A_{aj}x_j \in A_{ai}S \right) = \Prob\left( \frac{1}{A_{ai}} \left(y_a-\sum_{j \neq i}A_{aj}x_j\right) \in S \right).
\end{align*}
The claim follows by taking the expectation and the variance of \eqref{eq:xi_hat}.
\end{proof}
We now show the way Gaussian densities propagate thought the BP iteration. 
\begin{Lem}\label{lemma:gaussianfacile}
The following hold:
\begin{itemize}
\item[i)] If $\nu\ia^{(0)}$ is a centred Gaussian density, then for all $t$, $\nu\ia\ti$ and $\hat{\nu}\ai\ti$ are Gaussian densities.
\item [ii)] If $\hat{\nu}\ai\ti$ is a Gaussian density with mean $\hat{x}\ai\ti$ and variance $\hat{v}\ai\ti$ then $\nu\ia\tti$ is a Gaussian density with mean 
    \begin{equation}\label{eq:x_ia}
        x\ia\tti = \frac{\sum\limits_{b\neq a}\frac{\hat{x}\bi\ti}{\hat{v}\bi\ti}}{2\beta+\sum\limits_{b\neq a} \frac{1}{\hat{v}\bi\ti}} \,,
    \end{equation}
    and variance
    \begin{equation}\label{eq:v_ia}
        v\ia\tti = \frac{1}{2\beta + \sum\limits_{b\neq a}\frac{1}{\hat{v}\ti\bi}} \,.
    \end{equation}
    \end{itemize}
\end{Lem}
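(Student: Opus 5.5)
The plan is to prove i) by induction on $t$, using ii) together with the closure of Gaussians under affine maps of independent Gaussians. So I would first prove ii) and then deduce i).

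\emph{Proof of ii).} Consider the second BP equation \eqref{eq:BPII-sect2} with $p=2$, where the prior is $\pi_{\b,2}(s)\propto e^{-\b s^2}$. Assume each $\hat\nu\bi\ti$ is Gaussian, so that $\hat\nu\bi\ti(s)\propto \exp\bigl(-\frac{(s-\hat x\bi\ti)^2}{2\hat v\bi\ti}\bigr)$. Up to normalisation, $\nu\ia\tti(s)$ is then the exponential of a quadratic polynomial in $s$:
\[
-\b s^2-\sum_{b\neq a}\frac{(s-\hat x\bi\ti)^2}{2\hat v\bi\ti}
=-\frac{s^2}{2}\Bigl(2\b+\sum_{b\neq a}\frac{1}{\hat v\bi\ti}\Bigr)+s\sum_{b\neq a}\frac{\hat x\bi\ti}{\hat v\bi\ti}+\mathrm{const}.
\]
The coefficient of $s^2$ is strictly negative, because $\b>0$ and $\hat v\bi\ti>0$. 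Completing the square therefore shows that $\nu\ia\tti$ is a Gaussian density. Its precision is $2\b+\sum_{b\neq a}1/\hat v\bi\ti$, which gives \eqref{eq:v_ia}. Its mean is the linear coefficient divided by the precision, which gives \eqref{eq:x_ia}. The normalisation constant is irrelevant, since the BP equations only hold up to the relation $\simeq$.

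\emph{Proof of i).} The base case is $\nu\ia^{(0)}=\phi_{0,v^{(0)}}$, which is Gaussian. For the inductive step, suppose $\nu\ja\ti$ is Gaussian for all $(j,a)$. By the representation \eqref{eq:xi_hat} in the proof of Lemma \ref{lemma:facilemaleki}, $\hat\nu\ai\ti$ is the law of $A_{ai}^{-1}\bigl(y_a-\sum_{j\neq i}A_{aj}\xi\ja\ti\bigr)$. Here the $\xi\ja\ti$ are independent Gaussians, since the integral in \eqref{eq:BP-sect2} is taken against the product measure $\prod_{j\neq i}\nu\ja\ti$. A fixed affine combination of independent Gaussians is Gaussian. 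This uses $A_{ai}\neq0$, which holds almost surely by the no-atom assumption. The resulting variance $\hat v\ai\ti$, given by \eqref{eq:v_ai}, is strictly positive: for $N\geq2$ the sum contains at least one term $A_{aj}^2v\ja\ti$, which is almost surely positive. Applying ii) then shows that $\nu\ia\tti$ is Gaussian, which closes the induction.

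The only delicate point is nondegeneracy, namely that all variances stay strictly positive and finite so that the densities are genuine. This follows from the same induction: $v\ia\tti\in(0,1/(2\b)]$ by \eqref{eq:v_ia}, and $\hat v\ai\ti>0$ almost surely as just argued. I expect no real obstacle beyond this bookkeeping.
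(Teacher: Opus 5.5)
Your proposal is correct and follows essentially the same route as the paper. Part i) uses the representation \eqref{eq:xi_hat} of $\hat\nu\ai\ti$ as an affine image of independent Gaussians, and part ii) collects the quadratic and linear terms in the exponent of $e^{-\beta s^2}\prod_{b\neq a}\hat\nu\bi\ti(s)$. The differences are cosmetic: you prove ii) before i), you make the nondegeneracy $\hat v\ai\ti>0$ explicit, and you avoid the paper's slip of calling the $\xi\ja\ti$ centred with $\hat x\ai\ti=y_a/A_{ai}$ for all $t$, which holds only at $t=0$.
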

\begin{proof}
 $i)$   If for some $t\geq1$ $\nu\ia\ti$ is a Gaussian density, then the random variables $\xi_{i \rightarrow a}\ti \sim \nu_{i \rightarrow a}\ti $ defined in the proof of the previous result are centred Gaussian, therefore also  the variables $\hat{\xi}_{a \rightarrow i}\ti$ given by (\ref{eq:xi_hat}) are Gaussian. Thus $\hat{\nu}\ai\ti$ is a Gaussian density with mean $\frac{y_a}{A_{ai}}$ and variance $\frac{v\ia^{(0)}}{A_{ai}^2}\sum_{j\neq i}A_{aj}^2$. Next, by the second BP equation (\ref{eq:BPII-sect2}), $\nu\ia^{(t+1)}$ is the product of Gaussian densities, therefore will be Gaussian as well. 
 
Since $\nu\ia^{(0)}$ is a (centred) Gaussian density, we can start the above reasoning from $t=0$ and iterate.
 
$ii)$ By $i)$ and \eqref{eq:BP-sect2}
 \begin{equation*}
\nu\ia\tti(x_i) \cong  \exp\left(-\beta x_i^2\right)\prod_{b\neq a}
\exp\left(-\frac{(x_i-\hat{x}\bi\ti)^2}{2\hat{v}\bi\ti}\right).
 \end{equation*}
Collecting the quadratic and linear terms in $x_i$,
\begin{align*}
\nu\ia\tti(x_i)
&\cong \exp \left(-\frac12\Bigl(2\beta+\sum_{b\neq a}\frac{1}{\hat{v}\bi\ti}\Bigr)x_i^2
+ x_i\sum_{b\neq a}\frac{\hat{x}\bi\ti}{\hat{v}\bi\ti}\right),
\end{align*}
which is a one-dimensional Gaussian density with variance
 $\left(2\beta+\sum_{b\neq a}\frac{1}{\hat{v}\bi\ti}\right)^{-1} $ and mean 
 $$
 \frac{\sum\limits_{b\neq a}\frac{\hat{x}\bi\ti}{\hat{v}\bi\ti}}{2\beta+\sum\limits_{b\neq a} \frac{1}{\hat{v}\bi\ti}}\,.
 $$
\end{proof}

\begin{Lem}\label{lem:variance}
Let $v_0>0$. Assume that for all $i\in[N], a\in[m]$ $\nu\ia^{(0)}=\phi_{0, v^{(0)}}$. Then for all $t\in \NN$ 
    \begin{equation}\label{eq:unifVar-edge}
        v\ia\ti=v\ti \approx \frac{v^{(0)}(1-\delta)}{\delta^t(1-\delta)+2\beta v^{(0)}(1-\delta^t)}.
    \end{equation}
\end{Lem}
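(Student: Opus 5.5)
The plan is to argue by induction on $t$, using Lemma \ref{lemma:facilemaleki} and Lemma \ref{lemma:gaussianfacile}, which give the exact evolution of the variances when the initialisation is Gaussian. The first step is to show that the variances do not depend on the labels. At $t=0$ they all equal $v\zzero$. Suppose that at time $t$ one has $v\ja\ti=v\ti$ for all $(j,a)$. Then \eqref{eq:v_ai} gives
\[
\hat v\ai\ti=\frac{v\ti}{A_{ai}^2}\sum_{j\neq i}A_{aj}^2 .
\]
Substituting this into \eqref{eq:v_ia} yields
\[
v\ia\tti=\Bigl(2\b+\frac{1}{v\ti}\sum_{b\neq a}\frac{A_{bi}^2}{\sum_{j\neq i}A_{bj}^2}\Bigr)^{-1}.
\]
This expression still depends on $(i,a)$ through the entries of $A$. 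So the exact equality $v\ia\ti=v\ti$ should be read as defining $v\ti$ through the concentrated value, with the dependence on the labels absorbed into an error $\approx$, i.e. an $O_\PP(N^{-1/2})$ correction. I will therefore prove the approximate statement and carry the induction at the level of "$v\ia\ti=v\ti+O_\PP(N^{-1/2})$ uniformly in $(i,a)$".

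The second step is the concentration. By \eqref{eq:concentration}, each denominator satisfies $\sum_{j\neq i}A_{bj}^2\approx 1/\d$, uniformly in the indices. Hence
\[
\sum_{b\neq a}\frac{A_{bi}^2}{\sum_{j\neq i}A_{bj}^2}\approx \d\sum_{b\neq a}A_{bi}^2\approx \d .
\]
For the error I will use the Bernstein inequality for the sub-Gaussian sums, together with a union bound over the $Nm$ pairs $(i,a)$; this is affordable because the tails are $e^{-N^{\e}}$. The denominators are bounded away from zero with overwhelming probability, so replacing them by $1/\d$ costs only $O_\PP(N^{-1/2})$. Together with the induction hypothesis and the fact that $x\mapsto(2\b+\d/x)^{-1}$ is Lipschitz on compacts of $(0,\infty)$, this gives the recursion
\[
v\tti=\frac{v\ti}{2\b v\ti+\d}+O_\PP(N^{-1/2}).
\]

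The third step is to solve the deterministic recursion $v\tti=v\ti/(2\b v\ti+\d)$. Passing to reciprocals $w\ti=1/v\ti$ makes it linear:
\[
w\tti=\d\, w\ti+2\b .
\]
Its solution is
\[
w\ti=\d^t w\zzero+2\b\frac{1-\d^t}{1-\d},
\]
and inverting gives exactly \eqref{eq:unifVar-edge}. Errors propagate only through finitely many Lipschitz steps, each with constant bounded in terms of $\d$ and $\b$. So for fixed $t$ they remain $O_\PP(N^{-1/2})$, with constants depending on $t$.

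I expect the main obstacle to be the uniform control of the ratio $A_{bi}^2/\sum_{j\neq i}A_{bj}^2$ with exponential tails. Its numerator involves $A_{bi}$, which is excluded from its own denominator; this helps, since the two are independent. The plan is to bound the denominator from below with overwhelming probability, expand $1/\sum_j A_{bj}^2=\d+O_\PP(N^{-1/2})$, and control the remainder by $\sum_b A_{bi}^2\cdot O_\PP(N^{-1/2})=O_\PP(N^{-1/2})$.
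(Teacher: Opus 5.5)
Your proof is correct and follows the paper's argument. You use the same variance recursion from \eqref{eq:v_ai} and \eqref{eq:v_ia}, the same concentration $\sum_{j\neq i}A_{bj}^2\approx 1/\d$, $\sum_{b\neq a}A_{bi}^2\approx 1$ to reach $v\tti\approx v\ti/(2\b v\ti+\d)$, and then solve this scalar recursion; you linearise it via $w\ti=1/v\ti$, whereas the paper verifies the closed form by induction. The paper asserts that $v\ia\ti$ is exactly label-independent. Your remark that this fails for $t\geq 1$, so the lemma must be read as $\max_{i,a}|v\ia\ti-v\ti|=O_\PP(N^{-1/2})$ with $v\ti$ deterministic, is accurate and is the right rigorous reading of the paper's proof.
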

\begin{proof}
    Combining equations \eqref{eq:v_ia} and \eqref{eq:v_ai} we get the iteration for the variance
\begin{equation}\label{eq:iteration_v}
    v\ia\tti=\frac{1}{2\beta+\sum\limits_{b\neq a}\frac{A_{bi}^2}{\sum\limits_{j\neq i}A_{bj}^2v\jb\ti}}.
\end{equation}
Under the assumption $v^{(0)}\ia=v^{(0)}$, we see from the above equation $v\ia\ti=v\ti$ for all $t>0$ and we have
\begin{equation*}
    \sum_{b\neq a}\frac{A_{bi}^2}{\sum\limits_{j\neq i}A_{bj}^2 v\jb\ti} = \frac{1}{v\ti}\sum_{b\neq a}\frac{A_{bi}^2}{\sum\limits_{j\neq i}A_{bj}^2} \approx \frac{\delta}{v\ti}.
\end{equation*}
The fact that the numerator and the denominator at the r.h.s. of the identity above are independent justifies the last step. 
Thus, we get 
\begin{equation}\label{eq:iteration_v1}
    v\tti \approx \frac{v\ti}{2\beta v\ti+\delta}.
\end{equation}
We now prove by induction that 
\begin{equation}\label{eq:IH}
    v\ti \approx \frac{v^{(0)}}{\delta^t+2\beta v^{(0)}\sum\limits_{\tau=0}^{t-1}\delta^\tau}\,,
\end{equation}
and the claim follows by summing the geometric series: $\sum_{\tau=0}^{t-1}\delta^\tau=\frac{1-\delta^t}{1-\delta}$ (recall $\delta\in(0,1)$).

For $t=1$, $v\oone \approx \frac{v^{(0)}}{2\beta v^{(0)}+\delta}$ follows immediately from \eqref{eq:iteration_v1}. Let us suppose \eqref{eq:IH} holds for $t>1$. We have
\begin{align*}
    v\tti & \approx \frac{v\ti}{2\beta v\ti+\delta} \\
    & \approx \frac{v^{(0)}}{\delta^t+2\beta v^{(0)}\sum\limits_{\tau=0}^{t-1}\delta^\tau} \frac{1}{2\beta \frac{v^{(0)}}{\delta^t+2\beta v^{(0)}\sum\limits_{\tau=0}^{t-1}\delta^\tau} +\delta }\\
    &= \frac{v^{(0)}}{\delta^{t+1} +2\beta v^{(0)} \left(1+\sum\limits_{\tau=0}^{t-1}\delta^{\tau+1} \right)}\\
    &= \frac{v^{(0)}}{\delta^{t+1} +2\beta v^{(0)} \sum\limits_{\tau=0}^{t}\delta^{\tau}}\,.
\end{align*}
\end{proof}
\begin{corollary}\label{cor:vhat}
    Same assumptions of Lemma \ref{lem:variance}. Then, for all $t\in\NN$, it holds 
    \begin{equation}\label{eq:vHat}
        \hat{v}\ai\ti\approx \frac{1}{\delta A_{ai}^2}v\ti.
    \end{equation}
\end{corollary}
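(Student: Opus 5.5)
This follows by combining Lemma \ref{lemma:facilemaleki} with Lemma \ref{lem:variance}.

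\textbf{Step 1: exact formula.} Formula \eqref{eq:v_ai} gives the exact identity
\[
\hat v\ai\ti=\frac{1}{A_{ai}^2}\sum_{j\neq i}A_{aj}^2 v\ja\ti .
\]
Under Gaussian initialisation, Lemma \ref{lem:variance} states that $v\ja\ti=v\ti$ exactly, with no dependence on the labels. This label-independence holds exactly, as shown inside its proof from the recursion \eqref{eq:iteration_v}. Consequently
\[
\hat v\ai\ti=\frac{v\ti}{A_{ai}^2}\sum_{j\neq i}A_{aj}^2 .
\]

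\textbf{Step 2: concentration of the row sum.} By the concentration \eqref{eq:concentration} we have $\sum_{j\neq i}A_{aj}^2\approx 1/\delta$ uniformly in $(a,i)$, with error $O_\PP(N^{-1/2})$. This follows from Bernstein's inequality for the sub-exponential variables $A_{aj}^2-\frac1m$. Their sum over $N-1$ terms has fluctuations of order $\sqrt N/m\sim N^{-1/2}$. The shift from $(N-1)/m$ to $N/m=1/\delta$ costs $O(1/N)$. A union bound over $(a,i)$ is harmless given the stretched-exponential tails. Also, $v\ti$ is a deterministic bounded quantity up to the $\approx$ in \eqref{eq:unifVar-edge}; indeed $v\ti\le v\zzero/\delta^t$ or similar, bounded for fixed $t$. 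Multiplying gives
\[
\hat v\ai\ti=\frac{v\ti}{\delta A_{ai}^2}\bigl(1+O_\PP(N^{-1/2})\bigr).
\]

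\textbf{Main obstacle.} The only delicate point is the meaning of $\approx$ given the factor $A_{ai}^{-2}$, which is typically of order $m$ and may be larger. The additive error is $v\ti A_{ai}^{-2}\,O_\PP(N^{-1/2})$, which is not $O_\PP(N^{-1/2})$ in absolute terms. I would therefore interpret \eqref{eq:vHat} as a statement about the bracket: $A_{ai}^2\hat v\ai\ti \approx v\ti/\delta$, equivalently a relative error $O_\PP(N^{-1/2})$. This is the form actually used later, namely in $1/\hat v\bi\ti = \delta A_{bi}^2/v\ti\,(1+O_\PP(N^{-1/2}))$ inside \eqref{eq:x_ia} and \eqref{eq:v_ia}. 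Stating it in this form makes the proof a one-line consequence of Steps 1 and 2. The plan also uses the fact that $A_{ai}\neq0$ almost surely, so the division is legitimate.
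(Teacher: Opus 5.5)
Your argument is the paper's own one-line proof: insert the label-independence from Lemma \ref{lem:variance} into \eqref{eq:v_ai} and use $\sum_{j\neq i}A_{aj}^2\approx 1/\delta$. Your reading of $\approx$ as a relative statement, $A_{ai}^2\hat v\ai\ti=v\ti/\delta+O_\PP(N^{-1/2})$, is correct and is in fact necessary. Since $A_{ai}^{-2}$ is typically of order $m$, even the deterministic shift from $(N-1)/m$ to $N/m$ already gives an absolute discrepancy of order one. So \eqref{eq:vHat} cannot hold in the absolute sense of the paper's $\approx$ notation, and the relative form is what is used afterwards.

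One assertion in your Step 1 is wrong, though; the paper's proof of Lemma \ref{lem:variance} makes the same overstatement. The label-independence $v\ja\ti=v\ti$ is not exact in general. Already at $t=1$, \eqref{eq:iteration_v} gives $v\ia\oone=\bigl(2\beta+(v\zzero)^{-1}\sum_{b\neq a}A_{bi}^2/\sum_{j\neq i}A_{bj}^2\bigr)^{-1}$, which depends on $(i,a)$ through the entries of $A$. What the recursion actually propagates, by induction using the uniform row and column concentration, is only $\max_{i,a}|v\ia\ti-v\ti|=O_\PP(N^{-1/2})$ for each fixed $t$.

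Your conclusion survives this correction. Write $A_{ai}^2\hat v\ai\ti=v\ti\sum_{j\neq i}A_{aj}^2+\sum_{j\neq i}A_{aj}^2(v\ja\ti-v\ti)$ and bound the last sum by $\max_{j}|v\ja\ti-v\ti|\sum_{j\neq i}A_{aj}^2=O_\PP(N^{-1/2})$. Since $v\ti$ is bounded away from zero for fixed $t$ and $\beta$, this is still a relative error $O_\PP(N^{-1/2})$.
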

\begin{proof}
    By \eqref{eq:v_ai} and Lemma \ref{lem:variance} we have
    \begin{equation*}
        \hat{v}\ai\ti = \frac{1}{A_{ai}^2}\sum_{j\neq i}A_{aj}^2 v\ti \approx \frac{1}{\delta A_{ai}^2}v\ti.
    \end{equation*}
\end{proof}
\begin{Rem}
 Combining equations \eqref{eq:x_ia} and \eqref{eq:x_ai} we have the iteration for the mean
\begin{equation}\label{eq:itermean}
    x\ia\tti = \frac{\sum\limits_{b\neq a}A_{bi}\frac{y_b-\sum\limits_{j\neq i}A_{bj}x\jb\ti}{\sum\limits_{j\neq i}A_{bj}^2v\jb\ti}}{2\beta+\sum\limits_{b\neq a}\frac{A_{bi}^2}{\sum\limits_{j\neq i}A_{bj}^2v\jb\ti}}\,.
\end{equation}
\end{Rem}

We are now ready to derive the message passing equations.

\begin{proof}[Proof of \eqref{eq:MP-gauss}]
Starting from \eqref{eq:itermean} and using Corollary \ref{cor:vhat}, Lemma \ref{lem:variance}, $\sum_{b\neq a}A^2_{bi}\approx 1$ and $\sum_{j\neq i}A^2_{bj}\approx \d^{-1}$ we get \eqref{eq:MP-gauss} by simple algebra, recalling the explicit form of $\D\ti$ given in \eqref{eq:defGamma}. 
\end{proof}

The remaining part of Proposition \ref{prop:mean} is proven by the following lemma. 

\begin{Lem}\label{lem:mean_gaussian_chaos}
Fix $v\ia\zzero=v\zzero$. Define the index sets
\begin{align}
\Jil
&=\bigl\{(j_1,\dots,j_\lambda)\in [N]^\lambda:\ j_1=i\text{ and }j_\alpha\ne j_{\alpha+1}\ \forall \alpha\in[\lambda-1]\bigr\},\label{eq:defJi}\\
\Bal
&=\bigl\{(b_1,\dots,b_\lambda)\in [m]^\lambda:\ b_1\ne a\text{ and }b_\alpha\ne b_{\alpha+1}\ \forall \alpha\in[\lambda-1]\bigr\}.\label{eq:defBa}
\end{align}
Then, for all  $t\in\NN $,
\begin{equation}\label{eq:iterationX}
x\ia \ti
=\sum_{\lambda=1}^t (-1)^{\lambda+1} \Gamma_{\lambda}\ti
\sum_{\Bal}\sum_{\Jil}
y_{b_\lambda} A_{b_1j_1}\prod_{\iota=1}^{\lambda-1}A_{b_\iota j_{\iota+1}}A_{b_{\iota+1}j_{\iota+1}},
\end{equation}
where
\begin{equation}\label{eq:defGamma}
\Gamma\ti_\lambda=\prod_{\tau=1}^\lambda \Delta^{(t-\tau)},
\qquad
\Delta^{(\tau)}=\frac{\delta}{ 2\beta v^{(\tau)}+\delta }.
\end{equation}
\end{Lem}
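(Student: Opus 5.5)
Induction on $t$, using the message passing equation \eqref{eq:MP-gauss} as the one-step recursion. I take \eqref{eq:MP-gauss} as the exact definition of the iterates, i.e. I absorb the $\approx$ errors coming from Corollary \ref{cor:vhat} and \eqref{eq:concentration}, as the statement implicitly does; the error propagation is handled later in the paper. The base case is $t=0$. Since $x\zzero\ja=0$, \eqref{eq:MP-gauss} gives $x\oone\ia=\Delta\zzero\sum_{b\neq a}y_bA_{bi}$. This is exactly the $\lambda=1$ term of \eqref{eq:iterationX}: there $\Gamma\oone_1=\Delta\zzero$, $j_1=i$, $b_1\neq a$, and the summand is $y_{b_1}A_{b_1 i}$. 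The formula at $t=0$ is the empty sum, which is consistent with $x\zzero\ia=0$.

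For the inductive step, assume \eqref{eq:iterationX} holds at time $t$ for every pair $(j,b)$, and insert it into the second term of \eqref{eq:MP-gauss}:
\[
-\Delta\ti\sum_{b\neq a}\sum_{j\neq i}A_{bi}A_{bj}\sum_{\lambda=1}^t(-1)^{\lambda+1}\Gamma\ti_\lambda\sum_{(b'_1,\dots,b'_\lambda)\in\mathcal B_b^\lambda}\sum_{(j'_1,\dots,j'_\lambda)\in\mathcal J_j^\lambda} y_{b'_\lambda}A_{b'_1j'_1}\prod_{\iota=1}^{\lambda-1}A_{b'_\iota j'_{\iota+1}}A_{b'_{\iota+1}j'_{\iota+1}}.
\]
Now relabel the indices as a path of length $\lambda+1$: set $b_1=b$, $j_1=i$, and $b_{\iota+1}=b'_\iota$, $j_{\iota+1}=j'_\iota$ for $\iota\in[\lambda]$. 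The constraints match exactly.
\begin{itemize}
\item[(i)] $b\neq a$ becomes $b_1\neq a$.
\item[(ii)] $j\neq i$ becomes $j_1\neq j_2$.
\item[(iii)] $b'_1\neq b$, which is the defining condition of $\mathcal B_b^\lambda$, becomes $b_2\neq b_1$.
\item[(iv)] $j'_1=j$ is automatic, since $j'_1=j_2$.
\item[(v)] The remaining non-backtracking conditions shift by one index.
\end{itemize}
Hence the summation over $(b,j,b',j')$ becomes exactly a summation over $\mathcal B_a^{\lambda+1}\times\mathcal J_i^{\lambda+1}$.

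The monomial becomes $A_{b_1i}A_{b_1j_2}\cdot A_{b_2j_2}\prod_{\iota=2}^{\lambda}A_{b_\iota j_{\iota+1}}A_{b_{\iota+1}j_{\iota+1}}\,y_{b_{\lambda+1}}$. Here $A_{b_1i}=A_{b_1j_1}$, and $A_{b_1j_2}A_{b_2j_2}$ is the $\iota=1$ factor of the product, so the monomial is $y_{b_{\lambda+1}}A_{b_1j_1}\prod_{\iota=1}^{\lambda}A_{b_\iota j_{\iota+1}}A_{b_{\iota+1}j_{\iota+1}}$, exactly as required.

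For the coefficients, $-\Delta\ti(-1)^{\lambda+1}\Gamma\ti_\lambda=(-1)^{\lambda+2}\Delta\ti\prod_{\tau=1}^{\lambda}\Delta^{(t-\tau)}=(-1)^{\lambda+2}\prod_{\tau=1}^{\lambda+1}\Delta^{(t+1-\tau)}=(-1)^{(\lambda+1)+1}\Gamma^{(t+1)}_{\lambda+1}$. Here I used the definition of $\Gamma$ and that the $\Delta$'s depend only on time, since the variances are label-independent by Lemma \ref{lem:variance}. Shifting $\lambda+1\mapsto\lambda$, these terms give $\lambda=2,\dots,t+1$. The first term $\Delta\ti\sum_{b\neq a}y_bA_{bi}$ supplies $\lambda=1$, because $\Gamma^{(t+1)}_1=\Delta\ti$. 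This closes the induction.

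The only delicate point I expect is the index bookkeeping: verifying that the non-backtracking sets correspond bijectively, and in particular that the exclusions $j\neq i$ and $b'_1\neq b$ land as $j_1\neq j_2$ and $b_1\neq b_2$. The time-shift of the $\Gamma$'s also needs checking. Everything else is direct substitution.
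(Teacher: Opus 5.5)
Your proof is correct and follows essentially the same route as the paper: induction on $t$, substituting the inductive hypothesis into the message-passing recursion \eqref{eq:MP-gauss}, concatenating $(b,j)$ with the paths in $\mathcal{B}_b^\lambda\times\mathcal{J}_j^\lambda$ to obtain paths in $\mathcal{B}_a^{\lambda+1}\times\mathcal{J}_i^{\lambda+1}$, and using $\Delta\ti\Gamma\ti_\lambda=\Gamma\tti_{\lambda+1}$. Your base case correctly identifies $\Gamma^{(1)}_1=\Delta^{(0)}$, where the paper's text writes $\Delta^{(1)}$ (a typo), and, like the paper, you treat the approximate identities behind \eqref{eq:MP-gauss} as exact.
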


\begin{proof} We argue by induction on  $t $.
 
\emph{Base case  $t=1 $.} With  $v^{(0)}\ia=v^{(0)} $, equations \eqref{eq:x_ia}, \eqref{eq:unifVar-edge} and Corollary~\ref{cor:vhat} give 
 \begin{equation*} x\ia\oone = v\oone\sum_{b\neq a}\frac{\hat x\bi^{(0)}}{\hat v\bi^{(0)}} \approx \frac{v\oone}{v^{(0)}} \delta \sum_{b\neq a}A_{bi}^2\hat x\bi^{(0)}.  \end{equation*} 
Using \eqref{eq:x_ai} at  $t=0 $,  $\hat x\bi^{(0)}=y_b/A_{bi} $, hence  \begin{equation*} x\ia\oone \approx \frac{v\oone}{v^{(0)}} \delta \sum_{b\neq a}y_bA_{bi} =\Delta\oone\sum_{b\neq a}y_bA_{bi},  \end{equation*} which matches \eqref{eq:iterationX} for  $t=1 $ (with  $\Gamma\oone_1=\Delta\oone $). \\
\emph{Induction step  $t\to t+1 $.} Assume \eqref{eq:iterationX} holds at time  $t $. Combining \eqref{eq:x_ia}, \eqref{eq:iteration_v1} and Corollary~\ref{cor:vhat}, 
\begin{align} 
    x\ia\tti &= v\tti\sum_{b\neq a}\frac{\hat x\bi\ti}{\hat v\bi\ti} \approx \frac{v\tti}{v\ti} \delta\sum_{b\neq a}A_{bi}^2\hat x\bi\ti \approx \Delta\ti\sum_{b\neq a}A_{bi}^2\hat x\bi\ti. \label{eq:decomposizione} 
\end{align} 
From \eqref{eq:x_ai}, 
\begin{equation}\label{eq:split} 
    x\ia\tti = \Delta\ti\sum_{b\neq a}y_bA_{bi} -\Delta\ti\sum_{b\neq a}\sum_{j\neq i}A_{bi}A_{bj} x\jb\ti. 
\end{equation} 
Insert the induction hypothesis for  $x\jb\ti $ into the second term: 
\begin{align} 
     \sum_{b\neq a}\sum_{j\neq i}A_{bi}A_{bj} x\jb\ti &=  \sum_{\lambda=1}^t(-1)^{\lambda+1}\Gamma\ti_\lambda \sum_{b\neq a}\sum_{j\neq i}A_{bi}A_{bj} \sum_{\mathcal{B}_b^\lambda}\sum_{\mathcal{J}_j^\lambda} y_{b_\lambda} A_{b_1j_1} \prod_{\iota=1}^{\lambda-1} A_{b_\iota j_{\iota+1}}A_{b_{\iota+1}j_{\iota+1}}. \label{eq:step} 
\end{align} 
Introduce the concatenated index sets 
\begin{align*}
    \mathcal{J}_{i,j}^{\lambda+1} &=\{(j_0,\dots,j_\lambda): j_0=i,\ (j_1,\dots,j_\lambda)\in\mathcal{J}_j^\lambda\},\\ 
    \mathcal{B}_{a,b}^{\lambda+1} &=\{(b_0,\dots,b_\lambda): b_0\ne a,\ (b_1,\dots,b_\lambda)\in\mathcal{B}_b^\lambda\}. 
\end{align*}
Then \eqref{eq:step} rewrites as 
 \begin{equation*} 
\sum_{\lambda=1}^t(-1)^{\lambda+1}\Gamma\ti_\lambda \sum_{\mathcal{B}_{a,b}^{\lambda+1}}\sum_{\mathcal{J}_{i,j}^{\lambda+1}} y_{b_{\lambda+1}} A_{b_0j_0}\prod_{\iota=0}^{\lambda-1}A_{b_\iota j_{\iota+1}}A_{b_{\iota+1}j_{\iota+1}}. 
 \end{equation*} 
Relabeling indices shows that this equals 
 \begin{equation*} 
\sum_{\lambda=2}^{t+1}(-1)^{\lambda}\Gamma\ti_{\lambda-1} \sum_{\mathcal{B}_{a}^{\lambda}}\sum_{\mathcal{J}_{i}^{\lambda}} y_{b_{\lambda}} A_{b_1j_1}\prod_{\iota=1}^{\lambda-1}A_{b_\iota j_{\iota+1}}A_{b_{\iota+1}j_{\iota+1}}. 
 \end{equation*} 
Plugging into \eqref{eq:split} yields 
 \begin{equation*} 
x\ia\tti = \Delta\ti\sum_{\lambda=1}^{t+1}(-1)^{\lambda+1}\Gamma\ti_{\lambda-1} \sum_{\mathcal{B}_{a}^{\lambda}}\sum_{\mathcal{J}_{i}^{\lambda}} y_{b_{\lambda}} A_{b_1j_1}\prod_{\iota=1}^{\lambda-1}A_{b_\iota j_{\iota+1}}A_{b_{\iota+1}j_{\iota+1}}, 
 \end{equation*} 
and since  $\Gamma\ti_{\lambda-1}\Delta\ti=\Gamma\tti_\lambda $, we obtain \eqref{eq:iterationX} at time  $t+1 $. 
\end{proof}


\section{Tail control of the messages}\label{sect:proofProp}

In this section our goal is to show that the messages  $x\ia\ti $ are $O_\P(1)$. Moreover we will establish a suitable decomposition that will be useful in the next section. Our main result follows.

\begin{Prop}\label{prop:AMP_dec}
Let $v_0>0$. Assume that for all $i\in[N], a\in[m]$ $\nu\ia^{(0)}=\phi_{0, v^{(0)}}$. For every $t\in\NN$ and edge $(i,a)$,
\begin{equation}\label{eq:scomposizione}
x\ia\ti
=\sum_{\lambda=1}^t (-1)^{\lambda+1} \Gamma_\lambda\ti 
\bigl( x_i^{[\lambda]} + z\ai ^{[\lambda]} \bigr),
\end{equation}
where for all $\lambda\in[t]$
\begin{equation*}
\max_{i\in [N]}|x_i^{[\lambda]}|=O_\PP(1)
\quad\text{and}\quad
\max_{\substack{i\in [N]\\ a\in[m]}}|z\ai ^{[\lambda]}|=O_\PP(N^{-1/2})\,.
\end{equation*}
\end{Prop}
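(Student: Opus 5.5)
We start from the explicit chaos expansion \eqref{eq:iterationX} of Lemma \ref{lem:mean_gaussian_chaos}. For each $\lambda\in[t]$ we set
\[
S\ia^{[\lambda]}\coloneqq\sum_{\Bal}\sum_{\Jil} y_{b_\lambda} A_{b_1j_1}\prod_{\iota=1}^{\lambda-1}A_{b_\iota j_{\iota+1}}A_{b_{\iota+1}j_{\iota+1}}\,,
\]
so that $x\ia\ti=\sum_{\lambda}(-1)^{\lambda+1}\Gamma\ti_\lambda S\ia^{[\lambda]}$. The dependence on $a$ enters only through the constraint $b_1\neq a$. We therefore define $x_i^{[\lambda]}$ as the same sum with $b_1$ ranging over all of $[m]$, and $z\ai^{[\lambda]}\coloneqq S\ia^{[\lambda]}-x_i^{[\lambda]}$. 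The latter is minus the sum restricted to $b_1=a$:
\[
z\ai^{[\lambda]}=-A_{ai}\sum_{b_2\neq a,\dots}\sum_{j_2\neq i,\dots} y_{b_\lambda}A_{aj_2}A_{b_2j_2}\cdots\,.
\]
This gives \eqref{eq:scomposizione} identically. It then remains to prove two tail bounds uniformly over indices.

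\textbf{Tail bound for $x_i^{[\lambda]}$.} The quantity $x_i^{[\lambda]}$ is a multilinear polynomial of degree $2\lambda-1$ in the entries of $A$. It is not quite multilinear, because indices may repeat non-consecutively, so that entries such as $A_{bj}^2$ appear. We handle this by splitting the index sum according to the partition pattern of coincidences among the $b$'s and $j$'s. Each squared entry is written as $A^2_{bj}=(A^2_{bj}-1/m)+1/m$, which produces finitely many genuinely multilinear pieces (with lower degree and deterministic prefactors). For each piece we compute the $L^2$ norm. By Appendix \ref{App:equiv}, it suffices to do this for Gaussian $A$, where the computation reduces to the Wick theorem. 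The count is standard: each free $b$ index contributes a factor $m$, each free $j$ index a factor $N$, and each pairing contributes $m^{-1}$ per pair of entries. The path structure $i=j_1,b_1,j_2,b_2,\dots,b_\lambda$ has $2\lambda-1$ free summation indices against $2(2\lambda-1)/2$ inverse powers of $m$ in the second moment, together with $|y_b|\le1$. This leaves $\|x_i^{[\lambda]}\|_{L^2}=O(1)$, and we expect non-matched pairings to cost extra factors of $N^{-1}$.

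\textbf{From moments to tails.} Hypercontractivity \eqref{eq:hiper-intro} then gives $\|x_i^{[\lambda]}\|_{L^p}\le C_\lambda p^{\lambda-1/2}$. By Markov's inequality with $p\sim N^{c\e}$, this yields $\PP(|x_i^{[\lambda]}|>N^\e)\le \exp(-N^{c'\e})$. A union bound over $i\in[N]$ gives $\max_i|x_i^{[\lambda]}|=O_\PP(1)$, in the stretched-exponential sense used in the paper.

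\textbf{Bound for $z\ai^{[\lambda]}$.} This is the same structure with one fewer free $b$ index. Since $A_{ai}$ is an isolated sub-Gaussian factor of size $N^{-1/2}$, we have $z\ai^{[\lambda]}=-A_{ai}\cdot W_{ai}$. Here $W_{ai}$ is a chaos of the same type as above, whose second moment is $O(1)$ by the same Wick count. The factor $A_{ai}$ may reappear inside $W_{ai}$ through coincidences, and we treat those terms by the same partition splitting. Hence $z=O_\PP(N^{-1/2})$, and the union bound is now over $(i,a)\in[N]\times[m]$, which costs only polynomially many events.

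\textbf{Main obstacle.} The delicate step is the combinatorial $L^2$ bound: we must show that every coincidence pattern among the indices, including ones that create diagonal terms $A^2$ or higher powers, contributes $O(1)$ rather than a positive power of $N$. The nonbacktracking constraints $j_\alpha\neq j_{\alpha+1}$ and $b_\alpha\neq b_{\alpha+1}$ are what prevent the dangerous $\sum_j A_{bj}^2\approx\d^{-1}$ self-loops from being summed without a matching normalization. The plan is to encode each term as a walk on the bipartite graph and to bound the number of Wick pairings by the number of distinct edges via a tree-counting (Euler characteristic) argument.
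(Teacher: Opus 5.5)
Your proposal is correct and follows essentially the paper's route. Your $x_i^{[\lambda]}$ (drop the constraint $b_1\neq a$) and $z\ai^{[\lambda]}$ (minus the slice $b_1=a$) are exactly the closed form of the paper's inclusion--exclusion split according to whether the equality $b_1=b_0=a$ is enforced, and the tails then come from the same chain as in Lemmas \ref{lem:Lp_sharp}, \ref{lem:boundary-Lp-corrected} and Proposition \ref{prop:tails-xy}: reduction to Gaussian entries (Appendix \ref{App:equiv}), a Wick count dominated by the identity pairing, hypercontractivity, and Markov. Your explicit handling of repeated entries and your tree-counting argument are a cleaner justification of identity-pairing dominance than the paper's claim $V(k)\le V(0)$: in a tree, two non-backtracking walks from $i$ that cover every edge exactly twice must coincide, so every other pairing loses a factor $N$.
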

\begin{proof}
Let us consider the case $t=1$ first. From \eqref{eq:x_ia}-\eqref{eq:v_ia} and Corollary~\ref{cor:vhat} we have
\be\label{eq:casotugu1}
x\ia\oone =\Delta\oone \sum_{b\neq a}y_bA_{bi}
= \Delta\oone  \left(\sum_{b=1}^m y_bA_{bi}-y_aA_{ai}\right).
\ee
Comparing (\ref{eq:scomposizione}) and (\ref{eq:casotugu1}) and noting that $\sum_b y_b A_{bi}=O_\mathbb{P}(1)$  and $y_aA_{ai}=O_\mathbb{P}(N^{-1/2})$ gives the assertion in the case $t=1$. 

Now we pass to consider $t\geq2$. By Lemma~\ref{lem:mean_gaussian_chaos} we have
\begin{equation*} 
x\ia\ti
=\sum_{\lambda=1}^t (-1)^{\lambda+1}\Gamma_\lambda\ti
\sum_{\mathcal{B}_a^\lambda}\sum_{\mathcal{J}_i^\lambda}
y_{b_\lambda} A_{b_1j_1}\prod_{\iota=1}^{\lambda-1}A_{b_\iota j_{\iota+1}}A_{b_{\iota+1}j_{\iota+1}}.
\end{equation*} 
Fix $\lambda$ and abbreviate
\be\label{def:f}
f(b_1,\dots,b_\lambda)
=\sum_{\mathcal{J}_i^\lambda}
y_{b_\lambda} A_{b_1j_1}\prod_{\iota=1}^{\lambda-1}A_{b_\iota j_{\iota+1}}A_{b_{\iota+1}j_{\iota+1}}\,,
\ee
so that
\be\label{eq:XintemrsofF}
x\ia\ti
=\sum_{\lambda=1}^t (-1)^{\lambda+1}\Gamma_\lambda\ti
\sum_{\mathcal{B}_a^\lambda} f(b_1,\dots,b_\lambda)\,.
\ee
 Set $b_0=a$ and $B^\lambda =\{ \{ b_1, \dots, b_\lambda\} \in [m]^\lambda\}.$ We can rewrite the set $\Bal$ as 
    \begin{equation*}
        \Bal=B^\lambda \setminus \bigcup \limits_{\zeta=1}^\lambda  \bigcup_{\substack{B \subseteq B^\lambda \\ \lvert B\lvert=\zeta}} \left\{b_\alpha \in B: b_\alpha=b_{\alpha-1} \right\}.
    \end{equation*}
splitting the inclusion-exclusion terms according to whether $a$ is
present in the set of enforced equalities yields the decomposition 
\bea
x_i^{[\lambda]}
& \coloneqq&  \sum_{B^\lambda} f(b_1,\dots,b_\lambda)
 +  \sum_{\zeta=1}^\lambda (-1)^{\zeta}
\sum_{\substack{B\subseteq B^\lambda\setminus\{a\}\\ |B|=\zeta}}
\sum_{B^\lambda\setminus B}
f(b_1,\dots,b_\lambda)\prod_{b_\alpha\in B}\mathbbm{1}_{\{b_\alpha=b_{\alpha-1}\}},\label{eq:defxquadrat}\\
z\ai ^{[\lambda]}
& \coloneqq  &\sum_{\zeta=1}^\lambda (-1)^{\zeta}
\sum_{\substack{B\subseteq B^\lambda\\ |B|=\zeta,\ a\in B}}
\sum_{B^\lambda\setminus B}
f(b_1,\dots,b_\lambda)\prod_{b_\alpha\in B}\mathbbm{1}_{\{b_\alpha=b_{\alpha-1}\}}.\label{eq:defzquadrat}
\eea

The proof is then completed by the subsequent Proposition \ref{prop:tails-xy}. 
\end{proof}

\begin{proposition}\label{prop:tails-xy}
Fix $t\in\NN$. There exist positive constants $K_1=K_1 \bigl(t,\delta,\|y\|_2,\Gamma\ti_1\bigr)$ and
$K_2=K_2 \bigl(t,\delta,\|y\|_2,\Gamma\ti_1\bigr)$, independent of $(i,a)$ and $N$, such that for all $\alpha\ge0$,
 \be\label{eq:tails-x}
\mathbb{P} \left(\bigl|x\ia\ti\bigr|\ge \alpha\right)
 \le  K_1 \exp \Bigl(- K_2 \alpha^{1/t}\Bigr).
 \ee
 and
 \be\label{eq:tails-z}
\mathbb{P} \left(\bigl|z\ia\ti\bigr|\ge \alpha\right)
 \le  K'_1 \exp \Bigl(- K'_2 (\sqrt N\alpha)^{1/t}\Bigr).
 \ee
 \end{proposition}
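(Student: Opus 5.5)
The plan is to control each term of the decomposition \eqref{eq:XintemrsofF} separately. For fixed $\lambda\le t$, the quantity $\sum_{\mathcal{B}_a^\lambda} f(b_1,\dots,b_\lambda)$, and likewise each piece of $x_i^{[\lambda]}$ and $z\ai^{[\lambda]}$ in \eqref{eq:defxquadrat}--\eqref{eq:defzquadrat}, is a polynomial in the entries of $A$ of degree $2\lambda-1$. The natural tool is therefore hypercontractivity \eqref{eq:hiper-intro} (equivalently, Wick/Gaussian-chaos moment bounds): show $\|\cdot\|_{L^2}\le C$ for the $x$-pieces and $\|\cdot\|_{L^2}\le CN^{-1/2}$ for the $z$-pieces, then obtain the stretched-exponential tails by Markov's inequality applied to the $p$-th moment.

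\textbf{Step 1: reduce to multilinear forms.} Expand each constrained index sum using the indicator constraints $b_\alpha=b_{\alpha-1}$ and $j_\alpha\ne j_{\alpha+1}$. The result is a finite sum, with the number of terms depending only on $t$, of expressions in which some entries of $A$ appear squared. Where an entry appears to the power two, write $A_{bj}^2=(A_{bj}^2-\tfrac1m)+\tfrac1m$, so that every term becomes a multilinear form in centred variables. Appendix~\ref{App:equiv} guarantees that each of these forms satisfies \eqref{eq:hiper-intro} with $\ell\le 2\lambda-1\le 2t$.

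\textbf{Step 2: bound the $L^2$ norms by counting.} This is the main obstacle. One must compute
\[
\mathbb E\Bigl[\bigl(\textstyle\sum f\bigr)^2\Bigr]=\sum_{\text{two index paths}}y_{b_\lambda}y_{b'_\lambda}\,\mathbb E\bigl[\textstyle\prod A\prod A\bigr],
\]
pairing the entries of $A$ as in the Wick theorem. Appendix~\ref{App:equiv} reduces this to the case of Gaussian entries. Each nonzero pairing identifies index pairs, and every identified pair carries a weight $1/m$. The task is to show that the number of free indices left after pairing is at most the number of $1/m$ factors, using $\|y\|_2^2\le m$ for the $y$-weights. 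Organise the pairings as gluing two alternating paths $i=j_1,b_1,j_2,\dots,b_\lambda$: the leading pairings glue the two paths edge by edge and give $O(1)$. For the $z$-pieces the extra constraint $b_\alpha=a$ pins one summed index to a fixed value. This removes a free sum of size $\sim m$ without removing a $1/m$ factor, but the corresponding entry $A_{a\cdot}$ still contributes $1/m$, which produces the gain $N^{-1}$ in the variance and hence $N^{-1/2}$ in $L^2$. I would organise the counting graph-theoretically: vertices are indices, edges are factors of $A$, the pairing must make every edge at least doubled, and the standard tree-versus-cycle argument from moment-method proofs for random matrices bounds the surviving contributions. The non-backtracking constraints in $\mathcal{J}_i^\lambda$ and $\mathcal{B}_a^\lambda$ are exactly what rule out the dangerous self-loops.

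\textbf{Step 3: tails.} By \eqref{eq:hiper-intro}, $\|x\ia\ti\|_{L^p}\le \sum_\lambda \Gamma_\lambda\ti C_t p^{\lambda-1/2}\le K p^{t}$ (for $t\ge1$ and $p\ge1$), using $\Gamma_\lambda\ti\le \Gamma_1\ti$, which holds since $\Delta\le1$. Markov's inequality then gives
\[
\mathbb P(|x|\ge\alpha)\le (Kp^t/\alpha)^p .
\]
Choosing $p=(\alpha/(eK))^{1/t}$ gives $\exp(-K_2\alpha^{1/t})$, which is \eqref{eq:tails-x}. For $\alpha$ small the bound is trivial after adjusting $K_1$. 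For \eqref{eq:tails-z} the same computation applies with $L^2$ norm $CN^{-1/2}$, so $\alpha$ is replaced by $\sqrt N\alpha$. Since the constants are uniform in $(i,a)$, a union bound over the $Nm$ edges at level $\alpha=N^{\varepsilon}$ (respectively $N^{-1/2+\varepsilon}$) then yields the maxima claimed in Proposition~\ref{prop:AMP_dec}.
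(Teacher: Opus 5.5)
Your proposal is correct and follows essentially the paper's route: bound the $L^2$ norms of the chaos pieces by a Wick pairing count after reducing to Gaussian entries via Appendix~\ref{App:equiv}. The identity pairing dominates, giving $O(1)$ for the $x$-pieces and $O(N^{-1/2})$ for the $z$-pieces, where $b_1=a$ is pinned. You then pass to $\|\cdot\|_{L^p}\lesssim p^t$ by hypercontractivity and apply Markov with $p\asymp\alpha^{1/t}$ (resp.\ $(\sqrt N\alpha)^{1/t}$), exactly as the paper does through Lemmas~\ref{lem:Lp_sharp} and~\ref{lem:boundary-Lp-corrected}.

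One small correction concerns Step~1. Appendix~\ref{App:equiv} only compares $L^2$ norms with the Gaussian case and does not supply hypercontractivity. The $p^{\ell/2}$ moment growth comes from the assumption \eqref{eq:hiper-intro}, which is stated only for multilinear forms in the entries of $A$. So after your centring $A_{bj}^2=(A_{bj}^2-\frac1m)+\frac1m$, you need an extension of \eqref{eq:hiper-intro} to forms in these centred powers. That extension is standard for sub-Gaussian entries but not automatic.
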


\begin{proof}
We prove (\ref{eq:tails-x}). 
By Lemma~\ref{lem:Lp_sharp} below, there exists a constant
$D_t=D_t\bigl(t,\delta,\|y\|_2,\Gamma\ti_1\bigr) > 0$ such that for all $p\ge 2$
 \begin{equation*}
\|x\ia\ti\|_{L^p}
 \le  D_t p^t,
\qquad
D_t = C \Gamma\ti_{1} 
\frac{\lVert y\rVert_2}{\sqrt N} 
\frac{1}{\delta^{ t/2}} 
t (4t)^{ t+1}.
 \end{equation*}
By Markov's inequality,
 \begin{equation*}
\mathbb{P} \left(\left\lvert x\ia\ti\right\lvert \ge \alpha\right)
 \le \left(\frac{D_t p^t}{\alpha}\right)^{ p}\,.
 \end{equation*}
Choose
 $
p  =  \max \left\{2,\ \left\lfloor \bigl(\alpha/(eD_t)\bigr)^{1/t}\right\rfloor\right\}.
 $
If $\alpha \ge eD_t 2^{ t}$, then $D_t p^t/\alpha\le e^{-1}$ and thus
 \begin{equation*}
\mathbb{P} \left(\left\lvert x\ia\ti\right\lvert \ge \alpha\right)
 \le  e^{-p}
 \le  \exp \left( - K_2 \alpha^{1/t}\right),
 \end{equation*}
for some $K_2=K_2\bigl(t,\delta,\|y\|_2,\Gamma\ti_1\bigr)>0$. If instead $\alpha < eD_t 2^{ t}$, the bound is trivial ($\le 1$) and can be absorbed into a front constant $K_1=K_1\bigl(t,\delta,\|y\|_2,\Gamma\ti_1\bigr)>0$.
This yields the stated inequality, uniformly over $(i,a)$.

The proof of (\ref{eq:tails-z}) is identical, using Lemma \ref{lem:boundary-Lp-corrected} instead of Lemma \ref{lem:Lp_sharp}. 
\end{proof}

Next we establish uniform $L^p $ bounds for  $x\ia\ti $ and $z\ia\ti $ (see Lemma~\ref{lem:Lp_sharp} and Lemma~\ref{lem:boundary-Lp-corrected}).
The strategy is to control the $L^p $ norm of each of the terms appearing in the expansions (\ref{eq:defxquadrat}) and (\ref{eq:defzquadrat}). Each singular term writes as a finite sum of multilinear polynomials in the entries of the matrix $A$.
Two standard tools will be used:

\ 

\begin{itemize}
\item {\em Minkowski's inequality (triangle inequality in $L^p$).}
\item \emph{ Hypercontractivity} (for Gaussian r.vs this was proved by Nelson, see Theorem~5.10 in~\cite{janson}).
If $\Psi_\ell$ is an $\ell$-linear form (or chaos of order $\ell$), then for $p\ge2$,
 \be\label{eq:nelson}
\|\Psi_\ell\|_{L^p} \le C_\ell\|\Psi_\ell\|_{L^2} p^{\ell/2}.
 \ee
\end{itemize}

Moreover we will need to estimate the $L^2$ norms on the r.h.s. by the ones w.r.t. the standard Gaussian measure. 
This is done by a direct application of Proposition \ref{prop:equiv} in Appendix \ref{App:equiv}. 

Finally, to estimate the $L^2$ norm of the Gaussian chaoses we will use the following version of the Wick theorem (see for instance \cite[Theorem 1.28]{janson} and surrounding discussion). Let $I$ be a index-set. A \emph{pairing} is a partition of $I$ into pairs $\{i,j\}$. We denote by $\mathcal{P}_I$ the set of all possible pairings of $I$.

\begin{theorem}\label{thm:Wick4}
    Let $l\in\N$ and $(g_i)_{i\in[4l]}$ be Gaussian centred random variables. Let $(m_i, n_i, p_i, q_i)_{i\in[l]}$ be a family of indices and denote $\mathcal{A}_l=(\{m_i,n_i, p_i,q_i\})_{i\in[l]}$. Then it holds
    \begin{equation}\label{eq:Wick4}
        \EE \left[ \prod_{i=1}^l g_{m_i}g_{n_i}g_{p_i}g_{q_i}\right]=\sum_{P \in \mathcal{P}_{\mathcal{A}_l}}\prod_{\{\alpha,\beta \}\in P}\EE[g_\alpha g_\beta]\,,
    \end{equation}
    where we denote by $\mathcal{P}_{\mathcal{A}_l}$ the set of all pairings of $\mathcal{A}_l$.
    In particular, if $\var(g_i)=1$ for all $i$, we have 
    \begin{equation*}
        \EE \left[ \prod_{i=1}^l g_{m_i}g_{n_i}g_{p_i}g_{q_i}\right]=\sum_{P \in \mathcal{P}_{\mathcal{A}_l}}\prod_{\{\alpha,\beta \}\in P}\delta_{\alpha\beta}\,.
    \end{equation*}
\end{theorem}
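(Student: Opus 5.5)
The plan is to prove a slightly more general statement, from which \eqref{eq:Wick4} follows. Let $(g_i)$ be a centred, \emph{jointly} Gaussian family; joint Gaussianity is implicit in the statement and is what we need. Then for any finite list of positions $I=(\alpha_1,\dots,\alpha_{n})$, with repetitions of the underlying indices allowed, we claim
\[
\EE\Big[\prod_{k=1}^{n} g_{\alpha_k}\Big]=\sum_{P\in\mathcal P_I}\prod_{\{\alpha,\beta\}\in P}\EE[g_\alpha g_\beta],
\]
where the right-hand side is zero if $n$ is odd. Taking $I=\mathcal A_l$, read as the list of the $4l$ positions $m_1,n_1,p_1,q_1,\dots,m_l,n_l,p_l,q_l$, gives \eqref{eq:Wick4}. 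The important point is that pairings are taken over \emph{positions} in the list, not over distinct index values. This is what makes repeated indices, such as $g_{m_i}=g_{n_i}$, come out with the correct combinatorial multiplicity.

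The first step is Gaussian integration by parts. Write $C_{\alpha\beta}=\EE[g_\alpha g_\beta]$ and represent $g=C^{1/2}z$ with $z$ a standard Gaussian vector. For any polynomial $F$, one-dimensional integration by parts in each coordinate $z_k$ (using $z_k\phi(z_k)=-\phi'(z_k)$, with boundary terms vanishing thanks to the Gaussian decay of a polynomial times $\phi$) gives
\[
\EE[g_\alpha F(g)]=\sum_\beta C_{\alpha\beta}\,\EE[\partial_\beta F(g)].
\]
All moments are finite, so there is no integrability issue.

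Next I induct on $n$. The cases $n=0,1$ are trivial, and $n=2$ is the definition of $C$. For general $n$, apply the identity with $g_\alpha=g_{\alpha_1}$ and $F=\prod_{k\ge2}g_{\alpha_k}$. The product rule turns $\partial F$ into a sum over positions $k\ge2$: the term from position $k$ deletes the factor $g_{\alpha_k}$ and carries the coefficient $C_{\alpha_1\alpha_k}$. This yields the recursion
\[
\EE\Big[\prod_{k=1}^n g_{\alpha_k}\Big]=\sum_{k=2}^n C_{\alpha_1\alpha_k}\,\EE\Big[\prod_{j\ne1,k}g_{\alpha_j}\Big].
\]
The same recursion characterises the pairing sum: choose the partner of the first position, then pair the remaining $n-2$ positions. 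The induction hypothesis therefore closes the argument, and odd $n$ reduces to $n=1$, giving $0$.

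An alternative route, which I would keep as a cross-check, is to differentiate the moment generating function $\EE e^{\langle s,g\rangle}=e^{\frac12 s^TCs}$ once in each position's variable and set $s=0$. Only the degree-$n/2$ term $\frac{1}{(n/2)!}(\frac12 s^TCs)^{n/2}$ survives. Its derivatives enumerate ordered choices of pairs, and the factors $(n/2)!$ and $2^{n/2}$ cancel exactly against the overcounting, leaving unordered pairings.

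For the second assertion, I interpret ``$\var(g_i)=1$'' as the standard situation in which the $g_i$ are i.i.d.\ $N(0,1)$, so $C_{\alpha\beta}=\delta_{\alpha\beta}$; the formula then follows by substitution. I must flag that independence is genuinely required here: unit variance alone does not force $C_{\alpha\beta}=\delta_{\alpha\beta}$. The only delicate points I expect are this interpretive one and keeping the position-versus-index bookkeeping consistent. The integration-by-parts induction itself is routine.
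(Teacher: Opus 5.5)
The paper gives no proof of this statement; it quotes it as a standard fact and points to \cite[Theorem 1.28]{janson}. Your argument is a correct, self-contained proof. You use Gaussian integration by parts, $\EE[g_\alpha F(g)]=\sum_\beta \EE[g_\alpha g_\beta]\,\EE[\partial_\beta F(g)]$, and then induct on the number of factors via the recursion obtained by choosing the partner of the first position. The generating-function computation you keep as a cross-check is an equally valid second route.

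Beyond supplying a proof, your formulation fixes three points that the statement as printed leaves loose.
\begin{enumerate}
\item You make explicit that the family must be jointly Gaussian.
\item You pair positions rather than index values. Read literally, $\mathcal{A}_l$ is a collection of sets $\{m_i,n_i,p_i,q_i\}$, which would collapse repeated indices. The paper's actual use of the theorem in Lemma \ref{lem:Lp_sharp} is your position-based reading: pairings of the nodes $m_\iota,n_\iota,p_\iota,q_\iota$, with Kronecker deltas enforcing equality of index values.
\item You prove the formula for an arbitrary even number of factors, which is what the paper actually needs. In Lemma \ref{lem:Lp_sharp} the product has $\Lambda=4\lambda-2$ factors, because the block $A_0$ has only two. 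This is never of the form $4l$, so the theorem as stated does not literally cover that application, while your version does.
\end{enumerate}

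Your remark on the second display is also correct. It requires $\EE[g_\alpha g_\beta]=\delta_{\alpha\beta}$, i.e.\ uncorrelated (for jointly Gaussian variables, independent) standard Gaussians, not merely unit variances. In the paper's applications the $g$'s are the i.i.d.\ standard Gaussians $\bar A_{bj}$, so this hypothesis is satisfied there.
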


\begin{Lem}\label{lem:Lp_sharp}
Same assumptions of Proposition \ref{prop:AMP_dec}. For every $t\in\NN$ and $p\geq2$,
\begin{equation}\label{eq:Lp_sharp_bound}
\bigl\lVert x\ia\ti\bigr\rVert_{L^p}
 \le 
C \Gamma\ti_{1} 
\frac{\lVert y\rVert_2}{\sqrt N} 
\frac{p^{ t}}{\delta^{ t/2}} 
t (4t)^{ t+1},
\end{equation}
where $C>0$ is an absolute constant and $\Gamma\ti_{1}$ is defined in \eqref{eq:defGamma}.
\end{Lem}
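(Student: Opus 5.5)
Starting from the chaos expansion of Lemma~\ref{lem:mean_gaussian_chaos}, the plan is to bound each $\lambda$-term
$$T_\lambda:=\sum_{\mathcal{B}_a^\lambda}f(b_1,\dots,b_\lambda)$$
separately, where $f$ is as in \eqref{def:f}. By Minkowski's inequality in $L^p$,
$$\|x\ia\ti\|_{L^p}\le\sum_{\lambda=1}^t\Gamma_\lambda\ti\|T_\lambda\|_{L^p}.$$
Since $\Delta^{(\tau)}\le1$, we have $\Gamma_\lambda\ti\le\Gamma_1\ti$ for every $\lambda\ge1$. It therefore suffices to show
$$\|T_\lambda\|_{L^p}\le C\,\frac{\|y\|_2}{\sqrt N}\,\frac{p^{\lambda}}{\delta^{\lambda/2}}\,(4\lambda)^{\lambda+1}.$$
Summing over $\lambda\le t$ then gives the factor $t(4t)^{t+1}p^t\delta^{-t/2}$.

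\textbf{Reduction to $L^2$.} Each summand of $T_\lambda$ is a product of $2\lambda-1$ entries of $A$, weighted by the deterministic coefficient $y_{b_\lambda}$. Indices can coincide: for example $b_\iota=b_{\iota+2}$ with $j_{\iota+1}=j_{\iota+2}$ gives squared entries. So $T_\lambda$ is not a single multilinear form. I would first expand it via the inclusion--exclusion/diagonal decomposition already used in \eqref{eq:defxquadrat}, grouping terms according to the pattern of coincidences among the indices. This writes $T_\lambda$ as a sum of at most $(4\lambda)^{\lambda}$ (a crude count of set partitions of the $2\lambda$ index slots) pieces. After centring the repeated factors, each piece is a multilinear form of order $\ell\le 2\lambda-1$ in the entries of $A$, times bounded weights such as $A_{bj}^2-\frac1m$ or $\frac1m$. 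Hypercontractivity \eqref{eq:hiper-intro}/\eqref{eq:nelson} then gives
$$\|\Psi_\ell\|_{L^p}\le C_\ell\,p^{\ell/2}\,\|\Psi_\ell\|_{L^2}\le C\,p^{\lambda}\,\|\Psi_\ell\|_{L^2}.$$
Proposition~\ref{prop:equiv} lets us replace each $L^2$ norm by the corresponding one for Gaussian entries with variance $1/m$.

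\textbf{Gaussian $L^2$ computation.} For Gaussian entries I would compute $\EE[\Psi_\ell^2]$ by Theorem~\ref{thm:Wick4}. The second moment is a sum over pairings of the $2(2\lambda-1)$ entries in the two copies. Each pairing forces identifications of indices and costs a factor $m^{-1}$ per pair. Counting free summation indices, the dominant pairings match the two copies ``in parallel.'' They leave $\lambda$ free $b$-indices and $\lambda-1$ free $j$-indices, against $2\lambda-1$ factors of $m^{-1}$. This contributes roughly
$$\sum_b y_b^2\,\frac{N^{\lambda-1}}{m^{\lambda-1}}\cdot\frac{m^{\lambda-1}}{m^{\lambda}}\lesssim \frac{\|y\|_2^2}{N}\,\delta^{-\lambda}.$$
All other pairings lose at least one free index and are smaller by a factor $N^{-1}$. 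Their number, at most $(2\lambda)^{2\lambda}$-ish, is absorbed into $(4\lambda)^{\lambda+1}$ after taking square roots.

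\textbf{Main obstacle.} The main obstacle is this combinatorial step. We must verify that no pairing, including those created by the diagonal terms from repeated indices and the constraints $b_\alpha\ne b_{\alpha+1}$, $j_\alpha\ne j_{\alpha+1}$, produces more free indices than the parallel one, uniformly in $\lambda$. The count also has to stay within $(4\lambda)^{\lambda+1}$. I would handle it by encoding each pairing as a graph on the path $b_1j_2b_2\cdots$ and bounding the number of connected components by the number of edges divided by two. The case of the first entry $A_{b_1 i}$, with $i$ fixed, is what produces the overall $N^{-1/2}$ together with $\|y\|_2$.
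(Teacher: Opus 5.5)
Your proposal follows essentially the same route as the paper's proof. You use Minkowski over $\lambda$ with $\Gamma_\lambda\ti\le\Gamma_1\ti$, then hypercontractivity to pass from $L^p$ to $L^2$, then Proposition~\ref{prop:equiv} to reduce to Gaussian entries. Finally, a Wick expansion shows that the parallel (identity) pairing gives the leading term $\delta^{-\lambda}\|y\|_2^2/N$, while all other pairings are dominated by it and merely counted into $(4\lambda)^{\lambda+1}$.

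The one addition is your preliminary diagonal decomposition. It concerns a point the paper does not address explicitly: the paper applies \eqref{eq:nelson} to $\Psi\ia^{(2\lambda-1)}$ as a chaos of order $2\lambda-1$, although repeated entries do occur for $\lambda\ge3$. Your example does not produce one, since $j_{\iota+1}=j_{\iota+2}$ is excluded by $\mathcal{J}_i^\lambda$; but e.g.\ $b_3=b_1$, $j_3=i$ gives $A_{b_1i}^2$.

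The decomposition does not buy much rigour, though. If \eqref{eq:hiper-intro} is read as covering homogeneous polynomials of degree $\ell$ (as Nelson's inequality does for Gaussian entries), it applies to $\Psi\ia^{(2\lambda-1)}$ directly, and Wick's formula already handles the repeated factors. If it only covers forms in distinct entries, it does not cover your pieces either, since they contain the random (not bounded) factors $A_{bj}^2-\frac1m$.

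There is also a bookkeeping issue. The number of pieces multiplies the pairing count: even granting your $(4\lambda)^\lambda$, times the $(2\lambda)^\lambda$ from pairings, this exceeds $(4\lambda)^{\lambda+1}$. So as written you do not land on the stated constant unless you also use that the coincidence pieces are lower order in $N$. This only affects the $t$-dependent constant, which is all Proposition~\ref{prop:tails-xy} needs.
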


\begin{proof} 
By Lemma~\ref{lem:mean_gaussian_chaos}, 
 \begin{equation*} 
x\ia\ti =\sum_{\lambda=1}^{t}(-1)^{\lambda+1}\Gamma\ti_{\lambda} \Psi\ia^{(2\lambda-1)},  \end{equation*} with  \begin{equation*} \Psi\ia^{(2\lambda-1)} \coloneqq \sum_{\mathcal{B}_a^\lambda}\sum_{\mathcal{J}_i^\lambda} y_{b_\lambda} A_{b_1 j_1}\prod_{\iota=1}^{\lambda-1}A_{b_\iota j_{\iota+1}}A_{b_{\iota+1}j_{\iota+1}}, 
 \end{equation*} 
with index sets $\mathcal{B}_a^\lambda$, $\mathcal{J}_i^\lambda$ as in \eqref{eq:defBa}-\eqref{eq:defJi}. 
Minkowski's and Nelson's inequality yield \begin{equation}\label{eq:basicNelson} 
    \bigl\lVert x\ia\ti\bigr\rVert_{L^p} \le \sum_{\lambda=1}^{t}\Gamma\ti_{\lambda} \bigl\lVert \Psi\ia^{(2\lambda-1)}\bigr\rVert_{L^p} \le \sum_{\lambda=1}^{t}\Gamma\ti_{\lambda} (p-1)^{\frac{2\lambda-1}{2}} \bigl\lVert \Psi\ia^{(2\lambda-1)}\bigr\rVert_{L^2}. 
\end{equation} Hence it suffices to bound $\lVert \Psi\ia^{(2\lambda-1)}\rVert_{L^2}$. In virtue of Proposition \ref{prop:equiv} of Appendix \ref{App:equiv} we can reduce to do the estimate assuming the entries of the matrix $A$ to be independent standard Gaussian r.vs. \smallskip\\
\emph{Wick expansion with explicit index constraints.} Write $A_{bj}=\bar A_{bj}/\sqrt m$ with i.i.d.\ standard Gaussians $\bar A_{bj}$. Let $\widetilde{\mathcal{B}}_a^\lambda$, $\widetilde{\mathcal{J}}_i^\lambda$ be independent copies of $\mathcal{B}_a^\lambda$, $\mathcal{J}_i^\lambda$, and write 
 \begin{equation*}
\|\Psi\ia^{(2\lambda-1)}\|_{L^2}^2 =\EE\Big[\Psi\ia^{(2\lambda-1)} \widetilde{\Psi}\ia^{(2\lambda-1)}\Big], 
 \end{equation*} 
where $\widetilde{\Psi}$ is an independent copy of $\Psi$. Expanding, 
\begin{align} 
    \|\Psi\ia^{(2\lambda-1)}\|_{L^2}^2 &=\sum_{\mathcal{B}_a^\lambda}\sum_{\widetilde{\mathcal{B}}_a^\lambda} \sum_{\mathcal{J}_i^\lambda}\sum_{\widetilde{\mathcal{J}}_i^\lambda} y_{b_\lambda}y_{\widetilde b_\lambda}  \EE \left[ A_{b_1 j_1}A_{\widetilde b_1\widetilde j_1} \prod_{\iota=1}^{\lambda-1} A_{b_\iota j_{\iota+1}}A_{b_{\iota+1} j_{\iota+1}} A_{\widetilde b_\iota \widetilde j_{\iota+1}}A_{\widetilde b_{\iota+1} \widetilde j_{\iota+1}} \right]\nonumber\\ 
    &=m^{-(2\lambda-1)}    \sum_{\mathcal{B},\widetilde{\mathcal{B}},\mathcal{J},\widetilde{\mathcal{J}}} y_{b_\lambda}y_{\widetilde b_\lambda}  \EE \left[ \bar A_{b_1 j_1}\bar A_{\widetilde b_1\widetilde j_1} \prod_{\iota=1}^{\lambda-1} \bar A_{b_\iota j_{\iota+1}}\bar A_{b_{\iota+1} j_{\iota+1}} \bar A_{\widetilde b_\iota \widetilde j_{\iota+1}} \bar A_{\widetilde b_{\iota+1} \widetilde j_{\iota+1}} \right].\label{eq:L2square_pre} 
\end{align} 
We introduce the node list $\mathcal{A}_\lambda=(A_\ell)_{\ell=0}^{\lambda-1}$ with 
\begin{align*}
    A_0&=\{n_0,q_0\}=\{(b_1,j_1),(\widetilde b_1,\widetilde j_1)\}, \\
    A_\iota&=\{m_\iota,n_\iota,p_\iota,q_\iota\} =\{(b_\iota,j_{\iota+1}),(b_{\iota+1},j_{\iota+1}), (\widetilde b_\iota,\widetilde j_{\iota+1}),(\widetilde b_{\iota+1},\widetilde j_{\iota+1})\}. 
\end{align*} 
Let $\Lambda=4\lambda-2$ be the total number of Gaussian factors. By Theorem \ref{thm:Wick4} the expectation in \eqref{eq:L2square_pre} is a sum over pairings $P\in\mathcal{P}_{\mathcal{A}_\lambda}$, each contributing a product of Kronecker deltas enforcing equality of the paired indices. Thus 
\begin{equation}\label{eq:L2square_pairings} 
\|\Psi\ia^{(2\lambda-1)}\|_{L^2}^2 =m^{-(2\lambda-1)}  \sum_{P\in\mathcal{P}_{\mathcal{A}_\lambda}} \sum_{\mathcal{B},\widetilde{\mathcal{B}},\mathcal{J},\widetilde{\mathcal{J}}} y_{b_\lambda}y_{\widetilde b_\lambda}  \prod_{\{\alpha,\beta\}\in P} \delta_{\alpha\beta}. 
\end{equation} 
\emph{Index constraints.} The admissible index sets enforce the following conditions  
\begin{equation*} 
j_\alpha\neq j_{\alpha+1}\quad(\alpha\in[\lambda-1]), \qquad b_1\neq a,  b_\alpha\neq b_{\alpha+1}\quad(\alpha\in[\lambda-1]).  
\end{equation*} 
Consequently, the following forbidden identifications must not be imposed by $P$, otherwise the corresponding term vanishes: 
\begin{align} 
    m_\iota &\neq n_\iota, & p_\iota &\neq q_\iota, \label{eq:V1}\\
    n_\iota &\neq m_{\iota+1}, & q_\iota &\neq p_{\iota+1}, \label{eq:V2}\\ 
    m_\iota &\neq m_{\iota+1}, & p_\iota &\neq p_{\iota+1}, \label{eq:V3}\\ 
    n_\iota &\neq n_{\iota+1}, & q_\iota &\neq q_{\iota+1}, \label{eq:V4}\\ 
    m_\iota &\neq n_{\iota+1}, & p_\iota &\neq q_{\iota+1}, \label{eq:V5}\\ 
    n_\iota &\neq m_{\iota+2}, & q_\iota &\neq p_{\iota+2}. \label{eq:V6} 
\end{align} 
These constraints 
will eliminate many pairings. \smallskip\\ 
\emph{The identity pairing.} Consider the identity pairing $\mathsf{I}_\Lambda$ that pairs across the two copies at matching positions: 
 \begin{equation*} 
n_0\leftrightarrow q_0,\qquad m_\iota\leftrightarrow p_\iota,\quad n_\iota\leftrightarrow q_\iota\qquad(\iota=1,\dots,\lambda-1). 
 \end{equation*} 
This pairing obeys \eqref{eq:V1}-\eqref{eq:V6} and enforces  $ \mathcal{J}_i^\lambda=\widetilde{\mathcal{J}}_i^\lambda  $,  $ \mathcal{B}_a^\lambda=\widetilde{\mathcal{B}}_a^\lambda  $, $b_\lambda=\widetilde b_\lambda$, whence 
 \begin{equation*} \sum_{\mathcal{B},\widetilde{\mathcal{B}},\mathcal{J},\widetilde{\mathcal{J}}} y_{b_\lambda}y_{\widetilde b_\lambda} \prod_{\{\alpha,\beta\}\in \mathsf{I}_\Lambda}\delta_{\alpha\beta} = \sum_{\mathcal{B}_a^\lambda}\sum_{\mathcal{J}_i^\lambda} y_{b_\lambda}^2 =(\#\mathcal{B}_a^\lambda)(\#\mathcal{J}_i^\lambda) \|y\|_2^2. 
 \end{equation*} 
Counting the choices gives $(\#\mathcal{B}_a^\lambda)\asymp m^{\lambda-1}$ and $(\#\mathcal{J}_i^\lambda)\asymp N^{\lambda-1}$. Therefore the identity contribution equals 
\begin{equation}\label{eq:identity_value} 
    m^{-(2\lambda-1)}\cdot m^{\lambda-1}N^{\lambda-1}\cdot \|y\|_2^2 =\frac{1}{m^{\lambda}N^{1-\lambda}}\|y\|_2^2 =\frac{1}{\delta^\lambda}\cdot\frac{\|y\|_2^2}{N},
\end{equation} 
since $m=\delta N$. \smallskip\\ 
\emph{Perturbations of the identity pairing.} Any $P\in\mathcal{P}_{\mathcal{A}_\lambda}$ can be written as a perturbation of $\mathsf{I}_\Lambda$: select an even number $k$ of perturbed nodes $S\subset\{n_0,m_1,n_1,p_1,q_1,\dots,m_{\lambda-1},n_{\lambda-1},p_{\lambda-1},q_{\lambda-1}\}$, and keep the identity pairing on the complement, while on $S$ arrange any pairing $\mathsf{Q}_k$ that still obeys the constraints \eqref{eq:V1}-\eqref{eq:V6}. We denote this by  $ P=\mathsf{I}_{\Lambda-k}\cup \mathsf{Q}_k  $. 
Let 
 \begin{equation*} V(k)\coloneqq \sum_{\mathcal{B},\widetilde{\mathcal{B}},\mathcal{J},\widetilde{\mathcal{J}}} y_{b_\lambda}y_{\widetilde b_\lambda}  \prod_{\{\alpha,\beta\}\in \mathsf{I}_{\Lambda-k}\cup \mathsf{Q}_k}\delta_{\alpha\beta}. 
 \end{equation*} 
\emph{Claim:} For every even $k\in\{2,\dots,\Lambda\}$, either $V(k)=0$ (if \eqref{eq:V1}-\eqref{eq:V6} are violated) or 
\begin{equation}\label{eq:Vk_leq_V0} 
    V(k)\le V(0)=\sum_{\mathcal{B}_a^\lambda,\mathcal{J}_i^\lambda} y_{b_\lambda}^2\leq \|y\|_2^2. 
\end{equation} 
\emph{Proof of Claim.} Each cross-copy identification imposed by $\mathsf{Q}_k$ glues indices across the two copies in addition to the identity constraints, thus reduces the number of free summation indices. The worst case (largest value) arises when no additional equalities are imposed beyond $\mathsf{I}_\Lambda$, namely $k=0$. \smallskip \\
\emph{Counting admissible perturbations.} It remains to bound the number of admissible $\mathsf{Q}_k$ for a fixed (even) $k$. Easy counting gives
 \begin{equation*}
\#\{\text{admissible }\mathsf{Q}_k\}  \le  \prod_{\substack{\kappa=3\\ \kappa\ \text{odd}}}^{k+1}(\Lambda-\kappa)  \le  (4\lambda)^{ k/2+1}, 
 \end{equation*} 
where we used $\Lambda=4\lambda-2$. 
Summing over even $k$ gives 
 \be\label{eq:pertcount} 
\sum_{\substack{k=0\\ k\text{ even}}}^{\Lambda}\#\{\mathsf{Q}_k\}  \le  1+\sum_{\ell=1}^{\Lambda/2}(4\lambda)^{ \ell}  \le \sum_{\ell=0}^{2\lambda+1}(4\lambda)^{ \ell}. 
 \ee
\smallskip 
\emph{Collecting the $L^2$ bound.} By \eqref{eq:L2square_pairings}, \eqref{eq:Vk_leq_V0} and \eqref{eq:pertcount}, 
 \begin{equation*} 
\|\Psi\ia^{(2\lambda-1)}\|_{L^2}^2  \le  m^{-(2\lambda-1)} V(0)  \sum_{\substack{k=0\\ k\text{ even}}}^{\Lambda}\#\{\mathsf{Q}_k\}  \le  \frac{1}{\delta^\lambda}\frac{\|y\|_2^2}{N} \sum_{\ell=0}^{2\lambda+1}(4\lambda)^{ \ell}. 
 \end{equation*} 
Taking square roots, 
\begin{equation}\label{eq:L2Psi_final} 
    \bigl\lVert \Psi\ia^{(2\lambda-1)}\bigr\rVert_{L^2}  \le  \frac{\|y\|_2}{\sqrt N} \frac{1}{\delta^{\lambda/2}} (4\lambda)^{ \lambda+1}. 
\end{equation} \smallskip 
\emph{From $L^2$ to $L^p$.} 
Insert \eqref{eq:L2Psi_final} into \eqref{eq:basicNelson} and bound $p^{(2\lambda-1)/2}\le p^\lambda$, $(4\lambda)^{\lambda+1}\le (4t)^{t+1}$, $\Gamma\ti_\lambda\le \Gamma\ti_1$, and $\sum_{\lambda=1}^t \delta^{-\lambda/2}\le t \delta^{-t/2}$. This yields \eqref{eq:Lp_sharp_bound}. \end{proof}

\begin{Lem}\label{lem:boundary-Lp-corrected}
Fix $\lambda\in\mathbb{N}$. Let $(a,i)\in[m]\times[N]$ and (recall \eqref{eq:defzquadrat})
\begin{equation*}
z\ai^{[\lambda]}
\;=\;
\sum_{\zeta=1}^{\lambda}(-1)^{\zeta}\Phi\ai^{(\zeta)}
,
\end{equation*}
where
\be
\Phi\ai^{(\zeta)}:=\sum_{\substack{B\subseteq B^\lambda\\ |B|=\zeta,\ a\in B}}
\sum_{B^\lambda\setminus B}
f(b_1,\dots,b_\lambda)\,
\prod_{b_\alpha\in B}\mathbbm{1}_{\{b_\alpha=b_{\alpha-1}\}}
\ee
and
\be
f(b_1,\dots,b_\lambda)
\;=\;
\sum_{\mathcal{J}_i^\lambda}
y_{b_\lambda}\, A_{b_1 j_1}\,
\prod_{\iota=1}^{\lambda-1} A_{b_\iota j_{\iota+1}}A_{b_{\iota+1}j_{\iota+1}}.
\ee
Here $b_0=a$, $B_\l:=\{\{m_1,\ldots, m_\l\}\in[m]^\l\}$ and the index set
$\mathcal{J}_i^\lambda$ is defined in \eqref{eq:defJi}. Then, for every $p\ge2$, there exists a constant $C>0$ (independent of $N,m,a,i$ and of $\beta$) such that
\begin{equation*}
\bigl\|z\ai^{[\lambda]}\bigr\|_{L^p}
\;\le\;
\,\frac{C}{\sqrt N}\,
\frac{p^{\lambda}}{\delta^{\,(\lambda-1)/2}}\,
(4\lambda)^{\lambda+1},
\qquad\text{uniformly in }(a,i)\in[m]\times[N].
\end{equation*}
\end{Lem}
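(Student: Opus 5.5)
The plan mirrors the proof of Lemma \ref{lem:Lp_sharp}: bound the $L^2$ norm of each $\Phi\ai^{(\zeta)}$ and then pass to $L^p$ by Minkowski and hypercontractivity.

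First I unpack the structure of $\Phi\ai^{(\zeta)}$. Since $b_0=a$ and $a\in B$, the constraint $\mathbbm{1}_{\{b_1=b_0\}}=\mathbbm{1}_{\{b_1=a\}}$ is always enforced. The remaining $\zeta-1$ indicators in $B$ identify further consecutive factor indices, and each such identification deletes one free summation over $[m]$. After these substitutions every term of $\Phi\ai^{(\zeta)}$ is a multilinear-type polynomial in the entries of $A$ of degree at most $2\lambda-1$. Some repeated entries $A_{bj}^2$ may appear; these can be split into the centred part $A_{bj}^2-1/m$ plus the constant $1/m$, which keeps everything inside chaoses of order $\le 2\lambda-1$.

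By Minkowski, $\|z\ai^{[\lambda]}\|_{L^p}\le\sum_\zeta\|\Phi\ai^{(\zeta)}\|_{L^p}$. There are at most $\binom{\lambda}{\zeta}$ choices of $B$, so the total combinatorial factor is at most $2^\lambda$, which is absorbed into $(4\lambda)^{\lambda+1}$. By \eqref{eq:nelson}, $\|\Phi\|_{L^p}\le C p^{\lambda}\|\Phi\|_{L^2}$, and by Proposition \ref{prop:equiv} the $L^2$ norm can be computed with Gaussian entries $A_{bj}=\bar A_{bj}/\sqrt m$.

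For the $L^2$ bound I expand $\EE[\Phi\widetilde\Phi]$ with Theorem \ref{thm:Wick4} exactly as in \eqref{eq:L2square_pairings}. Under the identity pairing the two copies are forced equal. The key difference from the computation behind \eqref{eq:identity_value} is that $b_1$ is now frozen to $a$ rather than summed over $m$ values. Each further identification from $B$ removes another factor $m$ while also removing two $A$ factors, i.e.\ a factor $1/m$, so those identifications are neutral or favourable.

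The resulting count is then the following. The free indices are $\lambda-1$ variable indices $j_2,\dots,j_\lambda$, giving $N^{\lambda-1}$, and at most $\lambda-1$ factor indices $b_2,\dots,b_\lambda$, where summing $y_{b_\lambda}^2$ contributes $\|y\|_2^2\le m$. The normalisation is $m^{-(2\lambda-1)}$. This gives
\[
m^{-(2\lambda-1)}N^{\lambda-1}m^{\lambda-2}\|y\|_2^2\lesssim \frac{N^{\lambda-1}m^{\lambda-1}}{m^{2\lambda-1}}=\frac{1}{m}\,\delta^{-(\lambda-1)}\,,
\]
so the norm is of order $N^{-1/2}\delta^{-(\lambda-1)/2}$, as claimed. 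This is the one extra factor $N^{-1}$ compared with \eqref{eq:identity_value}, coming from the lost sum over $b_1$. Here $\max_b|y_b|\le1$ is used, rather than $\|y\|_2/\sqrt N$, to get a constant independent of $y$.

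Non-identity pairings are handled as in the Claim \eqref{eq:Vk_leq_V0}: each one imposes extra index equalities and so cannot increase the count. Their number is bounded as in \eqref{eq:pertcount} by $\sum_\ell(4\lambda)^\ell$, which after taking square roots gives the factor $(4\lambda)^{\lambda+1}$.

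The main obstacle is the diagonal terms created by the identifications. When $b_\alpha=b_{\alpha-1}$, entries like $A_{b j}^2$ or $A_{a i}A_{a j}$ can pair within a single copy. One must check that such self-pairings do not reinstate a free sum of order $m$ without its compensating $1/m$. I would first handle this by the centring decomposition above, verifying that each mean $1/m$ exactly replaces the lost summation.
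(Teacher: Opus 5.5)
Your proposal follows the paper's proof essentially step by step. You use Minkowski over $\zeta$, hypercontractivity to reduce to $L^2$, Proposition~\ref{prop:equiv} to pass to Gaussian entries, a Wick expansion led by the identity pairing with the other pairings treated as in Lemma~\ref{lem:Lp_sharp}, and $\max_b|y_b|\le 1$ at the end. Your bookkeeping, where the only loss relative to \eqref{eq:identity_value} is the frozen summation over $b_1=a$, is a cleaner account of the $N^{-1/2}$ gain than the paper's three-effect heuristic; and the self-pairing issue you flag does break the literal Claim \eqref{eq:Vk_leq_V0}, but harmlessly, since every Wick pairing is still $O\bigl(m^{-(2\lambda-1)}N^{2\lambda-2}\bigr)=O(N^{-1})$ because the two index walks form a connected graph with at most $2\lambda-1$ edges containing the fixed vertices $i$ and $a$.
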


\begin{proof}
The proof is similar to the one of the previous lemma and we will only sketch it, underlining the main points. 
Every summand in $z\ai^{[\lambda]}$ is a homogeneous polynomial of degree $2\lambda-1$ in the entries of $A$, with the crucial feature that at least one adjacency equality involves the left endpoint $b_0=a$; consequently each summand contains an explicit factor $A_{ai}$.\\

Again by Proposition \ref{prop:equiv}, we may w.l.o.g. assume that $A$ has i.i.d. standard Gaussian entries. By Minkowski's and Nelson's inequality, for $p\ge2$,
\begin{equation*}
\bigl\|z\ai^{[\lambda]}\bigr\|_{L^p}
\;\le\;
\sum_\zeta \|\Phi\ai^{(\zeta)}\|_{L^p}
\;\le\;
\sum_\zeta (p-1)^{(2\lambda-1)/2}\,\|\Phi\ai^{(\zeta)}\|_{L^2}
\;\lesssim\;
p^{\lambda}\,\sum_\zeta \|\Phi\ai^{(\zeta)}\|_{L^2}.
\end{equation*}

Next we expand $\|\Phi\ai^{(\zeta)}\|_{L^2}^2$ as in Lemma~\ref{lem:Lp_sharp}. 
The constraints of \eqref{eq:defJi}-\eqref{eq:defBa} eliminate forbidden pairings; the identity pairing is admissible and dominant. 
Relative to the bulk estimate \eqref{eq:L2Psi_final}, the boundary summand differs in three ways:
\begin{enumerate}
\item one row index is pinned at $a$, so a factor $\sqrt m$ from summing over row indices is lost,
\item one column index is pinned at $i$, so a factor $\sqrt N$ from summing over column indices is lost,
\item there is an explicit Gaussian factor $A_{ai}$, contributing $\|A_{ai}\|_{L^2}=m^{-1/2}$.
\end{enumerate}
Together, these effects reduce the size of the leading term by a factor of order $1/(\sqrt{m}\sqrt{N})$.
Hence
\begin{equation*}
\|\Phi\ai^{(\zeta)}\|_{L^2}
\;\le\;
C'\,\frac{\|y\|_2}{N}\,
\frac{1}{\delta^{(\lambda-1)/2}}\,
(4\lambda)^{\lambda+1},
\end{equation*}
uniformly over the indices and over $\zeta$.
All the other pairings, obtained as perturbations of the identity,  are treated as in Lemma~\ref{lem:Lp_sharp}. 
Combining that with the previous display 
yields the stated $L^p$ bound.

Finally, using that $\max_{a\in[m]}|y_a|\leq 1$, we have $\|y\|_2 \leq \sqrt{\delta N}$. Substituting gives
\[
\frac{\|y\|_2}{N} \delta^{-(\lambda-1)/2}\leq  \frac{1}{\sqrt N}\,\delta^{(2-\lambda)/2}\,.
\]
This proves that 
$z\ai^{[\lambda]}=O_\PP(N^{-1/2})$ uniformly in $(a,i)$.
\end{proof}

By similar techniques, we establish also the next lemma, that will be used in the next section.

\begin{Lem}\label{lem:double_sum_Y}
Define the quantity
\[
Z_i \;\coloneqq\; \sum_{b=1}^m \sum_{j=1}^N A_{b i} A_{b j} Y\bj\ti,
\]
where the messages \(Y\bj\ti\) are defined by 
\[
Y_{a \to i}\ti(\beta) \coloneqq  \sum_{\lambda=1}^t (-1)^{\lambda+1} \Gamma_\lambda\ti \, z_{a \to i}^{[\lambda]},
\]
Then
\[
\max_{i\in[N]}|Z_i| = O_\PP \big( N^{-1/2} \big).
\]
\end{Lem}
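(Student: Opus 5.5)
We will handle $Z_i$ with the same $L^p$ machinery used for Lemma~\ref{lem:Lp_sharp} and Lemma~\ref{lem:boundary-Lp-corrected}: expand into multilinear forms in the entries of $A$, bound each $L^2$ norm by a Wick computation after reducing to Gaussian entries via Proposition~\ref{prop:equiv}, then pass to $L^p$ with hypercontractivity \eqref{eq:nelson} and to a tail bound exactly as in Proposition~\ref{prop:tails-xy}.

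\emph{Step 1: expansion.} By linearity,
\[
Z_i=\sum_{\lambda=1}^t(-1)^{\lambda+1}\Gamma_\lambda\ti\sum_{\zeta=1}^{\lambda}(-1)^\zeta\sum_{b=1}^m\sum_{j=1}^N A_{bi}A_{bj}\,\Phi_{b\to j}^{(\zeta)} .
\]
The terms are finitely many, and their number depends only on $t$. Each $\Phi_{b\to j}^{(\zeta)}$ is a homogeneous polynomial of degree $2\lambda-1$ in which one adjacency equality pins a row index to the endpoint $b$. As noted in the proof of Lemma~\ref{lem:boundary-Lp-corrected}, this produces an explicit factor $A_{bj}$. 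Hence each summand of $Z_i$ has degree $2\lambda+1$ and contains the pattern $A_{bi}A_{bj}^2\times(\text{a chain of length }2\lambda-2\text{ starting at row }b)$.

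\emph{Step 2: reduction.} We split $A_{bj}^2=\frac1m+(A_{bj}^2-\frac1m)$. For the mean part, the sum over $j$ has $N$ terms weighted by $1/m$, giving $\delta^{-1}$ times $\sum_b A_{bi}(\text{chain from }b)$. This object has the same structure as the bulk chaos $\Psi^{(2\lambda-1)}$ of Lemma~\ref{lem:Lp_sharp}, but with its endpoint factor $y$ replaced by an extra pinned index, so it carries an additional factor of order $N^{-1/2}$. The identity-pairing count of Lemma~\ref{lem:boundary-Lp-corrected} then gives an $L^2$ norm $\lesssim N^{-1/2}$. The fluctuation part is a sum over $j$ of centred, essentially independent terms of size $N^{-1}\cdot N^{-1/2}$. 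Its $L^2$ norm is therefore $\lesssim \sqrt N\cdot N^{-3/2}=N^{-1}$, which is negligible.

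\emph{Step 3: main obstacle.} The hard part is the Wick count for the mean part. Summing over the free row $b$ would naively gain a factor $\sqrt m$, which would destroy the $N^{-1/2}$ gain. We must show this does not happen: the identity pairing across the two copies forces $b=\tilde b$ and $j=\tilde j$. Each summation over $b$ then comes with $A_{bi}^2$, whose expectation is $1/m$, which exactly cancels the extra $m$ from the summation. Any perturbed pairing imposes additional equalities and so lowers the count, as in the Claim \eqref{eq:Vk_leq_V0}. The number of pairings is again bounded by $(4\lambda)^{O(\lambda)}$, as in \eqref{eq:pertcount}.

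I would first verify carefully that diagonal identifications $j=i$ and $b=b_1$ produce no uncancelled large term. The forbidden identifications \eqref{eq:V1}--\eqref{eq:V6} exclude the dangerous adjacencies, and the terms with $j=i$ contain $A_{bi}^3$-type factors that are handled by the same counting. This gives
\[
\|Z_i\|_{L^p}\le C_t\,p^{t+1}N^{-1/2},
\]
and Markov's inequality with the choice of $p$ used in Proposition~\ref{prop:tails-xy} yields a tail bound $\exp(-c(\sqrt N\alpha)^{1/(t+1)})$. Taking $\alpha=N^{-1/2+\varepsilon}$, this tail is small enough to absorb a union bound over the $N$ values of $i$, giving $\max_i|Z_i|=O_\PP(N^{-1/2})$.
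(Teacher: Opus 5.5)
\textbf{There is a genuine gap, and it cannot be repaired: the bound is false in the regime the paper allows.}

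The step that fails is your Step~2. The ``mean part'' does not carry an extra factor $N^{-1/2}$. After you replace $A_{bj}^2$ by $1/m$ and sum over $j$, you are left with $\delta^{-1}\sum_b A_{bi}\,(\text{chain from row } b)$. In this expression the row $b$ is summed freely and the endpoint weight $y_{b_\lambda}$ is still present, so nothing is pinned. It is a bulk chaos of the same type and size as $x_i^{[\lambda]}$, not a boundary term like $z^{[\lambda]}_{a\to i}$.

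Your own Step~3 count says exactly this. In the identity pairing the $b$-sum comes with $\mathbb{E}[A_{bi}^2]=1/m$, which only cancels the $m$ summands. That leaves a contribution $\asymp \|y\|_2^2/m$, which is of order one under $\max_b|y_b|\le 1$, not $O(1/N)$.

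The case $t=1$ makes this explicit. There $Y^{(1)}_{b\to j}=\Gamma^{(1)}_1 z^{[1]}_{b\to j}=-\Gamma^{(1)}_1\,y_bA_{bj}$, hence
\[
Z_i=-\Gamma^{(1)}_1\sum_{b=1}^m y_bA_{bi}\sum_{j=1}^N A_{bj}^2\;\approx\;-\frac{\Gamma^{(1)}_1}{\delta}\sum_{b=1}^m y_bA_{bi},
\]
with $\|Z_i\|_{L^2}\approx \Gamma^{(1)}_1\|y\|_2/(\delta\sqrt m)$. Take $y_b\equiv 1$ (allowed) and Gaussian entries (which satisfy all the assumptions). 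Then $Z_i$ is essentially $\Gamma^{(1)}_1\delta^{-1}$ times a standard normal, so $\mathbb{P}(|Z_i|>c)$ stays bounded away from $0$. The claimed $O_{\mathbb P}(N^{-1/2})$ bound is therefore false, and no refinement of the Wick counting can rescue it.

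The mechanism is that every term of $Y_{b\to j}$ contains the explicit factor $A_{bj}$, because $b_1=b$ is enforced. So $A_{bj}Y_{b\to j}$ contains $A_{bj}^2$, which has nonzero mean and survives the sum over $j$ as a factor $N/m=\delta^{-1}$. This is precisely the Onsager reaction term of AMP.

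The paper's own proof has the same defect, in an even stronger form. It asserts $\|Z_i^{(\lambda)}\|_{L^2}\lesssim \|y\|_2N^{-3/2}$ on the grounds that the explicit factors ``fix'' the row $b$ and the columns $i,j$. But $b$ and $j$ are summed in $Z_i$, and the repeated $A_{bj}$ pairs with itself at no cost in free indices. A correct treatment has to keep this order-one term, which changes the update in Proposition~\ref{cor:finitebeta-component}, rather than absorb it into the $O_{\mathbb P}(N^{-1/2})$ remainder.

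A minor point: your fluctuation part is $O(N^{-1/2})$ rather than $O(N^{-1})$, because your estimate dropped the sum over $b$.
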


\begin{proof}
Write
\[
Z_i = \sum_{b=1}^m \sum_{j=1}^N A_{b i} A_{b j} Y\bj\ti = \sum_{\lambda=1}^t (-1)^{\lambda+1} \Gamma_\lambda\ti \sum_{b=1}^m \sum_{j=1}^N A_{b i} A_{b j} z\bj^{[\lambda]}.
\]
It suffices to bound each summand
\[
Z_i^{(\lambda)} \coloneqq  \sum_{b=1}^m \sum_{j=1}^N A_{b i} A_{b j} z\bj^{[\lambda]}.
\]
To bound \(\|Z_i^{(\lambda)}\|_{L^2}\), we expand
\[
\mathbb{E} \big[(Z_i^{(\lambda)})^2 \big]
= \sum_{\substack{b,b'=1}}^m \sum_{\substack{j,j'=1}}^N \mathbb{E} \Big[A_{b i} A_{b j} z\bj^{[\lambda]} A_{b' i} A_{b' j'} z_{b' \to j'}^{[\lambda]} \Big].
\]
Using again Theorem \ref{thm:Wick4}, this expectation is given by a sum over all pairings of Gaussian variables, which include the explicit factors \(A_{b i}\), \(A_{b j}\), \(A_{b' i}\), \(A_{b' j'}\) as well as those appearing inside \(z\bj^{[\lambda]}, z_{b' \to j'}^{[\lambda]}\).
The presence of the explicit factors fixes both a row index (\(b\)) and two column indices (\(i,j\)). Correspondingly, the number of free summation indices is reduced by two compared to the bulk message expansions of Lemma~\ref{lem:Lp_sharp}.
From the combinatorial analysis of Lemma~\ref{lem:boundary-Lp-corrected}, we deduce that
\[
\|Z_i^{(\lambda)}\|_{L^2} \le C \frac{\|y\|_2}{N^{3/2}} \cdot \text{Poly}(t,\delta)\leq \frac{C}{N} \text{Poly}(t,\delta),
\]
where $\text{Poly}(t,\delta)$ is a polynomial function of \(t\) and \(\delta\).
Hence, $\|Z_i^{(\lambda)}\|_{L^2} = O\left( N^{-1} \right)$.
By Nelson's hypercontractivity and Markov's inequality, this implies $Z_i^{(\lambda)} = O_\PP \left( N^{-1} \right).$
Summing over finitely many \(\lambda =1, \ldots, t\) preserves this bound.
\end{proof}


\section{Exact computations with a Gaussian initial distribution II: AMP equations}\label{sect:exactGaussian-bis}

We are now ready to derive the approximate message passing equations using the decomposition of the messages of Proposition \ref{prop:AMP_dec}. This completes the proof of Theorem \ref{TH:main} in the case of a Gaussian initial condition.
\begin{Prop} \label{cor:finitebeta-component}
Assume that $\nu\ia^{(0)}=\phi_{0,v^{(0)}}$ for all $i\in[N], a\in[m]$.
Let $\Delta\ti \coloneqq \delta/(2\beta v\ti +\delta)$, where $v\ti $ is the (edge-independent) variance from Lemma~\ref{lem:variance}, and recall
$\Gamma_\lambda\ti =\prod_{\tau=1}^{\lambda}\Delta^{(t-\tau)}$ from~\eqref{eq:defGamma}.
It holds that
\be
x\ia\tti=X_i\ti+O_\mathbb{P}(t\Delta\ti\Gamma_1\ti N^{-1/2})\qquad \forall a\in[m]\,,
\ee
where the sequence $\{X_i\ti\}_{t\in\N}$ satisfies the AMP equations
\be\label{eq:AMP-cor}
X_i\tti= X_i\ti (\beta)
+\Delta\ti  \left(\sum_{b=1}^m y_bA_{bi}-\sum_{b=1}^m\sum_{j=1}^N A_{bi}A_{bj} X_j\ti (\beta)\right)+O_\PP \Big(\Delta\ti  \Gamma\ti _{1} N^{-1/2}\Big)\,.
\ee
\end{Prop}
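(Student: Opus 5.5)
The plan is to define the candidate AMP iterate explicitly from the decomposition of Proposition \ref{prop:AMP_dec}, namely
\[
X_i\ti\coloneqq\sum_{\lambda=1}^t(-1)^{\lambda+1}\Gamma\ti_\lambda\, x_i^{[\lambda]},\qquad
Y\ai\ti\coloneqq\sum_{\lambda=1}^t(-1)^{\lambda+1}\Gamma\ti_\lambda\, z\ai^{[\lambda]},
\]
so that $x\ia\ti=X_i\ti+Y\ai\ti$ exactly. Since $\Gamma\ti_\lambda\le\Gamma\ti_1$ and each $z\ai^{[\lambda]}=O_\PP(N^{-1/2})$ uniformly in $(a,i)$, a union bound over the $t$ terms gives $\max_{i,a}|Y\ai\ti|=O_\PP(t\Gamma\ti_1N^{-1/2})$. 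This already yields the first claim, up to the index shift: $x\ia\tti=X_i\tti+Y\ai\tti$, with the error controlled by $\Gamma_1^{(t+1)}=\Delta\ti$, which after the bookkeeping below produces the stated prefactor $t\Delta\ti\Gamma_1\ti$.

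For \eqref{eq:AMP-cor}, I start from the message passing equation \eqref{eq:MP-gauss}, $x\ia\tti=\Delta\ti\sum_{b\ne a}y_bA_{bi}-\Delta\ti\sum_{b\ne a}\sum_{j\ne i}A_{bi}A_{bj}x\jb\ti$. First I complete the sums to $b\in[m]$ and $j\in[N]$. Adding back $b=a$ costs $\Delta\ti y_aA_{ai}=O_\PP(\Delta\ti N^{-1/2})$ in the first term. In the double sum, the $b=a$ terms contribute $A_{ai}\sum_jA_{aj}x\ja\ti$, which is $O_\PP(N^{-1/2})$ by the same Hoeffding-type argument as \eqref{eq:mepiacesse} together with the tail bound of Proposition \ref{prop:tails-xy}. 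The $j=i$ terms give $\sum_bA_{bi}^2x\ib\ti\approx x_i$-type diagonal quantities.

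Next I substitute $x\jb\ti=X_j\ti+Y\bj\ti$. The $Y$ part is exactly the quantity $Z_i$ of Lemma \ref{lem:double_sum_Y}, so it is $O_\PP(N^{-1/2})$ and, once multiplied by the $\Gamma$'s inside $Y$, contributes $O_\PP(\Delta\ti\Gamma_1\ti N^{-1/2})$. This leaves
\[
\Delta\ti\Big(\sum_by_bA_{bi}-\sum_{b,j}A_{bi}A_{bj}X_j\ti\Big)
\]
plus the diagonal $j=i$ correction. The remaining task is to identify the left-hand side with $X_i\tti$ plus a term that should equal $X_i\ti$: the Onsager-type term.

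This identification is where I expect the main difficulty. The idea is to use the recursive structure of the chaos expansion \eqref{eq:iterationX}. Inserting the inductive form of $X_j\ti$ into $\sum_{b,j}A_{bi}A_{bj}X_j\ti$ reproduces the terms of $X_i\tti$ with $\lambda\ge2$, but with the constraints $b_1\ne b_2$ and $j_1\ne j_2$ relaxed. The relaxation terms where $b=b_1$ and $j=i$ are the backtracking contributions. Using $\sum_bA_{bi}^2\approx1$, they reproduce $-X_i\ti$-type terms, and combined with the coefficient relation $\Delta\ti\Gamma_{\lambda-1}\ti=\Gamma_\lambda^{(t+1)}$ they should yield the $X_i\ti$ term on the right of \eqref{eq:AMP-cor}. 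All other coincidence patterns carry an extra pinned index and are bounded like the $z^{[\lambda]}$ terms via Lemma \ref{lem:boundary-Lp-corrected}.

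I would carry this out by induction on $t$, using the inclusion–exclusion split \eqref{eq:defxquadrat}–\eqref{eq:defzquadrat} to track exactly which equalities are imposed. Each error is bounded in $L^2$ via Wick pairings and Proposition \ref{prop:equiv}, lifted to $L^p$ via \eqref{eq:nelson}, and converted to $O_\PP$ via Markov's inequality as in Proposition \ref{prop:tails-xy}.
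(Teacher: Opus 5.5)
Up to the point where you defer the difficulty, your route is the paper's. You use the same split $x\ia\ti=X_i\ti+Y\ai\ti$ from Proposition~\ref{prop:AMP_dec}, the same completion of the sums in \eqref{eq:MP-gauss}, and Lemma~\ref{lem:double_sum_Y} to discard the $Y$-part. The paper then just collects the $a$-independent terms and asserts that they give \eqref{eq:AMP-cor}. The step you flag as the main difficulty is exactly where this breaks, and it cannot be repaired, for two reasons.

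First, the diagonal $j=i$ term is $\Delta\ti\sum_b A_{bi}^2 x\ib\ti=\Delta\ti X_i\ti+O_\PP(N^{-1/2})$. Its coefficient is $\Delta\ti$, not $1$. The identity $\Delta\ti\Gamma\ti_{\lambda-1}=\Gamma\tti_{\lambda}$ merely rewrites $\Delta\ti X_i\ti$ with coefficients $\Gamma\tti_{\lambda+1}$; it cannot produce the coefficients $\Gamma\ti_\lambda$ of $X_i\ti$.

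Second, the coincidence $b_1=b$ with $j\neq i$ is not a pinned-index term. The two factors $A_{bj}$ pair into $\sum_j A_{bj}^2\approx\delta^{-1}$ with no cancellation, so the term is of order one. The same pairing shows that Lemma~\ref{lem:double_sum_Y} does not hold: since $Y\bj^{(1)}=-\Delta\zzero y_bA_{bj}$,
\[
Z_i=-\Delta\zzero\sum_{b}y_bA_{bi}\sum_{j}A_{bj}^2=-\frac{\Delta\zzero}{\delta}\sum_{b}y_bA_{bi}+O_\PP\big(N^{-1/2}\big).
\]
This is the Onsager reaction term, not an error.

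In fact \eqref{eq:AMP-cor} is false as stated already at $t=1$. Lemma~\ref{lem:mean_gaussian_chaos} with $t=2$ reads
\[
x\ia\ttwo=\Delta\oone\sum_{b\neq a}y_bA_{bi}-\Delta\oone\Delta\zzero\sum_{b\neq a}\sum_{c\neq b}\sum_{j\neq i}y_cA_{bi}A_{bj}A_{cj}.
\]
Lifting $j\neq i$ costs $(\sum_bA_{bi}^2)(A^Ty)_i\approx(A^Ty)_i$, and lifting $c\neq b$ costs $\sum_b y_bA_{bi}\sum_{j\neq i}A_{bj}^2\approx\delta^{-1}(A^Ty)_i$. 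Hence
\[
x\ia\ttwo=\Delta\oone(A^Ty)_i-\Delta\oone\Delta\zzero\Big[(A^TAA^Ty)_i-\big(1+\delta^{-1}\big)(A^Ty)_i\Big]+O_\PP\big(N^{-1/2}\big).
\]
The right-hand side of \eqref{eq:AMP-cor} with $X\oone=\Delta\zzero A^Ty$ is instead $(\Delta\zzero+\Delta\oone)(A^Ty)_i-\Delta\oone\Delta\zzero(A^TAA^Ty)_i$. Using $1-2\beta v\oone=\Delta\zzero$, the difference is $\delta^{-1}(\Delta\zzero)^2\Delta\oone(A^Ty)_i$. This is of order one, not $O_\PP(N^{-1/2})$; take for example $|y_b|=1$ for all $b$, so that $(A^Ty)_i$ has unit variance.

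The paper's proof reaches \eqref{eq:AMP-cor} through precisely the two moves above: it invokes Lemma~\ref{lem:double_sum_Y} and reads $\Delta\ti\sum_bA_{bi}^2X_i\ti$ as $X_i\ti$. So it has the same gap, and no bookkeeping of the chaos expansion will close it. What your expansion does yield, and what one checks directly at $t=1$, is an AMP iteration with memory:
\[
X\tti=\Delta\ti\big(X\ti+A^Tr\ti\big)+O_\PP\big(N^{-1/2}\big),\qquad r\ti=y-AX\ti+\frac{\Delta^{(t-1)}}{\delta}\,r^{(t-1)}\ (t\geq1),\quad r\zzero=y.
\]
The $b_1=b$ pairings you planned to discard are exactly what build the memory term.
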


\begin{proof}
    We start by decomposing the message passing equations \eqref{eq:MP-gauss} as 
    \begin{align}
        x\ia\tti &= \Delta \ti \sum_{b=1}^m y_b A_{bi} - \Delta\ti y_a A_{ai} \label{eq:parte1} \\
        & -\Delta\ti \sum_{b=1}^m \sum_{j=1}^N A_{bi} A_{bj}x\jb\ti + \Delta\ti A_{ai} \sum_{j=1}^N A_{aj} x\ja\ti \label{eq:parte2} \\
        &+ \Delta\ti \sum_{b=1}^m A_{bi}^2 x\ib\ti - \Delta\ti A_{ai}^2 x\ia\ti \label{eq:parte3}.
    \end{align}
We recognise that some of the terms above do not depend on the index $a$. Comparing with the decomposition of Proposition \ref{prop:AMP_dec}, we set
\be
X_i\ti (\beta)\coloneqq \sum_{\lambda=1}^t(-1)^{\lambda+1}\Gamma_\lambda\ti  x_i^{[\lambda]},
\qquad
Y\ai \ti (\beta)\coloneqq \sum_{\lambda=1}^t(-1)^{\lambda+1}\Gamma_\lambda\ti  z\ai ^{[\lambda]}\,,
\ee
so that
\be\label{X+Y}
x\ia\ti =X_i\ti (\beta)+Y\ai \ti (\beta)\,
\ee
and (recall that $\Gamma_\lambda\ti\leq\Gamma_1\ti$ for all $t$ and for all $\lambda\geq 1$)
\be\label{eq:Ypiccolo}
\max_{i\in [N]} \max_{a\in [m]}|Y\ai \ti (\beta)|=O_\PP(t\Gamma_1\ti N^{-1/2})\,. 
\ee
We now replace $x\ia\ti$ on the l.h.s. of \eqref{eq:parte1} with the r.h.s. of \eqref{X+Y} and similarly for $x\jb\ti$ (suitably changing the index $a$ into $b$ and $i$ into $j$) in (\ref{eq:parte2}) and (\ref{eq:parte3}). Next we separate all the terms depending on the index $a$ and all those where this index does not appear. 

The $a$-independent
terms give \eqref{eq:AMP-cor} with the explicit remainder
\begin{equation}\label{eq:R-explicit}
R_i\ti (N,\beta)
=\Delta\ti  \left(
\sum_{b=1}^m A_{bi}^2 Y\ib\ti (\beta)-\sum_{b=1}^m\sum_{j=1}^N A_{bi}A_{bj} Y_{j\to b}\ti (\beta)\right)\,. 
\end{equation}
By \eqref{eq:Ypiccolo} and $\sum_{b}A^2_{bi}\approx 1$ it follows that
\be
\sum_{b=1}^m A_{bi}^2 Y\ib\ti (\beta)=O_\PP\big(\Gamma_1\ti N^{-1/2}\big)\,.
\ee
Moreover, by Lemma \ref{lem:double_sum_Y} 
\begin{equation*}
    \sum_{b=1}^m\sum_{j=1}^N A_{bi}A_{bj} Y_{j\to b}\ti (\beta) = O_\PP(\Gamma_1\ti N^{-1/2}),
\end{equation*}
and this recovers \eqref{eq:AMP-cor}.
\end{proof}

\begin{Rem}
We stress that the only $\beta$-dependence on the r.h.s. of \eqref{eq:AMP-cor} is given by the factors
$\Delta\ti $ and $\Gamma_1\ti =\Delta^{(t-1)}$. For the behaviour as $\beta\to\infty$ (taken after $N\to\infty$), see Section \ref{sect:AnaAMP}. 
\end{Rem}


\section{Gaussian approximations for $t=1$}\label{sect:GaussianI}

In this section we will work with a general $\ell_q$ prior of the form
\be
\pi_{\b,q}(s)\coloneqq Z'(\b,q)e^{-\b|s|^q}\,,\qquad \b>0,\quad s>0\,
\ee
where $Z'(\b,q)$ is a normalisation constant.

We set
\be\label{eq:musigmat=1}
\mu\ia\oone \coloneqq \frac{\sum_{b\neq a} \frac{\hat{x}\bi\zzero}{\hat{v}\bi\zzero}}{\sum_{b \neq a} \frac{1}{\hat{v}\bi\zzero}}, \qquad 
\sigma\ia\oone \coloneqq \frac{1}{\sum_{b \neq a} \frac{1}{\hat{v}\bi\zzero}}.
\ee

Let us define for brevity
\be\label{eq:average}
\mathtt{A}[F]_{q,\beta,i\to a}\coloneqq \frac{\int  \pi_{\b, q}(s)F(s)\phi_{\mu\ia\oone, \s\ia\oone}(s)\dd s}{\int \pi_{\b, q}(s)\phi_{\mu\ia\oone, \sigma\ia\oone}(s) \dd s}
\ee
The main result of this section is the following.
\begin{Prop}\label{prop:Gaussian-approx}
Assume that $\nu^{(0)}\ia $ is bounded and continuous with 
$$\int|s|^4\nu^{(0)}\ia (s)\dd s<\infty\,.$$
Let $q>0$ and consider the BP equations (\ref{eq:BP}) with initial conditions $\{\nu\zzero\ia\}_{i\in[N], a\in[m]}$. 
For all $F$ such that 
\be\label{eq:mathttF}
\int \pi_{\b,q}(s)|F(s)|^2\phi_{\mu,\s}(s)\dd s=C_{q,\b,\mu,\s}<\infty\,,
\ee
we have
\be\label{eq:Gauss-appro}
\left|\int \nu\ia \oone (s)F(s)\dd s-\mathtt{A}[F]_{q,\b,i\to a}\right|
\leq \mathtt{A}[|F|]_{q,\b,i\to a}O_\P\parav{\frac{1}{ N^{\frac12-2\e}}}\,.
\ee
Moreover, $\nu\ia \oone$ depends continuously on the entries of the matrix $A$. 
\end{Prop}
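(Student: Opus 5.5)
The plan is to compare each factor $\hat\nu\zzero\bi$ with the Gaussian $\phi_{\hat x\bi\zzero,\hat v\bi\zzero}$ pointwise, in multiplicative form, and then multiply the comparisons over $b\neq a$.

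By Lemma \ref{lemma:facilemaleki}, $\hat\nu\zzero\bi$ is the density of
$$
\frac{1}{A_{bi}}\Bigl(y_b-\sum_{j\neq i}A_{bj}\xi_{j\to b}\Bigr),
$$
where the $\xi_{j\to b}\sim\nu\zzero_{j\to b}$ are independent. This is a weighted sum of $N-1$ independent variables. They have bounded continuous densities, finite fourth moments, zero mean and common variance $v\zzero$. The weights $A_{bj}$ are of order $N^{-1/2}$, with $\max_j|A_{bj}|=O_\P(N^{-1/2+\e})$ by the tail estimates of Appendix \ref{app:tail}, and $\sum_jA_{bj}^2\approx\d^{-1}$ by \eqref{eq:concentration}.

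\textbf{Step 1 (local CLT with relative error).} I would invoke the local CLT of Appendix \ref{app:LLT}, in its Edgeworth form, for non-identically weighted sums with bounded densities. It gives, uniformly in $s$ in a window $|s-\hat x\bi\zzero|\le N^{\e}\sqrt{\hat v\bi\zzero}$,
$$
\hat\nu\zzero\bi(s)=\phi_{\hat x\bi\zzero,\hat v\bi\zzero}(s)\bigl(1+E_b(s)\bigr),
$$
where $E_b$ consists of the first Edgeworth correction plus a remainder. The first correction is a Hermite polynomial of degree $3$ multiplied by $\kappa_3$ times the normalized sum $\sum_jA_{bj}^3/(\sum_jA_{bj}^2)^{3/2}=O_\P(N^{-1/2+\e})$. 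The remainder, using fourth moments, is $O_\P(N^{-1+2\e})$ in relative terms.

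\textbf{Step 2 (product over $b$).} Taking the product over $b\neq a$, the Gaussian factors combine, exactly as in Lemma \ref{lemma:gaussianfacile}, into a constant times $\phi_{\mu\ia\oone,\s\ia\oone}(s)$ with the parameters \eqref{eq:musigmat=1}. The main difficulty is the correction factor $\prod_{b\neq a}(1+E_b(s))$. Summing $m$ crude errors of size $N^{-1/2}$ would give an $O(\sqrt N)$ error, which is useless.

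To get around this, I would write $\sum_b\log(1+E_b)\approx\sum_bE_b-\frac12\sum_bE_b^2$ and control the sum in two parts.
\begin{itemize}
\item The remainders sum to $m\cdot O(N^{-1+2\e})=O_\P(N^{2\e})$ in absolute terms. This is too crude, so I would exploit that the relevant $s$ lies within $O(\sqrt{\s\ia\oone})$ of $\mu\ia\oone$. Since $\hat v\bi\zzero\sim 1/(\d A_{bi}^2 N^{0})$ is typically large, the normalized argument $(s-\hat x\bi\zzero)/\sqrt{\hat v\bi\zzero}$ is $O(|A_{bi}|(|s|+|\hat x\bi\zzero|))$, which is small for most $b$.
\item In the leading Edgeworth terms, the Hermite polynomials are then evaluated near $0$, where $H_3$ is odd. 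The random signs coming from $A_{bi}$ and from $y_b-\ldots$ make $\sum_b E_b$ a centred sum. Hoeffding (as in \eqref{eq:mepiacesse}) then gives size $O_\P(N^{-1/2+2\e})$, and the quadratic part is of the same order.
\end{itemize}
Hence $\prod_b(1+E_b(s))=c_N\bigl(1+O_\P(N^{-1/2+2\e})\bigr)$ uniformly on the window, with $c_N$ a constant independent of $s$ that cancels in the normalisation.

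\textbf{Step 3 (tails and conclusion).} Outside the window, I would bound $\hat\nu\zzero\bi$ by a Gaussian envelope, using the non-Gaussian tail part of the local CLT together with the boundedness of the densities. After multiplying by the prior $\pi_{\b,q}$, the contribution of the complement is then exponentially small in $N^{2\e}$ relative to $\mathtt A[|F|]$. Here I use \eqref{eq:mathttF} and Cauchy–Schwarz.

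Inserting these estimates into the ratio defining $\int\nu\ia\oone F$ and comparing with \eqref{eq:average} yields \eqref{eq:Gauss-appro}. Continuity of $\nu\ia\oone$ in the entries of $A$ follows from the explicit convolution/product representation and dominated convergence, using that $\P(A_{bi}=0)=0$.

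I expect Step 2 to be the main obstacle: turning $m$ relative errors into a single $O_\P(N^{-1/2+2\e})$ requires the cancellation argument for the third-order Edgeworth terms.
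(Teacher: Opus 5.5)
Your overall architecture matches the paper's (Lemmas \ref{lemma:multiplicative}, \ref{lemma:EDG}, \ref{lemma:prod}). However, Step 2 does not close, and the real problem is the local-CLT remainder, not the Edgeworth term. Your proposed remedy fails for two reasons.

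First, the normalised argument is not small. By Lemma \ref{lemma:facilemaleki},
\[
\frac{s-\hat x\zzero\bi}{\sqrt{\hat v\zzero\bi}}=\sign(A_{bi})\,\frac{A_{bi}s-y_b+\sum_{j\neq i}A_{bj}x\zzero\jb}{\bigl(\sum_{j\neq i}A_{bj}^2v\zzero\jb\bigr)^{1/2}},
\]
which is of order one. In your bound, the term $|A_{bi}\hat x\zzero\bi|$ equals $|y_b-\sum_{j\neq i}A_{bj}x\zzero\jb|$.

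Second, smallness of the argument would not help anyway. A relative remainder of order $N^{-1}$ does not vanish at the origin. The next Edgeworth term is $\frac{\kappa_{4,b}}{24}H_4+\frac{\kappa_{3,b}^2}{72}H_6$, with $H_4(0)=3$ and $H_6(0)=-15$. Moreover $\kappa_{4,b}=\sum_{j\neq i}A_{bj}^4\kappa_4(\nu\zzero\jb)/(\sum_{j\neq i}A_{bj}^2v\zzero\jb)^2$ is in general of exact order $N^{-1}$. Like $\kappa_{3,b}^2$, it involves only even powers of the entries. So $\sum_{b\neq a}\chi\bi(s)$ is in general of order one, and there is no cancellation over $b$ to exploit.

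The Edgeworth bullet also has a bookkeeping slip: a centred sum of $m$ terms of size $N^{-1/2+\e}$ is $O_\P(N^{\e})$, not $O_\P(N^{-1/2+2\e})$. What works, as in Lemma \ref{lemma:EDG}, is to expand $H_3$ as a cubic in $s$ and put the $s$-free part into the constant $K_N$. The coefficients of $s,s^2,s^3$ then carry explicit factors $A_{bi},A_{bi}^2,A_{bi}^3$ (cf.\ Lemma \ref{lemma:Pprob}).

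You should know that the paper does not get around the remainder issue either. Lemma \ref{lemma:multiplicative} asserts $|\chi\ai|\lesssim N^{-3/2}$, and Lemma \ref{lemma:prod} uses this to get $\prod_{b\neq a}(1+\chi\bi)=1+O(N^{-1/2})$ without cancellation. But $N/P_2^{5/2}$ in Theorem \ref{lemma:local-limit-teorem} is an absolute sup-norm error, while $\phi_{P_1,P_2}\asymp P_2^{-1/2}$ on $A_L$; the constant $c(L)$ in Corollary \ref{cor;LLT} omits this factor. So the relative error is of order $N/P_2^{2}\asymp N^{-1}$, which is exactly your figure.

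What does work with four moments is to use that only the $s$-dependence of the product matters, because $s$-independent factors cancel in the ratio defining $\int\nu\oone\ia F$.
\begin{itemize}
\item Rerun the Fourier proof of Theorem \ref{lemma:local-limit-teorem} using $|e^{-i\l s}-e^{-i\l s'}|\le|\l||s-s'|$ on $|\l|<\r$. This shows that the additive error is Lipschitz with constant $\lesssim N/P_2^{3}$, up to exponentially small terms.
\item Hence, on the window, $|\chi\bi(s)-\chi\bi(s')|\lesssim|s-s'|\,|A_{bi}|\sum_{j\neq i}A_{bj}^4\asymp|s-s'|\,|A_{bi}|/N$.
\item Write $\prod_{b\neq a}(1+\chi\bi(s))=\prod_{b\neq a}(1+\chi\bi(0))\cdot\prod_{b\neq a}\frac{1+\chi\bi(s)}{1+\chi\bi(0)}$. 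The first factor is a constant. Since $\sum_{b}|A_{bi}|/N=O_\P(N^{-1/2})$, the second is $1+O_\P(N^{-1/2+\a})$ uniformly on $|s|\le N^{\a}$.
\end{itemize}

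Step 3 needs the same care. A Gaussian envelope for $\hat\nu\zzero\bi$ outside the window is not available with only four moments. Also, any per-factor envelope constant $C>1$ becomes $C^{m}$ after the product. So the tail must be compared with the tiny normalisation $\prod_{b\neq a}(2\pi\hat v\zzero\bi)^{-1/2}$ carried by the main term before the prior can absorb it. The bound $\prod_b\hat\nu\zzero\bi\le1$ in Lemma \ref{lemma:prod} has the same weakness relative to $K_{ia}(m,N)$.
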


Let us set
\be\label{eq:cum-t-3}
P\ai \coloneqq \sum_{j\neq i}\frac{\dd^3}{\dd r^3}\log\int \pi_{\b,q}(s)\nu^{(0)} \ja (s)e^{r\left(\frac{y_a}{A_{ai}(N-1)}-\frac{A_{aj}}{A_{ai}}s\right)}ds\,,
\ee
and
\be\label{eq:defEDG}
\edg\ai (s)\coloneqq \left(1+\frac{1}{3!}\frac{P\ai }{(\hat v\zzero \ia )^{\frac32}}H_3\left(\frac{s-\hat x\zzero \ia }{\sqrt{\hat v\zzero \ia }}\right)\right)\,,
\ee
where
$H_3(x)=x^3-3x$
is the third Hermite polynomial. 

To prove the main proposition here, the following result will be instrumental. The proof is rather technical and it is given in Appendix \ref{App:prooflemma}.

\begin{Lem}\label{lemma:Pprob}
We have
\be\label{eq:Pprob1}
\left|\sum_{b \neq a}\frac{\hat x\zzero\bi P\bi }{(\hat v\zzero\bi )^{3}}\right|= O_\P\parav{\frac{1}{N}}\,,\qquad\mbox{and}\qquad
\left|\sum_{b \neq a}\frac{(\hat x\zzero\bi )^2P\bi }{(\hat v\zzero\bi )^{3}}\right|\leq O_\P\parav{\frac{1}{\sqrt N}}
\ee
and
\be\label{eq:Pprob2}
\left|\sum_{b \neq a}\frac{P\bi }{(\hat v\zzero\bi )^{2}}\right|= O_\P\parav{\frac{1}{\sqrt N}}\,,\quad \left|\sum_{a\in[m]\setminus\{b\}}\frac{P\ai }{(\hat v\zzero\ia )^{3}}\right|= O_\P\parav{\frac{1}{N^{\frac32}}}\,.
\ee
\end{Lem}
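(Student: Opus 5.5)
The plan is to compute $P\ai$ explicitly, which reduces every sum in the statement to a weighted sum of centred, nearly independent quantities built from the entries of $A$.

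\textbf{Step 1: explicit form of $P\ai$.} In the exponent of \eqref{eq:cum-t-3}, the term $r\,y_a/(A_{ai}(N-1))$ is linear in $r$ and deterministic given $A$. It therefore affects only the first derivative of the log-moment generating function and disappears after three derivatives. What remains for each $j$ is the cumulant generating function of $-\frac{A_{aj}}{A_{ai}}\xi_j$, where $\xi_j$ has the (tilted) law proportional to $\pi_{\b,q}\nu\zzero\ja$. Hence
\[
P\ai=-\frac{1}{A_{ai}^3}\sum_{j\neq i}A_{aj}^3\kappa_j,\qquad \kappa_j:=\kappa_3(\xi_j).
\]
Each $\kappa_j$ does not depend on $A$, and $\sup_j|\kappa_j|\le C$ by the finite fourth moment of $\nu\zzero\ja$ (the prior only improves the tails). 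Set $S_a^{[i]}:=\sum_{j\neq i}A_{aj}^3\kappa_j$.

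\textbf{Step 2: the Gaussian-initialisation parameters.} With $x\zzero\ja=0$, Lemma \ref{lemma:facilemaleki} gives $\hat x\zzero\bi=y_b/A_{bi}$ and $\hat v\zzero\bi=A_{bi}^{-2}v\zzero\sum_{j\neq i}A_{bj}^2$. Write $\sum_{j\neq i}A_{bj}^2=\d^{-1}(1+\eta_b)$ with $\eta_b=O_\P(N^{-1/2})$ uniformly, by \eqref{eq:concentration} and Bernstein. The sums then become, up to deterministic powers of $\d/v\zzero$ and factors $(1+\eta_b)^{-k}$:
\begin{align*}
\sum_{b\neq a}\frac{P\bi}{(\hat v\zzero\bi)^2}&\propto \sum_{b\neq a}A_{bi}\,S^{[i]}_b, &
\sum_{b\neq a}\frac{\hat x\zzero\bi P\bi}{(\hat v\zzero\bi)^3}&\propto\sum_{b\neq a}y_bA_{bi}^2 S^{[i]}_b,\\
\sum_{b\neq a}\frac{(\hat x\zzero\bi)^2 P\bi}{(\hat v\zzero\bi)^3}&\propto\sum_{b\neq a}y_b^2A_{bi}\,S^{[i]}_b, &
\sum_{a\neq b}\frac{P\ai}{(\hat v\zzero\ai)^3}&\propto\sum_{a\neq b}A_{ai}^3S^{[i]}_a.
\end{align*}
In the last sum I read the subscript as $a\to i$, consistent with the others.

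\textbf{Step 3: moment bounds.} The entries are symmetric, so $\EE[A^3_{aj}]=0$. Each $S^{[i]}_b$ is a sum of $N-1$ independent centred terms of size $m^{-3/2}$, hence $S^{[i]}_b=O_\P(N^{-1})$, with stretched-exponential tails from sub-Gaussianity. Crucially, $S^{[i]}_b$ excludes column $i$ and is therefore independent of $A_{bi}$, and the $S^{[i]}_b$ are independent across $b$. Each summand above is thus a product of independent centred factors, and the $m$ summands are independent. The second moments give:
\begin{itemize}
\item first sum: $\big(m\cdot N^{-1}\cdot N^{-2}\big)^{1/2}=O(N^{-1})$;
\item second sum: $\big(m N^{-2}N^{-2}\big)^{1/2}=O(N^{-3/2})$;
\item third sum: $O(N^{-1})$;
\item fourth sum: $\big(m N^{-3}N^{-2}\big)^{1/2}=O(N^{-2})$.
\end{itemize}
These are all at least as good as required. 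To upgrade to $O_\P$ with tails $e^{-N^{\e}}$, uniformly over $(i,a)$, I would use the hypercontractivity \eqref{eq:hiper-intro}, or simply Bernstein for sums of independent sub-Weibull variables, and then take a union bound over the $mN$ index pairs.

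\textbf{Main obstacle.} I expect the difficulty to be the random factors $(1+\eta_b)^{-k}$, which are not independent of $S^{[i]}_b$ (both involve row $b$). I would expand $(1+\eta_b)^{-k}=1-k\eta_b+O(\eta_b^2)$. The leading term is handled in Step 3. The correction has the form $\sum_b A_{bi}\eta_bS^{[i]}_b$ (and analogues), where $\eta_bS^{[i]}_b$ is still independent of $A_{bi}$ and still centred, by the symmetry $A_{bj}\mapsto-A_{bj}$ applied within row $b$. This makes the correction a factor $N^{-1/2}$ smaller. The remainder is bounded crudely in absolute value, using $\max_b|\eta_b|^2=O_\P(N^{-1})$ on a high-probability event. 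The same sign-symmetry argument handles the cross terms in the second sum, where $A_{bi}^2$ is not centred but $S^{[i]}_b$ is.
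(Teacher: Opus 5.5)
Your argument is correct, but it follows a different route from the paper's. In Appendix \ref{App:prooflemma} the paper uses no cancellation over $b$. For the appropriate $h,k,r$ it bounds each sum by
\[
\Bigl|\sum_{b\neq a}\cdots\Bigr|\leq\max_{b\neq a}\Bigl[\Bigl(\sum_{j\neq i}A_{bj}^2v^{(0)}_{j\to b}\Bigr)^{-h}\Bigl|y_b-\sum_{j\neq i}A_{bj}x^{(0)}_{j\to b}\Bigr|^{k}\,|w_{b\to i}|\Bigr]\sum_{b\neq a}|A_{bi}|^{r},
\]
where $w_{b\to i}=\sum_{j\neq i}A_{bj}^3(\cdots)$ is essentially your $S^{[i]}_b$. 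It controls the maximum by a union bound over $b$, using Hoeffding and Lemma \ref{prop:tail}, and controls $\sum_b|A_{bi}|^r$ by Lemma \ref{lemma:facilissimo}. It then splits the product with auxiliary exponents $(p,q,\alpha)$, tuned to land exactly on the stated rates.

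You instead use the following structure. After extracting the explicit power of $A_{bi}$, every remaining factor depends only on row $b$ minus column $i$. Under the sign flip of that row, $S^{[i]}_b$ is odd and $\eta_b$ is even. Hence the $m$ summands are independent and centred, and a second-moment bound beats each stated rate by a factor $N^{-1/2}$.

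What each approach buys: the paper's method needs only separate tail bounds for each factor, not their joint law or the centring of the summands in $b$. That is why it covers arbitrary initial means $x^{(0)}_{j\to b}$ and variances $v^{(0)}_{j\to b}$ without change, at the cost of the lost $\sqrt N$ and the exponent bookkeeping. Yours is shorter and sharper, but as written it uses $x^{(0)}=0$. This is a standing hypothesis, so it is legitimate. With nonzero means, $\sum_bA_{bi}^2T_bS^{[i]}_b$, where $T_b=\sum_{j\neq i}A_{bj}x^{(0)}_{j\to b}$, is no longer centred, but its mean is $O(N^{-1})$, so the first bound still holds.

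Two small technical points:
\begin{enumerate}
\item The expansion of $(1+\eta_b)^{-k}$ is unnecessary. This factor is itself even and depends only on row $b$ minus column $i$, so it can stay in the summand once truncated on $\{\sum_{j\neq i}A_{bj}^2\geq 1/(2\delta)\}$. Some truncation is needed anyway, because under the no-atom assumption alone $\sum_jA_{bj}^2$ need not have finite negative moments.
\item The hypercontractive bound \eqref{eq:hiper-intro} is assumed only for multilinear forms, and $\sum_bA_{bi}S^{[i]}_b$ is not one, since it contains cubes $A_{bj}^3$. The Bernstein inequality for independent sub-Weibull summands is what gives the $e^{-N^{\varepsilon}}$ tails before your union bound.
\end{enumerate}
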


\begin{remark}\label{rmk:K}
A direct consequence of \eqref{eq:Pprob1} is that
\be\label{eq:K}
\absv{\frac{\hat x\zzero\ai P\ai }{(\hat v\zzero\ai )^{3}}}= O_\P\parav{\frac{1}{N}}\,,\qquad \absv{\frac{(\hat x\zzero\ai )^2P\ai }{(\hat v\zzero\ai )^{3}}}= O_\P\parav{\frac{1}{\sqrt N}}\,. 
\ee
\end{remark}

\begin{Lem}\label{lemma:simpel}
There is a constant $c=c(\d, \nu^{(0)})$ such that the following holds: 
\bea
\mathbb{P}\left(\left|\sum_{j\neq i} A_{aj}x^{(0)}\ja  \right|\geq \l\right)&\leq& Ce^{-c\l^2}\,,\quad \l\geq0\label{eq:claimXk}\\
\mathbb{P}\left(\left|\sum_{j\neq i} \parav{A^2_{aj}-\frac{1}{m}}v^{(0)}\ja \right|\geq \l\right)&\leq& Ce^{-c\l^2m}\quad\mbox{$0\leq \l\ll1$}\label{eq:claimP2}\\
\mathbb{P}\left(\left|A^3_{ai}P\ai \right|\geq \l\right)&\leq& \begin{cases}
Ce^{-c\l^{2} m^2}&\quad\mbox{$\l\leq \frac1{m^{\frac34}}$}\\
Ce^{-c\l^{\frac23} m}&\quad\mbox{$\l\geq \frac1{m^{\frac34}}$}
\end{cases}\label{eq:claimP3}
\,.
\eea
\end{Lem}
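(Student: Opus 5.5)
The plan is to prove the three tail bounds of Lemma \ref{lemma:simpel} separately. Each quantity is a sum of independent terms with sub-Gaussian or sub-exponential tails, so the natural tools are Hoeffding and Bernstein inequalities, applied conditionally on the row $a$ only when needed. The key structural fact is that the initial data $\nu^{(0)}\ja$, and hence $x^{(0)}\ja$ and $v^{(0)}\ja$, are deterministic: they do not depend on $A$. Throughout we assume $\sup_{j,a}|x^{(0)}\ja|$, $\sup_{j,a}v^{(0)}\ja$ and the fourth moments of $\nu^{(0)}\ja$ are bounded by constants depending only on $\nu^{(0)}$; this is what the constant $c=c(\d,\nu^{(0)})$ absorbs. (In the main theorem $x^{(0)}\ja=0$, so \eqref{eq:claimXk} is then trivial, but we prove it in general.)

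For \eqref{eq:claimXk}, the variables $A_{aj}x^{(0)}\ja$, $j\neq i$, are independent, centred and sub-Gaussian with $\psi_2$-norm $\lesssim m^{-1/2}|x^{(0)}\ja|$. Hoeffding's inequality then gives a bound $2\exp(-c\l^2/\sum_j m^{-1}(x^{(0)}\ja)^2)$. Since $\sum_j m^{-1}(x^{(0)}\ja)^2\le \d^{-1}\max_j (x^{(0)}\ja)^2$, this yields \eqref{eq:claimXk}.

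For \eqref{eq:claimP2}, the summands $(A^2_{aj}-1/m)v^{(0)}\ja$ are independent, centred and sub-exponential with $\psi_1$-norm $\lesssim m^{-1}v^{(0)}\ja$. Bernstein's inequality gives
$$2\exp\Big(-c\min\Big(\frac{\l^2}{N m^{-2}},\frac{\l}{m^{-1}}\Big)\Big).$$
For $\l\ll1$ the quadratic term is the minimum, and $m^2/N=\d m$, so the bound becomes $Ce^{-c\l^2 m}$.

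For \eqref{eq:claimP3}, first expand the third cumulant. Write $\kappa_3^{(j)}(u)$ for the third derivative at $r=0$ of the log moment generating function of the tilted measure $\pi_{\b,q}\nu^{(0)}\ja$ evaluated in direction $u$. (Note that $P\ai$ is defined as a derivative in $r$ without specifying the point; we take $r=0$, as in an Edgeworth expansion.) Since the deterministic shift $y_a/(A_{ai}(N-1))$ contributes nothing to cumulants of order $\ge 2$, we get
$$A^3_{ai}P\ai=-\sum_{j\neq i}A^3_{aj}\,\kappa_3^{(j)},$$
where $\kappa_3^{(j)}$ is the deterministic third cumulant of $\pi_{\b,q}\nu^{(0)}\ja$. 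Its finiteness follows from the fourth-moment assumption, or from the Gaussian factor $\pi_{\b,2}$. The summands $A_{aj}^3\kappa_3^{(j)}$ are independent, and centred because the entries are symmetric. They have variance of order $m^{-3}$, so the total variance is $\sigma^2\asymp N m^{-3}\asymp m^{-2}$. They have heavy tails of Weibull type with exponent $2/3$: indeed $\PP(|A^3_{aj}|>u)\le Ce^{-c m u^{2/3}}$.

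We then apply a Bernstein-type inequality for sums of independent $\alpha$-sub-exponential variables with $\alpha=2/3$. Results of this kind are available in the literature (e.g.\ Kuchibhotla--Chakrabortty, or via moment bounds of Lata{\l}a type and truncation). This gives
$$\PP\Big(\Big|\sum_j A^3_{aj}\kappa_3^{(j)}\Big|\ge\l\Big)\le C\exp\Big(-c\min\big(\l^2 m^2,\ (\l m^{3/2})^{2/3}\big)\Big).$$
Here the second exponent is $\l^{2/3}m$, because $\|A^3_{aj}\|_{\psi_{2/3}}\asymp m^{-3/2}$. The two regimes cross where $\l^2m^2=\l^{2/3}m$, that is at $\l=m^{-3/4}$, which is exactly the split in \eqref{eq:claimP3}. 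If a ready-made $\alpha<1$ Bernstein inequality is not used, one can instead truncate at $|A_{aj}|\le m^{-1/2}(\log m)$ or at level $\l$-dependent. The bounded part is handled by Bernstein and the large-deviation part by a union bound with the single-summand tail $e^{-c\l^{2/3}m}$.

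The main obstacle is this last step: rigorously justifying the heavy-tailed ($\psi_{2/3}$) concentration with the correct crossover. A secondary point is checking that $\kappa_3^{(j)}$ is uniformly bounded, with constants depending only on $\nu^{(0)}$, $\b$ and $q$.
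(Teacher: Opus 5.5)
Your proposal is correct and follows the paper's proof. The paper uses Hoeffding for \eqref{eq:claimXk} and Bernstein for \eqref{eq:claimP2}. For \eqref{eq:claimP3} it writes $A^3_{ai}P_{a\to i}=-\sum_{j\neq i}A_{aj}^3\kappa_3^{(j)}$ with deterministic $\kappa_3^{(j)}$ and applies a Bernstein-type bound for centred $\psi_{2/3}$ summands. The paper gets that bound from its own truncation-and-exponential-moment Lemma \ref{prop:tail}, whereas you cite Kuchibhotla--Chakrabortty. Your bookkeeping, $\sum_j\|A_{aj}^3\|_{\psi_{2/3}}^2\asymp m^{-2}$ and $\max_j\|A_{aj}^3\|_{\psi_{2/3}}\asymp m^{-3/2}$, is exactly what yields the exponents $\l^2m^2$ and $\l^{2/3}m$ and the crossover at $\l=m^{-3/4}$.

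One caveat concerns your fallback. With a single truncation level $\tau$, the bounded-variable Bernstein inequality only gives $\exp(-c\min(\l^2m^2,\l/\tau))$. This cannot reach $e^{-c\l^{2/3}m}$ for $\l\geq m^{-3/4}$, since it would need $\l\lesssim\tau\lesssim\l^{1/3}/m$. There you must use the tail-aware bound on the exponential moment of the truncated summands, as in the proof of Lemma \ref{prop:tail}.
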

\begin{proof}
These are tail probabilities of sums of i.i.d. r.vs. Recall that each $A_{aj}$ can be written as $X_{ai}/\sqrt m$ where $X_{ai}$ is a sub-Gaussian r.v. with unitary variance. 
The Hoeffding inequality yields \eqref{eq:claimXk} and the Bernstein inequality yields \eqref{eq:claimP2} (for $\l$ small enough). The inequality (\ref{eq:claimP3}) follows from Lemma \ref{prop:tail}. 
\end{proof}

\begin{remark} 
From (\ref{eq:claimXk}), (\ref{eq:claimP2}) it follows that $\mu\oone \ia$ and $\s\oone \ia$ are $O_\P(1)$.
\end{remark}

\begin{Lem}\label{lemma:multiplicative}
With probability at most $1-e^{-cN}$ the following holds. If $|s|\lesssim \sqrt {N}$ then there is a continuous function $\chi\ai :\mathbb{R}\rightarrow \mathbb{R}$ with $\lvert\chi\ai \lvert \lesssim \frac{1}{N^{\frac{3}{2}}}$ such that
\be\label{eq:multiplicative1}
\hat\nu^{(0)}\ai (s)=\phi_{\hat x^{(0)}\ai , \hat v^{(0)}\ai }\left(s\right)\edg\ai (s)\left(1+\chi\ai (s)\right)\,. 
\ee
The function $\chi\ai$ depends continuously from the entries of the matrix $A$. 
Moreover, $\sup_{s\in\R}\hat\nu^{(0)}\ai (s)\leq 1$.
\end{Lem}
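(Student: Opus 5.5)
The plan is to read \eqref{eq:multiplicative1} as a local limit theorem with a one-term Edgeworth correction and relative error, applied to the density of the weighted sum in Lemma \ref{lemma:facilemaleki}. By \eqref{eq:xi_hat}, $\hat\nu\zzero\ai$ is the density of
\[
\hat\xi\zzero\ai=\sum_{j\neq i}\Big(\frac{y_a}{A_{ai}(N-1)}-\frac{A_{aj}}{A_{ai}}\xi\zzero\ja\Big),
\]
a sum of $N-1$ independent summands with bounded continuous densities and finite fourth moments. Its mean and variance are exactly $\hat x\zzero\ai$ and $\hat v\zzero\ai$, and its third cumulant is the quantity $P\ai$ of \eqref{eq:cum-t-3}. (I read the prior factor inside \eqref{eq:cum-t-3} as absorbed in the normalised $\nu\zzero$ at $t=0$. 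Otherwise I would replace $\nu\zzero\ja$ by the tilted density throughout; nothing below changes.)

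\textbf{Step 1: the good event.} I first restrict to an event of probability at least $1-e^{-cN}$, built from Lemma \ref{lemma:simpel} and \eqref{eq:concentration}. On it:
\begin{itemize}
\item the normalised weights $w_j:=A_{aj}/(\sum_k A_{ak}^2)^{1/2}$ satisfy $\max_j|w_j|\lesssim N^{-1/2+\e}$;
\item the variance ratio $\sum_j A_{aj}^2 v\zzero\ja/\sum_j A_{aj}^2$ is bounded away from $0$ and $\infty$;
\item the standardised third cumulant $P\ai/(\hat v\zzero\ai)^{3/2}$ is $O(N^{-1/2})$, which is consistent with \eqref{eq:claimP3};
\item the fourth standardised cumulant is $O(N^{-1})$.
\end{itemize}
The variable $Z:=(\hat\xi\zzero\ai-\hat x\zzero\ai)/\sqrt{\hat v\zzero\ai}$ is then a standardised weighted sum. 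Its density is $\sqrt{\hat v\zzero\ai}\,\hat\nu\zzero\ai(\hat x\zzero\ai+\sqrt{\hat v\zzero\ai}\,z)$, and the problem is scale-free: the scaling by $A_{ai}$ drops out after standardisation.

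\textbf{Step 2: Edgeworth expansion by Fourier inversion.} Write the density of $Z$ as $\frac1{2\pi}\int e^{-i\theta z}\prod_j\hat\nu_j(w_j\theta)\,\dd\theta$ and split the frequency range at $|\theta|\le \kappa\sqrt N$ for small $\kappa$.
\begin{itemize}
\item \emph{Low frequencies.} Expand $\log\prod_j\hat\nu_j(w_j\theta)$ to third order with a fourth-order remainder $O(\theta^4/N)$. This produces $e^{-\theta^2/2}\big(1+\frac{\kappa_3}{6}(i\theta)^3+O((\theta^4+\theta^6)/N)\big)$, whose inverse transform is $\phi(z)\big(1+\frac{\kappa_3}{6}H_3(z)\big)+O(N^{-1})$ uniformly in $z$. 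This is the local CLT of Appendix \ref{app:LLT}.
\item \emph{High frequencies.} Boundedness and continuity of $\nu\zzero$ give the Cram\'er-type condition $\sup_{|\theta|\ge\eta}|\hat\nu_j(\theta)|<1$. Together with the bound $\sup\hat\nu\le1$ (which also follows, since a convolution of bounded densities is bounded), the contribution of $|\theta|>\kappa\sqrt N$ is $e^{-cN}$.
\end{itemize}
Continuity of everything in the entries of $A$ is immediate from this representation, because characteristic functions depend continuously on the weights.

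\textbf{Step 3: from additive to multiplicative error (the main obstacle).} The estimate of Step 2 is additive, of size $O(N^{-1})$ in sup norm, whereas \eqref{eq:multiplicative1} requires a relative error $\lesssim N^{-3/2}$ on the whole range $|s|\lesssim\sqrt N$. In the original variable this is $|z|\lesssim\sqrt N/\sqrt{\hat v\zzero\ai}$, and since $\hat v\zzero\ai\sim A_{ai}^{-2}\sim N$, this is $|z|=O(1)$. The Gaussian factor is therefore bounded below on the relevant window. An additive error of order $N^{-3/2}$ in the standardised density would then convert into a relative one.

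To reach $N^{-3/2}$ rather than $N^{-1}$, I would push the expansion one step further, including the fourth cumulant and the $\kappa_3^2$ term $H_4$, $H_6$ corrections. The remainder is then of order $N^{-3/2}$, and the additional terms are $O(N^{-1})\cdot\mathrm{poly}(z)$ on a bounded $z$-window. This step needs the fourth moment hypothesis, and possibly a fifth-moment or smoothing argument if only four moments are available. I expect this step to be the real difficulty.

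The remaining question is whether the extra $N^{-1}$-size polynomial terms can be folded into $\chi\ai$. If they cannot, I would redefine $\chi\ai$ to contain them and check that the later uses, via Lemma \ref{lemma:Pprob}, only need the $m$-fold product $\prod_b(1+\chi\bi)$ to stay $1+o(1)$. Finally, I would set
\[
\chi\ai:=\frac{\hat\nu\zzero\ai}{\phi\,\edg\ai}-1,
\]
which is continuous wherever $\edg\ai>0$. This holds on the window because $\kappa_3H_3(z)=O(N^{-1/2})$ there.
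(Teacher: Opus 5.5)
Your Steps 1--2 follow the paper's route: Theorem \ref{lemma:local-limit-teorem} applied to the summands $X_{aj}$, here in standardised variables. You are right that this only gives an additive $O(N^{-1})$ error for the standardised density, i.e.\ a relative $O(N^{-1})$ error on $|z|=O(1)$. The gap is Step 3, and it cannot be closed, because the bound $|\chi\ai|\lesssim N^{-3/2}$ is false in general.

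Put $z=(s-\hat x\zzero\ai)/(\hat v\zzero\ai)^{1/2}$, $K_3:=P\ai/(\hat v\zzero\ai)^{3/2}$ and $K_4:=\sum_{j\neq i}A_{aj}^4\,c_4(\nu\zzero\ja)/(\sum_{j\neq i}A_{aj}^2v\zzero\ja)^2$, where $c_4(\nu)$ is the fourth cumulant of $\nu$. The next order of the expansion is $\hat\nu\zzero\ai/\phi_{\hat x\zzero\ai,\hat v\zzero\ai}=1+\frac{K_3}{6}H_3(z)+\frac{K_4}{24}H_4(z)+\frac{K_3^2}{72}H_6(z)+o(N^{-1})$. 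The coefficient $K_4$ has no sign cancellation, since $A_{aj}^4\geq0$. If, e.g., all $\nu\zzero\ja$ share a fourth cumulant $c_4\neq0$, then $|K_4|\asymp N^{-1}$ with high probability. Since $\chi\ai=\hat\nu\zzero\ai/(\phi_{\hat x\zzero\ai,\hat v\zzero\ai}\,\edg\ai)-1$ is forced and $\edg\ai$ contains only $H_3$, the term $\frac{K_4}{24}H_4(z)$ necessarily sits in $\chi\ai$. There is nowhere else to fold it.

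Concretely, take $y=0$ and $\nu\zzero\ja=\frac12(\phi_{-1,1}+\phi_{1,1})$, which is bounded, continuous and centred with $c_3=0$, $c_4=-2$. Then $\edg\ai\equiv1$, and an exact Hubbard--Stratonovich computation at $s=0$ gives, with high probability,
\[
\chi\ai(0)=-\frac{\sum_{j\neq i}A_{aj}^4}{16\bigl(\sum_{j\neq i}A_{aj}^2\bigr)^2}\bigl(1+o(1)\bigr)\asymp-\frac1N .
\]
In addition, an $O(N^{-3/2})$ remainder in your refined expansion would require a fifth moment, as you suspected. Your fallback also fails: $\sum_{b\neq a}|\chi\bi|\asymp m/N=\delta$, so $\prod_{b\neq a}(1+\chi\bi)$ is not $1+o(1)$.

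The paper gets $N^{-3/2}$ exactly at the point you flagged. It inserts $\hat v\zzero\ai\asymp N$ into the additive remainder $N/P_2^{5/2}$ of Theorem \ref{lemma:local-limit-teorem} and reads it as a relative error via Corollary \ref{cor;LLT}. But in the proof of that corollary, $\inf_{A_L}\phi_{P_1,P_2}$ equals $\phi_{0,1}(L)/\sqrt{P_2}$, not $\phi_{0,1}(L)$. Restoring this factor gives the relative bound $N/P_2^{2}\asymp N^{-1}$, which is your diagnosis and, by the example above, sharp.

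So what either route proves is $|\chi\ai|\lesssim N^{-1}$. To make that usable in Lemma \ref{lemma:prod}, keep the $H_4$ and $H_6$ terms explicit and treat them as the $H_3$ term is treated in Lemma \ref{lemma:EDG}:
\begin{itemize}
\item their $s$-independent part is absorbed into the normalising constant;
\item their $s$-dependent part is small, because $\partial_s z_b=(\hat v\zzero\bi)^{-1/2}=O(N^{-1/2+\varepsilon})$ uniformly in $b$, so after the product over $b\neq a$ it contributes $O(N^{\alpha-1/2+\varepsilon})$ on $|s|\le N^{\alpha}$.
\end{itemize}
The genuine per-factor remainder must then be $O(N^{-3/2})$ to preserve the $N^{-1/2}$ rate of Proposition \ref{prop:Gaussian-approx}. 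With only four moments you get $o(N^{-1})$ per factor, hence only $o(1)$ after the product.

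Finally, for $|s|\lesssim\sqrt N$ the window $|z|=O(1)$ needs $|A_{ai}|\lesssim m^{-1/2}$; so does the paper's $\hat v\zzero\ai\gtrsim N$. For Gaussian entries this fails with probability of order $e^{-cK^2}$ for the implied constant $K$, not $e^{-cN}$. Restricting to $|s|\le N^{\alpha}$, which is all that Lemma \ref{lemma:prod} uses, brings this back to an event of probability $1-e^{-cN^{2\varepsilon}}$, the paper's $O_{\mathbb P}$ standard.
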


\begin{proof}
In the first part of the proof, we proceed as in \cite[Lemma 5.2.1]{malekiT}.
Let us define for any $a\in[m]$ a sequence of independent r.vs $\{\xi^{(0)}\ia \}_{i\in[N]}$ whose distribution has density $\nu^{(0)}\ia $. Note that the variables $\{\xi^{(0)}\ia \}_{i\in[N]}$ are independent of the realisation of the matrix $A$, a property which is lost for $t\geq1$. 
For $a\in[m]$, we also set
$$
X_{aj}=\frac{y_a}{A_{ai}(N-1)}-\frac{A_{aj}}{A_{ai}}\xi^{(0)}\ja \,,\quad j\in[N]\setminus\{i\}\,.
$$
The distributions of the variables $X_{aj}$ then will have all the regularity and decay of the $\nu^{(0)}\ia $. Moreover it is clear that the distribution of $\sum_{j\neq i}X_{aj}$ is simply $\hat\nu^{(0)}\ai $. Recall that  $\hat\nu^{(0)}\ai $ has mean and variance given by the following expressions
\be
\hat x^{(0)}\ai=\frac{y_a}{A_{ai}}\,,\qquad \hat v^{(0)}\ai=A^{-2}_{ai}\sum_{j\neq i} A^2_{aj}\,.
\ee

These identifications allow us to apply the local central limit theorem in the form of Theorem \ref{lemma:local-limit-teorem} of Appendix \ref{app:LLT} (with $P_1=\hat x^{(0)}\ai $ and $P_2=\hat v^{(0)}\ai $). In particular Corollary \ref{cor;LLT} gives that for any $L>0$ there is a function $\chi\ai$ such that for all $|s-\hat x^{(0)}\ai |\leq L\sqrt {\hat v^{(0)}\ai }$ 
\be\label{eq:stima1}
\hat\nu^{(0)}\ai (s)=\phi_{\hat x^{(0)}\ai , \hat v^{(0)}\ai }\left(s\right)\edg\ai (s)\left(1+\chi\ai (s)\right)\,,
\ee
where 
\be\label{eq:stimachi-intermedio}
\chi\ai (s)\leq C\parav{\frac{N}{(\hat v^{(0)}\ai )^\frac52}+e^{-c \hat v^{(0)}\ai }}\,.
\ee
Moreover, $\chi\ai$ depends continuously on $\hat x^{(0)}\ai, \hat v^{(0)}\ai$ and $P\ai$, which are continuous in the entries of $A$. 

Next, we note that we can choose $L$ so large that there is $L'>0$ for which \eqref{eq:stima1} and \eqref{eq:stimachi-intermedio} hold for all $|s|\leq L'\sqrt {\hat v^{(0)}\ai }$\,. 
Indeed, the Hoeffding inequality (w.r.t. the law of the matrix $A$) gives
\be\label{eq:facille}
\sum_{j\neq i}A_{aj}\xi^{(0)}\ja =O_\P\parav{1}\,. 
\ee
Combining \eqref{eq:facille} and (\ref{eq:xx-hat}) and using that $A_{ai}$ is independent of the $\{A_{aj}\}_{j\neq i}$ we have that $|\hat x\ja ^{(0)}|\leq C\sqrt m$ with probability larger than $1-e^{-cm}$. This entails that with probability at least $1-e^{-cm}$ we can restrict ourself to consider $|s|\leq L'\sqrt{\hat v\ja ^{(0)}}$ where $L'>0$ is a large constant.

Here $\hat v^{(0)}\ai$ is still a random quantity that we can easily localise.
By \eqref{eq:v_ai} we have
\be
\absv{\hat v^{(0)}\ai -\frac{1}{mA^2_{ai}} \sum_{j\neq i}v^{(0)}\ja }=A^{-2}_{ai}\absv{ \sum_{j\neq i}A^2_{aj}v^{(0)}\ja -\frac{1}{m} \sum_{j\neq i}v^{(0)}\ja }
\ee
and
\be
\absv{ \sum_{j\neq i}A^2_{aj}v^{(0)}\ja -\frac{1}{m} \sum_{j\neq i}v^{(0)}\ja }\leq \t A^{2}_{ai}
\ee
with probability at least $1-e^{-c\min(\t \E[A^2_{ai}],\E[A^4_{ai}]\t^2) N}$ by the Bernstein inequality, the independence of $A_{ai}$ and $\hat v^{(0)}\ai $ and the Jensen inequality. 
Thus we set $\t=\e m$ for some $\e>0$ and we get (we abbreviate $V:=m^{-1}\sum_{}$)

\be\label{eq:fine-v}
\absv{\hat v^{(0)}\ai -\frac{V}{A^2_{ai}}}\leq \e m\,,
\ee
with probability larger than $1-e^{-c\e N}$. The bound \eqref{eq:fine-v} entails
\be\label{eq:fine-vv}
\hat v^{(0)}\ai = O_\P(m)\,. 
\ee
Using (\ref{eq:fine-vv}) into (\ref{eq:stimachi-intermedio}) we have that for all $|s|\leq L\sqrt N$
\be\label{eq:stimachi1}
\chi\ai (s)\leq C\parav{\frac{1}{N^\frac32}+e^{-c N}}\,,
\ee
which gives the bound on $\chi\ai$. 

Finally, the global bound $\sup_{s\in\R}\hat\nu^{(0)}\ai (s)\leq 1$ follows again by Theorem \ref{lemma:local-limit-teorem} for $N$ large enough. 
\end{proof}

When taking products, the Edgeworth term is the most delicate to handle. 

\begin{Lem}\label{lemma:EDG}
Let $\e>0, \a\in(0,\frac12)$. 
Then there is a constant $K_N>0$ and a polynomial function $\chi'$ with
\be
\sup_{|s|\leq N^{\a}}|\chi'(s)|=O_\P\parav{\frac{1}{N^{\frac12-\a}}}\,,
\ee
such that
\be
\prod_{b\neq a}\edg\bi (s)
 =K_N\left(1+\chi'\ai(s)\right)\,.
\ee 
Moreover, $\chi'\ai$ depends polynomially from the entries of $A$. 
\end{Lem}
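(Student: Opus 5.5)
The plan is to take logarithms of the product and expand. Each factor is $\edg\bi(s)=1+\frac{1}{6}\kappa\bi H_3(u\bi(s))$, with $\kappa\bi:=P\bi/(\hat v\zzero\bi)^{3/2}$ and $u\bi(s):=(s-\hat x\zzero\bi)/\sqrt{\hat v\zzero\bi}$. By \eqref{eq:fine-vv} we have $\hat v\zzero\bi=O_\P(m)$; more precisely $\hat v\zzero\bi\asymp V/A_{bi}^2$, which is typically of order $m$. By \eqref{eq:xx-hat}, $\hat x\zzero\bi=y_b/A_{bi}=O_\P(\sqrt m)$, so $u\bi(s)$ is $O_\P(1)$ uniformly in $|s|\le N^\a$. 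By \eqref{eq:claimP3} and \eqref{eq:K}, each $\kappa\bi$ is $O_\P(N^{-1/2})$ uniformly in $b$. Hence every correction term $\frac16\kappa\bi H_3(u\bi)$ is uniformly small, and on a high-probability event we may write $\log\edg\bi=\frac16\kappa\bi H_3(u\bi)+R\bi$ with $|R\bi|\lesssim \kappa\bi^2H_3(u\bi)^2$.

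Next expand $H_3(u\bi(s))=u^3-3u$ as a polynomial in $s$:
\[
\sum_{b\ne a}\tfrac16\kappa\bi H_3(u\bi(s))=c_0+c_1 s+c_2 s^2+c_3 s^3 .
\]
The coefficients are
\begin{align*}
c_3&=\tfrac16\sum_b P\bi/(\hat v\zzero\bi)^3,\\
c_2&=-\tfrac12\sum_b \hat x\zzero\bi P\bi/(\hat v\zzero\bi)^3,\\
c_1&=\tfrac12\sum_b (\hat x\zzero\bi)^2P\bi/(\hat v\zzero\bi)^3-\tfrac12\sum_bP\bi/(\hat v\zzero\bi)^2,
\end{align*}
and $c_0$ is $s$-independent. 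The $s$-independent part $c_0$, together with the $s$-independent part of $\sum_b R\bi$, is absorbed into $K_N:=\exp(c_0+\dots)$.

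Lemma \ref{lemma:Pprob} bounds these coefficients directly. The second estimate in \eqref{eq:Pprob2} (the sum over factor indices, applied with the roles of the labels as needed) gives $c_3=O_\P(N^{-3/2})$. From \eqref{eq:Pprob1}, $c_2=O_\P(N^{-1})$, and from \eqref{eq:Pprob1} and \eqref{eq:Pprob2}, $c_1=O_\P(N^{-1/2})$. For $|s|\le N^\a$ this yields
\[
|c_1s+c_2s^2+c_3s^3|\lesssim N^{\a-1/2}+N^{2\a-1}+N^{3\a-3/2}=O_\P(N^{\a-1/2}).
\]
For the quadratic remainder, write $\kappa\bi^2H_3^2$ as a polynomial in $s$ with coefficients $\sum_b P\bi^2(\hat x\zzero\bi)^k/(\hat v\zzero\bi)^{3+\ell}$. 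Using $|A_{bi}^3P\bi|$ from \eqref{eq:claimP3}, these coefficients are bounded crudely by $O_\P(N^{-1})$ times the appropriate powers of $N^{-1/2}$ attached to each power of $s$. So the remainder is $O_\P(N^{-1}(1+N^{\a-1/2}|s|^0)^6)$, which is also $O_\P(N^{\a-1/2})$.

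Finally set $1+\chi'\ai(s):=\exp(\text{the }s\text{-dependent terms})$. Since the exponent is $o(1)$ uniformly on $|s|\le N^\a$, we get $|\chi'|\lesssim|\text{exponent}|=O_\P(N^{\a-1/2})$. The statement asks for $\chi'$ polynomial in $s$ and in the entries of $A$. To obtain this, I would instead expand $\prod_b(1+\frac16\kappa\bi H_3)$ directly via elementary symmetric sums, which are polynomial, rather than exponentiating. The same coefficient bounds control the $k$-fold terms by $(CN^{\a-1/2})^k/k!$-type estimates; the degree is capped at $3(m-1)$, so the sum converges and gives the same bound.

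The main obstacle is the cancellation in $c_1$, and more generally in the cross terms of the product. The naive bound gives $\sum_b|\kappa\bi||u\bi|\sim m\cdot N^{-1/2}$, which is far too large. The whole argument therefore rests on the signed-sum estimates of Lemma \ref{lemma:Pprob}, whose proof uses the symmetry of $A_{bi}$ and the independence of $P\bi A_{bi}^3$ from $A_{bi}$. I would also need to check that the second-order (and higher elementary-symmetric) terms inherit enough cancellation. If they do not, I would fall back on the crude bound $\sum_b\kappa\bi^2=O_\P(1)\cdot N^{-1}\cdot m/N$, which is still acceptable.
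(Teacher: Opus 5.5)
Your argument is correct in substance, but it is organised differently from the paper's.

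\textbf{Comparison with the paper.} The paper writes each factor as $\edg\bi(s)=1+K\bi+S\bi(s)$, i.e.\ the constant part plus the $s$-dependent part of the Hermite correction. It sets $K_N=\prod_{b\ne a}(1+K\bi)$ and bounds $\chi'=\prod_{b\ne a}\bigl(1+S\bi/(1+K\bi)\bigr)-1$ by $\sum_{k\ge1}\bigl(2\sup_{|s|\le N^\a}\sum_{b\ne a}|S\bi(s)|\bigr)^k$. This needs the $s$-dependent parts to be small in \emph{absolute} sum, which is slightly more than the signed statement of Lemma~\ref{lemma:Pprob}. It does hold: the appendix proof of that lemma actually bounds absolute sums, and directly $|S\bi(s)|\lesssim N^{C\e}|\kappa\bi|\,|s|\,|A_{bi}|$, so $\sum_b|S\bi|\lesssim N^{\a-1/2+C\e}$.

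Your logarithmic linearisation instead uses Lemma~\ref{lemma:Pprob} exactly as stated, through the signed coefficients $c_1,c_2,c_3$. It pays for this with an $\ell^2$ bound on $\sum_b\kappa\bi^2$ for the quadratic remainder. The paper's factorisation is shorter and gives the polynomial form at once. For the same reason, the ``main obstacle'' you describe is not one: once $\kappa\bi$ has its correct size, neither the $s$-dependent parts nor the cross terms of the product need any cancellation.

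\textbf{Points to fix.}

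First, $\kappa\bi=O_\P(N^{-1/2})$ is too weak for your remainder; it only gives $\sum_b\kappa\bi^2=O_\P(1)$. Since $\kappa\bi=-\sign(A_{bi})\,w\bi\bigl(\sum_{j\ne i}A_{bj}^2v\zzero\jb\bigr)^{-3/2}$, what \eqref{eq:claimP3} actually gives is $\max_b|\kappa\bi|\lesssim N^{-1+\e}$. Hence $\sum_b|R\bi(s)|\lesssim N^{-1+C\e}$ uniformly on $|s|\le N^\a$. This is what your final fallback tacitly uses.

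Second, the uniform bound on $u\bi(s)$ should come from the ratio $\hat x\zzero\bi/(\hat v\zzero\bi)^{1/2}$, which does not involve $|A_{bi}|$. Do not derive it from $\hat x\zzero\bi=O_\P(\sqrt m)$ and $\hat v\zzero\bi\asymp m$ separately. Those need not hold uniformly in $b$; for Gaussian entries, for instance, $\min_b|A_{bi}|$ is of order $m^{-3/2}$.

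Third, the detour through elementary symmetric sums is unnecessary, and as phrased it is unjustified. Bounding $e_k$ by $(\sum_b|x_b|)^k/k!$ uses exactly the absolute sums you meant to avoid. Moreover, with the constant parts included, $\sum_b|x_b|$ is of order one, not small. Instead, simply take $K_N:=\prod_{b\ne a}\edg\bi(0)>0$. Then $1+\chi'\ai(s):=K_N^{-1}\prod_{b\ne a}\edg\bi(s)$ is a polynomial in $s$. On $|s|\le N^\a$ it equals $\exp\bigl(c_1s+c_2s^2+c_3s^3+\sum_{b\ne a}(R\bi(s)-R\bi(0))\bigr)$, which gives the bound directly.
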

\begin{proof}
We write
\bea
\frac{1}{3!}\frac{P\ai }{(\hat v^{(0)}\ai )^{\frac32}}H_3\left(\frac{s-\hat x^{(0)}\ai }{\sqrt{\hat v^{(0)}\ai }}\right)
&=&\frac{1}{3!}\frac{P\ai }{(\hat v^{(0)}\ai )^{3}}s^3
-\frac{\hat x^{(0)}\ai }{2}\frac{P\ai }{(\hat v^{(0)}\ai )^3}s^2\label{eq:somma2}\\
&+&\left(\frac{(\hat x^{(0)}\ai )^2}{2}\frac{P\ai }{(\hat v^{(0)}\ai )^{3}}-\frac{1}{2}\frac{P\ai }{(\hat v^{(0)}\ai )^{2}}\right)s\label{eq:somma3}\\
&-&\frac{(\hat{x}^{(0)}\ai )^2}{6}\frac{P\ai }{(\hat{v}^{(0)}\ai )^3}-\frac{\hat{x}\ai ^{(0)}}{2}\frac{P\ai }{(\hat{v}\ai ^{(0)})^2}\label{eq:somma4}\,. 
\eea

Set for brevity
$$
S\ai (s)\coloneqq \eqref{eq:somma2}+\eqref{eq:somma3}\,,\qquad K\ai \coloneqq \eqref{eq:somma4}\,.
$$
By Lemma \ref{lemma:Pprob} and triangular inequality we have
\be\label{eq:affatica-1}
\absv{\sum_{b\neq a}S\bi }\lesssim 
\frac{|s|^3}{N^{\frac32-\e}}+
\frac{s^2}{N^{1-\e}}+\frac{|s|}{N^{\frac12-\e}}
\ee
and therefore
\be\label{eq:affatica}
\sup_{|s|\leq N^{\a}}\absv{\sum_{b \neq a}S\bi }\lesssim 
\frac{1}{N^{\frac12-\a-\e}}
\ee
with probability greater than $1-e^{-N^\e}$.

Moreover by Remark \ref{rmk:K} and the triangular inequality, we obtain that on the same event of probability larger than $1-e^{-cN^{\e}}$ it holds that
\be\label{eq:affatica-aridaje}
|K\ai |\leq \frac{C}{N^{\frac{1}2-\e}}\,.
\ee

We work from now on on that event of probability larger than $1-e^{-cN^{\e}}$. 
Next, we note that thanks to \eqref{eq:affatica-aridaje} we can assume that for $N$ large enough $\inf_{b\neq a}|1+K\bi |>\frac12$. We write
\be
\prod_{b\neq a}\left(1+K\bi +S\bi (s)\right)
=\prod_{b\neq a}\left(1+K\bi \right)\prod_{b\neq a}\left(1+\frac{S\bi }{1+K\bi }\right)\nn\,.
\ee
We identify the constant $K_N$ with the first product on the r.h.s. above: 
\be
K_N\coloneqq \prod_{b\neq a}\left(1+K\bi \right)\,.
\ee
Also, we write
\be
\prod_{b\neq a}\left(1 +\frac{S\bi (s)}{1+K\bi }\right)=:1+\chi'(s)\,,
\ee
where
\bea
\sup_{|s|\leq N^{\a}}|\chi'(s)|&\leq& \sup_{|s|\leq N^{\a}}\sum_{k\in[m]}\left(\sum_{b\,:\,|1+K\bi |\geq1/2}\left|\frac{S\bi }{1+K\bi }\right|\right)^k\nn\\
&\leq&\sum_{k\geq1}\left(2\sup_{|s|\leq N^{\a}}\sum_{a\neq b}\left|S\bi \right|\right)^k=O_\P\parav{\frac{1}{N^{\frac12-\a}}}\,,\nn
\eea
by (\ref{eq:affatica}). Finally the fact that $\chi'\ai$ depends polynomially on the entries of $A$ is clear from the construction above.
\end{proof}

\begin{Lem}\label{lemma:prod}
Let $\a\in(0,\frac12)$.
There is a continuous function
$\Tilde{\chi}\ai :\mathbb{R} \rightarrow \mathbb{R}$ such that 
\be
\Tilde{\chi}\ai = O_\P\parav{\frac{\delta}{N^{\frac12-\a}}}\,,
\ee 
a constant $K_{ib}(m,N)>0$ and a function $|\Upsilon_{m,N}|\leq 1$ such that
\be\label{eq:prod}
\prod_{b \neq a}\hat\nu^{(0)}\bi (s)=K_{ia}(m,N)\phi_{\mu\oone _{\ia}, \s\oone _{\ia}}(s)\left(1+\Tilde{\chi}_{\ai}\right)\ind{|s|\leq LN^{\a}}+\Upsilon_{m,N}\ind{|s|\geq N^{\a}}\,.
\ee
Moreover, $\Tilde\chi\ai$ depends continuously from the entries of $A$. 
\end{Lem}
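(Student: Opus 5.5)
The plan is to split $\R$ into the bulk region $|s|\leq N^{\a}$, where the two preceding lemmas apply, and the complementary tail region, where only a crude bound is needed. In the bulk, Lemma \ref{lemma:multiplicative} gives the factorisation
\[
\prod_{b\neq a}\hat\nu^{(0)}\bi(s)=\prod_{b\neq a}\phi_{\hat x^{(0)}\bi,\hat v^{(0)}\bi}(s)\,\prod_{b\neq a}\edg\bi(s)\,\prod_{b\neq a}(1+\chi\bi(s))\,.
\]
This holds on an event of probability at least $1-e^{-cN}$, uniformly in $b$ after a union bound over $m$ indices. Three products then have to be handled in turn.

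\emph{First product.} The Gaussian factors multiply exactly, as in the proof of Lemma \ref{lemma:gaussianfacile}, to a constant times $\phi_{\mu\oone\ia,\s\oone\ia}(s)$, with $\mu\oone\ia,\s\oone\ia$ given by \eqref{eq:musigmat=1}. The prefactor collects the normalisations and the cross terms $\exp\big(-\tfrac12\sum_b (\hat x\bi^{(0)})^2/\hat v\bi^{(0)}+\tfrac12(\mu\oone\ia)^2/\s\oone\ia\big)$. It does not depend on $s$, so it goes into $K_{ia}(m,N)$.

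\emph{Second product.} Lemma \ref{lemma:EDG} gives $K_N(1+\chi'\ai(s))$ with $\sup_{|s|\leq N^\a}|\chi'\ai|=O_\P(N^{-1/2+\a})$. The constant $K_N$ also goes into $K_{ia}(m,N)$.

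\emph{Third product.} Since $|\chi\bi|\lesssim N^{-3/2}$ and there are $m=\d N$ factors, $\log\prod_b(1+\chi\bi)$ is bounded by $C\d N^{-1/2}$. Hence $\prod_b(1+\chi\bi)=1+O(\d N^{-1/2})$ uniformly on $|s|\leq N^\a$.

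Putting the three together, I would set
\[
1+\tilde\chi\ai:=(1+\chi'\ai)\prod_b(1+\chi\bi)\,,
\]
which gives $\tilde\chi\ai=O_\P(\d N^{-1/2+\a})$. The $\d$ comes from counting $m=\d N$ factors, and it also enters the constant in Lemma \ref{lemma:EDG} through $\sum_{b\neq a}$. Continuity of $\tilde\chi\ai$ in the entries of $A$ follows from the continuity of $\chi\bi$ (Lemma \ref{lemma:multiplicative}), the polynomial dependence of $\chi'\ai$ (Lemma \ref{lemma:EDG}), and the continuity of $\hat x^{(0)},\hat v^{(0)}$ (using $\P(A_{bi}=0)=0$).

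In the tail region $|s|\geq N^\a$, I would set $\Upsilon_{m,N}(s):=\prod_{b\neq a}\hat\nu^{(0)}\bi(s)\ind{|s|\geq N^\a}$. Then $|\Upsilon_{m,N}|\leq 1$ follows directly from $\sup_s\hat\nu^{(0)}\bi\leq 1$ in Lemma \ref{lemma:multiplicative}. There is a small mismatch between the cutoffs $LN^\a$ and $N^\a$ in \eqref{eq:prod}. I would resolve it by taking $L\leq 1$ and absorbing the annulus into the $\Upsilon$ term, or, if $L>1$ is intended, by noting that Lemmas \ref{lemma:multiplicative} and \ref{lemma:EDG} still hold on $|s|\leq LN^{\a}$ with the same exponents after enlarging $\a$ slightly.

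The main obstacle is making these bounds hold simultaneously on one event and uniformly in $s$. The Gaussian-product identity is exact, but $\hat x\bi^{(0)}=y_b/A_{bi}$ can be of order $\sqrt N$. One therefore has to check that the random constants do not vanish or blow up in a way that breaks the factorised form. Here I would use $\inf_b|1+K\bi|>1/2$ from the proof of Lemma \ref{lemma:EDG}, which ensures $K_N>0$. I would also intersect the high-probability events of Lemmas \ref{lemma:multiplicative}, \ref{lemma:EDG} and \ref{lemma:Pprob}; a union bound keeps the total failure probability below $e^{-cN^\e}$.
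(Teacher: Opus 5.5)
Your proposal is correct and follows essentially the same route as the paper: split at $|s|=N^{\alpha}$, factor each $\hat\nu^{(0)}_{b\to i}$ via Lemma~\ref{lemma:multiplicative}, multiply the Gaussians exactly into a constant times $\phi_{\mu^{(1)}_{i\to a},\sigma^{(1)}_{i\to a}}$, handle the Edgeworth product by Lemma~\ref{lemma:EDG} and $\prod_{b\neq a}(1+\chi_{b\to i})$ by counting $m$ factors of size $N^{-3/2}$, and bound the tail product by $1$ using $\sup_s\hat\nu^{(0)}_{b\to i}\leq 1$. Your remarks on why $K_N>0$ and on the $LN^{\alpha}$ versus $N^{\alpha}$ cutoff are sensible additions that the paper leaves implicit; its proof effectively takes $L=1$.
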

\begin{proof}
By Lemma \ref{lemma:multiplicative},
 we write for $\a\in(0,1/2)$
\be
\hat\nu^{(0)}\ai (s)=\phi_{\hat x^{(0)}\ai , \hat v^{(0)}\ai }\left(s\right)\edg\ai (s)\left(1+\chi\ai (s)\right)1_{\{|s|\leq N^{\a}\}}+\hat\nu^{(0)}\ai (s)1_{\{|s|\geq N^{\a}\}}\,,
\ee
where the functions $\chi\ai (s)$ are bounded by $1/N^{3/2}$ for $|s|\leq N^{\a}$. 
Thus 
\bea
\prod_{b \neq a}\hat\nu^{(0)}\bi (s)&=&\prod_{b \neq a}\phi_{\hat x^{(0)}\bi , \hat v^{(0)}\bi }\left(s\right)\edg\bi (s)\left(1+\chi\bi (s)\right)1_{\{|s|\leq N^{\a}\}}\label{eq:NONUpsilon}\\
&+&\prod_{b\neq a}\hat\nu^{(0)}\bi (s)1_{\{|s|\geq N^{\a}\}}\label{eq:Upsilon}\,.
\eea
In the term (\ref{eq:Upsilon}) we simply bound each term of the product by one, thanks to Lemma \ref{lemma:multiplicative}. We get
\be
|\eqref{eq:Upsilon}|\leq 1_{\{|s|\geq N^{\a}\}}\,. 
\ee
Thus we have to evaluate the product in (\ref{eq:NONUpsilon}). First, bearing in mind \eqref{eq:musigmat=1}, we compute
\bea
&&\prod_{b \neq a}\phi_{\hat x^{(0)}\bi , \hat v^{(0)}\bi }\left(s\right)\nn\\
&=&\prod_{b \neq a}(2\pi \hat v^{(0)}\bi )^{-\frac 12}\exp\left(\frac12\left(\frac{(\mu\oone \ib )^2}{\s\oone \ib}-\sum_{b \neq a}\frac{(\hat x^{(0)}\bi )^2}{\hat v^{(0)}\bi }\right)\right)
\sqrt{2\pi\s\oone \ib }\phi_{\mu\oone \ib , \s\oone \ib }(s)\nn\\
&=:&\widetilde K_{ib}(m,N)\phi_{\mu\oone \ib , \s\oone \ib }(s)\,. 
\eea
Moreover,
\be
\prod_{b \neq a}\left(1+\chi\bi (s)\right) =: 1+{\chi}''\ai(s) 
\ee
where clearly $|{\chi}''\ai|\lesssim\sum_{b\neq a}\chi\bi\lesssim \frac{\delta}{\sqrt N}$.
Finally the product of $\edg\ai $ terms is evaluated by Lemma \ref{lemma:EDG}. 
To conclude, we set
\be
(1+\chi\ai')(1+{\chi}''\ai)=:1+\Tilde{\chi}\ai\,, 
\ee
where $\chi'$ was estimated in Lemma \ref{lemma:EDG}. 
\end{proof}

\begin{proof}[Proof of Proposition \ref{prop:Gaussian-approx}]

Combining Lemma \ref{lemma:prod} and the BP equations \eqref{eq:BP} we have
\be\label{eq:reprt=1}
\int F(s)\nu\ib\oone \dd s=\frac{\mathtt A[F]_{2,\b,i\to b}+\mathtt A[F\Tilde{\chi}\ai]_{2,\b,i \to b}+\D(F)}{1+\mathtt A[\Tilde{\chi}\bi]_{2,\b, i\to b }+\D(1)}\,,
\ee
where 
\be\label{eq:defDelta}
\Delta(F)\coloneqq \frac{\int_{|s|\geq N^{\a}}\pi_{\b, q}(s) F(s)\left(\Upsilon_{m,N}+K_{ib}(m,N)(1+\Tilde{\chi}\bi (s))\phi_{\mu\ti\ib , \s\ti\ib }(s)\right)\dd s}{K_{ib}(m,N)\int  \pi_{\b, q}(s) \phi_{\mu\ti\ib, \sigma\ti\ib}\dd s}\nn\,.
\ee
Since $\Upsilon_{m,N}$ is bounded and $\tilde{\chi}\bi$ is small, we have that for some absolute constants $c,C>0$
\be\label{eq:deltabound}
|\D(F)|\leq C \mathtt A[F1_{\{|s|\geq N^\a\}}]\leq \sqrt{\mathtt A[F^2]} e^{-c\b N^{\a q}}\,.
\ee
Therefore
\bea
\absv{\int F(s)\nu\ib \oone ds-\mathtt A[F]}&\leq& \absv{\mathtt A[F\tilde \chi\bi]-\mathtt A[F]\mathtt A[\tilde \chi\bi]}+|\D(F)|+|\D(1)|\nn\\
&\leq& \frac{C}{N^{\frac12-\a-\e}}\mathtt A[|F|]+Ce^{-c\b N^{\a q}}\,. \nn
\eea
with probability at least $1-e^{-N^{\e}}$. Choosing $\a=\e$ we recover the assertion. 
\end{proof}
\begin{corollary}\label{cor:analiticity}
For the same choice of the element of the matrix $A$ of Proposition \ref{prop:Gaussian-approx},
the function $\l\in\R\to\int e^{i\l s}\nu\ib \oone \dd s$ is analytic and it depends continuously on the entries of the matrix $A$. 
\end{corollary}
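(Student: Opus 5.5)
The plan is to read the characteristic function directly off the representation of $\nu\ib\oone$ obtained in the proof of Proposition \ref{prop:Gaussian-approx}. By the second BP equation,
\[
\nu\ib\oone(s)=\frac{\pi_{\b,q}(s)\prod_{c\neq b}\hat\nu\zzero_{c\to i}(s)}{\int \pi_{\b,q}(u)\prod_{c\neq b}\hat\nu\zzero_{c\to i}(u)\,\dd u}.
\]
Lemma \ref{lemma:prod} splits the numerator into two pieces. On $\{|s|\leq N^\a\}$ it is $K_{ib}(m,N)\phi_{\mu\oone\ib,\s\oone\ib}(s)(1+\tilde\chi\bi(s))$, and on $\{|s|\geq N^\a\}$ it is bounded by $1$. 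Multiplying by $\pi_{\b,q}(s)$, the whole numerator is therefore dominated by
\[
C\,\pi_{\b,q}(s)=C\,Z'(\b,q)e^{-\b|s|^q}
\]
on the same event of probability at least $1-e^{-N^\e}$. The denominator is strictly positive, so it causes no difficulty.

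\textbf{Analyticity.} Write
\[
\varphi(\l)=\int e^{i\l s}\nu\ib\oone(s)\,\dd s,
\]
and let $\l$ range over complex values. The integrand is entire in $\l$ and satisfies
\[
|e^{i\l s}\nu\ib\oone(s)|\leq e^{|\mathrm{Im}\,\l||s|}\,\nu\ib\oone(s)\leq C'e^{|\mathrm{Im}\,\l||s|-\b|s|^q}.
\]
For $q>1$, and in particular for the case $q=2$ of interest, this is integrable locally uniformly in $\l\in\C$. Morera's theorem together with Fubini, or differentiation under the integral sign, then shows that $\varphi$ extends to an entire function. Its restriction to $\R$ is therefore analytic. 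Equivalently, all moments are finite with $\int|s|^k\nu\ib\oone\lesssim \Gamma(k/q)\b^{-k/q}$, so the Taylor series of $\varphi$ has infinite radius.

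The case $q\le 1$ is not needed for $p=2$, so I will state the corollary for $q>1$. If one wants $q=1$, analyticity still holds, but only in the strip $|\mathrm{Im}\,\l|<\b$.

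\textbf{Continuous dependence on $A$.} All the objects involved depend continuously on $A$:
\begin{itemize}
\item $\mu\oone\ib$ and $\s\oone\ib$ are explicit continuous functions of $\hat x\zzero,\hat v\zzero$, hence of $A$, since $A_{ci}\neq0$ almost surely;
\item $\tilde\chi\bi$ depends continuously on $A$ by Lemma \ref{lemma:prod};
\item $K_{ib}(m,N)$ is continuous in $A$.
\end{itemize}
More directly, $\hat\nu\zzero_{c\to i}(s)$ is the density of a weighted sum of independent variables with bounded continuous densities, with weights $A_{cj}/A_{ci}$. It is therefore continuous in those weights pointwise in $s$, by dominated convergence in the convolution integral, using boundedness of $\nu\zzero$.

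Hence the numerator $\pi_{\b,q}(s)\prod_{c\neq b}\hat\nu\zzero_{c\to i}(s)$ is pointwise continuous in $A$. By Lemma \ref{lemma:multiplicative} it is dominated by $\pi_{\b,q}(s)$, uniformly on a neighbourhood of the given realisation of $A$. Dominated convergence then gives continuity in $A$ of the numerator integrated against $e^{i\l s}$, and of the normalisation, which is bounded away from zero near the given $A$. The quotient $\varphi$ is therefore continuous in $A$, locally uniformly in $\l$.

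\textbf{Main obstacle.} The delicate step is making the domination uniform in a neighbourhood of $A$. The bound $\sup_s\hat\nu\zzero\leq1$ comes from the local limit theorem, which holds only for $N$ large and on a high-probability event. I would handle this by noting that the inequalities defining that event are open conditions on $A$, namely bounds on $\hat v\zzero$ and on $P\ai$. A small perturbation of $A$ therefore stays inside the event, and the constants stay uniform there.
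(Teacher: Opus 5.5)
Your proof is correct, but it takes a different route from the paper. The paper inserts $F(s)=e^{cs^2}$ into the representation \eqref{eq:reprt=1}. Using the smallness of $\tilde\chi$ and \eqref{eq:deltabound}, it gets $\int e^{cs^2}\nu\ib\oone(s)\,\dd s\le 3\,\mathtt A[e^{cs^2}]_{2,\b,i\to b}$. This is a Gaussian moment bound, valid for $N$ large on the good event and uniform in $N$, and analyticity follows from it; for continuity the paper simply cites Lemma \ref{lemma:prod}. You instead dominate the density pointwise, $\nu\ib\oone\le\pi_{\b,q}/Z$ with $Z$ the BP normalisation, and extend the characteristic function to an entire function. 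Your route is more elementary: it uses no Edgeworth or local-limit input and does not need $N$ large, only $Z>0$, which the BP update presupposes anyway. The cost is a realisation-dependent prefactor $Z'(\b,q)/Z$ that may degenerate as $N$ grows; this is harmless for the qualitative statement, and your restriction to $q\geq1$ is appropriate. Your continuity argument via the convolution structure is also more complete than the paper's, since Lemma \ref{lemma:prod} only controls the window $|s|\le N^{\a}$. Finally, the obstacle you single out is easier than you fear. Since $\hat\nu^{(0)}_{c\to i}$ is the density of $(y_c-\sum_{j\neq i}A_{cj}\xi_{j\to c})/A_{ci}$, and convolving with a probability measure cannot increase a density's supremum, you have deterministically $\sup_s\hat\nu^{(0)}_{c\to i}(s)\le|A_{ci}|\,\|\nu^{(0)}_{j\to c}\|_\infty/|A_{cj}|$ for any fixed $j\neq i$. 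This bound is locally uniform in $A$, so the domination by $C\pi_{\b,q}$ on a neighbourhood of the given realisation needs neither the local limit theorem nor openness of the good event.
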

\begin{proof}
The continuous dependence on the entries of $A$ follows directly from the analogue property of $\nu\ib \oone$ given by Lemma \ref{lemma:prod}.

Consider $F(s)=e^{cs^2}$ for $c>0$ suitably small. Starting by (\ref{eq:reprt=1}) (with the same definition of $\D$ as \eqref{eq:defDelta}) we have
\bea
\int e^{cs^2}\nu\ib \oone \dd s&=&\frac{\mathtt A[e^{cs^2}]_{2,\b, i\to b }+\mathtt A[e^{cs^2}\tilde\chi]_{2,\b, i\to b }+\D(e^{cs^2})}{1+\mathtt A[\chi]_{2,\b, i\to b }+\D(1)}\,,\nn\\
&\leq&\mathtt A[e^{cs^2}]_{2,\b, i\to b }+\mathtt A[e^{cs^2}\tilde\chi]_{2,\b, i\to b }+\D(e^{cs^2})\nn\\
&\leq&3\mathtt A[e^{cs^2}]_{2,\b, i\to b }
\eea
for $N$ sufficiently large. We used that $\tilde\chi$ is small as $N\to\infty$ and \eqref{eq:deltabound}, since $A[e^{2cs^2}]_{2,\b, i\to b }$ is finite for $c$ small enough. The density $\nu\ib \oone $ must then decay at least exponential fast at infinity, whence the first part of the assertion. 
\end{proof}


\section{Successive Gaussian approximations}\label{sect:successive}

In this section we will work with the $\ell_2$ prior, that is
\be
\pi_{\b,2}(s)\coloneqq Z(\b,2)e^{-\b|s|^2}\,,\qquad \b>0,
\ee
where $Z(\b,2)$ is a normalisation constant. 
In analogy with \eqref{eq:musigmat=1} we put
\be\label{eq:musigmat}
\mu\ti\ib \coloneqq \frac{\sum_{a\neq b}\frac{\hat x^{(t-1)}\ia }{\hat v^{(t-1)}\ia }}{\sum_{a\neq b}\frac{1}{\hat v^{(t-1)}\ia }} \,,\qquad \s\ti\ib \coloneqq  \left(\sum_{a\neq b}\frac{1}{\hat v^{(t-1)}\ia }\right)^{-1}
\ee
and
$$
[F]_{\mu\ia\ti, \s\ia\ti}:=\frac{\int  \pi_{\b, 2}(s)F(s)\phi_{\mu\ia\ti, \s\ia\ti}(s)\dd s}{\int \pi_{\b, 2}(s)\phi_{\mu\ia\ti, \sigma\ia\ti}(s) \dd s}\,. 
$$
The main result of this section follows. 

\begin{proposition}\label{prop:gaussapprox-t}
Let $k\in\N$. Set
\be
\mathcal X\ia\ti:=\int ds s^k\nu\ia \ti(s)-[s^k]_{\mu\ti\ia, \s\ti\ia}\,.
\ee
For all $t\geq2$ it holds that $\mathcal X\ia\ti$ depends continuously on the entries on $A$ and
\be
\mathcal X\ia\ti= O_\P\parav{\frac{k!}{\sqrt N}}\,.
\ee
\end{proposition}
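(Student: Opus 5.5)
We argue by induction on $t\geq2$, using Proposition \ref{prop:Gaussian-approx} and Corollary \ref{cor:analiticity} as the base input at $t=1$. The structure mirrors the $t=1$ case, but the key point is that for $t\geq1$ the input densities $\nu\ti\ja$ are already close to Gaussian. Indeed, at $t=1$, with $F(s)=s^k$, Proposition \ref{prop:Gaussian-approx} shows that the moments of $\nu\oone\ja$ agree with those of the tilted Gaussian $\pi_{\b,2}\phi_{\mu,\s}$ up to $O_\P(N^{-1/2})$. Since $q=2$, this tilted Gaussian is itself an exact Gaussian, and its cumulants of order $\geq3$ vanish. Consequently the cumulants of $\nu\oone\ja$ of order $3\leq r\leq k+3$ are $O_\P(N^{-1/2})$, and the constants grow at most like $r!$ because of the analyticity given by Corollary \ref{cor:analiticity}. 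The induction hypothesis at step $t$ is the same statement: the cumulants of order $r\geq3$ of $\nu\ti\ja$ are $O_\P(r!N^{-1/2})$, and $\nu\ti\ja$ depends continuously on $A$ with uniformly bounded exponential moments.

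\emph{Step 1: the hat-messages.} By \eqref{eq:xi_hat}, $\hat\nu\ti\ai$ is the law of $A_{ai}^{-1}(y_a-\sum_{j\neq i}A_{aj}\xi\ti\ja)$. Its third cumulant is
\[
P\ti\ai=-A_{ai}^{-3}\sum_{j\neq i}A_{aj}^3\kappa_3(\xi\ti\ja).
\]
The sum $\sum_j A_{aj}^3\kappa_3$ is now of order $N\cdot m^{-3/2}\cdot N^{-1/2}$ rather than $N\cdot m^{-3/2}$, so compared with the quantity $P\ai$ treated in Lemma \ref{lemma:Pprob} it carries an extra factor $N^{-1/2}$. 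We would apply the local CLT of Appendix \ref{app:LLT} (the version designed for $t\geq1$) exactly as in Lemma \ref{lemma:multiplicative}. This yields $\hat\nu\ti\ai=\phi_{\hat x\ti\ai,\hat v\ti\ai}(1+\text{Edgeworth}+\chi\ai)$ with $|\chi\ai|\lesssim N^{-3/2}$ on $|s|\lesssim\sqrt N$. The main caveat is that the $\xi\ti\ja$ are no longer independent of $A$. We would handle this by conditioning: the distribution of $\xi\ti\ja$ depends on $A$ only through continuous functionals, and the LLT needs only the cumulant bounds, which hold on the high-probability event from the induction hypothesis.

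\emph{Step 2: the products.} We repeat Lemma \ref{lemma:EDG} and Lemma \ref{lemma:prod}. Thanks to the extra $N^{-1/2}$ in $P\ti$, the Edgeworth correction summed over $b\neq a$ is crudely bounded by $m\cdot N^{-1/2}\cdot(\text{previous per-term size})$. This means we no longer need the delicate cancellations of Lemma \ref{lemma:Pprob}, and a triangle inequality suffices; this is the main simplification over $t=1$. The product then becomes $K\phi_{\mu\tti\ia,\s\tti\ia}(1+\tilde\chi)$ on $|s|\leq N^\a$, with $\tilde\chi=O_\P(N^{-1/2+\a})$, plus a tail term bounded by one. Multiplying by $\pi_{\b,2}$ and taking $F=s^k$ as in the proof of Proposition \ref{prop:Gaussian-approx} gives
\[
|\mathcal X\tti\ia|\leq [|s|^k]_{\mu,\s}\,O_\P(N^{-1/2+2\e})+e^{-c\b N^{2\a}}.
\]
Since $\mu,\s$ are $O_\P(1)$, the Gaussian moment $[|s|^k]$ is at most $C^k k!$, which accounts for the $k!$ in the statement. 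The same bound also propagates the cumulant control needed for the induction hypothesis at step $t+1$, and continuity in $A$ is inherited step by step.

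\emph{Main obstacle.} The hardest step will be justifying the LLT in Step 1 uniformly despite the dependence between $\xi\ti\ja$ and $A$. This requires quantitative uniform bounds on the characteristic functions of $\nu\ti\ja$ away from the origin, which can be extracted from the analyticity and Gaussian closeness obtained at the previous step. A secondary difficulty is tracking the $k!$ growth without losing powers of $N$ across iterations; this works as long as $t$ stays fixed.
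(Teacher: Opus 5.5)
There is a genuine gap: your Steps 1--2 need control of the \emph{location} of the hat-messages, namely $\max_b|\sum_{j\neq i}A_{bj}x\ti\jb|=O_\P(N^{\e})$, equivalently $|\hat x\ti\bi|\lesssim\sqrt m\,N^{\e}$. Nothing in your argument provides this.

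You use it in three places:
(i) to turn the LLT bulk $|s-\hat x\ti\bi|\le L\sqrt{\hat v\ti\bi}$ into $|s|\lesssim\sqrt N$, as in Lemma \ref{lemma:multiplicative};
(ii) in your triangle-inequality bound on the Edgeworth sums. The summand $\hat x^2P/\hat v^3$ has size $|A_{bi}|\,|y_b-\sum_{j\neq i}A_{bj}x\ti\jb|^2N^{-1}$, so the sum over $b$ is $O(N^{-1/2})$ only if that bracket is $O(N^\e)$. The crude bound $|\sum_jA_{bj}x\ti\jb|\lesssim\sqrt N$ gives $O(N^{1/2})$ instead. Your ``extra factor $N^{-1/2}$'' only brings the crude bound down to the size Lemma \ref{lemma:Pprob} obtained at $t=0$ through cancellations, so there is no slack;
(iii) for $\mu\tti\ia=O_\P(1)$, without which $[|s|^k]_{\mu,\s}\le C^kk!$ fails. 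Note that $\mu\tti\ia$ contains $\sum_{b\neq a}\sum_{j\neq i}A_{bi}A_{bj}x\ti\jb$.

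At $t=0$ all of this came from Hoeffding (Lemma \ref{lemma:simpel}, \eqref{eq:facille}), using that $\nu\zzero\ja$ is independent of $A$. For $t\ge2$ the coefficients $x\ti\jb$ depend on row $b$ of $A$, and eventually on $A_{bj}$ itself. This is exactly the obstruction the introduction calls prohibitive.

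Your conditioning remark addresses only the cumulants of the individual summands in the LLT, not this cross-sum. An induction hypothesis made of cumulant bounds for each $\nu\ti\ja$ separately cannot yield it. For general initialisation, boundedness of $\mu\ti\ia$ is essentially the content of Section \ref{sect:shadow}, which the paper proves after, and by means of, this proposition. Using it would require a joint induction with Proposition \ref{prop:shadow}, which you do not set up.

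The paper's route is designed so that these $\hat x$-weighted sums never appear. It does not rerun the Edgeworth machinery for $t\ge1$. Instead it inducts on the Fourier bound $\sup_\l|\upsilon\ia\ti(\l)|\le N^{-\frac12+\e(t)}$ of Lemma \ref{lemma:approx=Gauss=t} and feeds it into the near-Gaussian local limit theorem, Theorem \ref{prop:LLTPII}. That theorem gives $\hat\nu\ti\ai=\phi_{\hat x\ti\ai,\hat v\ti\ai}+\chi\ti\ai$ with an additive sup-norm error, with no Edgeworth term and no requirement on where $\hat x\ti\ai$ sits. The product over $a\neq b$ is then expanded by inclusion--exclusion. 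Moments are extracted only at the end, via analyticity (Lemma \ref{lemma:anal-t}) and the Cauchy estimate of Lemma \ref{lemma:unmomento}, and that is where the $k!$ comes from.

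A secondary point: Theorem \ref{lemma:local-limit-teorem}, which your Step 1 relies on, also needs bounds on $|\hat\nu_n(\l)|$ away from the origin that are uniform in $n$. Finitely many cumulants do not give these. You would at least have to carry Fourier sup-closeness (your Step 2 with $F=e^{i\l s}$) in the induction hypothesis, which is precisely the quantity the paper inducts on.
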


The key step towards the proof of Proposition \ref{prop:gaussapprox-t} is the following. 

\begin{Lem}\label{lemma:approx=Gauss=t}
Let $\l\in\R$ and
\be\label{eq:defupsilont}
\upsilon\ia \ti(\l)\coloneqq \int dse^{i\l s}\nu\ia \ti(s)-[e^{i\l s}]_{\mu\ti_{ia}, \s\ti_{ia}}
\ee
Then with probability $1-e^{-N^\e}$ it holds that 
\be\label{eq:Gauss-Fourier}
\sup_{\l\in\R}\absv{\upsilon\ia \ti(\l)}
\leq \frac{1}{ N^{\frac12-\e(t)}}\,,
\ee
where $\e(t)\coloneqq \e\sum_{k=0}^{t-1}2^{-k}$.
\end{Lem}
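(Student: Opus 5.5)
We prove the bound by induction on $t\geq1$. The base case $t=1$ comes from Proposition \ref{prop:Gaussian-approx} with $F(s)=e^{i\l s}$: since $|F|=1$, we get $\mathtt A[|F|]=1$, so $\sup_\l|\upsilon\ia\oone(\l)|\leq N^{-1/2+2\e}$. This matches the claim up to renaming $\e$, because $\e(1)=\e$. Corollary \ref{cor:analiticity} also tells us that the characteristic function of $\nu\ia\oone$ is analytic and that the density decays exponentially. These are the regularity inputs the local CLT of Appendix \ref{app:LLT} needs at the next step.

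For the induction step $t\to t+1$ we first handle $\hat\nu\ai\ti$. By Lemma \ref{lemma:facilemaleki}, its characteristic function is
\[
\widehat{\hat\nu\ai\ti}(\l)=e^{i\l y_a/A_{ai}}\prod_{j\neq i}\widehat{\nu\ja\ti}\Bigl(-\tfrac{A_{aj}}{A_{ai}}\l\Bigr).
\]
By the induction hypothesis, each factor equals the Fourier transform of the tilted Gaussian $\pi_{\b,2}\phi_{\mu\ja\ti,\s\ja\ti}$ up to an additive error of size at most $N^{-1/2+\e(t)}$. Since $p=2$, that tilted measure is exactly Gaussian, with variance $(2\b+1/\s\ja\ti)^{-1}$. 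Hence each factor is a Gaussian characteristic function plus an error of size $N^{-1/2+\e(t)}$.

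We cannot simply add up $N$ such errors. Instead we expand the product around the Gaussian one, writing each factor as $g_j(1+r_j)$. The factors $g_j$ are Gaussian characteristic functions, and the relative errors $r_j$ vanish to second order at $\l=0$, because the error terms have matching zeroth moment and, by construction, also matching first and second moments. This gives $|r_j(u)|\lesssim N^{-1/2+\e(t)}|u|^3$ for small $|u|$. With $u=A_{aj}\l/A_{ai}$, the row sums $\sum_j|A_{aj}|^3|\l/A_{ai}|^3$ are of order $N^{-1/2}|\l/A_{ai}|^3$. So near the Gaussian scale the accumulated relative error is $O(N^{-1/2+\e(t)})$, and it is in fact even smaller.

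After inverting the Fourier transform, as in the local CLT of Appendix \ref{app:LLT}, we obtain $\hat\nu\ai\ti=\phi_{\hat x\ai\ti,\hat v\ai\ti}(1+\chi\ai)$ with $|\chi\ai|\lesssim N^{-3/2+\e(t)/2}$ on $|s|\lesssim N^{\a}$. This is the same multiplicative form as Lemma \ref{lemma:multiplicative}, but now there is no Edgeworth correction at leading order, since the third cumulants are already Gaussian-matched to order $N^{-1/2}$. The halving of the exponent, $\e(t+1)=\e+\e(t)/2$, should come from balancing the truncation $|s|\leq N^{\a}$, with $\a$ of order $\e$, against the inherited error under the square-root loss in the Edgeworth-type inversion.

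Next we multiply over $b\neq a$, following Lemma \ref{lemma:prod}. This gives $\prod_{b\neq a}\hat\nu\bi\ti=K\,\phi_{\mu\ia\tti,\s\ia\tti}(1+\tilde\chi)$ on $|s|\leq N^\a$, with $\tilde\chi=O_\P(N^{-1/2+\e(t+1)})$ coming from $m$ factors each of size $N^{-3/2+\cdots}$, and with a bounded remainder outside that window. We then repeat the argument in the proof of Proposition \ref{prop:Gaussian-approx} with $F=e^{i\l s}$. The Gaussian prior $\pi_{\b,2}$ makes the tail contribution $\D(F)$ of size $e^{-c\b N^{2\a}}$, uniformly in $\l$ because $|F|=1$. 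This yields $\sup_\l|\upsilon\ia\tti(\l)|\leq N^{-1/2+\e(t+1)}$ on an event of probability at least $1-e^{-N^\e}$, after a union bound over the $mN$ edges and finitely many steps.

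The main obstacle is the local CLT step for $\hat\nu\ai\ti$. We need a \emph{relative}, multiplicative density error of order $N^{-3/2}$ on windows of size $N^{\a}$, but the induction only supplies uniform Fourier closeness. We also need control of the characteristic function away from the origin, which requires a uniform bound $\sup_{|\l|\geq\eta}|\widehat{\nu\ja\ti}(\l)|<1-c$ (the Cramér condition). We obtain this from the inductive Fourier approximation itself: the Gaussian part decays, and the error is $N^{-1/2+\e(t)}$, which is much smaller than $1$. We also need the independence of $\nu\ja\ti$ from $A_{aj}$, which fails for $t\geq3$. We will circumvent this by conditioning only on the bounds used, namely uniform smallness, moment matching and the Cramér bound. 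These hold simultaneously for all $j$ on a high-probability event, so the local CLT can be applied deterministically on that event.
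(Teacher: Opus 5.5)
The base case matches the paper's. Your induction step takes a different route: a multiplicative representation of $\hat\nu\ai\ti$ in the style of Lemma \ref{lemma:multiplicative}, followed by Lemma \ref{lemma:prod}, instead of the additive bound of Theorem \ref{prop:LLTPII} followed by the binomial expansion over subsets $I$. The step you yourself flag as the main obstacle is where it fails.

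With $|r_j(u)|\lesssim N^{-1/2+\e(t)}|u|^3$ and $\sum_j|A_{aj}|^3\asymp N^{-1/2}$, your own estimate only gives an accumulated relative error of order $N^{-1+\e(t)}$ at the Gaussian scale $|\l|\asymp|A_{ai}|$ of $\hat\nu\ai\ti$. Equivalently, the induction hypothesis alone bounds the standardized third cumulant $\sum_{j\neq i}A_{aj}^3\kappa_3(\nu\ja\ti)/(\sum_{j\neq i}A_{aj}^2v\ja\ti)^{3/2}$ of $\hat\nu\ai\ti$ only by $O(N^{-1+\e(t)})$. The hypothesis controls $|\kappa_3(\nu\ja\ti)|$, not the signs in the sum over $j$. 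So there \emph{is} an Edgeworth term, at exactly the order that matters.

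After Fourier inversion, $N^{-3/2+\e(t)}$ is the size of the \emph{additive} error. Relative to $\phi_{\hat x\ai\ti,\hat v\ai\ti}$, whose height is $\asymp N^{-1/2}$, this is a relative error $N^{-1+\e(t)}$, not $N^{-3/2+\e(t)/2}$. The remark about balancing the truncation does not supply the missing factor $N^{-1/2}$. With relative errors of size $N^{-1+\e(t)}$, the product over the $m-1\asymp\d N$ factors $b\neq a$ is only $1+O(N^{\e(t)})$, so the induction does not close. Switching to the paper's additive formulation does not avoid this on its own: for the same reason, the averages $\meanv{\prod_{a\in I}\phi_{\hat x\ai\ti,\hat v\ai\ti}}_I$ in that expansion are at most $(C\log N/N)^{|I|/2}$, not bounded below by a constant.

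The missing idea is that only the $s$-dependence of the relative error matters. Factors independent of $s$ are absorbed by the normalisation of $\nu\ib\tti$, exactly as the constants $K\ai$ are absorbed into $K_N$ in Lemma \ref{lemma:EDG}. Keep the cubic Edgeworth term and expand $H_3$ in powers of $s$ as in \eqref{eq:somma2}--\eqref{eq:somma4}. Since $\hat\nu\bi\ti$ lives on the scale $\sqrt{\hat v\bi\ti}\asymp\sqrt N$, the coefficients of $s$, $s^2$, $s^3$ are $O(N^{-3/2+\e(t)})$, $O(N^{-2+\e(t)})$, $O(N^{-5/2+\e(t)})$ per factor. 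The plain triangle inequality over $b$ then gives $O(N^{-1/2+\e(t)+\a})$ uniformly on $|s|\le N^{\a}$. The remainder beyond the cubic term is already $O(N^{-3/2+\e(t)})$ relative, because $\sum_jA_{aj}^4\asymp N^{-1}$. No cancellation of the type in Lemma \ref{lemma:Pprob} is needed at $t\ge2$, which fits your plan of working deterministically on a good event.

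Two further steps need repair.
\begin{enumerate}
\item \textbf{Tail.} The mass of $\pi_{\b,2}\prod_{b\neq a}\hat\nu\bi\ti$ on $|s|\le N^{\a}$ is of order $\prod_{b\neq a}(\hat v\bi\ti)^{-1/2}$, up to a factor $e^{-O(m)}$. A tail estimate $e^{-c\b N^{2\a}}$ with $2\a<1$, multiplied by a bounded remainder (or even by $\prod_{b\neq a}C(\hat v\bi\ti)^{-1/2}$), is therefore not small in the ratio. You need relative, moderate-deviation control of $\hat\nu\bi\ti/\phi_{\hat x\bi\ti,\hat v\bi\ti}$ on a window of width comparable to $\sqrt N$.
\item \textbf{Moment matching.} The tilted Gaussian in \eqref{eq:defupsilont} has mean $m\ja\ti$ and variance $s\ja\ti$ from \eqref{eq:shadow}, not $x\ja\ti$ and $v\ja\ti$. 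So the second-order vanishing of $r_j$ at $\l=0$ is not automatic. Recentre at the true moments, paying $O(N^{-1/2+\e(t)})$ through Lemma \ref{lemma:unmomento}.
\end{enumerate}
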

\begin{proof}
Let us deal first with the case $t=1$. Choosing $F(s)=e^{i\l s}$ in Proposition \ref{prop:Gaussian-approx} gives (\ref{eq:Gauss-Fourier}). 

Assume now that \eqref{eq:Gauss-Fourier} holds true for some $t\geq 2$. It follows then by Theorem \ref{prop:LLTPII} that
\be\label{eq:phi+chi}
\hat\nu\ti\ai (s)=\phi_{\hat x\ti\ai , \hat v\ti\ai }(s)+ \chi\ti\ai (s)\,,
\ee
where for all $r\in(0,1)$
\be\label{eq:stimachi1-s6}
\sup_{s\in\R}|\chi\ti\ai (s)|\lesssim \frac{1}{N^{\frac12-\e_t+r}}\,.
\ee
For a reason that will appear clear below we choose $r=1-\e2^{-t}$.
Combing the second BP equation \eqref{eq:BP} with \eqref{eq:phi+chi} we get
\be
\nu\ib ^{(t+1)}\simeq \pi_{2,\b}(s)\prod_{a\in[m]\setminus \{b\}}\phi_{\hat x\ti\ai , \hat v\ti\ai }+\pi_{2,\b}(s)\sum_{k=1}^{m-1}\sum_{\substack{I\subset[m]\setminus \{b\}\\|I|=k}}\prod_{a\in I}\chi\ti\ai \prod_{a\notin I}\phi_{\hat x\ti\ai , \hat v\ti\ai }\,. 
\ee

Thus we have
\be\label{eq;by}
\int e^{i\l s}\nu\ib ^{(t+1)}ds=\frac{[e^{i\l s}]_{\mu\ti\ib , \s\ti\ib }+\D_1\ti}{1+\D_2\ti}\,\,,
\ee
where
\bea
\D_1\ti&\coloneqq &\sum_{k=1}^{m-1}\sum_{\substack{I\subset[m]\setminus \{b\}\\|I|=k}}\frac{\int \pi_{\b,2}(s)e^{i\l s}\prod_{a\notin I}\phi_{\hat x\ti\ai , \hat v\ti\ai }(s)\prod_{a\in I}\chi\ti\ai (s)ds}{\int \pi_{\b,2}(s)\prod_{a\neq b}\phi_{\hat x\ti\ai , \hat v\ti\ai }(s)}\,,\label{eq:delta1}\\
\D_2\ti&\coloneqq &\sum_{k=1}^{m-1}\sum_{\substack{I\subset[m]\setminus \{b\}\\|I|=k}}\frac{\int \pi_{\b,2}(s)\prod_{a\notin I}\phi_{\hat x\ti\ai , \hat v\ti\ai }(s)\prod_{a\in I}\chi\ti\ai (s)ds}{\int \pi_{\b,2}(s)\prod_{a\neq b}\phi_{\hat x\ti\ai , \hat v\ti\ai }(s)}\label{eq:delta2}\,.
\eea
It is helpful to introduce the following notation: for any function $F$ we set
\be\label{eq:defmeanv}
\meanv{F}_I\coloneqq \frac{\int \pi_{\b,2}(s)\prod_{a\notin I}\phi_{\hat x\ti\ai , \hat v\ti\ai }(s)F(s)ds}{\int \pi_{\b,2}(s)\prod_{a\notin I}\phi_{\hat x\ti\ai , \hat v\ti\ai }(s)}\,.
\ee
Thus
\be\label{eq:delta2II}
\D_2\ti=\sum_{k=1}^{m-1}\sum_{\substack{I\subset[m]\setminus \{b\}\\|I|=k}}\frac{\meanv{\prod_{a\in I}\chi\ti\ai }_I}{\meanv{\prod_{a\in I}\phi_{\hat x\ti\ai , \hat v\ti\ai }}_I}\,. 
\ee
By \eqref{eq:stimachi1} we have
\be\label{eq:boundChi}
\meanv{\prod_{a\in I}\chi\ti\ai }_I\lesssim \parav{\frac{1}{N^{\frac32-\e_t-\e2^{-t}}}}^{|I|}=\parav{\frac{1}{N^{\frac32-\e_{t+1}}}}^{|I|}\,. 
\ee
In addition, we write
\be\label{eq:addizione}
\meanv{\prod_{a\in I}\phi_{\hat x\ti\ai , \hat v\ti\ai }}_I=K_I\meanv{\phi_{\mu\tti_I,\s\tti_I}}_I\,,
\ee
where 
\be\label{eq:musigmatI}
\mu^{(t+1)}_{I}\coloneqq \frac{\sum_{a\in I}\frac{\hat x\ti\ia }{\hat v\ti\ia }}{\sum_{a\in I}\frac{1}{\hat v\ti\ia }} \,,\qquad \s^{(t+1)}_{I}\coloneqq  \left(\sum_{a\in I}\frac{1}{\hat v\ti\ia }\right)^{-1}\,
\ee
and $K_I$ is a normalisation constant. 
Let now $K'_I$ be a number such that
\be\label{eq:gegeben}
\min_{|s|\leq K'_I} \phi_{\mu^{(t+1)}_I,\s^{(t+1)}_I}\geq \frac{2}{K_I}\,.
\ee
This gives
\be\label{eq:K'I}
K'_I\leq \mu^{(t+1)}_{I}+\sqrt{2\s^{(t+1)}_{I}\log\parav{\frac{K_I}{2\sqrt{2\pi \s^{(t+1)}_{I}}}}}\,. 
\ee
By \eqref{eq:addizione} and \eqref{eq:gegeben}
\be\label{eq:lowKi}
\meanv{\prod_{a\in I}\phi_{\hat x\ti\ai , \hat v\ti\ai }}_I\geq 2\parav{1-\meanv{1_{\{|s|\geq K_I'\}}}_I}\,
\ee
and we can estimate by (\ref{eq:K'I}) and the definition (\ref{eq:defmeanv})
\be\label{eq:betalargenough}
\meanv{1_{\{|s|\geq K_I'\}}}_I\leq \exp\parav{-\b \mu^{(t+1)}_I-(\b-1+(2\s^{(t+1)}_{I^c})^{-1})\frac{|\mu^{(t+1)}_{I^c}|^2}{2\s^{(t+1)}_{I^c}}}\leq \frac{9}{10}
\ee
for $\b$ large enough. 

Using (\ref{eq:boundChi}), (\ref{eq:lowKi}) and (\ref{eq:betalargenough}) into \eqref{eq:delta2} we obtain
\bea
\absv{\D_2\ti}&\leq& C\sum_{k=1}^{m-1}\sum_{\substack{I\subset[m]\setminus \{b\}\\|I|=k}}\parav{\frac{1}{N^{\frac32-\e_{t+1}}}}^{k}\nn\\
&\leq& C\sum_{k=1}^{m-1}\binom{m-1}{k}\parav{\frac{1}{N^{\frac32-\e_{t+1}}}}^{k}\nn\\
&=&C\parav{\parav{1+\frac{1}{N^{\frac32-\e_{t+1}}}}^{m-1}-1}\nn\\
&\lesssim& \frac{1}{N^{\frac12-\e_{t+1}}}\,. \label{eq"D_2S1}
\eea
It follows that for $N$ large enough
\be\label{eq"D_2S2}
|1+\D_2\ti|\geq \frac12\,. 
\ee

Similarly, since $|e^{i\l s}|\leq1$, we have
\be\label{eq"D_1S1}
\absv{\D_1\ti}\lesssim \frac{1}{N^{\frac12-\e_{t+1}}}\,. 
\ee

By \eqref{eq;by}
\bea
\absv{\int e^{i\l s}\nu\ib ^{(t+1)}ds-[e^{i\l s}]_{\mu\ti\ib , \s\ti\ib }}&=&\frac{\absv{\D_1\ti-[e^{i\l s}]_{\mu\ti\ib , \s\ti\ib }\D_2\ti}}{|1+\D_2\ti|}\nn\\
&\leq&2 \absv{\D_1\ti}+2\absv{\D_2\ti}\nn\\
&\lesssim& \frac{1}{N^{\frac12-\e_{t+1}}}\,. 
\eea
This proves \eqref{eq:Gauss-Fourier}.
\end{proof}

\begin{Lem}\label{lemma:anal-t}
On the same event in which Corollary \ref{cor:analiticity} holds, the function
$\l\in\R\mapsto\upsilon\ia \ti(\l)$ is an analytic function. 
\end{Lem}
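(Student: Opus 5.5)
The plan is an induction on $t$, with Corollary \ref{cor:analiticity} as the base case. Write
\[
\upsilon\ia\ti(\l)=\int e^{i\l s}\nu\ia\ti(s)\dd s-[e^{i\l s}]_{\mu\ti\ia,\s\ti\ia}.
\]
The Gaussian expectation $[e^{i\l s}]_{\mu,\s}$ is the characteristic function of a Gaussian with precision $2\b+\s^{-1}$. It is therefore entire in $\l$, so it suffices to show that $\l\mapsto\int e^{i\l s}\nu\ia\ti(s)\dd s$ is analytic. A standard sufficient condition is an exponential moment bound $\int e^{c|s|}\nu\ia\ti(s)\dd s<\infty$ for some $c>0$. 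Under it the characteristic function extends holomorphically to the strip $|\Im\l|<c$, and in particular it is real-analytic on $\R$. I would aim for the stronger Gaussian-type bound $\int e^{cs^2}\nu\ia\ti\dd s<\infty$, as in the proof of Corollary \ref{cor:analiticity}.

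For $t=1$ this is exactly Corollary \ref{cor:analiticity}, on the event given there.

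For the inductive step, suppose $\nu\ja\ti$ has a Gaussian-type tail for all $(j,a)$ on that event, say $\nu\ja\ti(s)\le C e^{-cs^2}$, or at least $\int e^{cs^2}\nu\ja\ti<\infty$. The proof has two steps.

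\begin{enumerate}
\item By the first BP equation and \eqref{eq:xi_hat}, $\hat\nu\ai\ti$ is the law of $A_{ai}^{-1}(y_a-\sum_{j\ne i}A_{aj}\xi\ja\ti)$, a finite linear combination of independent variables with finite exponential moments. Hence $\hat\nu\ai\ti$ is a bounded density; boundedness follows, for instance, from the convolution of bounded densities, or from Theorem \ref{prop:LLTPII}, which gives $\sup_s\hat\nu\ai\ti(s)\lesssim 1$.
\item The second BP equation gives
\[
\nu\ia\tti(s)\simeq \pi_{\b,2}(s)\prod_{b\ne a}\hat\nu\bi\ti(s)\le C^{m-1}Z(\b,2)e^{-\b s^2}.
\]
The normalising constant $\int\pi_{\b,2}\prod_b\hat\nu\bi\ti$ is bounded below by a positive number. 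This follows from the decomposition \eqref{eq:phi+chi} together with the lower bounds \eqref{eq"D_2S2} and \eqref{eq:lowKi} established in the proof of Lemma \ref{lemma:approx=Gauss=t}, which hold on the same event. Hence $\nu\ia\tti(s)\le C_N e^{-\b s^2}$, with a constant $C_N$ that may depend on $N$ but is finite.
\end{enumerate}

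Consequently $\int e^{cs^2}\nu\ia\tti<\infty$ for every $c<\b$. This bound is exactly what is needed, and it is in fact much easier than the argument of Corollary \ref{cor:analiticity}, because the prior $e^{-\b s^2}$ itself supplies Gaussian decay once the product is shown to be bounded.

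The main obstacle is ensuring that the product of the $\hat\nu\bi\ti$ is bounded and that the normaliser does not vanish, uniformly on the stated event rather than merely almost surely. Boundedness of each factor by $1$ follows, as in Lemma \ref{lemma:multiplicative}, from the local limit theorem of Appendix \ref{app:LLT} for $N$ large. Nonvanishing of the normaliser is the estimate \eqref{eq"D_2S2}. Once these are in place, dominated convergence lets us differentiate $\int e^{i\l s}s^k\nu\ia\tti(s)\dd s$ under the integral and bound the $k$-th derivative by $\int|s|^k\nu\ia\tti\le C_N k!\,c^{-k}$. This gives a positive radius of convergence of the Taylor series at every $\l\in\R$, hence analyticity of $\upsilon\ia\tti$.
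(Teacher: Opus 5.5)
Your proposal is correct and follows essentially the same route as the paper's proof. Both argue by induction from Corollary \ref{cor:analiticity}, obtain boundedness of $\hat\nu\bi\ti$ from the first BP equation (via \eqref{eq:phi+chi} and the local limit theorem), and then use the second BP equation to transfer the Gaussian decay of the $\ell_2$ prior to $\nu\ia\tti$, which makes its Fourier transform analytic. Two of your choices are harmless refinements of that argument: you carry the tail bound itself as the inductive invariant, whereas the paper carries analyticity of $\upsilon\ia\ti$ and must recover exponential decay from it, and you check explicitly that the normaliser stays positive.
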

\begin{proof}
The proof is done by induction over $t\in\N$.
The analyticity of $\upsilon\ia \oone $ follows from Corollary \ref{cor:analiticity}.
Assume now that $\upsilon\ia \ti$ is an analytic function. Then also
$$
\l\in\R\mapsto \int e^{i\l s}\nu\ib \ti ds
$$
must be so, that is $\nu\ib \ti$ decays at least exponentially for large arguments. It follows by the first BP equation (\ref{eq:BP}) that also $\hat\nu\bi \ti$ has that decay and in particular it must be bounded by a constant smaller than one by \eqref{eq:phi+chi} and \eqref{eq:stimachi1}. Therefore by the second BP equation (\ref{eq:BP}) we see that $\nu\ia ^{(t+1)}$ inherits the decay of the $\ell_2$-prior $\pi_{2,\b}$, hence its Fourier transform is analytic. We conclude that also $\upsilon\ia ^{(t+1)}$ is an analytic function and the assertion follows by induction over $t\in\N$. 
\end{proof}

\begin{proof}[Proof of Proposition \ref{prop:gaussapprox-t}]
Combine Lemma \ref{lemma:approx=Gauss=t}, Lemma \ref{lemma:anal-t} and Lemma \ref{lemma:unmomento} in Appendix \ref{app:LLT}. Remark also that since in the proof of Lemma \ref{lemma:anal-t} we proved that $\nu\ia ^{(t+1)}$ has an Gaussian decay we can set $c=1$ as the constant appearing in the statement of Lemma \ref{lemma:unmomento}. Finally, continuous dependance on the entries of $A$ follows from Lemma \ref{lemma:prod} for $t=1$ and it is propagated by the BP equations (\ref{eq:BP}) for all $t\geq2$.
\end{proof}


\

\section{Two shadowing sequences}\label{sect:shadow}

From now on we denote by $\bar{m}\ia\ti$ and $\bar{s}\ia\ti$ the mean and variance updates of the BP-iteration \eqref{eq:BP} when they are initialised with a centred Gaussian distribution $\nu\ia\zzero=\phi_{0, v\zzero}.$ These were computed in the sections \ref{sect:exactGaussian}-\ref{sect:exactGaussian-bis}. We proved in Lemma \ref{lem:variance} that the variances $\bar{s}\ia\ti$ are constant as $a,i$ varies in $[m],[N]$ respectively. They are all fixed to some value
\be\label{eq:def-bars^t}
\bar{s}\ti=\frac{v\zzero(1-\d)}{\d^t(1-\d)+2\b v\zzero(1-\d^t)}\,. 
\ee 

Here we are interested in writing the actual means and variances obtained by the BP iteration starting by an arbitrary distribution in terms of these quantities, namely 
\begin{equation*}
    x\ia\ti=\bar{m}\ia\ti+\text{error}\quad \text{and}\quad v\ia\ti=\bar{s}\ia\ti+\text{error}.
\end{equation*}
To be precise, the main result of the section is the following.

\begin{Prop}\label{prop:shadow}
Consider initial conditions for the BP equations \eqref{eq:BP} $\nu^{(0)}_{i\to a}$, that is bounded and continuous densities with with finite fourth moment and $x\ia\zzero=0$, $v\ia\zzero=v^{(0)}>0$.
 Let $\bar s^{(t)}$ and $\bar m^{(t)}_{i\to a}$ as explained above. 
Fix $t\in\mathbb{N}$ and $\e>0$. 
Then, for all $(i,a)\in[N]\times[m]$ we have
\be\label{eq:shadow-V}
v\ia^{(t)} \;=\; \bar s^{(t)} \;+\; \ERR^{(t)}_2 \;+\; \err^{(t)}_2,
\qquad
\err^{(t)}_2 = O \big(N^{-1/2+2\varepsilon}\big),\quad
\ERR^{(t)}_2 = O \big(\beta^{-\alpha} N^{-1/2+2\varepsilon}\big)\,,
\ee
and 
\be\label{eq:shadowX}
x^{(t)}_{i\to a} \;=\; \bar m^{(t)}_{i\to a} \;+\; \ERR^{(t)}_{i\to a} \;+\; \err^{(t)}_1,
\qquad
\ERR^{(t)}_{i\to a} = O_\PP \big(N^{-1/2+2\varepsilon}\big),\quad
\err^{(t)}_1 = O_\PP \big(N^{-1/2+2\varepsilon}\big).
\ee
\end{Prop}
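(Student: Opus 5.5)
We will argue by induction on $t$, comparing the BP iteration from the arbitrary initialisation with the Gaussian one through a third, auxiliary sequence. The auxiliary sequence is obtained by running the \emph{exact Gaussian recursions} of Lemma~\ref{lemma:facilemaleki} and Lemma~\ref{lemma:gaussianfacile}(ii) (i.e. \eqref{eq:itermean} and \eqref{eq:iteration_v}) on the \emph{actual} means and variances $x\ia\ti, v\ia\ti$. More precisely, the shadow mean and variance at time $t+1$ are $\tilde m\ia\tti:=\mu\ia\tti/(1+2\b\s\ia\tti)$ and $\tilde s\ia\tti:=\s\ia\tti/(1+2\b\s\ia\tti)$, with $\mu,\s$ as in \eqref{eq:musigmat=1}, \eqref{eq:musigmat}. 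These are exactly the mean and variance under $[\,\cdot\,]_{\mu\ia\tti,\s\ia\tti}$ with the $\ell_2$ prior. Then $\err_1^{(t)}:=x\ia\ti-\tilde m\ia\ti$ and $\err_2^{(t)}:=v\ia\ti-\tilde s\ia\ti$ measure the distance from Gaussianity. The terms $\ERR$ measure the propagation of errors through the (linear, in $p=2$) Gaussian recursion, i.e. $\tilde m-\bar m$ and $\tilde s-\bar s$.

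\emph{Step 1 (non-Gaussian errors $\err$).} For $t=1$ we apply Proposition~\ref{prop:Gaussian-approx} with $q=2$ and $F(s)=s,s^2$. These $F$ satisfy \eqref{eq:mathttF} since $\mu\oone,\s\oone=O_\P(1)$ by Lemma~\ref{lemma:simpel}. This gives $\err^{(1)}_{1,2}=\mathtt A[|F|]\,O_\P(N^{-1/2+2\e})=O_\P(N^{-1/2+2\e})$. For $t\geq2$ we instead use Proposition~\ref{prop:gaussapprox-t} with $k=1,2$, which controls $\int s^k\nu\ia\ti-[s^k]_{\mu\ia\ti,\s\ia\ti}$ directly. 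Combining the $k=1,2$ bounds gives the variance error. This step is essentially bookkeeping.

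\emph{Step 2 (variance shadow).} From \eqref{eq:iteration_v}, $\tilde s\ia\tti$ is a smooth function of the $\{v\jb\ti\}$. We write $v\jb\ti=\bar s\ti+\eta\jb\ti$ with $\eta\ti=\ERR_2\ti+\err_2\ti$ and assume inductively $\max|\eta\ti|=O(N^{-1/2+2\e})$. We then Taylor expand. The denominator $\sum_{j\neq i}A_{bj}^2 v\jb\ti$ equals $\d^{-1}\bar s\ti+O(\max|\eta|)$ by \eqref{eq:concentration}, so $\tilde s\tti=\bar s\tti+O(\max|\eta\ti|\,\partial_v g)$, where $g(v)=v/(2\b v+\d)$. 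Since $g'(v)=\d/(2\b v+\d)^2\lesssim \b^{-\a}$ in the relevant range of $v$, this produces the factor $\b^{-\a}$ in $\ERR_2$. The $O(N^{-1/2})$ corrections from \eqref{eq:concentration} are absorbed into $\err_2$.

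\emph{Step 3 (mean shadow, the main obstacle).} From \eqref{eq:itermean}, $\tilde m\ia\tti$ is given by the message passing map \eqref{eq:MP-gauss} applied to $x\jb\ti$, up to corrections from $v\jb\ti-\bar s\ti$ multiplying $\sum_b A_{bi}(y_b-\sum_j A_{bj}x\jb\ti)$, which is $O_\P(1)$ once messages are $O_\P(1)$. Subtracting the same map applied to $\bar m\ti$ gives
$$\ERR\ia\tti\approx-\Delta\ti\sum_{b\neq a}\sum_{j\neq i}A_{bi}A_{bj}\bigl(\ERR\jb\ti+\err_1\ti\bigr)+O_\P(N^{-1/2+2\e}).$$
The difficulty is that a crude bound on the double sum loses a factor $\sqrt N$. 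One cannot use independence either, because the errors at time $t$ depend on $A$. I would first try to unroll the recursion as in Lemma~\ref{lem:mean_gaussian_chaos}, expressing $\ERR\ti$ as $A$-chaoses applied to the earlier errors $\err_1^{(\tau)}$, $\tau<t$. I would then use the leave-one-out structure (the message $j\to b$ excludes $b$) together with the fact that $\err_1^{(\tau)}\jb$ depends on $A_{bi}$ only weakly. Continuity in the entries of $A$ from Propositions~\ref{prop:Gaussian-approx} and~\ref{prop:gaussapprox-t} should allow a resampling argument, reducing $\sum_b A_{bi}A_{bj}(\cdot)$ to a centred sum whose fluctuations are $O_\P$ of the maximal error, as in Lemma~\ref{lem:double_sum_Y}. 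If this decoupling fails, the fallback is to bound the sum by $\|A^TA\|_{\op}\lesssim 1$ in $\ell_2$, which controls averages, and to recover the uniform bound from the exponential tails.
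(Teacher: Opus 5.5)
Your architecture is the paper's. Your $\tilde m\ia\tti=\mu\ia\tti/(1+2\b\s\ia\tti)$ and $\tilde s\ia\tti=\s\ia\tti/(1+2\b\s\ia\tti)$ are exactly the shadow sequences $m\ia\tti$, $s\ia\tti$ of \eqref{eq:shadow}. Your Step 1 is how the paper controls $\err_1,\err_2$ (Propositions \ref{prop:Gaussian-approx} and \ref{prop:gaussapprox-t}), and Step 2 corresponds to Lemmas \ref{lem:recip-affine}, \ref{lem:var_t2} and \ref{lem:varCLT}.

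\textbf{The gap is Step 3.} This is the real content of the proposition, and you leave it as a list of things to try. What has to be proved is the per-edge estimate
\[
\sum_{b\neq a}\sum_{j\neq i}A_{bi}A_{bj}\bigl(\ERR\jb\ti+\err^{(t)}_{1,j\to b}\bigr)=O_\PP\bigl(N^{-1/2+2\e}\bigr)
\]
for errors that are themselves functions of $A$. Your claim that $\sum_bA_{bi}(y_b-\sum_jA_{bj}x\jb\ti)=O_\P(1)$ ``once messages are $O_\P(1)$'' also rests on this estimate, since individual $O_\P(1)$ bounds say nothing about correlations with $A_{bj}$. Neither route you suggest delivers it.
\begin{itemize}
\item Resampling/leave-one-out: Propositions \ref{prop:Gaussian-approx} and \ref{prop:gaussapprox-t} give only continuity of the errors in the entries of $A$, with no modulus. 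They cannot quantify the ``weak'' dependence your resampling step needs. A crude count, $\sum_{b,j}|A_{bi}A_{bj}|\asymp N$, shows that resampling one entry would have to move the errors by $O(N^{-1})$ relative to their own size.
\item Exact independence: from $\tau\ge2$ on there is none to use, since $\hat\nu^{(\tau-2)}_{b\to k}$ enters $\nu^{(\tau-1)}_{k\to c}$ and hence the message $j\to b$ sees row $b$.
\item Unrolling as in Lemma \ref{lem:mean_gaussian_chaos} only moves this correlation problem onto longer $A$-chains.
\item The $\|A^TA\|_{\op}$ fallback: because $\ERR\jb\ti$ depends on $b$, the double sum is not $(A^TAE)_i$ with $E=\ERR\ti$. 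Even if it were, $\|A^TAE\|_2\lesssim\|E\|_2\le\sqrt N\max|E|$ controls only $\frac1N\sum_i|(A^TAE)_i|^2$, and that is compatible with single coordinates of size $N^{2\e}$. The obstruction is the correlation of $E$ with $A$, not the tails of $E$, so exponential tails cannot turn this averaged bound into the per-edge statement.
\end{itemize}

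The paper supplies an explicit decoupling mechanism at this point (Lemma \ref{lem:mt}):
\begin{itemize}
\item It isolates $\Delta\ia\ti\sum_{b\neq a}\sum_{j\neq i}A_{bi}A_{bj}\ERR\jb\ti$.
\item On the high-probability set where the Frobenius norm of $A$ is bounded, it uses continuity of $\ERR$ to replace it by a polynomial (Stone--Weierstrass).
\item For monomials $A_{bj}^K(\sum_{k\neq j}A_{bk}^H)^p$, it splits $\sum_{k\neq j}A_{bk}^H=A_{bi}^H+X_{bij}$, with $X_{bij}$ independent of $A_{bi},A_{bj}$, and expands binomially.
\item Terms carrying extra powers of $A_{bi}$ are bounded by size (Lemmas \ref{prop:tail}, \ref{lemma:facilissimo}), and the remaining term by Hoeffding in $A_{bi}$.
\end{itemize}
Your proposal needs some argument of this kind to be complete.

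A smaller point in Step 2: the factor $\b^{-\a}$ cannot come from $g'$. For $t\ge1$, $2\b\bar s\ti\to(1-\d)/(1-\d^t)$ as $\b\to\infty$, so $g'(\bar s\ti)=\d/(2\b\bar s\ti+\d)^2$ is of order one. A $\b$-gain could only come from errors that are small relative to $\bar s\ti\lesssim\b^{-1}$. Proposition \ref{prop:gaussapprox-t} gives only an absolute $O(k!/\sqrt N)$ bound, so it does not provide this.
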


The central tool to prove this proposition is the introduction of the following two sequences
(compare with \eqref{eq:x_ia} and \eqref{eq:v_ia})
\be\label{eq:shadow}
    m\ia\ti := \left(\sum_{b\neq a}\frac{\hat{x}\bi^{(t-1)}}{\hat{v}\bi^{(t-1)}} \right) s\ia\ti \quad\text{and}\quad s\ia\ti:=\left(2\b+\sum_{b\neq a}\frac{1}{\hat{v}\bi ^{(t-1)}} \right)^{-1}\,,
\ee

where we recall that
\bea
\hat{x}\ai \ti&=&x\ia \ti+\frac{y_a-\sum_{j}A_{aj}x\ti\ja }{A_{ai}}\label{eq:xx-hatBIS}\,,\\
\hat{v}\ai \ti& =& \frac{\sum_{k \neq i} A_{ak}^2 v_{\ka}\ti}{A_{ai}^2}\,.\label{eq:v_hatBIS}
\eea

The sequence $m\ia\ti$ and $s\ia\ti$ would be respectively the mean and the variance of $\nu\ia\ti$ if all the $\hat \nu\ib^{(t-1)}$ were Gaussian distributions with mean $\hat{x}\bi^{(t-1)}$ and the variance $\hat{v}\bi ^{(t-1)}$.
Moreover, $m\ia\ti$ has the remarkable property that it shadows both the sequence $\bar{m}\ia\ti$ of the means of the BP iterates with the Gaussian initial condition and the sequence $x\ia\ti$ of the means of the BP iterates with the non-Gaussian initial condition $\nu^{(0)}_{i\to a}$. The same occurs in a much simpler way for the sequences of the variances: $s\ia\ti$, $\bar s\ia\ti$ and $v\ia\ti$. The shadowing of $x\ia\ti$ follows from the central limit theorem approach of the past two sections. For the shadowing of $\bar{m}\ia\ti$ one has to carefully analyse the BP iteration, as done in the sequel. 

To better illustrate this point, we analyse in detail the first two steps of the iteration. 

\subsection*{Iteration at $t=1$}
Recall that at $t=0$ we have
\begin{equation}\label{eq:quartico}
x\ia\zzero = 0, \qquad
v\ia\zzero = v\zzero, \qquad 
\hat{x}\ai\zzero = \frac{y_a}{A_{ai}}, \qquad 
\hat{v}\ai\zzero = \frac{1}{A_{ai}^2}\sum_{j\neq i}A_{aj}^2\,v\zzero
\;\approx\; \frac{1}{A_{ai}^2}\,\frac{v\zzero}{\delta}.
\end{equation}
Moreover, we have
\be\label{eq:s_1m_1}
\bar s\oone\ai=s\oone\ai\approx
\Bigl(2\beta+\frac{\delta}{v^{(0)}}\Bigr)^{-1}\,\quad\text{ and }\quad \bar m\oone\ai=m\oone\ai\,. 
\ee

\begin{Lem}\label{lem:t1-iteration}
It holds that
\bea
v\oone \ia &=&  \bar s\oone  + \err\oone _2,
\qquad
\err\oone _2 \;=\; O_{\PP}\Big(N^{-\frac{1}{2}}\Big)\,,\label{eq:v_uno}\\
x\oone \ia  &=& m\oone \ia  + \err\oone \ia ,
\qquad
\err\oone \ia  \;=\; O_{\PP} \Big(N^{-\frac{1}{2}}\Big)\,.\label{eq:x_uno}
\eea
The errors $\err\oone \ia$, $\err\oone _2$ are both a continuous function of the matrix entries of $A$.  
\end{Lem}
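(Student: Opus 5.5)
The plan is to read off both statements directly from Proposition \ref{prop:Gaussian-approx} with $q=2$. The test functions will be $F(s)=s$ and $F(s)=s^2$. Both satisfy the integrability condition \eqref{eq:mathttF}, because $\pi_{\b,2}\,\phi_{\mu,\s}$ has Gaussian tails.

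\emph{Step 1: the Gaussian averages.} The key observation is that, for $q=2$, the measure $\pi_{\b,2}(s)\phi_{\mu\ia\oone,\s\ia\oone}(s)\,\dd s$ is, after normalisation, again Gaussian. Completing the square as in Lemma \ref{lemma:gaussianfacile} ii) shows that its variance is
$$\Bigl(2\b+\frac{1}{\s\ia\oone}\Bigr)^{-1}=\Bigl(2\b+\sum_{b\neq a}\frac{1}{\hat v\bi\zzero}\Bigr)^{-1}=s\ia\oone,$$
and that its mean is $\frac{\mu\ia\oone}{\s\ia\oone}\,s\ia\oone=m\ia\oone$, using \eqref{eq:musigmat=1} and \eqref{eq:shadow}. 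Hence
$$\mathtt A[s]=m\ia\oone,\qquad \mathtt A[s^2]-\mathtt A[s]^2=s\ia\oone .$$

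\emph{Step 2: transfer to the BP iterate.} Applying \eqref{eq:Gauss-appro} with $F(s)=s$ gives
$$x\ia\oone=m\ia\oone+\mathtt A[|s|]\,O_\P\bigl(N^{-1/2+2\e}\bigr).$$
Applying it with $F(s)=s^2$ gives the analogous statement for the second moment. The prefactors $\mathtt A[|s|]$ and $\mathtt A[s^2]$ are bounded by a polynomial in $|m\ia\oone|$ and $s\ia\oone\le 1/(2\b)$.

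To see that they are $O_\P(1)$, use the Remark after Lemma \ref{lemma:simpel}, which gives $\mu\ia\oone,\s\ia\oone=O_\P(1)$. Since $\s\ia\oone>0$, the mean satisfies $|m\ia\oone|\le|\mu\ia\oone|$. Subtracting the square of the first-moment identity from the second-moment identity, and using $x\ia\oone=O_\P(1)$, I obtain
$$v\ia\oone=s\ia\oone+O_\P\bigl(N^{-1/2+2\e}\bigr).$$
The $N^{2\e}$ loss is absorbed into the paper's convention for $O_\P(N^{-1/2})$, which allows an $N^{\e}$ slack with stretched-exponential probability.

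\emph{Step 3: identification with $\bar s\oone$.} The quantity $\hat v\bi\zzero$ depends only on $A$ and on $v\zzero$, and not on the shape of $\nu\zzero$. It is therefore literally the same as in the Gaussian-initialised iteration, so $s\ia\oone=\bar s\ia\oone$ and $m\ia\oone=\bar m\ia\oone$, as recorded in \eqref{eq:s_1m_1}. By Lemma \ref{lem:variance}, $\bar s\ia\oone=\bar s\oone+O_\P(N^{-1/2})$. This is the concentration $\sum_{b\neq a}A_{bi}^2/\sum_{j\neq i}A_{bj}^2\approx\d$, which follows from Bernstein's inequality and \eqref{eq:concentration}. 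Collecting these terms gives \eqref{eq:v_uno} and \eqref{eq:x_uno}, with $\err\oone_2$ absorbing the concentration error.

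\emph{Step 4: continuity.} Proposition \ref{prop:Gaussian-approx} (through Lemma \ref{lemma:prod}) states that $\nu\ia\oone$ depends continuously on the entries of $A$. Moreover, $\nu\ia\oone\le C\pi_{\b,2}$ up to normalisation, locally uniformly in $A$. Dominated convergence then makes $x\ia\oone$ and $v\ia\oone$ continuous in $A$. The quantities $m\ia\oone$ and $s\ia\oone$ are explicit rational functions of the entries of $A$, which do not vanish since $\P(A_{ai}=0)=0$. The errors are differences of continuous functions, hence continuous.

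The only delicate point is Step 2. One has to check that the multiplicative error in \eqref{eq:Gauss-appro} is uniform in $(i,a)$ and that the prefactors $\mathtt A[|F|]$ do not spoil the $N^{-1/2}$ rate. I would handle this with a union bound over the $mN$ edges, which is harmless against $e^{-N^\e}$, together with the explicit Gaussian formulas for $\mathtt A[|s|]$ and $\mathtt A[s^2]$.
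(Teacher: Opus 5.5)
Your proposal is correct and follows essentially the paper's route: apply Proposition \ref{prop:Gaussian-approx} with $F(s)=s$ and $F(s)=s^2$, identify the tilted Gaussian mean and variance with $m\ia\oone$ and $s\ia\oone$ (as in Lemma \ref{lemma:gaussianfacile} ii)), control the prefactor $\mathtt A[|F|]$ using $\sum_{b\neq a}y_bA_{bi}=O_\P(1)$, and pass from $s\ia\oone$ to $\bar s\oone$ via the concentration $\sum_{b\neq a}A_{bi}^2/\sum_{j\neq i}A_{bj}^2\approx\d$. Two points of yours are, if anything, slightly more careful than the paper: your bookkeeping of the second-moment error is additive, $\mathtt A[s^2]\,O_\P(N^{-1/2+2\e})$, rather than the paper's shorthand $O_\P(N^{-1/2})\,s\ia\oone$, and your dominated-convergence argument for continuity is more explicit.
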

\begin{proof}
Consider
\be
s\ia\oone = \Bigl(2\beta+\sum_{b\neq a}\frac{1}{\hat{v}\bi\zzero}\Bigr)^{-1}\,. 
\ee
Using \eqref{eq:quartico} and $\sum_{b\neq a}A_{bi}^2\approx 1$, gives the approximation $$\sum_{b\neq a}\frac{1}{\hat{v}\bi\zzero} \approx \frac{\delta}{v\zzero}$$ 
whence 
\be\label{eq:s_1}
s\ia\oone \approx \Bigl(2\beta+\frac{\delta}{v\zzero}\Bigr)^{-1} = \bar s\oone\,.
\ee

Applying Proposition \ref{prop:Gaussian-approx} with $F(s)=s^2$ and $F(s)=s$ we get (see Lemma \ref{lemma:gaussianfacile}, ii) with $t=0$)
\[
\absv{v\ia\oone - s\ia\oone}= O_\PP\parav{\frac{1}{\sqrt N}} s\ia\oone=O_\PP\parav{\frac{1}{\sqrt N}} \bar s\oone\,.
\]
and it is a continuous function of the elements of the matrix $A$. This proves \eqref{eq:v_uno}.

Next, we compute 
\[
m\ia\oone
= s\oone\ia \sum_{b\neq a}\frac{\hat{x}\zzero\bi}{\hat{v}\zzero\bi}
= \frac{s\oone\ia}{v\zzero}\,\delta \sum_{b\neq a} y_bA_{bi},
\]
Proposition \ref{prop:Gaussian-approx} with $F(s)=s$ and the inequality $\mathbb{E}[\lvert X\lvert] \leq C\left(\lvert \mathbb{E}[X]\lvert +\sqrt{\var(X)}\right)$ yield
\[
x\ia\oone \;=\; m\ia\oone + \err\ia\oone,
\qquad
\err\ia\oone \;=\; O_{\PP}\parav{\frac{1}{\sqrt N}}
\left( \frac{\bar s\oone}{v\zzero}\delta \left\lvert\sum_{b\neq a} y_bA_{bi}\right\lvert +\sqrt{\bar s\oone}\right),
\]
and $\err\ia\oone$ is a continuous function of the elements of the matrix $A$. 
Using that $\bigl|\sum_{b\neq a}y_bA_{bi}\bigr| = O_\PP(1)$,
\[
\frac{\bar s\oone}{v\zzero}\,\delta
= \frac{\delta}{2\beta v\zzero+\delta}\,,
\]
and that $\sqrt{\bar s\oone}$ is bounded in $\beta$, we conclude $\err\oone \ia 
= O_{\PP} \Big(N^{-\frac{1}{2}}\Big).$

\end{proof}

\subsection*{Iteration for $t=2$}
The analysis of the case $t=2$ is slightly more complicated, as it has already most of the features appearing for general $t$. We start by recording the following simple fact.

\begin{Lem}\label{lem:recip-affine}
Let $\delta\in(0,1)$, $\beta>0$, and let $x>0$. Let
\[
f(x) \coloneqq \; \frac{x}{2\beta x+\delta}\,.
\]
Then for all $|\e|\leq x/2$ we have
\be\label{eq:recip-affine}
\bigl|f(x+\varepsilon)-f(x)\bigr|\leq \frac{|\e|\d}{(\b x+\d)^2}\,. 
\ee
\end{Lem}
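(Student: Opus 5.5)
The plan is a direct computation: write the difference $f(x+\varepsilon)-f(x)$ as a single fraction, then bound its denominator from below using the restriction $|\varepsilon|\leq x/2$. No earlier result from the paper is needed.

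\emph{Step 1: exact formula.} Put both terms over a common denominator. The cross terms $2\beta x(x+\varepsilon)$ cancel in the numerator:
\[
f(x+\varepsilon)-f(x)=\frac{(x+\varepsilon)(2\beta x+\delta)-x\bigl(2\beta(x+\varepsilon)+\delta\bigr)}{\bigl(2\beta(x+\varepsilon)+\delta\bigr)(2\beta x+\delta)}
=\frac{\delta\,\varepsilon}{\bigl(2\beta(x+\varepsilon)+\delta\bigr)(2\beta x+\delta)}.
\]

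\emph{Step 2: lower bounds on the two denominator factors.} Since $|\varepsilon|\leq x/2$, we have $x+\varepsilon\geq x/2>0$. Hence, using $\beta>0$ and $\delta>0$,
\[
2\beta(x+\varepsilon)+\delta\geq \beta x+\delta>0 .
\]
Trivially, $2\beta x+\delta\geq \beta x+\delta$ as well.

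\emph{Step 3: conclusion.} Both denominator factors are positive and at least $\beta x+\delta$, so their product is at least $(\beta x+\delta)^2$. Taking absolute values in Step 1 therefore gives
\[
\bigl|f(x+\varepsilon)-f(x)\bigr|\leq \frac{|\varepsilon|\,\delta}{(\beta x+\delta)^2},
\]
which is \eqref{eq:recip-affine}.

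There is no real obstacle in this argument. The only point needing care is that the factor $2\beta(x+\varepsilon)+\delta$ stays positive and comparable to $\beta x+\delta$, and this is exactly what the hypothesis $|\varepsilon|\leq x/2$ ensures.
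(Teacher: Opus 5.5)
Your proof is correct. It reaches the bound by a slightly different mechanism than the paper.

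The paper invokes the mean value theorem. It bounds $f'(u)=\delta/(2\beta u+\delta)^2$ for $u$ between $x$ and $x+\varepsilon$, and uses $u\geq x/2$ to get $2\beta u+\delta\geq\beta x+\delta$. (Its displayed computation writes $2\beta x$ where $2\beta u$ is meant.)

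You instead compute the difference exactly: $f(x+\varepsilon)-f(x)=\delta\varepsilon/\bigl((2\beta(x+\varepsilon)+\delta)(2\beta x+\delta)\bigr)$. This is the discrete counterpart of the derivative formula. The squared factor at an unspecified intermediate point is replaced by the product of the two endpoint values, and each is bounded below by $\beta x+\delta$ through the same observation $x+\varepsilon\geq x/2$.

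Your version is purely algebraic and fully explicit, with no intermediate point to control. The paper's version is shorter to state and applies verbatim to any $C^1$ function whose derivative is controlled on $[x/2,3x/2]$.
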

\begin{proof}
Since $f$ is Lipschitz, it suffices to compute 
\be
\sup_{u\in (x,x+\e)} f'(u)=\sup_{u\in (x,x+\e)}\frac{\d}{(2\b x+\d)^2}\leq \frac{\d}{(\b x+\d)^2}\,. 
\ee
\end{proof}

Consider now
\begin{equation}\label{eq:sbar2}
    \bar{s}^{(2)} = \Bigl(2\beta+\frac{\delta}{s\oone}\Bigr)^{-1},
\end{equation}
The next lemma shows that $s^{(2)}$ differs from $\bar{s}^{(2)}$ by a term that vanishes as $N\to\infty$.

\begin{Lem}\label{lem:var_t2}
Uniformly over $a,i$, it holds that
\[
s\ia^{(2)} \;=\; \bar{s}^{(2)} + \ERR_2^{(2)}
\qquad\text{with}\qquad
\ERR_2^{(2)} = O_\PP\left(\frac{1}{\sqrt N}\right)\,
\]
and
\[
v\ia^{(2)} \;=\; \bar{s}^{(2)} + \err_2^{(2)},
\qquad
\err_2^{(2)} \;=\; O_\PP\left(\frac{1}{\sqrt N}\right).
\]
The errors $\ERR_2^{(2)}$, $\err_2^{(2)}$ are both a continuous functions of the matrix entries of $A$.  
\end{Lem}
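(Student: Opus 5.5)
We prove both statements by mirroring the $t=1$ argument of Lemma \ref{lem:t1-iteration}, one step later. For the first claim we write, by \eqref{eq:shadow} and \eqref{eq:v_hatBIS},
\[
\frac{1}{s\ia^{(2)}}=2\beta+\sum_{b\neq a}\frac{A_{bi}^2}{\sum_{k\neq i}A_{bk}^2 v\oone_{k\to b}}\,.
\]
By Lemma \ref{lem:t1-iteration} we have $v\oone_{k\to b}=\bar s\oone+\err\oone_2$, with $\err\oone_2=O_\PP(N^{-1/2})$ uniformly in $(k,b)$. Only finitely many (polynomially many) indices appear, and the tails are stretched exponential, so a union bound keeps this uniform. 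Hence
\[
\sum_{k\neq i}A_{bk}^2v\oone_{k\to b}=\bar s\oone\sum_{k\neq i}A_{bk}^2+O_\PP(N^{-1/2})\sum_{k\neq i}A_{bk}^2\,,
\]
and \eqref{eq:concentration} gives $\sum_{k\neq i}A_{bk}^2\approx\delta^{-1}$. The denominator is therefore $\delta^{-1}\bar s\oone(1+O_\PP(N^{-1/2}))$, uniformly in $b$. Next we use $\sum_{b\neq a}A_{bi}^2\approx 1$. The relative error is uniform in $b$ and can be pulled out of the sum, so
\[
\sum_{b\neq a}\frac{1}{\hat v\oone\bi}=\frac{\delta}{\bar s\oone}\bigl(1+O_\PP(N^{-1/2})\bigr)\,.
\]

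It remains to pass from $1/s$ to $s$. Write $u=\bar s\oone/\delta$, so that $s\ia^{(2)}=\tilde f(u(1+\eta))$ with $\tilde f(x)=x/(2\beta x+1)$ and $\eta=O_\PP(N^{-1/2})$. A Lipschitz estimate in the spirit of Lemma \ref{lem:recip-affine} applies on the event $|\eta|\le 1/2$, which has overwhelming probability. It gives $|s\ia^{(2)}-\bar s^{(2)}|\lesssim |\eta|\, u/(\beta u+1)^2 = O_\PP(N^{-1/2})$, and the constant is uniform in $\beta$. Here $\bar s^{(2)}=(2\beta+\delta/\bar s\oone)^{-1}$, as in \eqref{eq:sbar2}, reading $s\oone$ there as $\bar s\oone$. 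The right-hand side is a composition of continuous maps of the entries of $A$ and of $\err\oone_2$. Since $\err\oone_2$ is continuous in $A$ by Lemma \ref{lem:t1-iteration}, $\ERR^{(2)}_2$ is continuous as well.

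For the second claim we must compare the true variance $v\ia^{(2)}$ with $s\ia^{(2)}$. We apply Proposition \ref{prop:gaussapprox-t} at $t=2$ with $k=1,2$. This gives $\int s^k\nu\ia^{(2)}=[s^k]_{\mu^{(2)}\ia,\s^{(2)}\ia}+O_\PP(N^{-1/2})$, with errors continuous in $A$. For the $\ell_2$ prior the tilted measure $\pi_{\beta,2}\phi_{\mu,\s}$ is exactly Gaussian. Its variance is $(2\beta+1/\s^{(2)}\ia)^{-1}=s\ia^{(2)}$ and its mean is $m\ia^{(2)}$, by the same computation as Lemma \ref{lemma:gaussianfacile} ii). Therefore
\[
v\ia^{(2)}=[s^2]-[s]^2+O_\PP(N^{-1/2})\bigl(1+|[s]|\bigr)=s\ia^{(2)}+O_\PP(N^{-1/2})\bigl(1+|m\ia^{(2)}|\bigr)\,.
\]
So we need $m\ia^{(2)}=O_\PP(1)$. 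This follows from $m\ia^{(2)}=s\ia^{(2)}\sum_{b\neq a}A_{bi}^2\hat x\oone\bi/(A_{bi}^2\hat v\oone\bi)$, together with $x\oone=m\oone+O_\PP(N^{-1/2})$ and the explicit form $m\oone\ia=\Delta^{(0)}\sum_{b\neq a}y_bA_{bi}$, which reduce it to the Gaussian-initialisation bound of Proposition \ref{prop:AMP_dec} up to $O_\PP(N^{-1/2})$ perturbations. Adding the first claim yields $v\ia^{(2)}=\bar s^{(2)}+\err_2^{(2)}$, and continuity is inherited as before.

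The main obstacle is the uniformity of these error bounds. The $O_\PP(N^{-1/2})$ in Lemma \ref{lem:t1-iteration} must hold simultaneously for all $(k,b)$ so that it can be pulled out of the sums over $b$ and $k$. We handle this by a union bound over the $mN$ edges, which costs only an $N^{\varepsilon}$ loss. If the strict rate $N^{-1/2}$ cannot survive this union bound, we fall back to the $N^{-1/2+2\varepsilon}$ rate that is claimed in Proposition \ref{prop:shadow}.
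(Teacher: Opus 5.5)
Your proposal follows the paper's proof: Lemma~\ref{lem:t1-iteration} together with \eqref{eq:concentration} and the Lipschitz estimate of Lemma~\ref{lem:recip-affine} gives the first claim. Proposition~\ref{prop:gaussapprox-t}, with the observation that the $\ell_2$-tilted Gaussian has variance $s^{(2)}_{i\to a}$ and mean $m^{(2)}_{i\to a}$, gives the second.

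You are more careful than the paper in noting that turning the moment errors into a variance error requires $m^{(2)}_{i\to a}=O_\PP(1)$. This does not follow from $x^{(1)}_{j\to b}-m^{(1)}_{j\to b}=O_\PP(N^{-1/2})$ by size alone, because $\sum_{b\neq a}\sum_{j\neq i}A_{bi}A_{bj}\bigl(x^{(1)}_{j\to b}-m^{(1)}_{j\to b}\bigr)$ is only $O(\sqrt N)$ under crude bounds. You also need that the $t=1$ messages $j\to b$ do not depend on row $b$ of $A$, which is the same independence used in the proof of Lemma~\ref{lem:m2}.
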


\begin{proof}
We have by \eqref{eq:v_uno}
\[
\hat{v}\ai\oone = \frac{1}{A_{ai}^2}\sum_{j \neq i} A_{aj}^2 v\oone \;\approx\; \frac{1}{A_{ai}^{2}}\,\frac{1}{\delta}\,\bigl(s\oone+\err_2\oone\bigr),
\]
so that
\[
s\ia^{(2)} \;=\; \Bigl(2\beta+\sum_{b\neq a}\frac{1}{\hat{v}\bi\oone}\Bigr)^{-1}
\;\approx\; \Bigl(2\beta+\frac{\delta}{s\oone+\err_2\oone}\Bigr)^{-1}.
\]
In the last step we have used \eqref{eq:v_hatBIS} and (\ref{eq:v_uno}).
Let $f(u)\coloneqq(2\beta+\delta/u)^{-1}$. By Lemma~\ref{lem:recip-affine}, for $N$ sufficiently large we have
\[
\bigl|f(s\oone+\err_2\oone)-f(s\oone)\bigr|
\;\lesssim_{\delta}\; \frac{|\err_2\oone|}{1+\beta\,|\err_2\oone|}.
\]
From Lemma~\ref{lem:t1-iteration} we have $|\err_2\oone|=O_\PP \bigl(N^{-\frac{1}{2}}\bigr)$, hence
\[
\ERR_2^{(2)} \;=\; \bigl|s^{(2)}-\bar{s}^{(2)}\bigr|
\;=\; O_\PP \left(\frac{1}{\sqrt N}\right)
\]
and it is a continuous function of the matrix entries of $A$.  
Finally, Proposition~\ref{prop:gaussapprox-t} implies
\[
\absv{v^{(2)} \;-\; s^{(2)}}= O_\PP\parav{\frac{1}{\sqrt N}}\,
\]
and that $\err_2^{(2)}$ is a continuous function of the matrix entries of $A$.  
\end{proof}

\begin{Lem}\label{lem:DELTA2}
Let $\Delta\ia^{(2)}=\delta\,\frac{s\ia^{(2)}}{v\ia\oone }$ and $\bar{\Delta}^{(2)}=\delta\,\frac{\bar{s}^{(2)}}{\bar{s}\oone }$.
Uniformly over $a,i$,
\[
\Delta\ia^{(2)} \;=\; \bar{\Delta}^{(2)} \;+\; O_\PP \left(N^{-\frac{1}{2}}\right).
\]
Moreover, $\Delta\ia^{(2)}$ and $\bar{\Delta}^{(2)}$ are continuous functions of the matrix entries of $A$.  
\end{Lem}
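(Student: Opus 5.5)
The claim is a ratio statement, so I will control numerator and denominator separately and then combine them by a first-order perturbation of a quotient. Write
\[
\Delta\ia^{(2)}-\bar\Delta^{(2)}=\delta\left(\frac{s\ia^{(2)}}{v\ia\oone}-\frac{\bar s^{(2)}}{\bar s\oone}\right)
=\delta\,\frac{s\ia^{(2)}-\bar s^{(2)}}{v\ia\oone}+\delta\,\bar s^{(2)}\,\frac{\bar s\oone-v\ia\oone}{v\ia\oone\,\bar s\oone}.
\]
\emph{Numerator.} Lemma~\ref{lem:var_t2} gives $s\ia^{(2)}-\bar s^{(2)}=\ERR_2^{(2)}=O_\PP(N^{-1/2})$, uniformly in $(i,a)$.

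\emph{Denominator.} Lemma~\ref{lem:t1-iteration} gives $v\ia\oone-\bar s\oone=\err_2\oone=O_\PP(N^{-1/2})$, also uniformly.

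The only point requiring care is a uniform lower bound on the denominators. Here $\bar s\oone=(2\beta+\delta/v\zzero)^{-1}$ is a deterministic positive constant, independent of $N$. On the event $\{|\err_2\oone|\le \bar s\oone/2\}$ we therefore have $v\ia\oone\ge \bar s\oone/2$. This event has probability at least $1-e^{-N^{\e}}$, since $\err_2\oone=O_\PP(N^{-1/2})$ and a union bound over the $mN$ edges costs only a polynomial factor against a stretched-exponential tail. On this event:
\begin{itemize}
\item the first term is bounded by $2\delta|\ERR_2^{(2)}|/\bar s\oone$;
\item the second term is bounded by $2\delta\bar s^{(2)}|\err_2\oone|/(\bar s\oone)^2$.
\end{itemize}
The prefactors depend only on $\beta,\delta,v\zzero$. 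The product rule for $O_\PP$ recorded in the Notations section then gives a total of $O_\PP(N^{-1/2})$, uniformly over $a,i$. Equivalently, one can apply the mean value theorem to $(u,w)\mapsto \delta u/w$ on the rectangle where $w\ge\bar s\oone/2$.

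\emph{Continuity.} The map $\bar\Delta^{(2)}$ is deterministic, hence trivially continuous in $A$. For $\Delta\ia^{(2)}$, the factor $s\ia^{(2)}$ is continuous in $A$: it is given by \eqref{eq:shadow} through $\hat v\bi\oone$, which by \eqref{eq:v_hatBIS} is a rational function of the entries of $A$ and of the $v\oone\ka$. These are continuous in $A$ by Lemma~\ref{lem:t1-iteration}, and $A_{ai}\neq0$ almost surely. The factor $v\ia\oone$ is continuous by the same lemma and is strictly positive, being the variance of a density. A quotient of continuous functions with positive denominator is continuous.

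I expect no real obstacle. The one step needing attention is making the denominator lower bound hold uniformly over edges with the required probability, and the union bound above handles it.
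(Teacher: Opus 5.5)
Your proof is correct and is essentially the paper's argument: you use the same split of $s\ia^{(2)}/v\ia\oone-\bar s^{(2)}/\bar s\oone$ into a term carrying $\ERR_2^{(2)}$ and a term carrying $\err_2\oone$, controlled by Lemmas~\ref{lem:var_t2} and~\ref{lem:t1-iteration}. Your restriction to the event $\{|\err_2\oone|\le\bar s\oone/2\}$ is slightly more careful than the paper, whose displayed bound puts $\bar s\oone+|\err\oone_2|$ in the denominators (a valid lower bound for $v\ia\oone$ only when $\err\oone_2\ge 0$), and your continuity paragraph supplies a point the paper's proof leaves unstated.
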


\begin{proof}
Write $v\oone\ai=v\oone =\bar s\oone +\err\oone _2$ and $s\ai^{(2)}=s^{(2)}=\bar{s}^{(2)}+\ERR^{(2)}_2$ due to Lemma \ref{lem:t1-iteration} and \ref{lem:var_t2}. Then
\[
\frac{1}{\delta}\,\bigl|\Delta\ai^{(2)}-\bar{\Delta}^{(2)}\bigr|
=\left|\frac{s^{(2)}}{v\oone }-\frac{\bar{s}^{(2)}}{\bar{s}\oone }\right|
\le \frac{|\ERR^{(2)}_2|}{\,\bar s\oone +|\err\oone _2|\,}
\;+\; \frac{\bar{s}^{(2)}\,|\err\oone _2|}{\,\bar s\oone \bigl(\bar s\oone +|\err\oone _2|\bigr)}.
\]
By Lemma~\ref{lem:t1-iteration}, $\bar s\oone$ is fixed as $N$ grows while 
$|\err\oone _2|=O_\PP \big(N^{-\frac12}\big)$.
By Lemma~\ref{lem:var_t2}, $|\ERR^{(2)}_2|=O_\PP \big(N^{-\frac12}\big)$ and
$\bar{s}^{(2)}=(2\beta+\delta/s\oone )^{-1}$.
Therefore
\[
\frac{|\ERR^{(2)}_2|}{\,s\oone +|\err\oone _2|\,}
= O_\PP \big(N^{-\frac12}\big),
\]
and
\[
\frac{\bar{s}^{(2)}\,|\err\oone _2|}{\,s\oone \bigl(s\oone +|\err\oone _2|\bigr)}
= O_\PP \big(N^{-\frac12}\big).
\]
Multiplying by $\delta\in(0,1]$ yields the claim.
\end{proof}

\begin{Lem}\label{lem:m2}
Define 
\begin{equation*}
    \ERR\ia\ttwo= m\ia\ttwo - \bar{m}\ia\ttwo\,. 
\end{equation*}
Then $\ERR\ia\ttwo$ is a continuous function of the entries of the matrix $A$ and
\begin{equation*}
    \absv{\ERR\ia\ttwo} =O_\PP \left(\frac{1}{\sqrt N}\right).
\end{equation*}
Moreover,
\[
x\ia\ttwo=\bar{m}\ia\ttwo
+\err_1\ttwo,
\qquad
\err_1\ttwo=O_\PP\left(N^{-\frac{1}{2}}\right).
\]
The error $\err_1\ttwo$ is a continuous function of the entries of the matrix $A$.
\end{Lem}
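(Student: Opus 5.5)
We compare the two expressions for the means directly. By \eqref{eq:shadow} with $t=2$,
\[
m\ia\ttwo = s\ia\ttwo\sum_{b\neq a}\frac{\hat x\bi\oone}{\hat v\bi\oone},
\]
where $\hat x\bi\oone$ and $\hat v\bi\oone$ are built from the non-Gaussian iterates $x\oone$, $v\oone$ through \eqref{eq:xx-hatBIS}--\eqref{eq:v_hatBIS}. The Gaussian-initialised mean $\bar m\ia\ttwo$ has the same structure, with $\bar m\oone$, $\bar s\oone$ in place of $x\oone$, $v\oone$. Using $\hat v\bi\oone\approx v\oone/(\delta A_{bi}^2)$ (Corollary~\ref{cor:vhat} and Lemma~\ref{lem:t1-iteration}), both means reduce, as in \eqref{eq:decomposizione}, to a prefactor times the same linear functional:
\[
m\ia\ttwo=\Delta\ia^{(2)}\sum_{b\neq a}A_{bi}\Bigl(y_b-\sum_{j\neq i}A_{bj}x\jb\oone\Bigr),\qquad
\bar m\ia\ttwo=\bar\Delta^{(2)}\sum_{b\neq a}A_{bi}\Bigl(y_b-\sum_{j\neq i}A_{bj}\bar m\jb\oone\Bigr).
\]
The difference therefore splits into a prefactor part and a message part:
\[
\ERR\ia\ttwo=(\Delta\ia^{(2)}-\bar\Delta^{(2)})\,\Sigma\ia
-\bar\Delta^{(2)}\sum_{b\neq a}\sum_{j\neq i}A_{bi}A_{bj}\,\err\jb\oone ,
\]
where $\Sigma\ia$ denotes the bracketed linear functional in $m\ia\ttwo$.

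\textbf{Prefactor term.} Lemma~\ref{lem:DELTA2} gives $\Delta\ia^{(2)}-\bar\Delta^{(2)}=O_\PP(N^{-1/2})$. It then suffices to show $\Sigma\ia=O_\PP(1)$. This follows by writing $x\oone=m\oone+\err\oone$, which holds since $m\oone=\bar m\oone$ by \eqref{eq:s_1m_1}. The part involving $\bar m\oone$ is $O_\PP(1)$ by Proposition~\ref{prop:AMP_dec} together with the argument behind Lemma~\ref{lem:double_sum_Y}. The part involving $\err\oone$ is handled exactly like the message part below.

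\textbf{Message part (main obstacle).} Here we need $\sum_{b,j}A_{bi}A_{bj}\err\jb\oone=O_\PP(N^{-1/2})$. A naive bound $|A_{bi}A_{bj}|\lesssim 1/m$ summed over $mN$ pairs loses a factor $N$, so a cancellation must be found. I would first try to expand $\err\jb\oone$ using Proposition~\ref{prop:Gaussian-approx}: the leading correction to the Gaussian mean comes from the Edgeworth term, i.e.\ from the cumulant sums $P\bj$ of \eqref{eq:cum-t-3}. These are explicit polynomials in row $b$ of $A$, independent of column $i$ except through the single entry $A_{bi}$. The random signs $A_{bi}$ then yield square-root cancellation in the sum over $b$, via the Hoeffding/Bernstein arguments of Lemma~\ref{lemma:simpel} and the sums controlled in Lemma~\ref{lemma:Pprob}.

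The difficulty is that $\err\oone$ is only known through tail bounds, not in closed form. If the expansion cannot be made explicit enough, the fallback is to separate the $b=a$ and $j=i$ diagonal contributions and argue via the continuity in $A$ (Lemma~\ref{lem:t1-iteration}) together with symmetry of the entries. Flipping the sign of $A_{bi}$ changes $\err\jb\oone$ only at order $N^{-1}$, which gives the cancellation. This is the step I expect to be hardest.

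\textbf{Conclusion.} Finally, $x\ia\ttwo-m\ia\ttwo=O_\PP(N^{-1/2})$ follows from Proposition~\ref{prop:gaussapprox-t} with $k=1$, since the Gaussian-weighted mean $[s]_{\mu,\sigma}$ equals $m\ia\ttwo$. Adding the two estimates gives $\err_1\ttwo=O_\PP(N^{-1/2})$. Continuity in the entries of $A$ follows because every quantity involved is a composition of continuous functions of $A$, by Lemmas~\ref{lem:t1-iteration}, \ref{lem:DELTA2} and Proposition~\ref{prop:gaussapprox-t}.
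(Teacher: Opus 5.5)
There is a genuine gap. The message-part estimate $\sum_{b\neq a}\sum_{j\neq i}A_{bi}A_{bj}\err\jb\oone=O_\PP(N^{-1/2})$ carries the real content of the lemma, and you do not prove it. Both routes you sketch also rest on a misreading of what $\err\jb\oone$ depends on. By the second BP equation, $\nu\jb\oone\propto\pi_{\b,2}\prod_{c\neq b}\hat\nu^{(0)}_{c\to j}$. Each $\hat\nu^{(0)}_{c\to j}$ is the law of $A_{cj}^{-1}\bigl(y_c-\sum_{k\neq j}A_{ck}\xi^{(0)}_{k\to c}\bigr)$, so it involves only row $c$ of $A$, and the same holds for $m\jb\oone$ by \eqref{eq:shadow}. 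Hence $\err\jb\oone=x\jb\oone-m\jb\oone$ is a function of the rows $c\neq b$ alone. Two consequences follow:
\begin{itemize}
\item The Edgeworth cumulants that matter for $\err\jb\oone$ are the $P_{c\to j}$ with $c\neq b$, not $P\bj$, which lives in row $b$.
\item Flipping the sign of $A_{bi}$ does not perturb $\err\jb\oone$ at order $N^{-1}$; it leaves it exactly unchanged.
\end{itemize}
Your first route also has nothing concrete to expand. Proposition~\ref{prop:Gaussian-approx} only bounds $\err\oone$; it does not by itself give it in a usable closed form.

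This cavity structure is the missing idea, and it is what the paper uses: $\err\jb\oone$ is independent of $A_{bi}$ and $A_{bj}$. It makes every summand $A_{bi}A_{bj}\err\jb\oone$ centred. Conditionally on the rows $c\neq b$, the inner sum $\sum_{j\neq i}A_{bj}\err\jb\oone$ is a Hoeffding sum with weights $O_\PP(N^{-1/2})$, hence of order $N^{-1/2}$. The outer weight $A_{bi}$ is independent of that inner sum. This is the square-root cancellation you were looking for, and the paper reads off the $O_\PP(N^{-1/2})$ bound from it. Strictly, terms with different $b$ remain weakly coupled, since $A_{bi}$ does enter $\err_{j\to b'}^{(1)}$ for $b'\neq b$. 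Still, without the cavity independence there is no cancellation mechanism at all.

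Note also that pairing $\Delta\ia^{(2)}-\bar\Delta^{(2)}$ with your $\Sigma\ia$ makes even the prefactor term depend on the same unproved double sum, since you need it to be at least $O_\PP(1)$. The paper instead pairs the prefactor difference with the Gaussian bracket, which equals $\bar m\ia\ttwo/\bar\Delta^{(2)}=O_\PP(1)$ by the MP equation and Proposition~\ref{prop:AMP_dec}, and puts $\Delta\ia^{(2)}$ in front of the error sum. Your last step via Proposition~\ref{prop:gaussapprox-t}, and the continuity argument, coincide with the paper's.
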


\begin{proof}
Recall $\Delta\ttwo\ia$ and $\bar{\Delta}\ttwo$ defined in Lemma \ref{lem:DELTA2} and set for brevity $\Delta\ttwo:=\Delta\ttwo\ia$. Combining \eqref{eq:shadow}, \eqref{eq:xx-hatBIS}, \eqref{eq:v_hatBIS}, Lemma \ref{lem:t1-iteration}, \eqref{eq:concentration} and $x^{(0)}\ia=0$ for all $i\in [N], a\in[m]$, we write 
\[
m\ia\ttwo
= \Delta\ttwo\sum_{b\neq a}A_{bi}\,\hat{x}\bi\oone
= \Delta\ttwo\sum_{b\neq a}y_bA_{bi}
   -\Delta\ttwo\sum_{b\neq a}\sum_{j\neq i}A_{bi}A_{bj}\,m\jb\oone
   -\Delta\ttwo\sum_{b\neq a}\sum_{j\neq i}A_{bi}A_{bj}\,\err\jb\oone.
\]
We compare this with the MP-equation of \eqref{eq:MP-gauss}, that gives
\[
\bar{m}\ia\ttwo
= \bar{\Delta}\ttwo\Biggl(\sum_{b\neq a}y_bA_{bi}
 -\sum_{b\neq a}\sum_{j\neq i}A_{bi}A_{bj}\,\bar{m}\jb\oone\Biggr),
\]
since $\bar{m}\jb\oone=m\jb\oone$, see \eqref{eq:s_1m_1}.
Hence
\begin{align}
\ERR\ia\ttwo
= m\ia\ttwo-\bar{m}\ia\ttwo
&= \parav{\Delta\ttwo-\bar{\Delta}\ttwo}\parav{
 \sum_{b\neq a}y_bA_{bi}
-\sum_{b\neq a}\sum_{j\neq i}A_{bi}A_{bj}\,\bar{m}\jb\oone}
\label{eq:m2:T1}\\
&\qquad
+\Delta\ttwo\,
\parav{\sum_{b\neq a}\sum_{j\neq i}A_{bi}A_{bj}\,\err\jb\oone}.
\label{eq:m2:T2}
\end{align}
The continuity of $\ERR\ia\ttwo$ w.r.t. the matrix elements of $A$ follows from the continuity of $\Delta\ttwo$, $\bar{\Delta}\ttwo$ and $\err\jb\oone$.
To control \eqref{eq:m2:T1},
we recognise that the second term of the product is exactly $\frac{\bar{m}\ia\ttwo}{\bar{\Delta}\ttwo}$. 
By Lemma~\ref{lem:DELTA2} we have that
$\bigl|\Delta\ttwo-\bar{\Delta}\ttwo\bigr|
=O_\PP \left(N^{-\frac{1}{2}}\right)$ and by Proposition \ref{prop:AMP_dec} we know $\bar{m}\ia\ttwo =O_\PP(1)$. Together with the fact that $\bar{\Delta}\ttwo= O(1) $ we conclude that \eqref{eq:m2:T1} is $O_\PP \left(\frac{1}{\sqrt N}\right)$.\\

Next, we bound \eqref{eq:m2:T2}. 
From Lemma \ref{lem:t1-iteration} we have that
$\err\jb\oone$ is $O_\PP \left(\frac{1}{\sqrt N}\right)$. Moreover, $\err\jb\oone $ is independent of $A_{bi}$ and $A_{bj}$, since $j\neq i$. Therefore, $\Bigl|\sum_{b\neq a}\sum_{j\neq i}A_{bi}A_{bj}\,\err\jb\oone\Bigr| = O_\PP \left(\frac{1}{\sqrt N}\right)$.\\
Combining \eqref{eq:m2:T1}-\eqref{eq:m2:T2} and the fact that $\Delta\ttwo=O(1)$, we conclude $\ERR\ia\ttwo
= O_\PP \left(\frac{1}{\sqrt N}\right).$
Finally, Proposition \ref{prop:gaussapprox-t} gives
\be
\absv{\err_1\ttwo}=\absv{x\ia\ttwo - m\ia\ttwo}=O_{\PP} \bigl(N^{-\frac{1}{2}}\bigr)
\ee
and it is a continuous function of the entries of the matrix $A$. 
\end{proof}

\subsection*{Iteration for $t>2$}
We now deal with the case $t>2$. Recall that (see \eqref{eq:iteration_v1})
\begin{equation}\label{eq:sbar-rec}
    \bar{s}^{(t+1)} \;\coloneqq\; \Bigl(2\beta+\frac{\delta}{\bar{s}\ti}\Bigr)^{-1},\qquad t\ge 1.
\end{equation}

\begin{Lem}\label{lem:varCLT}
Assume $v^{(0)}=\beta^{-\kappa}$ with $\kappa\ge 0$. Then $v\ti\ia =v\ti$ is edge-independent for all $t$, and
\[
v\ti=\bar{s}\ti+
\err_2\ti,
\]
with
\[
\err_2\ti=O_\PP  \left(N^{-\frac12}\right)
\]
uniformly over $(a,i)$ and for each fixed $t$. The error $\err_2\ti$ is a continuous function of the matrix elements of $A$.
\end{Lem}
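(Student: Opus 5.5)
We argue by induction on $t$, following the pattern of Lemma \ref{lem:t1-iteration} and Lemma \ref{lem:var_t2}. The cases $t=1,2$ are exactly those two lemmas, so we assume the statement at time $t\ge2$ and prove it at $t+1$. The induction hypothesis is that $v\ti\ia=v\ti$ is edge-independent up to an error $\err_2\ti=O_\PP(N^{-1/2})$ that is continuous in $A$.

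\emph{Step 1.} We first control the shadow variance $s\ia\tti$. By \eqref{eq:v_hatBIS} and the hypothesis,
\[
\hat v\ti\bi=A_{bi}^{-2}\sum_{k\ne i}A_{bk}^2\bigl(\bar s\ti+\err_2\ti\bigr).
\]
Using \eqref{eq:concentration} in the form $\sum_{k\ne i}A_{bk}^2\approx\delta^{-1}$ and $\sum_{b\ne a}A_{bi}^2\approx 1$, we obtain
\[
s\ia\tti=\Bigl(2\beta+\sum_{b\ne a}\hat v\ti\bi^{-1}\Bigr)^{-1}\approx f\bigl(\bar s\ti+\err_2\ti\bigr),\qquad f(u)=\frac{u}{2\beta u+\delta}.
\]
The concentration errors enter multiplicatively as $1+O_\PP(N^{-1/2})$, uniformly over edges by a union bound. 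Since $\bar s\ti>0$ is fixed as $N\to\infty$, we have $|\err_2\ti|\le\bar s\ti/2$ with overwhelming probability. Lemma \ref{lem:recip-affine} then gives
\[
|s\ia\tti-\bar s\tti|\le \frac{\delta\,|\err_2\ti|}{(\beta\bar s\ti+\delta)^2}+O_\PP(N^{-1/2})=O_\PP(N^{-1/2}),
\]
using $f(\bar s\ti)=\bar s\tti$ from \eqref{eq:sbar-rec}.

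\emph{Step 2.} Next we pass from $s\ia\tti$ to the true variance $v\ia\tti$. Proposition \ref{prop:gaussapprox-t} with $k=1,2$ compares the first two moments of $\nu\tti\ia$ with the Gaussian-prior averages $[s^k]_{\mu\tti\ia,\s\tti\ia}$. For the $\ell_2$ prior these averages are exactly the mean $m\ia\tti$ and variance $s\ia\tti$ of the tilted Gaussian (Lemma \ref{lemma:gaussianfacile} ii)). Hence
\[
v\ia\tti=s\ia\tti+O_\PP(N^{-1/2}),
\]
where the error includes the cross term $2m\ia\tti\cdot O_\PP(N^{-1/2})$. This cross term is harmless because $m\ia\tti=O_\PP(1)$, which follows from the mean shadowing at earlier steps combined with Proposition \ref{prop:AMP_dec}.

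Setting $\err_2\tti:=v\ia\tti-\bar s\tti$ closes the induction. This error is continuous in $A$ because $\err_2\ti$, the concentration quantities, and the error in Proposition \ref{prop:gaussapprox-t} all are. Edge-independence holds in the sense that all edge dependence is confined to $\err_2\tti$. The assumption $v^{(0)}=\beta^{-\kappa}$ guarantees that $\bar s\ti$ is bounded away from $0$ for each fixed $t$ and $\beta$, so Lemma \ref{lem:recip-affine} applies with constants independent of $N$.

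\emph{Main obstacle.} The hardest step is the uniformity of the $O_\PP(N^{-1/2})$ bounds over all $mN$ edges. This requires the tail bounds of Proposition \ref{prop:gaussapprox-t} and of \eqref{eq:concentration} to be stretched-exponential, so that a union bound costs only an $N^{\e}$ loss; this loss is absorbed in the convention for $O_\PP$. A second point of care is that the $\hat v\ti\bi$ depend on $A$ through the earlier iterates. This is handled because $v\ti$ is (up to error) edge-independent, so only $\sum_k A_{bk}^2$ enters the estimate and no independence between $A_{bk}$ and $v\ti\kb$ is needed.
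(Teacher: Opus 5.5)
Your proposal follows the paper's proof essentially step for step. You run the induction from Lemma \ref{lem:var_t2}, use concentration of row and column norms to get $s\ia\tti\approx(2\beta+\delta/v\ti)^{-1}$, compare this with $\bar s\tti$ via Lemma \ref{lem:recip-affine}, and pass from $s\ia\tti$ to $v\ia\tti$ via Proposition \ref{prop:gaussapprox-t}.

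Your explicit handling of the cross term $2m\ia\tti\cdot O_\PP(N^{-1/2})$, which the paper passes over, is a worthwhile addition. However, Lemma \ref{lem:mt} is itself proved using this lemma (directly and via Lemma \ref{lem:DeltaCLT}). So the bound $m\ia\tti=O_\PP(1)$ should come from running the variance and mean inductions jointly, not from quoting Lemma \ref{lem:mt}.
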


\begin{proof} We proceed by induction. The case $t=2$ was dealt in Lemma \ref{lem:var_t2}.
Assume that for some $t\ge2$ we have
\begin{equation}\label{eq:IHH}
v\ti=\bar{s}\ti
+\err_2\ti,\qquad
\err_2\ti=O_\P  \bigl(N^{-\frac12}\bigr),
\end{equation}
uniformly over $(a,i)$, and that $\bar{s}\ti$ is edge-independent.
From the induction hypothesis and (\ref{eq:v_hatBIS}) it follows that
\begin{equation}\label{eq:somma_conc}
s\ia^{(t+1)}=\parav{2\b+\sum_{b\neq a}\frac{1}{\hat v\ti_{b\to i}}
}^{-1}\;\approx\; \parav{2\b+\frac{\delta}{v\ti}}^{-1}=:s^{(t+1)}\,. 
\end{equation}
Next we compare $s^{(t+1)}$ with
\begin{equation}\label{eq:s-t+1}
\bar s^{(t+1)}=\frac{1}{\,2\beta+\frac{\delta}{\bar s\ti}\,}.
\end{equation}
Setting $f(u)\coloneqq(2\beta+\delta/u)^{-1}$ and
using Lemma~\ref{lem:recip-affine} and \eqref{eq:IHH},
\be\label{eq:split-AC}
\bigl|s^{(t+1)}-\bar s^{(t+1)}\bigr|
=\bigl|f(\bar s\ti+\err_2\ti)-f(\bar s\ti)\bigr|
\;=\; O_\PP \bigl(N^{-\frac12}\bigr)\,.
\ee
Combining the bound above and (\ref{eq:somma_conc}) we obtain that $s\ia^{(t+1)}$ shadows $\bar s^{(t+1)}$:
\be
\bigl|s\ia^{(t+1)}-\bar s^{(t+1)}\bigr|=O_\PP \bigl(N^{-\frac12}\bigr)\,. 
\ee

Finally, by Proposition~\ref{prop:gaussapprox-t} we have that $s\ia^{(t)}$ shadows also $v\ia^{(t)}$, that is
\[
\absv{\err_2\ti}=\absv{v\ia^{(t+1)}-s\ia^{(t+1)}}=O_\PP\parav{N^{-\frac12}}\qquad \text{for all $t$}\,,
\]
and it is a continuous function of the entries of the matrix $A$.
\end{proof}

\begin{Lem}\label{lem:DeltaCLT}
Let
\be\label{eq:defdeltat}
\Delta\ia\ti\coloneqq \delta\,\frac{s\ia^{(t+1)}}{v\ia\ti},
\qquad
\bar{\Delta}\ti\coloneqq \delta\,\frac{\bar{s}^{(t+1)}}{\bar{s}\ti}.
\ee
Then, uniformly over $(a,i)$,
\[
\Delta\ia\ti=\bar{\Delta}\ti+O_\PP \left(N^{-\frac12}\right)\,.
\]
Moreover, $\Delta\ia\ti$ and $\bar{\Delta}\ti$ are continuous functions of the matrix elements of $A$. 
\end{Lem}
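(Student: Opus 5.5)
The argument follows the one for Lemma \ref{lem:DELTA2}, with the time index shifted. We write $v\ia\ti=\bar s\ti+\err_2\ti$, using Lemma \ref{lem:varCLT}, and $s\ia^{(t+1)}=\bar s^{(t+1)}+\ERR_2^{(t+1)}$ with $\ERR_2^{(t+1)}=O_\PP(N^{-1/2})$. The latter is the shadowing bound established inside the proof of Lemma \ref{lem:varCLT}, namely \eqref{eq:split-AC} combined with \eqref{eq:somma_conc}. Both errors are uniform in $(a,i)$ and continuous in the entries of $A$.

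Next we use the elementary decomposition
\[
\frac1\delta\bigl(\Delta\ia\ti-\bar\Delta\ti\bigr)
=\frac{s\ia^{(t+1)}-\bar s^{(t+1)}}{v\ia\ti}
+\bar s^{(t+1)}\Bigl(\frac1{v\ia\ti}-\frac1{\bar s\ti}\Bigr)
=\frac{\ERR_2^{(t+1)}}{\bar s\ti+\err_2\ti}-\frac{\bar s^{(t+1)}\err_2\ti}{\bar s\ti(\bar s\ti+\err_2\ti)}.
\]
The key point is that $\bar s\ti$ is a deterministic positive number, given explicitly by \eqref{eq:def-bars^t}, which does not depend on $N$. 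On the event where $|\err_2\ti|\le \bar s\ti/2$, which has probability at least $1-e^{-N^\e}$ by the definition of $O_\PP$, both denominators are bounded below by $(\bar s\ti)^2/2$. Moreover $\bar s^{(t+1)}\le \bar s\ti/\delta$, since $\bar s^{(t+1)}=(2\b+\delta/\bar s\ti)^{-1}$. Therefore each term is a deterministic constant (depending on $t,\b,\d,v\zzero$) times $O_\PP(N^{-1/2})$. Multiplying by $\delta$ gives the claim. Uniformity in $(a,i)$ is inherited from the uniformity of $\err_2\ti$ and $\ERR_2^{(t+1)}$.

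For continuity, $\bar\Delta\ti$ is deterministic, hence trivially continuous. The quantity $\Delta\ia\ti$ is a ratio of $s\ia^{(t+1)}$ and $v\ia\ti$. The first is a continuous function of the $\hat v\bi\ti$, which are continuous in $A$ and in $v\ti$ by \eqref{eq:v_hatBIS}. The second is continuous by Lemma \ref{lem:varCLT}. The denominator $v\ia\ti$ is strictly positive, being the variance of a nondegenerate density. So the ratio is continuous.

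The only delicate point is making sure the rate is truly $N^{-1/2}$ and not $N^{-1/2+2\e}$. If only the weaker rate from Proposition \ref{prop:gaussapprox-t} is available, then the same bound holds with $N^{-1/2+2\e}$, which is what Proposition \ref{prop:shadow} ultimately needs. I would also use the convention of the notation section that a product of $O_\PP$ quantities multiplies the rates, which handles the random denominators cleanly.
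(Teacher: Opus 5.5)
Your argument is correct, but for general $t$ it is not the route the paper takes. You reproduce the two-term ratio decomposition that the paper uses in Lemma \ref{lem:DELTA2} (the case $t=2$).

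For this lemma the paper instead first substitutes $s\ia^{(t+1)}\approx v\ia\ti/(2\b v\ia\ti+\d)$. This gives $\Delta\ia\ti\approx g(v\ia\ti)$ with $g(u)=\d/(2\b u+\d)$ and $\bar\Delta\ti=g(\bar s\ti)$. The paper then bounds the increment $g(\bar s\ti+e)-g(\bar s\ti)$ by an explicit formula.

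The two routes trade off as follows.
\begin{itemize}
\item Collapsing to a single function of $v\ia\ti$ means the paper needs only one error input, $\err_2\ti$; the shadowing $s\ia^{(t+1)}\approx\bar s^{(t+1)}$ becomes implicit. It also makes the $\b$-dependence of the constant visible.
\item Your version needs both $\ERR_2^{(t+1)}$ and $\err_2\ti$, but both are available from the proof of Lemma \ref{lem:varCLT}.
\item Working on the event $|\err_2\ti|\le\bar s\ti/2$ handles negative errors cleanly. The paper's displayed bound for $g$ is only written for $e\ge0$.
\item You also justify the continuity claim, which the paper's proof of this lemma does not address.
\end{itemize}

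Two small remarks. First, the denominator $\bar s\ti+\err_2\ti$ is bounded below by $\bar s\ti/2$, not by $(\bar s\ti)^2/2$; the two agree only when $\bar s\ti\le1$. This is harmless, since all you need is a positive $N$-independent lower bound. Second, your worry about $N^{-1/2}$ versus $N^{-1/2+2\e}$ is moot: the paper's definition of $O_\PP(N^{\alpha})$ already tolerates arbitrary $N^{\e}$ losses.
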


\begin{proof}
Using that $s\ia^{(t+1)}\approx\frac{v\ia\ti}{2\beta v\ia\ti+\delta}$ and $\bar{s}^{(t+1)}=\frac{\bar{s}\ti}{2\beta \bar{s}\ti+\delta}$, we have that
\[
\Delta\ia\ti\approx\frac{\delta}{\,2\beta v\ia\ti+\delta\,},\qquad
\bar\Delta\ti=\frac{\delta}{\,2\beta \bar s\ti+\delta\,}.
\]
Set $g(u)\coloneqq\frac{\delta}{2\beta u+\delta}$ for $u>0$. Then
\[
|\Delta\ia\ti-\bar\Delta\ti|
=\bigl|g\bigl(\bar s\ti+e\ia\ti\bigr)-g\bigl(\bar s\ti\bigr)\bigr|,
\qquad e\ia\ti\coloneqq v\ia\ti-\bar s\ti.
\]
By Lemma~\ref{lem:varCLT}, $|e\ia\ti|=O_\PP \big(N^{-\frac12}\big)$.
Moreover, 
a direct computation gives, for all $x, e \ge0$,
\[
\bigl|g(x+e)-g(x)\bigr|
=\frac{2\beta\delta\,|e|}{\bigl(2\beta(x+e)+\delta\bigr)\bigl(2\beta x+\delta\bigr)}
\le \frac{2\beta\,|e|}{2\beta|e|+\delta}
\le C\,\min\{|e|,\beta^{-1}\}.
\]
For fixed $\beta>0$ this yields
\[
|\Delta\ia\ti-\bar\Delta\ti| \;=\; O_\PP \big(N^{-\tfrac12}\big),
\]
uniformly over $(a,i)$, which proves the claim.
\end{proof}

\begin{Lem}\label{lem:mt}
Define 
\be\label{eq:defE}
    \ERR\ia\ti:= m\ia\ti - \bar{m}\ia\ti. 
\ee
Then 
\be\label{eq:Esmall}
    \ERR\ia\ti =O_\PP \left(\frac{1}{\sqrt N}\right).
\ee
Moreover,
\begin{equation}\label{eq:x_ti_nove}
x\ia\ti=\bar{m}\ia\ti+\err_1\ti,
\qquad
\err_1\ti=O_\PP \left(N^{-\frac{1}{2}}\right)\,.
\end{equation}
\end{Lem}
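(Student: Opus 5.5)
We argue by induction on $t$, with the case $t=2$ already settled by Lemma \ref{lem:m2}. The induction hypothesis at time $t$ is that $\ERR\ia\ti=O_\PP(N^{-1/2})$ and $\err_1\ti=O_\PP(N^{-1/2})$ uniformly over edges. It also asserts that both errors are continuous functions of the entries of $A$.

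For the step $t\to t+1$ we mimic the proof of Lemma \ref{lem:m2}. Combining \eqref{eq:shadow}, \eqref{eq:xx-hatBIS}, \eqref{eq:v_hatBIS}, Lemma \ref{lem:varCLT} and \eqref{eq:concentration}, we rewrite
\[
m\ia\tti=\Delta\ia\ti\Bigl(\sum_{b\neq a}y_bA_{bi}-\sum_{b\neq a}\sum_{j\neq i}A_{bi}A_{bj}\,x\jb\ti\Bigr)+O_\PP(N^{-1/2}),
\]
with $\Delta\ia\ti$ as in \eqref{eq:defdeltat}. By the induction hypothesis $x\jb\ti=\bar m\jb\ti+\err\jb\ti$, where $\err\jb\ti:=\ERR\jb\ti+\err_1\ti$. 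Subtracting the MP equation \eqref{eq:MP-gauss} for $\bar m\ia\tti$ (with $\bar\Delta\ti$) then gives
\[
\ERR\ia\tti=\bigl(\Delta\ia\ti-\bar\Delta\ti\bigr)\frac{\bar m\ia\tti}{\bar\Delta\ti}-\Delta\ia\ti\sum_{b\neq a}\sum_{j\neq i}A_{bi}A_{bj}\,\err\jb\ti+O_\PP(N^{-1/2}).
\]
The first term is $O_\PP(N^{-1/2})$. This follows from Lemma \ref{lem:DeltaCLT}, from Proposition \ref{prop:AMP_dec} (which gives $\bar m=O_\PP(1)$), and from the fact that $\bar\Delta\ti$ is bounded away from $0$ for fixed $\beta,t$.

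The main obstacle is the double sum $\sum_{b\neq a}\sum_{j\neq i}A_{bi}A_{bj}\err\jb\ti$. At $t=2$ one used that $\err\jb\oone$ is independent of $A_{bi},A_{bj}$, but for $t\ge3$ this independence fails. A naive bound gives only $O(\sqrt N)\cdot N^{-1/2}$, which is not small. My plan is to split $\err\jb\ti$ into a part that does not depend on the edge label $b$ (call it $e_j$) and a remainder that is small by an extra factor $N^{-1/2}$, mirroring Proposition \ref{prop:AMP_dec}.

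\textbf{The $b$-independent part.} For this part the sum becomes $\sum_{j\neq i}(A^TA)_{ij}e_j$, which I bound by $\|A^TA\|_{\op}$ times a row-wise estimate. Alternatively, I would reproduce the Wick/hypercontractivity analysis of Lemma \ref{lem:double_sum_Y}, exploiting that the errors are continuous (indeed built from) multilinear expressions in $A$ through the recursion. Note that the diagonal-type contribution $\sum_b A_{bi}^2\err\ib\ti$ is trivially $O_\PP(N^{-1/2})$ by \eqref{eq:concentration}.

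\textbf{The $b$-dependent remainder.} Here I would use the cavity structure: $\err\jb\ti$ depends on row $b$ only through terms carrying a factor $A_{bj}$ or $A_{bi}$. After extracting these factors, what remains is independent of $A_{bi}$, and Hoeffding's inequality in $b$ gives the $O_\PP(N^{-1/2})$ bound. If this decomposition proves too delicate, the fallback is to propagate through the induction the stronger statement that $\err\jb\ti$ has the same chaos-type representation as $Y\ti$ in Lemma \ref{lem:double_sum_Y}, so that lemma applies verbatim.

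Finally, $\err_1\tti=x\ia\tti-m\ia\tti$ is $O_\PP(N^{-1/2})$ by Proposition \ref{prop:gaussapprox-t} with $k=1$. Combining this with the bound on $\ERR\ia\tti$ yields \eqref{eq:x_ti_nove}. Continuity of all errors in the entries of $A$ follows from the continuity of each ingredient: $\Delta$, $\bar\Delta$, the previous errors, and Proposition \ref{prop:gaussapprox-t}.
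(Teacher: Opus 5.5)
Your setup matches the paper's. You use the same induction from Lemma \ref{lem:m2} and essentially the same splitting of $\ERR\ia\tti$: a term $(\Delta\ia\ti-\bar\Delta\ti)\bar m\ia\tti/\bar\Delta\ti$, handled as in the paper by Lemma \ref{lem:DeltaCLT} and Proposition \ref{prop:AMP_dec}, plus the double sum $\sum_{b\neq a}\sum_{j\neq i}A_{bi}A_{bj}\,\err\jb\ti$. You also make the same final appeal to Proposition \ref{prop:gaussapprox-t}, and you correctly locate the difficulty in the double sum. However, none of the three devices you offer for that sum gives $O_\PP(N^{-1/2})$, so the proof has a gap at exactly the step it depends on.

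\emph{Operator-norm bound.} Take a $b$-independent part $e$ with $|e_j|=O_\PP(N^{-1/2})$. The bound $|\sum_{j\neq i}(A^TA)_{ij}e_j|\le\bigl(\sum_{j\neq i}(A^TA)_{ij}^2\bigr)^{1/2}\|e\|_2$, or $\|A^TA\|_{\op}\|e\|_2$, only yields $O_\PP(1)$, because both factors are of order one. Information about the size of $e$ alone cannot do better: $e_j=(A^TA)_{ij}$, $j\neq i$, has entries of order $N^{-1/2}$, yet it makes the sum $\approx 1/\delta$. What you would need is that $e$ is decorrelated from column $i$ of $A$.

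\emph{Cavity claim.} This claim is false. For $t\ge2$, $x\jb\ti$ is built from $\hat\nu^{(t-1)}_{c\to j}$ with $c\neq b$. These are built from $\nu^{(t-1)}_{k\to c}$, and hence from $\hat\nu^{(t-2)}_{b\to k}$ for every $k\neq j$. So $x\jb\ti$ depends on the whole row $b$, not only through $A_{bj}$ or $A_{bi}$. In the formula of Lemma \ref{lem:mean_gaussian_chaos} these are the paths with $b_2=b$, which carry a factor $A_{bj_2}$ with arbitrary $j_2\neq j$.

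Moreover, Hoeffding in $b$ needs the weights $W_b=\sum_{j\neq i}A_{bj}\err\jb\ti$ to be independent of the whole column $(A_{bi})_b$. But $\err\jb\ti$ depends on $A_{ci}$ for $c\neq b$, through the term $A_{ci}x^{(t-1)}_{i\to c}$ in $\hat x^{(t-1)}_{c\to j}$.

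\emph{Chaos-representation fallback.} This is not available. $\err\jb\ti$ contains $x\jb\ti-m\jb\ti$, a local-CLT/Edgeworth remainder coming from the non-Gaussian initial data (Propositions \ref{prop:Gaussian-approx} and \ref{prop:gaussapprox-t}). That remainder is only known to be continuous in $A$. The Wick computation behind Lemma \ref{lem:double_sum_Y}, by contrast, rests on the explicit multilinear formula of Lemma \ref{lem:mean_gaussian_chaos}.

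The paper attacks this step differently, and it uses exactly the continuity in $A$ that you carry in your induction hypothesis. On a high-probability event it approximates $\ERR\ti$ uniformly by a polynomial in the entries of $A$ (Stone--Weierstrass). It then reduces to row-wise monomials $E\jb=A_{bj}^K\bigl(\sum_{k\neq j}A_{bk}^H\bigr)^p$ and expands $(A_{bi}^H+X_{bij})^p$, where $X_{bij}=\sum_{k\neq i,j}A_{bk}^H$ is independent of $A_{bi},A_{bj}$. The terms with $\ell\ge1$ are controlled through the extra powers of $A_{bi}$ and Lemma \ref{prop:tail}, and the term $\ell=0$ by Hoeffding in $b$. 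The paper only sketches why the approximation is uniform as the number of variables grows and why it suffices to treat such monomials. Still, some mechanism that decouples the weights from column $i$ is the missing ingredient, and your proposal does not supply one.
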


\begin{proof} We proceed by induction. The case $t=2$ was dealt in Lemma \ref{lem:m2}. For $t\geq2$, we start from
\begin{equation}\label{eq:start}
m\tti\ia  \;=\; s\ia^{(t+1)}\sum_{b\neq a}\frac{\hat{x}\ti_{b\to i}}{\hat{v}\ti_{b\to i}}.
\end{equation}
By Lemma~\ref{lem:varCLT}, uniformly in $(b,i)$,
\[
\hat{v}\ti_{b\to i} \;\approx\; \frac{1}{A_{bi}^2}\,\frac{1}{\delta}\,v\ti
= \frac{1}{A_{bi}^2}\,\frac{1}{\delta}\,\bigl(\bar{s}\ti+\err_2\ti\bigr),
\]
so that (see \eqref{eq:defdeltat})
\begin{equation}\label{eq:start2}
m\tti\ia  \;=\; \Delta\ia\ti\sum_{b\neq a}A_{bi}^2\,\hat{x}\ti_{b\to i}\,. 
\end{equation}
We use \eqref{eq:x_ai} 
and the induction hypothesis
\[
\hat{x}\ti_{b\to i}
= \frac{y_b}{A_{bi}}
-\frac{1}{A_{bi}}\sum_{j\neq i}A_{bj}\,\bar{m}\ti_{j\to b}
-\frac{1}{A_{bi}}\sum_{j\neq i}A_{bj}\,\ERR\ti_{j\to b}
-\err_1\ti\frac{1}{A_{bi}}\sum_{j \neq i} A_{bj}
\]
Hence
\begin{align*}
m\tti\ia 
&= \Delta\ia\ti\sum_{b\neq a}y_bA_{bi}
-\Delta\ia\ti\sum_{b\neq a}\sum_{j\neq i}A_{bi}A_{bj}\,\bar{m}\ti_{j\to b}\\
&\quad -\,\Delta\ia\ti\sum_{b\neq a}\sum_{j\neq i}A_{bi}A_{bj}\,\ERR\ti_{j\to b}
-\,\Delta\ia\ti\err_1\ti\sum_{b\neq a}\sum_{j\neq i}A_{bi}A_{bj}.
\end{align*}
We compare with (see \eqref{eq:defdeltat})
\[
\bar{m}\tti\ia 
= \bar{\Delta}\ti\Biggl(\sum_{b\neq a}y_bA_{bi}
-\sum_{b\neq a}\sum_{j\neq i}A_{bi}A_{bj}\,\bar{m}\ti_{j\to b}\Biggr),
\qquad
\bar{\Delta}\ti\coloneqq \delta\,\frac{\bar{s}^{(t+1)}}{\bar{s}\ti}.
\]
We have
\begin{align}
\ERR\tti\ia 
=  m\tti\ia -\bar{m}\tti\ia 
&= (\Delta\ia\ti-\bar{\Delta}\ti)\parav{
 \sum_{b\neq a}y_bA_{bi}
-\sum_{b\neq a}\sum_{j\neq i}A_{bi}A_{bj}\,\bar{m}\ti_{j\to b} }
\label{eq:mT1-fix}\\
&\quad+\Delta\ia\ti\,
\sum_{b\neq a}\sum_{j\neq i}A_{bi}A_{bj}\,\ERR\ti_{j\to b}
\label{eq:mT2-fix}\\
&\quad+\Delta\ia\ti\,
\err_1\ti \sum_{b\neq a}\sum_{j \neq i}A_{bi} A_{bj} \,.
\label{eq:mT3-fix}
\end{align}
Therefore by the MP equations of Proposition \ref{prop:mean}
\be
\ERR\tti\ia =\frac{\Delta\ti\ia-\bar{\Delta}\ti}{\bar{\Delta}\ti}\bar{m}\tti\ia+\Delta\ia\ti\,
\sum_{b\neq a}\sum_{j\neq i}A_{bi}A_{bj}\,\ERR\ti_{j\to b}+\Delta\ia\ti\,
\err_1\ti \sum_{b\neq a}\sum_{j \neq i}A_{bi} A_{bj}\,.
\ee
The first and last summand on the r.h.s. are easily estimated. 
Lemma~\ref{lem:DeltaCLT} yields
\[
|\Delta\ia\ti-\bar{\Delta}\ti|
=O_\PP  \left(N^{-\frac{1}{2}}\right)\,,
\]
while Proposition~\ref{prop:AMP_dec} gives $\bar{m}\ti\ia=O_\PP(1)$.
Thus the first summand is $O_\PP  \left(N^{-\frac{1}{2}}\right)$. The last summand is also of this order of magnitude, because of the induction hypothesis, $\err_1\ti=O_\PP  \left(N^{-\frac{1}{2}}\right)$, $\Delta\ia\ti=O_\P(1)$ and the fact that $\sum_{b\neq a}\sum_{j \neq i}A_{bi} A_{bj}=O_\PP(1)$. 

Therefore we are left with bounding the second term. We study the recursion for $t\geq2$
\bea
\ERR\tti\ia &=&\text{Rem}\ti_{j\to a}+\Delta\ti\,
\sum_{b\neq a}\sum_{j\neq i}A_{bi}A_{bj}\,\ERR\ti_{j\to b}\,,\\
\text{Rem}\ti_{j\to a}&=&O_\PP  \left(N^{-\frac{1}{2}}\right)
\eea
and $\text{Rem}\ti_{j\to a}$ is a continuous function of the entries of the matrix $A$. 
It suffices to prove by induction over $t\geq1$ that $\ERR\tti\ia$ is a continuous function of the entries of the matrix $A$ and it is $O_\PP  \left(N^{-\frac{1}{2}}\right)$. The base case $t=1$ is dealt in Lemma \ref{lem:m2}. Assume now, that the assertion is true for some $t$. Then also $\ERR\tti\ia$ is a continuous function of the entries of the matrix $A$. Next we prove that $\ERR\tti\ia=O_\PP  \left(N^{-\frac{1}{2}}\right)$, which will conclude the proof. 

We note that $\ERR\tti\ia$ is a generic bounded continuous function of the entries of the matrix $A$. First of all, we observe that the Frobenius norm of $A$ is bounded by one with probability exponentially close to one as $N\to\infty$. On such a large probability event, the Stone-Weiestrass Theorem applies: for all $\e>0$ there is a polynomial $E\ia$ of some degree $D(\e)$ such that
\be
\|\ERR\tti\ia-E\ia\|_{L^\infty}\leq \e\,. 
\ee
Thus
\bea
\sum_{b\neq a}\sum_{j\neq i}A_{bi}A_{bj}\ERR\tti\ia&=&\sum_{b\neq a}\sum_{j\neq i}A_{bi}A_{bj}E\ia+\sum_{b\neq a}\sum_{j\neq i}A_{bi}A_{bj}(\ERR\tti\ia-E\ia)\nn\\
&\leq& \sum_{b\neq a}\sum_{j\neq i}A_{bi}A_{bj}E\ia+\e\sum_{b\neq a}\sum_{j\neq i}|A_{bi}A_{bj}|\nn\\
&\leq& \sum_{b\neq a}\sum_{j\neq i}A_{bi}A_{bj}E\ia+O_\P(\e m)\,.
\eea
As $\e$ can be made arbitrarily small, we only have to prove that the first addendum above is $O_\P(1)$. 

First we prove that the assertion holds for a monomial function. W.l.o.g. we can write
\be
E\jb=A_{bj}^{K}\parav{\sum_{k\neq j}A^{H}_{bk}}^p \,,
\ee
for given integers $K,p\geq0$ and $H\geq1$. Note that $E\jb=O_\P(1)$ (applying Lemma \ref{lemma:facilissimo}), thus we need to prove that 
\be\label{eq:needto}
\sum_{b\neq a}\sum_{j\neq i}A_{bi}A_{bj}E\jb=O_\P(1)\,. 
\ee
We set
\be
X_{bij}:=\sum_{k\neq i,j}A^{H}_{bk}\,.
\ee
Moreover,
\be
\parav{\sum_{k\neq j}A^{H}_{bk}}^p=\parav{A_{bi}^H+X_{bij}}^p=\sum_{\ell=0}^p\binom{p}{\ell}A_{bi}^{H\ell}X^{p-\ell}_{bij}\,.
\ee
Hence,
\be
\sum_{b\neq a}\sum_{j\neq i}A_{bi}A_{bj}E\jb=\sum_{\ell=0}^p\binom{p}{\ell}\sum_{b\neq a}A_{bi}^{H\ell+1}\sum_{j\neq i}A^{K+1}_{bj}X^{p-\ell}_{bij}\,.
\ee
Now we put
\be\label{eq:defYell}
Y^{(\ell)}_{bi}:=\sum_{j\neq i}A^{K+1}_{bj}X^{p-\ell}_{bij}\,.
\ee
We note that $X_{bij}$ is independent of $A_{bi}, A_{bj}$. Recalling the normalisation of the elements of the matrix $A$, applying Lemma \ref{prop:tail} with $x_i=m^{-H/2}$ for all $i=1\ldots N$, we have 
\be\label{eq:XO}
X_{bij}=
\begin{cases}
O_\P\parav{\frac{1}{m^{\frac{H-1}{2}}}}&\text{if $H$ is odd}\,,\\
O_\P\parav{\frac{1}{m^{\frac{H-2}{2}}}}&\text{if $H$ is even}\,.
\end{cases}
\ee
Therefore
\be\label{eq:XOell}
X^{p-\ell}_{bij}=
\begin{cases}
O_\P\parav{\frac{1}{m^{\frac{(p-\ell)(H-1)}{2}}}}&\text{if $H$ is odd}\,,\\
O_\P\parav{\frac{1}{m^{\frac{(p-\ell)(H-2)}{2}}}}&\text{if $H$ is even}\,.
\end{cases}
\ee
Moreover, if $H$ is even then clearly $X_{bij}>0$. Therefore the worst possible case is $H=2$, when the terms $X^{p-\ell}_{bij}$ are all positive $O_\P(1)$. Similarly, we will assume that $K=1$, so that also the terms $A^{K+1}_{bj}$ have the minimal decay $1/m$ with no cancellations due to the random signs in the sum in (\ref{eq:defYell}). Thus we have $Y^{(\ell)}_{bi}=O_\P(1)$ for all $\ell=1\ldots, p$. 

Next, we need to evaluate
\be\label{eq:scalY}
\sum_{\ell=0}^p\binom{p}{\ell}\sum_{b\neq a}A_{bi}^{H\ell+1}Y^{(\ell)}_{bi}\,. 
\ee
If $\ell>0$ then $H\ell+1\geq2$. Thus the summands $A_{bi}^{H\ell+1}Y^{(\ell)}_{bi}$ are at least $O_\P(m^{-1})$ and the sum over $b\neq a$ is at most $O_\P(1)$.

The term $\ell=0$ is dealt separately. By the Hoeffding inequality (w.r.t. the law of $A$) and the Jensen inequality we have
\be
P\parav{\absv{\sum_{b\neq a}A_{bi}Y^{(0)}_{bi}\geq t}\leq \squarev{e^{-c\frac{t^2m}{\sum_{b\neq a}(Y_{bi}^{(0)})^2}}}}
\leq e^{-c\frac{t^2m}{\sum_{b\neq a}\E\squarev{(Y_{bi}^{(0)})^2}}}=e^{-ct^2}\,,
\ee
that is 
$$
A_{bi}Y^{(0)}_{bi}=O_\P(1)\,. 
$$
Clearly the sum over $\ell$ of $O_\P(1)$ terms remains $O_\P(1)$. This proves the assertion under the assumption that $\ERR\tti\ia$ is a monomial function. Therefore the same follows also for any bounded polynomial function, combining different values of $H,K,p$. This concludes the proof. 
\end{proof}


\section{Analysis of the AMP equations}\label{sect:AnaAMP}

In this section we analyse the AMP iteration derived in Theorem \ref{TH:main} and prove Corollary \ref{cor:main}.

\begin{Lem}\label{lem:finite-window-stability}
    Let $(M_t)_{t\geq 0}$ be matrices in $\mathbb{R}^{N \times N}$ and $(b_t)_{t\geq 0}, \ (R\ti )_{t \geq 0}$ vectors in $\mathbb{R}^N.$ Consider, for a fixed $T \in \mathbb{N}$, the sequences
    \begin{align*}
        X\tti  &= M_t X\ti +b_t+R\ti , \\
        W\tti  &= M_t W\ti  + b_t,
    \end{align*}
    with $X\zzero =W\zzero .$ Assume that
    \begin{equation}\label{eq:prod-bound}
        C_T:=\sup _{0 \leq s<t\leq T}\left\lVert \prod_{\tau=s+1}^{t-1} M_\tau\right\rVert <\infty,
    \end{equation}
    and set 
    \begin{equation*}
        \varepsilon_T \coloneqq \max_{0 \leq t\leq T-1} \lVert R\ti \rVert\,. 
    \end{equation*}
	Then
    \begin{equation}\label{eq:finite-window-proximity-lemma}
        \max_{0 \leq t\leq T} \lVert X\ti -W\ti \rVert\leq C_T T \varepsilon_T\,.
    \end{equation}
\end{Lem}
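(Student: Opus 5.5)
The plan is to argue directly on the difference sequence $D\ti\coloneqq X\ti-W\ti$, with no probabilistic input: the statement is a purely deterministic discrete Duhamel (variation of constants) estimate. Subtracting the two recursions, the inhomogeneous term $b_t$ cancels, and since $X\zzero=W\zzero$ we get
\begin{equation*}
D\zzero=0,\qquad D\tti=M_tD\ti+R\ti,\qquad 0\leq t\leq T-1 .
\end{equation*}

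First, I will unroll this linear recursion and prove by induction on $t$ the representation
\begin{equation*}
D\ti=\sum_{s=0}^{t-1}\Bigl(\prod_{\tau=s+1}^{t-1}M_\tau\Bigr)R^{(s)},\qquad 1\leq t\leq T .
\end{equation*}
Here the product is ordered, namely $M_{t-1}M_{t-2}\cdots M_{s+1}$, and it is the identity when $s=t-1$. The base case $t=1$ reads $D\oone=R\zzero$. For the induction step, substituting the formula into $D\tti=M_tD\ti+R\ti$ multiplies each ordered product on the left by $M_t$, which extends its upper index from $t-1$ to $t$. The new term $R\ti$ appears with the empty product, as required.

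Second, I will apply the triangle inequality and the definitions of $C_T$ and $\varepsilon_T$. Each propagator $\prod_{\tau=s+1}^{t-1}M_\tau$ with $0\le s<t\le T$ is among those appearing in \eqref{eq:prod-bound}, so its norm is at most $C_T$. Each $\|R^{(s)}\|$ with $s\le t-1\le T-1$ is at most $\varepsilon_T$. Hence
\begin{equation*}
\|D\ti\|\leq \sum_{s=0}^{t-1}C_T\,\varepsilon_T=t\,C_T\,\varepsilon_T\leq T\,C_T\,\varepsilon_T .
\end{equation*}
Taking the maximum over $0\le t\le T$, and noting that $t=0$ is trivial because $D\zzero=0$, gives \eqref{eq:finite-window-proximity-lemma}.

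There is no genuine obstacle here. The only point needing care is the interpretation of \eqref{eq:prod-bound}: the product must be read as the ordered left product $M_{t-1}\cdots M_{s+1}$, which is exactly the propagator arising when the recursion is unrolled. The case $s=t-1$ (empty product, equal to the identity) is included in the supremum, so that $C_T\ge 1$ and the last term $R^{(t-1)}$ is also covered. I will also state explicitly that $\|\cdot\|$ is any vector norm together with its induced operator norm, so that $\|MR\|\le\|M\|\,\|R\|$ holds.
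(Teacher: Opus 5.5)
Your proposal is correct and is essentially the paper's proof. You subtract the recursions to get $E^{(t+1)}=M_tE^{(t)}+R^{(t)}$ with $E^{(0)}=0$, unroll into the sum $\sum_{s=0}^{t-1}\bigl(\prod_{\tau=s+1}^{t-1}M_\tau\bigr)R^{(s)}$, and bound each of the at most $T$ terms by $C_T\varepsilon_T$; your remarks on the left-ordered product and the empty product (which gives $C_T\ge 1$ and covers $R^{(t-1)}$) make explicit what the paper leaves implicit.
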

\begin{proof}
Set $E\ti \coloneqq X\ti -W\ti $. Then $E\zzero =0$ and
\begin{equation*}
E\tti  = M_t E\ti  + R\ti .
\end{equation*}
We have that
\begin{equation*}
E\ti =\sum_{s=0}^{t-1}\Bigl(\prod_{\tau=s+1}^{ t-1} M_\tau\Bigr) R^{(s)}.
\end{equation*}
Taking norms and using \eqref{eq:prod-bound},
\begin{equation*}
\|E\ti \|  \le  \sum_{s=0}^{t-1} \Bigl\|\prod_{\tau=s+1}^{ t-1} M_\tau\Bigr\| \|R^{(s)}\|
 \le  C_T \sum_{s=0}^{t-1}\|R^{(s)}\|
 \le  C_T  t  \varepsilon_T.
\end{equation*}
Maximizing over $t\le T$ yields \eqref{eq:finite-window-proximity-lemma}.
\end{proof}

\begin{proof}[Proof of Corollary \ref{cor:main}]

By Lemma \ref{lem:finite-window-stability} we can ignore the $O_\PP \Big(\Delta\ti  \Gamma\ti _{1} N^{-1/2}\Big)$ term in the AMP updates (\ref{eq:AMP-cor}) of Theorem \ref{TH:main}, by paying a $O_\PP \Big(C_tt\Delta\ti  \Gamma\ti _{1} N^{-1/2}\Big)$ error in the solution at step $t$, which is irrelevant as $N\to\infty$. Thus we are left to study the recursion
\begin{equation}\label{eq:tv-iter}
X\tti 
 = 
X\ti 
 + \Delta\ti  \left[A^T y  -  A^T A X\ti \right],
\qquad X\zzero =0 \,.
\end{equation}

The spectrum of $\d A^TA$ is contained a.s. in the interval $[\l_-,\l_+]$, where $\l_\pm:=(1\pm\sqrt \d)^2$.
Next, we recall 
\be\label{eq:recall-deltat}
\D^{(t)}=\frac{\d}{\d+\frac{2\b v\zzero(1-\d)}{\d^t(1-\d)+2\b v\zzero(1-\d^t)}}\,.
\ee
By a direct computation we see that we can write
\be\label{eq:recall-delta2}
\D^{(t)}=\d-\d^{t+1}\g(\b,t)\,,\qquad \g(\b,t):=\frac{(1-\d)(2\b v^{(0)}-(1-\d))}{\d^{t+1}(1-\d)+2\b v^{(0)}(1-\d^{t+1})}\,.
\ee
Note that $\g(\b,t)\geq 0$ if $\b\geq (1-\d)/2v\zzero$, it is increasing as $t$ grows and $\g(\b,t)<1$ for all $t$.
We set 
\be\label{eq:opR}
R^{(t)}:=I_N-\D^{(t)}A^TA\,,\qquad R^{(\infty)}:=I_N-\d A^TA\,.
\ee
We have that
\bea
\sup_{t\in\N}\|R^{(t)}\|_\op&\leq& \sup_{t\in\N}\max_{\l\in[(1-\sqrt\d)^2, (1+\sqrt\d)^2]}|1-(1-\d^{t}\g(\b,t))\l|\nn\\
&=&\max_{\l\in[(1-\sqrt\d)^2, (1+\sqrt\d)^2]}|1-\l(1-\g(\b,0))|\nn\\
&=&1-(1-\sqrt\d)^2(1-\g(\b,0))=:\r_1\,,\label{eq:boundRt}
\eea
where $\r_1\in(0,1)$ for all $\d\in(0,1)$, and
\be
\|R^{(\infty)}\|_\op\leq 1-(1-\sqrt\d)^2=\sqrt\d(2-\sqrt\d)=:\r_2\in(0,1)\,. 
\ee
The equation \eqref{eq:tv-iter} writes as
\begin{equation}\label{eq:tv-iter2}
X\tti 
 = 
R\ti X\ti
 + \Delta\ti  A^T y\,,
\qquad X\zzero =0 \,.
\end{equation}
In parallel, we define $X^{(\infty)} $ as the solution of
\be
X^{(\infty)}  
 = 
R^{(\infty)} X^{(\infty)} 
 + \d  A^T y\,,
\qquad X\zzero =0 \,.
\ee
We have
\bea
X^{(\infty)} &=&\d\sum_{k=0}^{\infty}(R^{(\infty)})^k A^T y\nn\\
&=&\d A^T\sum_{k=0}^{\infty}(I_N-\d AA^T)^k y\nn\\
&=&\d A^T (I_N-(I_N-\d AA^T))^{-1}y\nn\\
&=&A^T(AA^T)^{-1}y=x^\star\,.\label{eq:Xinfty}
\eea
Set now $\Gamma\ti:=X\ti-X^{(\infty)}$. We have
\bea
\Gamma\tti&=&R\ti X\ti-R^{(\infty)} X^{(\infty)}+(\D\ti-\d)A^Ty\nn\\
&=&R\ti\Gamma\ti+(R\ti-R^{(\infty)}) X^{(\infty)}+(\D\ti-\d)A^Ty\nn\label{eq:grafnontorna}\,.
\eea
Now, we observe that
\be
R\ti-R^{(\infty)}=-(\D\ti-\d)A^TA\,,
\ee
thus
\bea
\Gamma\tti&=&R\ti X\ti-R^{(\infty)} X^{(\infty)}+(\D\ti-\d)A^Ty\nn\\
&=&R\ti\Gamma\ti+(\D\ti-\d)(A^Ty-A^TAX^{(\infty)})\nn\\
&=&R\ti\Gamma\ti\,
\eea
since by (\ref{eq:Xinfty}) we have
\be
A^TAX^{(\infty)}=A^TAx^\star=A^T AA^T(AA^T)^{-1}y=A^Ty\,.
\ee
Hence, by (\ref{eq:boundRt})
\be
\|\Gamma\tti\|_2\leq \|R\ti\|_\op\|\Gamma\ti\|_2\leq \r_1^t\|x^\star\|_2\,.
\ee
This proves the assertion.
\end{proof}


\appendix


\section{Equivalence of $L^2$ norms of random tensors}\label{App:equiv}

In this appendix we prove the following statement.

\begin{proposition}\label{prop:equiv}
Let $\xi_1,\ldots, \xi_m$ be i.i.d. centred r.vs with unitary variance and set
\be
\chi_r(\xi)\coloneqq m^{-\frac r2}\sum_{\substack{n_i\neq n_{i+1}\\i=1\ldots r-1}} \xi_{n_1}\ldots \xi_{n_r}\,. 
\ee
We have
\be
\|\chi_r(\xi)\|^2_{L^2}\leq\|\chi_r(g)\|^2_{L^2}+\frac{C}{m}\,,
\ee
where $C>0$ is a constant depending on $r$ and on the distribution on $\xi$ and $g_1,\ldots, g_m$ are independent standard Gaussian r.vs.
\end{proposition}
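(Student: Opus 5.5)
The plan is to expand the square directly and compare moment by moment with the Gaussian case. Write
\[
\|\chi_r(\xi)\|_{L^2}^2=m^{-r}\sum_{\mathbf n,\mathbf n'}\EE\Big[\prod_{k=1}^r\xi_{n_k}\prod_{k=1}^r\xi_{n'_k}\Big],
\]
with both $\mathbf n=(n_1,\dots,n_r)$ and $\mathbf n'$ ranging over tuples whose consecutive entries are distinct. Each pair $(\mathbf n,\mathbf n')$ induces a partition $\pi$ of the $2r$ positions, according to which indices coincide. The expectation depends only on $\pi$: it equals $\prod_{B\in\pi}\EE[\xi^{|B|}]$. Because the variables are centred, this vanishes whenever $\pi$ has a singleton block.

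So we reorganise the sum as
\[
\|\chi_r(\xi)\|^2_{L^2}=m^{-r}\sum_{\pi}w(\pi)\,N_m(\pi),
\qquad w(\pi)=\prod_{B\in\pi}\EE[\xi^{|B|}],
\]
where $N_m(\pi)$ counts the admissible index assignments that realise exactly the partition $\pi$ (all distinct blocks receive distinct labels, and the adjacency constraints $n_i\neq n_{i+1}$ hold). The key point is that $N_m(\pi)=m(m-1)\cdots(m-|\pi|+1)$ or $0$; in particular $N_m(\pi)\le m^{|\pi|}$, and $N_m(\pi)$ is the same combinatorial quantity for $\xi$ and for $g$.

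We then split the partitions according to block structure.

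\emph{Pairings.} If $\pi$ has all blocks of size two, then $|\pi|=r$ and $w(\pi)=1$, both for $\xi$ and for $g$, since the variances agree. These terms therefore contribute identically to both sides.

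\emph{Other partitions without singletons.} Any other partition with no singleton block has some block of size at least three. Since the block sizes sum to $2r$, this forces $|\pi|\le r-1$, so
\[
m^{-r}N_m(\pi)\le m^{-1}.
\]
The number of such partitions is bounded by the Bell number $B_{2r}$, and $|w(\pi)|\le \prod_B\EE|\xi|^{|B|}$ is finite because the entries are sub-Gaussian, so all moments exist. Hence the total contribution of these partitions to $\|\chi_r(\xi)\|^2_{L^2}$ is at most $C(r,\xi)/m$.

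\emph{The Gaussian side.} On the Gaussian side the analogous terms have $w(\pi)=\prod_B\EE[g^{|B|}]\ge0$, since Gaussian moments are nonnegative. Dropping them therefore only decreases the Gaussian side, which gives
\[
\|\chi_r(g)\|^2_{L^2}\ge m^{-r}\sum_{\pi\ \text{pairing}}N_m(\pi).
\]

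Putting the three pieces together yields
\[
\|\chi_r(\xi)\|^2\le \|\chi_r(g)\|^2+\frac{C}{m}.
\]

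The main obstacle is bookkeeping the constraint $n_i\neq n_{i+1}$. It has to remove the same pairings in both models, and it must not create a mismatch in $N_m(\pi)$. This holds because admissibility depends only on $\pi$, so $N_m(\pi)$ is defined independently of the law of the variables. The only other delicate point is the counting estimate $|\pi|\le r-1$ for partitions without singletons that are not pairings, which is elementary: a block of size three or more uses up at least one extra position beyond a pairing.
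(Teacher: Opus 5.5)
Your proof is correct, and it is organised differently from the paper's.

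\textbf{The paper's route.} It first splits $\chi_r=\sum_k\chi_r^{(k)}$ according to the number $k$ of coincidences \emph{within a single tuple}, and applies Minkowski's inequality. It notes that the fully distinct piece $\chi_r^{(0)}$ is a decoupled chaos whose $L^2$ norm does not depend on the law. For $k\ge1$ it expands $\|\chi_r^{(k)}\|_{L^2}^2$, identifies the ``acyclic'' terms with the Gaussian Wick expansion, and power-counts the ``cycles'', reducing to a single cycle as the claimed worst case.

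\textbf{Your route.} You partition all $2r$ positions of \emph{both} copies at once. This gives the exact identity $\|\chi_r(\xi)\|_{L^2}^2=m^{-r}\sum_\pi w(\pi)N_m(\pi)$ with a law-independent factor $N_m(\pi)$. Pairings then match the Gaussian side exactly, every other singleton-free partition has $|\pi|\le r-1$, and the direction of the inequality comes from $\mathbb{E}[g^k]\ge0$.

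\textbf{What your route buys.}
\begin{itemize}
\item There is no Minkowski step. That step bounds $\|\chi_r\|_{L^2}$ by $\sum_k\|\chi_r^{(k)}\|_{L^2}$, not $\|\chi_r\|_{L^2}^2$ by a sum of squares, so cross terms between different $k$ would still need control.
\item There is no worst-case reduction.
\item Only moments of order at most $2r$ enter, and the constant is explicit: the number of partitions of $2r$ points times $\max_\pi\prod_{B\in\pi}\mathbb{E}|\xi|^{|B|}$.
\item Bounding the Gaussian non-pairing terms the same way gives the two-sided estimate $\bigl|\|\chi_r(\xi)\|_{L^2}^2-\|\chi_r(g)\|_{L^2}^2\bigr|\le C/m$.
\end{itemize}

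\textbf{Sharpness of $1/m$.} Your bookkeeping also shows that $1/m$ is the true order. For $r=3$, the only admissible, singleton-free, non-pairing partitions are $\{n_1,n_3,n_1',n_3'\}\cup\{n_2,n_2'\}$ and $\{n_1,n_3,n_2'\}\cup\{n_2,n_1',n_3'\}$. Hence
\[
\|\chi_3(\xi)\|_{L^2}^2-\|\chi_3(g)\|_{L^2}^2=\bigl(\mathbb{E}\xi^4+(\mathbb{E}\xi^3)^2-3\bigr)\,\frac{m(m-1)}{m^3}.
\]
This is of exact order $1/m$ whenever $\mathbb{E}\xi^4+(\mathbb{E}\xi^3)^2\neq3$. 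The paper's per-cycle rates $m^{-(\ell-1)}$, $m^{-(\ell-2)}$ with $\ell\ge3$ would suggest $O(m^{-2})$ for these terms; the stated $C/m$ bound is unaffected.

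\textbf{What the paper's organisation offers.} It isolates the law-independence of the decoupled part. It also uses the same within-tuple pairing language as the Wick computations of Section~\ref{sect:proofProp}, where the proposition is applied.
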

\begin{proof}
Let us set
\be
\mathcal B\coloneqq \{n_i\neq n_{i+1}\,:\,i=1\ldots r-1\}
\ee
and
\be
B_k\coloneqq \{n_1\ldots n_r\,:\,|i\,:\,n_i=n_{j}\,\,\text{for some }j\neq i,i+1|=k\}\,
\ee
(that is the subsets of $\mathcal B$ in which there are exactly $k$ pairs of indices taking the same values). The set $B_k$ easily identifies with a graph in which the vertex $i,j$ are connected iff $n_i=n_j$. We have
\be
\chi_r=\sum_{k=0}^{r}\chi_{r}^{(k)}\,,\qquad \chi_{r}^{(k)}\coloneqq m^{-\frac r2}\sum_{B_k}\xi_{n_1}\ldots \xi_{n_r}\,. 
\ee
Thus
\be
\|\chi_r\|_{L^2}\leq \sum_{k=0}^{r}\|\chi_{r}^{(k)}\|_{L^2}\,. 
\ee
The term $\chi_{r}^{(0)}$ corresponds to the decoupled chaos, for which we have
\be\label{eq:wickone}
\|\chi_{r}^{(0)}(\xi)\|_{L^2}=\|\chi_{r}^{(0)}(g)\|_{L^2}\,. 
\ee
Since all the indices here are distinct, this term corresponds to the ones in the Wick expansion of Theorem \ref{thm:Wick4} in which the vertices can be paired only in disjoint couples.

Let us assume now $k\geq1$. We define a cycle of length $p$ a chain of identities of $p$ indices. Clearly $p\geq3$. We set $B^a_k$ to be the subset of elements of $B_k$ containing no cycles and $B^c_k$ its complementary set w.r.t. $B_k$. We also define $p_i$ as the length of the path containing the index $n_i$ and if $i<j$ and $n_j$ belongs to the same cycle as $n_i$ we set $p_j=0$. For elements of $B^a_k$ we can extend this definition, setting $p_i\in\{0,1,2\}$ to be the number of indices equal to $n_i$ and if $i<j$ and $p_i=2$ then $p_j=0$. Clearly $k\leq |i\,:\,p_i=0|=:h$ and $|i\in B_k\,:\,n_i\neq n_j\,\forall j\in B_k\setminus\{i\}|\leq m^{r-h}$. Moreover, for each element in $B_k$ we can write
\be
\xi_{n_1}\ldots \xi_{n_r}= \xi^{p_1}_{n_1}\ldots \xi^{p_r}_{n_r}=\prod_{i\in[r]\,:\,p_i\neq0}\xi^{p_i}_{n_i}\,. 
\ee
It is important to notice that the indices in the terms appearing on the r.h.s of the display above are all distinct. 

Thus, squaring, we have
\bea
\|\chi_{r}^{(k)}(\xi)\|^2_{L^2}&=&m^{-r}\sum_{B_k\times B_k} E[\xi^{p_1}_{n_1}\ldots \xi^{p_r}_{n_r}\xi^{q_1}_{m_1}\ldots \xi^{q_r}_{m_r}]\nn\\
&=&m^{-r}\sum_{B^a_k\times B^a_k} E[\xi^{p_1}_{n_1}\ldots \xi^{p_r}_{n_r}\xi^{q_1}_{m_1}\ldots \xi^{q_r}_{m_r}]\label{eq:BaBa}\\
&+&2m^{-r}\sum_{B^a_k\times B^c_k} E[\xi^{p_1}_{n_1}\ldots \xi^{p_r}_{n_r}\xi^{q_1}_{m_1}\ldots \xi^{q_r}_{m_r}]\label{eq:BaBc}\\
&+&m^{-r}\sum_{B^c_k\times B^c_k} E[\xi^{p_1}_{n_1}\ldots \xi^{p_r}_{n_r}\xi^{q_1}_{m_1}\ldots \xi^{q_r}_{m_r}]\label{eq:BcBc}\,,
\eea
where the indices $q_i$'s are defined as the $p_i$'s, but referred to the $m_i$'s. Since the $n_i$'s and $m_i$'s are all distinct, the expectation value can only pair one of the $n_i$'s with one of the $m_i$'s. Moreover, if some of the $p_i$'s is odd it must necessarily be paired with some of the $q_i$'s in order to have a non-vanishing contribution. 

In each of the terms of the sum in (\ref{eq:BaBa}) there are $k$ $p_i$'s (resp. $q_i$'s) equal to two, $k$ $p_i$'s (resp. $q_i$'s) equal to zero and all the other equal to one. After the pairing given by the expectation, the terms without cycles in the product graph correspond to the terms of the Wick expansion of Theorem \ref{thm:Wick4} in which one pairs $k$ indices within each $B_k$ and the there are $r-2k$ cross pairings between the $n_i$'s and $m_i$'s. Together with (\ref{eq:wickone}) this recover all the Wick expansion giving $\|\chi_r(g)\|^2_{L^2}$. 

What is left are the terms containing cycles in the product graph, that is the remaining summands in \eqref{eq:BaBa} and all those of \eqref{eq:BaBc}, \eqref{eq:BcBc}. Assume for simplicity there is a single cycle of length $\ell\geq3$ and all the other indices have $p_i=1$. Indeed this is the worst possible case (i.e. giving the largest contribution). Since all the moments of $\xi$ are bounded, we only have to match the cardinality of the sums appearing with the factor $m^{-r}$ in front of all the expression. If $\ell$ is odd the cycle index must be paired with a $m$ index, if $\ell$ is even only the indices with $p_i=1$ (resp. $q_i=1$) must be cross-paired with a $m$ index.  Thus if $\ell$ is odd, the cardinality of the sum in \eqref{eq:BaBa}-\eqref{eq:BcBc} is bounded by $m^{r-\ell+1}$ and since we have $m^{-r}$ at the denominator this contribution is of order $1/m^{\ell-1}$. Similarly if $\ell$ is even, the cardinality of the sum in \eqref{eq:BaBa}-\eqref{eq:BcBc} is bounded by $m^{r-\ell+2}$, thus the contribution is of order $1/m^{\ell-2}$. 
\end{proof}


\section{Local Limit Theorems}\label{app:LLT}

Here we prove two versions of the local central limit theorem for independent random variables which are used in the main text.

We recall the definitions
\be\label{eq:def-Gauss}
\phi_{\mu,\s}(x)\coloneqq \frac{e^{-\frac{(x-\mu)^2}{2\s}}}{\sqrt{2\pi\s}}\,,\qquad
\phi\coloneqq \phi_{0,1}\,,
\ee
and the Hermite polynomials
\be\label{eq:hermite}
H_m(x)\coloneqq \phi(x)^{-1}\int e^{-i\l x-\frac{\l^2}{2}}(i\l)^m\frac{d\l}{\sqrt{2\pi}}\,,\quad m\in\N\,. 
\ee

We warn the reader that in this section the notation with the $\widehat \cdot$ has a different meaning with respect to the rest of the paper. Indeed our proofs are based on Fourier transform and we write the characteristic function of the probability density $\nu$ as
\be\label{eq:nu-fNFourier}
\widehat\nu(\l)\coloneqq \int dx e^{-i\l x} \nu(x)\,.
\ee

The following statement is essentially contained in \cite[XVI.2 and XVI.6]{feller}, but we report its proof below anyway. 

\begin{theorem}\label{lemma:local-limit-teorem}
Let $X_1,\ldots,X_N$ be independent r.vs with continuous densities $\nu_1,\ldots,\nu_N$. Assume that the fourth moment of each $X_n$, $n\in[N]$, is uniformly bounded for all $N\geq1$. Denote
\be\label{eq:cumulants}
P_{n,k}\coloneqq \frac{d^k}{dy^k}\log E[e^{yX_n}]\Big|_{y=0}\,,\qquad P_{k}\coloneqq \sum_{n\in[N]}P_{n,k}\,,\qquad k=1,\ldots,3\,.
\ee
Let $f_N$ be the density associated to $\sum_{n=1}^NX_n$.
Then
\be\label{eq:LLT}
\sup_{x\in\R}\left|f_N(x)-\phi_{P_1, P_2}\left(x\right)\left(1+\frac{1}{3!}\frac{P_3}{(P_2)^{\frac 32}}H_3\left(\frac{x-P_1}{\sqrt {P_2}}\right)\right)\right|\lesssim e^{-cP_2}+e^{-cN} + \frac{N}{P_2^{\frac{5}{2}}}\,.
\ee
Moreover, $f_N$ depends continuously on $P_1,P_2,P_3$.
\end{theorem}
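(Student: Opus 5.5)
We follow the classical Fourier-inversion proof of the Edgeworth expansion (Feller XVI.2, XVI.6). Set $S_N=\sum_n X_n$ and write $\widehat f_N(\l)=\prod_{n}\widehat\nu_n(\l)$. By \eqref{eq:hermite}, the Edgeworth density
\[
g_N(x):=\phi_{P_1,P_2}(x)\Big(1+\tfrac{1}{3!}\tfrac{P_3}{P_2^{3/2}}H_3\big(\tfrac{x-P_1}{\sqrt{P_2}}\big)\Big)
\]
has Fourier transform $\widehat g_N(\l)=e^{-i\l P_1-\l^2P_2/2}\big(1+\tfrac{P_3}{6}(-i\l)^3\big)$. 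Assuming $\widehat f_N\in L^1$ (see below), we have
\[
\sup_x|f_N(x)-g_N(x)|\le\tfrac{1}{2\pi}\int_\R|\widehat f_N(\l)-\widehat g_N(\l)|\,d\l .
\]
We split this integral into the regions $|\l|\le\eta$ and $|\l|>\eta$, with $\eta$ a small constant. By the uniform fourth-moment bound, $\eta$ can be chosen uniformly in $n$.

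\emph{Low frequencies.} For $|\l|\le\eta$, a Taylor expansion of $\log\widehat\nu_n$ to fourth order, with cumulants controlled by the uniform fourth moments, gives
\[
\log\widehat f_N(\l)=-i\l P_1-\tfrac{\l^2}{2}P_2+\tfrac{(-i\l)^3}{6}P_3+r_N(\l),\qquad |r_N(\l)|\lesssim N\l^4 .
\]
Shrinking $\eta$ so that $|P_3|\l^3+N\l^4\le \l^2P_2/4$ (using $P_3=O(N)$ and $P_2\gtrsim N$ in our application; in general one tracks $P_2$), the inequality $|e^{z}-1-w|\le |z-w|e^{|z|}+\tfrac{|w|^2}{2}e^{|w|}$ gives
\[
|\widehat f_N-\widehat g_N|\lesssim e^{-\l^2P_2/4}\big(N\l^4+P_3^2\l^6\big).
\]
Integrating, this region contributes $\lesssim N P_2^{-5/2}+P_3^2P_2^{-7/2}\lesssim N/P_2^{5/2}$, since $P_3^2\lesssim N^2\lesssim NP_2$. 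We also need the contribution of $\widehat g_N$ for $|\l|>\eta$, which is $\lesssim e^{-cP_2}$.

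\emph{High frequencies (main obstacle).} For $|\l|>\eta$ we need $\int_{|\l|>\eta}\prod_n|\widehat\nu_n(\l)|\,d\l\lesssim e^{-cN}$. The plan has two parts. First, continuity of the densities and Riemann--Lebesgue give $\sup_{|\l|>\eta}|\widehat\nu_n(\l)|\le \theta<1$; this needs to hold uniformly in $n$, for which we impose (as in our application, where all the $X_n$ are rescalings of a fixed family with bounded densities) a uniform non-lattice/Cramér-type condition. Second, for integrability, keep two factors and bound them in $L^2$ by Plancherel: $\|\widehat\nu_n\|_{L^2}^2=2\pi\|\nu_n\|_{L^2}^2\le 2\pi\|\nu_n\|_\infty$, which is bounded for bounded densities. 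Hölder then gives the bound $\theta^{N-2}C\lesssim e^{-cN}$. This is the delicate step, because uniformity of $\theta$ in $n$ is not automatic, and the scaling by $A_{aj}/A_{ai}$ in our application has to be absorbed. We would handle it by restricting to the high-probability event on which the scalings lie in a compact range, where continuity in the scale parameter yields a uniform $\theta$.

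\emph{Continuity.} $f_N$ is the Fourier inverse of a product of continuous characteristic functions, dominated as above, so continuity follows by dominated convergence. The statement's ``continuous dependence on $P_1,P_2,P_3$'' is understood through this representation, since the parameters enter only via the continuously varying factors.
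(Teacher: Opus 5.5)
Your proposal follows essentially the paper's own route. It uses Fourier inversion split into $|\lambda|\le\eta$ and $|\lambda|>\eta$, a fourth-order cumulant expansion of $\log\widehat\nu_n$ near the origin giving the $N/P_2^{5/2}$ term, the Gaussian tail beyond $\eta$ giving $e^{-cP_2}$, and a uniform bound $|\widehat\nu_n|\le\theta<1$ at high frequencies giving $e^{-cN}$. The extra hypotheses you make explicit (a uniform non-lattice/Cram\'er bound, bounded densities for the Plancherel step, and $P_2\gtrsim N$) are what the paper uses implicitly when it asserts that $\rho$ and $a$ can be chosen independently of $n$ and that $\max_n\int|\widehat\nu_n|\,d\lambda<\infty$, so they are not a gap relative to the paper. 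Your handling of the $P_3^2\lambda^6$ term, which the paper's remainder $R'''$ glosses over, makes the argument cleaner.
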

\begin{proof}

We write
\be\label{eq:nu-fNFourier}
\widehat\nu_n(\l)\coloneqq \int dx e^{-i\l x} \nu_n(x)\,, \qquad f_N(x)=\frac{1}{\sqrt{2\pi}}\int d \l e^{-i\l x} \prod_{n=1}^N \widehat\nu_n(\l)\,.
\ee
From this expression we see that $f_N$ depends continuously on $P_1,P_2,P_3$, as each factor $\widehat\nu_n$ depends continuously on $P_{n,1},P_{n,2},P_{n,3}$.

In addition, our assumptions on the densities ensure that $\widehat\nu_n(\l)\to0$ as $|\l|\to\infty$ (by the Riemann-Lebesgue Lemma) and also
$$
\max_{n\in[N]}\int d\l |\widehat\nu_n(\l)|=:D<\infty\qquad \mbox{uniformly in $N$.} 
$$
By \cite[XV.1 Lemma 4]{feller} for any $n\in[N]$ it is $|\widehat\nu_n(\l)|<1$ for $\l\neq0$. Therefore for all $\r>0$ there is $a>0$ such that for any $n\in[N]$ $|\widehat\nu_n(\l)|<e^{-a}$ for all $|\l|>\r$ (that is, we can choose $\r$ and $a$ independently of $n\in[N]$). 
Therefore we can bound
\be\label{eq:LLTvalorilarge}
\int_{|\l|\geq \r} d \l e^{-i\l x} \prod_{n=1}^N \widehat\nu_n(\l)\leq e^{-a(N-1)} \int d\l |\widehat\nu_N(\l)|\leq 3De^{-a(\r)N}\,. 
\ee

So for an arbitrary $\r>0$ we split 
\bea
f_N(x)&=&\frac{1}{\sqrt{2\pi}}\int_{|\l|<\r} d \l e^{-i\l x} \prod_{n=1}^N \widehat\nu_n(\l)+\frac{1}{\sqrt{2\pi}}\int_{|\l|\geq \r} d \l e^{-i\l x} \prod_{n=1}^N \widehat\nu_n(\l)\label{eq:fNsplit}\\
&=& \frac{1}{\sqrt{2\pi}}\int_{|\l|\leq \r} d \l e^{-i\l x} \prod_{n=1}^N \widehat\nu_n(\l)+\OOO{e^{-a(\r)N}}\,.\label{eq:idchiave}
\eea

by (\ref{eq:LLTvalorilarge}). Thus, we need only to gain control locally around zero.
We do that by a Taylor expansion
\be\label{eq:taylor1}
\widehat\nu_n(\l)=1+\sum_{k=1}^3\frac{(i\l)^k}{k!}\mu_{n,k}+R'_{n}(\l)\,,
\ee
where
$$
\sup_{N\in\N}\max_{n\in[N]}R'_{n}(\l)\leq C|\l|^4\,.
$$
This implies that there exists $\r>0$ such that $\inf_{n\in[N]}|\nu_n(\l)|>0$ for all $|\l|<\r$ uniformly in $N$. This allows us to write $\nu_n(\l)=e^{\log \nu_n(\l)}$ for $|\l|<\r$. Let us Taylor expand the logarithm at exponent. For all $|\l|<\r$ we have by a direct computation
\bea
\log\widehat\nu_n(\l)&=&\sum_{p=1}^3(-1)^p\left(\sum_{k=1}^3\frac{(i\l)^k}{k!}P_{n,k}\right)^p+\tilde R''_{n}(\l)\nn\\
&=&i\l P_{n,1}-\frac{\l^2}{2}P_{n,2}+\frac{(i\l)^3}{3!}P_{n,3}+R''_{n}(\l)\,,
\eea
where again
$$
\sup_{N\in\N}\max_{n\in[N]}\tilde R''_{n}(\l), R''_{n}(\l)\leq C|\l|^4\,.
$$
Hence, for all $n\in[N]$ and $|\l|<\r$
\bea
\prod_{n\in[N]}\widehat \nu_n(\l)&=&\exp\left(\sum_{n\in[N]}\log\widehat\nu_n(\l)\right)\nn\\
&=&\exp\left(i\l P_{1}-\frac{\l^2}{2} P_{2}\right)\left(1+\frac{(i\l)^3}{3!}P_{3}+R_m'''(\l)\right)\label{eq:THEexpansion}
\eea
with
$$
\sup_{N\in\N}\max_{n\in[N]}R'''_{n}(\l)\leq C|\l|^4\,.
$$

Therefore
\bea
\int_{|\l|< \r} d \l e^{-i\l x} \prod_{n=1}^N \widehat\nu_n(\l)&=&\int_{|\l|< \r} d \l e^{-i\l x}\exp\left(i\l P_{1}-\frac{\l^2}{2}P_{2}\right)\left(1+\frac{(i\l)^3}{3!}P_{3}\right)\nn\\\label{eq:firstINt}\\
&+&\int_{|\l|< \r} d \l e^{i\l x}\exp\left(i\l P_{1}-\frac{\l^2}{2}P_{2}\right)R'''(\l)\,.\label{eq:secondINt}
\eea
First we bound \eqref{eq:secondINt} uniformly in $x$. We have
\bea
|\eqref{eq:secondINt}|&\leq& \int_{|\l|<\r} d \l e^{-\frac{\l^2}{2}P_{2}}|R'''(\l)| \leq N \int_{|\l|\leq \r} d \l e^{-\frac{\l^2}{2}P_{2}}|\l|^{4}\nn\\
&=&\frac{N}{P_2^\frac52}\int_{|\l|< \r\sqrt{P_2}} d \l e^{-\frac{\l^2}{2}}|\l|^{4}\leq \frac{CN}{P_2^{\frac52}}\label{eq:boundIIINT}
\eea
for a suitable absolute constant $C>0$.

Now, we look at \eqref{eq:firstINt}. First we do (again) the change of variable $\l\mapsto\sqrt{P_2}\l$ and get
\be
\eqref{eq:firstINt}=\frac{1}{\sqrt {P_2}}\int_{|\l|< \r\sqrt{P_2}} d \l e^{-i\l \frac{x}{\sqrt {P_2}}}\exp\left(i\l\frac{P_{1}}{\sqrt{P_2}}-\frac{\l^2}{2}\right)\left(1+\frac{(i\l)^3}{3!}P_{3}\right)\,.\nn
\ee
We can extend this integral to $\R$ with a small error. Indeed
\bea
&&\left|\int_{|\l|> \r\sqrt{P_2}} d \l e^{-i\l \frac{x}{\sqrt {P_2}}}\exp\left(i\l\frac{P_{1}}{\sqrt{P_2}}-\frac{\l^2}{2}\right)\left(1+\frac{(i\l)^3}{3!}P_{3}\right)\right|\nn\\
&\leq& e^{-\frac{\r^2P_{2}}{4}}\int d \l e^{-\frac{\l^2}{4}}\left(1+\frac{(i\l)^3}{3!}P_{3}\right)\leq Ce^{-\frac{N}{4} P_{2}}\label{eq:LLTrem1}\,.
\eea
Moreover,
\bea
&&\frac{1}{\sqrt {2\pi P_2}}\int d \l e^{-i\l \frac{x}{\sqrt {P_2}}}\exp\left(i\l\frac{P_{1}}{\sqrt{P_2}}-\frac{\l^2}{2}\right)\left(1+\frac{(i\l)^3}{3!}P_{3}\right)\,\nn\\
&=&\phi_{P_1, P_2}\left(x\right)\left(1+\frac{1}{3!}\frac{P_3}{(P_2)^{\frac 32}}H_3\left(\frac{x-P_1}{\sqrt {P_2}}\right)\right)\label{eq:Hermite-term}
\eea
by the definition (\ref{eq:hermite}). 
Combining \eqref{eq:firstINt}, \eqref{eq:secondINt}, \eqref{eq:boundIIINT}, \eqref{eq:LLTrem1}, \eqref{eq:Hermite-term}, we can evaluate the first summand on the r.h.s. of \eqref{eq:idchiave} and therefore obtain (\ref{eq:LLT}). 
\end{proof}

The following simple observation is crucial in the paper.  

\begin{corollary}\label{cor;LLT}
Let $L>0$ and define the open set $A_L\coloneqq \{x\in\R\,:\,|x-P_1|\leq L\sqrt{P_2}\}$.
Under the same assumptions and notations of Theorem \ref{lemma:local-limit-teorem}, we have that there is a continuous function $\chi$ such that
\be\label{eq:cor;LLT1}
\sup_{x\in \R}|\chi(x)|\leq C\parav{\frac{N}{P_2}+e^{-cP_2}}\,
\ee
and 
\be\label{eq:cor;LLT}
f_N(x)=\phi_{P_1, P_2}\left(x\right)\left(1+\frac{1}{3!}\frac{P_3}{(P_2)^{\frac 32}}H_3\left(\frac{x-P_1}{\sqrt {P_2}}\right)+\chi(x)\right)\qquad\forall x\in A_L\,.
\ee
Moreover, $\chi$ depends continuously on $P_1,P_2,P_3$.
\end{corollary}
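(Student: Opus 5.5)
This corollary follows directly from Theorem \ref{lemma:local-limit-teorem}. The plan is to define $\chi$ as the relative error on $A_L$: set
\[
\chi(x)\coloneqq \frac{f_N(x)}{\phi_{P_1,P_2}(x)}-1-\frac{1}{3!}\frac{P_3}{(P_2)^{\frac32}}H_3\left(\frac{x-P_1}{\sqrt{P_2}}\right).
\]
With this choice \eqref{eq:cor;LLT} holds identically. Since $\phi_{P_1,P_2}>0$ everywhere, $\chi$ is continuous in $x$ and continuous in $(P_1,P_2,P_3)$, because $f_N$ is by the theorem and the other terms are explicit. For $x\notin A_L$ one may extend $\chi$ continuously in any way, for instance by freezing its value at the boundary of $A_L$, so the supremum bound only has to be proved on $A_L$.

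On $A_L$ the Gaussian density is bounded from below,
\[
\phi_{P_1,P_2}(x)\geq \frac{e^{-L^2/2}}{\sqrt{2\pi P_2}}.
\]
Dividing the uniform bound \eqref{eq:LLT} by this quantity gives
\[
|\chi(x)|\leq C_L\sqrt{P_2}\parav{e^{-cP_2}+e^{-cN}+\frac{N}{P_2^{5/2}}}.
\]
The last term becomes $C_L N/P_2^{2}\leq C_L N/P_2$ provided $P_2\gtrsim 1$, which is the regime of interest. The first term is absorbed into $e^{-c'P_2}$ by shrinking $c$. The middle term $\sqrt{P_2}\,e^{-cN}$ is harmless whenever $P_2$ grows at most polynomially in $N$; in the applications $P_2=\hat v\zzero\ai=O_\PP(m)$ by \eqref{eq:fine-vv}. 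In general, since $P_2\leq CN$ by the uniform bound on the moments, one can use $e^{-cN}\leq e^{-cP_2/C}$ to absorb it into the exponential term.

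The only step needing care is the choice of denominator. Dividing by the Gaussian, rather than by the Edgeworth-corrected density, avoids having to bound $1+\frac{P_3}{6P_2^{3/2}}H_3$ away from zero. The lower bound on $\phi_{P_1,P_2}$ holds only on the window $A_L$, which is exactly why the corollary is restricted to $A_L$. The constant then depends on $L$ through $e^{L^2/2}$, and this dependence is hidden in $C$.
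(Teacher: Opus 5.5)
Your proof is correct. It rests on the same idea as the paper's: turn the uniform bound \eqref{eq:LLT} into a relative bound by dividing by a positive lower bound available on $A_L$. You normalise differently, though, and the difference matters.

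Write $\mathcal{E}(x)\coloneqq\phi_{P_1,P_2}(x)\bigl(1+\tfrac{1}{3!}P_3P_2^{-3/2}H_3((x-P_1)/\sqrt{P_2})\bigr)$. The paper sets $\chi\coloneqq c(L)^{-1}(f_N-\mathcal{E})$ with $c(L)=\inf_{A_L}\mathcal{E}$. This gives a function bounded on all of $\R$ straight from \eqref{eq:LLT}, so no extension step is needed. But it has three defects:
\begin{itemize}
\item \eqref{eq:cor;LLT} then holds with $c(L)\chi(x)$ rather than $\phi_{P_1,P_2}(x)\chi(x)$. What it really controls is the relative error $f_N/\mathcal{E}-1$, i.e.\ the multiplicative form used in Lemma~\ref{lemma:multiplicative}.
\item It needs the Hermite factor to stay positive on $A_L$.
\item The displayed value of $c(L)$ drops the factor $P_2^{-1/2}$ carried by $\phi_{P_1,P_2}$.
\end{itemize}

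Your pointwise division by $\phi_{P_1,P_2}$ makes \eqref{eq:cor;LLT} an exact identity and needs no sign information on $H_3$. It also tracks the $\sqrt{P_2}$ loss explicitly. That is why you end up with $N/P_2^{2}$ and $\sqrt{P_2}\,e^{-cP_2}$, both inside the (weaker) stated bound. The only price is the freezing extension outside $A_L$.

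Your proviso $P_2\gtrsim1$ is in fact automatic in the setting of Theorem~\ref{lemma:local-limit-teorem}. Its proof uses $|\widehat\nu_n(\l)|<e^{-a}$ for $|\l|>\r$ uniformly in $n$. Since $|\widehat\nu_n(\l)|\ge1-\l^2P_{n,2}/2$, this forces $P_{n,2}\ge2(1-e^{-a})/\r^2$, hence $P_2\gtrsim N$.
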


\begin{proof}
It is easy to see that
\be
\inf_{x\in A_L} \phi_{P_1, P_2}\left(x\right)\left(1+\frac{1}{3!}\frac{P_3}{(P_2)^{\frac 32}}H_3\left(\frac{x-P_1}{\sqrt {P_2}}\right)\right)= \phi_{0, 1}\left(L\right)\left(1+\frac{1}{3!}\frac{P_3}{(P_2)^{\frac 32}}H_3\left(L\right)\right)=:c(L)\,.
\ee
Since $c(L)>0$ we can define
\be
\chi(x)\coloneqq c(L)^{-1}\parav{f_N(x)-\phi_{P_1, P_2}\left(x\right)\left(1+\frac{1}{3!}\frac{P_3}{(P_2)^{\frac 32}}H_3\left(\frac{x-P_1}{\sqrt {P_2}}\right)\right)}\,. 
\ee
The continuity of $\chi$ follows from the continuity of $f_N$. In addition, (\ref{eq:cor;LLT}) and (\ref{eq:cor;LLT1}) follow immediately from (\ref{eq:LLT}). 

The fact that $\chi$ depends continuously on $P_1,P_2,P_3$ follows from the analogue property of $f_N$.
\end{proof}

If the initial densities are close to be Gaussian, we obtain a finer local result, more in the spirit of the Richter Theorem \cite{bobkov}.

\begin{theorem}\label{prop:LLTPII}
Let $X_1,\ldots,X_N$ be independent r.vs with analytic densities $\nu_1,\ldots,\nu_N$ with mean values $x_1,\ldots, x_n$ and variances $v_1,\ldots, v_n$. Let $f_N$ be the density associated to $\sum_{n=1}^NX_n$. 
Let $\{\mu_n\}_{n\in[N]}\in\R$ and $\{\s_n\}_{n\in[N]}\in\R^+$ and $\e>0$ and assume that for all $n=1\ldots, N$ we have
\be\label{eq:HYP-FT}
\hat\nu_n(\l)=\hat\phi_{\mu_n,\s_n}(\l)+\e\upsilon_n(\l)
\ee
where $\upsilon_1\ldots,\upsilon_n$ are analytic functions bounded by 1.
Then for all $a\in(0,1)$ we have
\be\label{eq:LLT2}
\sup_{x\in\R}\left|f_N(x)-\phi_{P_1, P_2}(x)\right|\lesssim  \frac{\e}{N^a}\,,
\ee
where
\be
P_1=\sum_{n\in[N]}x_n\,,\qquad P_2\coloneqq \sum_{n\in[N]}v_n\,. 
\ee
\end{theorem}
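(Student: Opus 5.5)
We work in Fourier space, as in the proof of Theorem \ref{lemma:local-limit-teorem}. We write
\[
f_N(x)=\frac{1}{\sqrt{2\pi}}\int d\l\, e^{-i\l x}\prod_{n=1}^N\widehat\nu_n(\l),
\]
and, by hypothesis \eqref{eq:HYP-FT}, $\widehat\nu_n=\widehat\phi_{\mu_n,\s_n}+\e\upsilon_n$. Expanding the product gives
\[
\prod_n\widehat\nu_n=\prod_n\widehat\phi_{\mu_n,\s_n}+\sum_{\emptyset\neq I\subseteq[N]}\e^{|I|}\prod_{n\in I}\upsilon_n\prod_{n\notin I}\widehat\phi_{\mu_n,\s_n}.
\]
The first term is exactly the Fourier transform of $\phi_{\sum\mu_n,\sum\s_n}$. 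Matching first and second moments, we have $\widehat\nu_n'(0)$ and $\widehat\nu_n''(0)$ equal to those of $\phi_{\mu_n,\s_n}$ up to $\e\upsilon_n'(0)$ and $\e\upsilon_n''(0)$. We then bound these derivatives by Cauchy estimates on a strip of analyticity, where $|\upsilon_n|\le1$. This yields $|x_n-\mu_n|,|v_n-\s_n|\lesssim\e$, hence
\[
|P_1-\textstyle\sum\mu_n|,\ |P_2-\sum\s_n|\lesssim N\e,
\]
and replacing $\phi_{\sum\mu_n,\sum\s_n}$ by $\phi_{P_1,P_2}$ costs $\sup|\partial\phi|\cdot N\e\lesssim N\e/P_2\sim\e$. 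This is not yet $\e N^{-a}$. Our plan is therefore to keep the centring consistently: we first recenter each $\nu_n$ to its exact mean and variance, by redefining $\mu_n=x_n$ and $\s_n=v_n$. This changes $\upsilon_n$ only by $O(\l)$- and $O(\l^2)$-vanishing corrections at $0$, so we may assume $\upsilon_n(0)=\upsilon_n'(0)=\upsilon_n''(0)=0$, and hence $|\upsilon_n(\l)|\lesssim\min(1,|\l|^3)$ by analyticity.

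Next we split the $\l$-integral at $|\l|=\r$. For $|\l|\ge\r$ we use, as in \eqref{eq:LLTvalorilarge}, that $|\widehat\nu_n(\l)|\le e^{-a'}$ uniformly, since $|\widehat\phi_{\mu_n,\s_n}|\le e^{-\s_n\r^2/2}$ and $\e$ is small. The contribution is then $O(e^{-cN})$, which is $\ll\e N^{-a}$ provided $\e\ge e^{-cN/2}$; we assume this, as in the applications, where $\e=N^{-1/2+\e(t)}$. For $|\l|<\r$ we bound the $I$-sum by
\[
\int d\l\, e^{-\frac{\l^2}{2}\sum_{n\notin I}\s_n}\,\e^{|I|}|\l|^{3|I|}.
\]
The terms with $|I|=k\le N/2$ contribute at most
\[
\binom Nk\e^k\int e^{-c\l^2N}|\l|^{3k}\,d\l\lesssim \frac{(CN\e)^k\,k^{3k/2}}{N^{(3k+1)/2}}.
\]
For $k=1$ this is $\e N^{-1}$, and summing over $k$ gives a geometric series dominated by $k=1$ as long as $\e\ll N^{1/2}/k^{3/2}$. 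The terms with $|I|>N/2$ are handled by the crude bound $\e^{|I|}$ together with the decay of the remaining $\widehat\phi$ factors, which gives $(2\e)^{N/2}$. Dividing by $\sqrt{2\pi}$ and taking the sup over $x$ yields $\e N^{-1}\le\e N^{-a}$ for every $a\in(0,1)$.

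The main obstacle is the recentring step: showing that replacing $(\mu_n,\s_n)$ by the exact $(x_n,v_n)$ keeps $\upsilon_n$ of size $O(\e)$ and third-order vanishing, uniformly in $n$ on a fixed $\l$-disc. We would try Cauchy estimates on a common complex neighbourhood of $[-\r,\r]$. This requires that the analyticity strips of the $\nu_n$ be uniform in $n$, which we take from the Gaussian-decay argument of Lemma \ref{lemma:anal-t}. If uniformity fails, we will settle for $|\upsilon_n(\l)|\lesssim|\l|$, which still gives a bound of order $\e N^{-1/2}$. Extracting the full range $a<1$ then requires the $|\l|^3$ vanishing.
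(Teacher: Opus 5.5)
Your argument is correct in outline, and it improves on the paper's bound. It shares the paper's skeleton: Fourier inversion, a cutoff at $|\l|=\r$, a uniform bound $|\widehat\nu_n|<e^{-a}$ away from the origin, and recentring each factor at its exact mean and variance. That last step is exactly what the paper's identity $\widehat\phi_{\mu_n,\s_n}(\l)e^{\e\l\upsilon_n'(0)+\dots}=\widehat\phi_{x_n,v_n}(\l)$ accomplishes. Where you genuinely differ is in how the small-frequency integral is controlled.

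The paper works multiplicatively. It writes $\log\widehat\nu_n=\log\widehat\phi_{x_n,v_n}+\e R'_n$ with $|R'_n|\le C|\l|^3$ and exponentiates to $\prod_n\widehat\nu_n=\widehat\phi_{P_1,P_2}(1+\e N R''_N)$. It then bounds the error integral by $\e N\int_{|\l|\le\r}|\l|^3\,d\l\sim\e N\r^4$, discarding the Gaussian factor. This forces a shrinking cutoff $\r=N^{-a}$ and the analytic-case estimate $a(\r)=C\r^2$ for the outer region, and it yields only $\e N^{1-4a}$, i.e.\ any exponent below $1$.

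You work additively, with $\widehat\nu_n=\widehat\phi_{x_n,v_n}+\e\tilde\upsilon_n$ and $\tilde\upsilon_n=O(|\l|^3)$. You expand the product exactly over subsets and keep the damping $e^{-c\l^2N}$ supplied by the untouched factors. This allows a fixed cutoff and needs no smallness of $\e N\r^3$ in order to exponentiate. It gives $\e N^{-1}$, which is sharp: it is the size of the Edgeworth term $\phi_{P_1,P_2}\,P_3P_2^{-3/2}H_3/6$ with $P_3=O(N\e)$.

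Several points need tightening.
\begin{enumerate}
\item With the bound you wrote, which has no $1/k!$, the ratio condition $\e\ll N^{1/2}k^{-3/2}$ fails for $k$ near $N/2$ unless $\e\ll N^{-1}$. Use $\binom{N}{k}\le(eN/k)^k$ instead. Then the $k$-th term is $\lesssim N^{-1/2}(C\e)^k(k/N)^{k/2}$, which is summable and dominated by $k=1$ as soon as $\e$ is small.
\item Uniform analyticity strips of $\widehat\nu_n$, as from Lemma \ref{lemma:anal-t}, only give $|\upsilon_n|\lesssim 1/\e$ off the real axis. They therefore do not by themselves feed your Cauchy estimates. You can either read ``bounded by $1$'' as holding on a fixed complex neighbourhood uniformly in $n$; the paper's proof does this tacitly, treating $\upsilon_n'(0)$, $\upsilon_n''(0)$ and the constant in $|R_n|\le C|\l|^3$ as uniform. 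Or you can interpolate with the three-lines lemma between the bound $1$ on $\R$ and $C/\e$ on the strip. That gives $O(1)$ on a strip of width $\sim 1/\log(1/\e)$ at the cost of a polylogarithmic factor, which is still $\lesssim\e N^{-a}$ for all $a<1$ when $\e\ge N^{-C}$.
\item Your fallback is off by a power. The bound $|\upsilon_n|\lesssim|\l|$ gives $N\e\int e^{-cN\l^2}|\l|\,d\l\sim\e$, as your own opening computation already shows. Reaching $\e N^{-1/2}$ needs second-order vanishing.
\item The restriction $\e\ge e^{-cN/2}$ and the need for one integrable factor $\int|\widehat\nu_n|<\infty$ in the outer region are both implicit in the paper's proof as well.
\end{enumerate}
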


\begin{proof}
By the same proof as in Theorem \ref{lemma:local-limit-teorem} we arrive to
\bea
f_N(x)&=&\frac{1}{\sqrt{2\pi}}\int_{|\l|<\r} d \l e^{-i\l x} \prod_{n=1}^N \widehat\nu_n(\l)+\frac{1}{\sqrt{2\pi}}\int_{|\l|\geq \r} d \l e^{-i\l x} \prod_{n=1}^N \widehat\nu_n(\l)\label{eq:fNsplit2}\\
&=& \frac{1}{\sqrt{2\pi}}\int_{|\l|\leq \r} d \l e^{-i\l x} \prod_{n=1}^N \widehat\nu_n(\l)+\OOO{e^{-a(\r)N}}\,.\label{eq:idchiave2}
\eea
Moreover in the analytic case we can exactly characterise $a(\r)=C\r^2$ for some constant $C>0$ (see e.g. \cite[Corollary 2]{bobkov}). This allows us to consider any $\r>0$ such that $\r^2N\to\infty$ as $N\to\infty$. 

Next note that for any $n$ the function $\frac{\upsilon_n(\l)}{\widehat\phi_{\mu_n,\s_n}(\l)}$ is analytic in any neighbourhood of the origin and vanishes in $\l=0$. We expand it as follows
\be
\frac{\upsilon_n(\l)}{\widehat\phi_{\mu_n,\s_n}(\l)}=\upsilon_n'(0)\l+(\upsilon_n''(0)+2i\mu_n\upsilon_n'(0))\frac{\l^2}{2}+R_n(\l)\,,
\ee
where $|R_n(\l)|\leq C|\l|^3$. Thus
\bea
\log\widehat\nu_n(\l)&=&\log\widehat\phi_{\mu_n,\s_n}(\l)+\log\parav{1+\e\frac{\upsilon_n(\l)}{\widehat\phi_{\mu_n,\s_n}(\l)}}\nn\\
&=&\log\widehat\phi_{\mu_n,\s_n}(\l)+\e\l\upsilon_n(0)+\frac{\e\l^2}{2}\parav{\upsilon_n''(0)+2i\mu_n\upsilon_n'(0)-(\upsilon_n')^2}+\e R'_n(\l)\,,
\eea
where $R_n'$ comes out from the Taylor expansion of the $\log$ about $\l=0$ and we have $|R'_n(\l)|\leq C|\l|^3$.
Hence
\be
\widehat \nu_n(\l)=\phi_{\mu_n,\s_n}(\l)e^{\e\l\upsilon_n(0)+\frac{\e\l^2}{2}\parav{\upsilon_n''(0)+2i\mu_n\upsilon_n'(0)-(\upsilon_n')^2}+\e R'_n(\l)}\,.
\ee 
A direct computation gives
\be
\phi_{\mu_n,\s_n}(\l)e^{\e\l\upsilon_n(0)+\frac{\e\l^2}{2}\parav{\upsilon_n''(0)+2i\mu_n\upsilon_n'(0)-(\upsilon_n')^2}}=\widehat\phi_{x_n,v_n}(\l)\,.
\ee
Therefore there is a function $R''_N(\l)$, analytic uniformly in $N$ with $|R''_N(\l)|\leq C|\l|^3$, such that
\bea
\prod_{n=1}^N \widehat\nu_n(\l)&= &\prod_{n=1}^N \widehat\phi_{x_n,v_n}(\l)e^{\e\sum_{n=1}^N R'_n(\l)}\nn\\
&=&\widehat\phi_{P_1,P_2}(\l)\parav{1+\e N R''_N(\l)}
\eea
(this part is also similar to the proof of Theorem \ref{lemma:local-limit-teorem}). We write
\be\label{eq:rk}
R''_N(\l)=\sum_{k\geq 3} r_k\l^k
\ee
where the $r_k$ decay at least exponentially fast in $k$. 

Now we have
\bea
\frac{1}{\sqrt{2\pi}}\int_{|\l|\leq \r} d \l e^{-i\l x} \prod_{n=1}^N \widehat\nu_n(\l)&=&\frac{1}{\sqrt{2\pi}}\int_{|\l|\leq \r} d \l e^{-i\l x} \widehat\phi_{P_1,P_2}(\l)\label{eq:termMain}\\
&+&\frac{\e N}{\sqrt{2\pi}}\int_{|\l|\leq \r} d \l e^{-i\l x} \widehat\phi_{P_1,P_2}(\l) R'_n(\l)\,. \nn\\\label{eq:termR}
\eea
We bound
\be\label{eq:lazera}
\sup_{x\in\R}\absv{\eqref{eq:termR}}\leq \frac{\e N}{\sqrt{2\pi}}\int_{|\l|\leq \r} d \l |R'_n(\l)|\lesssim \e N\r^4\,. 
\ee

Similarly we have 
\bea
\eqref{eq:termMain}&=&\frac{1}{\sqrt{2\pi}}\int d \l e^{-i\l x} \widehat\phi_{P_1,P_2}(\l)\nn\\
&+&\frac{1}{\sqrt{2\pi}}\int_{|\l|> \r} d \l e^{-i\l x} \widehat\phi_{P_1,P_2}(\l)\nn\\
&=&\phi_{P_1,P_2}(x)+\frac{1}{\sqrt{2\pi}}\int_{|\l|> \r} d \l e^{-i\l x} \widehat\phi_{P_1,P_2}(\l)\nn
\eea
with
\be
\absv{\frac{1}{\sqrt{2\pi}}\int_{|\l|> \r} d \l e^{-i\l x} \widehat\phi_{P_1,P_2}(\l)\nn\\}\leq e^{-c\r^2P_2}\,. 
\ee
Therefore we have for all $\r>0$
\be
f_N(x)=\phi_{P_1,P_2}(x)+O(\e N\r^4)+O(e^{-c\r^2\min(N, P_2)})\,.
\ee
Taking $\r=N^{-a}$ for $1/4<a<1/2$ we have the assertion. 
\end{proof}

We conclude with the following result for analytic densities. 

\begin{Lem}\label{lemma:unmomento}
Assume $\nu$ is a probability density with $\nu(s)\leq e^{-c_1|s|}$ for $s\in\R$, where $c_1>0$ is an absolute constant, and 
\be
\absv{\int ds(\nu(s)-\phi_{\mu,\s}(s))e^{i\l s}}\leq \e\,.
\ee
Then for all $k\in\N$
\be
\absv{\int ds(\nu(s)-\phi_{\mu,\s}(s))s^k}\leq \frac{k!}{c_1^k}\e\,.
\ee
\end{Lem}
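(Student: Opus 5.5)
We want to pass from closeness of characteristic functions to closeness of the moments, using the exponential decay of $\nu$ (and of the Gaussian) to obtain analyticity in a strip. Set $g(s):=\nu(s)-\phi_{\mu,\s}(s)$ and $G(\l):=\int ds\, g(s)e^{i\l s}$.

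\emph{Step 1: $G$ is analytic in a strip.} Since $\nu(s)\leq e^{-c_1|s|}$ and $\phi_{\mu,\s}$ has Gaussian tails, the integral defining $G$ converges absolutely for complex $\l$ with $|\Im \l|<c_1$. Hence $G$ extends holomorphically to the strip $S_{c_1}=\{|\Im\l|<c_1\}$. In this strip
\[
\int ds\, g(s)s^k=i^{-k}G^{(k)}(0),
\]
where differentiation under the integral is justified by dominated convergence. So it suffices to bound $|G^{(k)}(0)|$.

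\emph{Step 2: Cauchy estimate.} For $r<c_1$, Cauchy's formula on the circle $|\l|=r$ gives
\[
|G^{(k)}(0)|\leq \frac{k!}{r^k}\max_{|\l|=r}|G(\l)|.
\]
The hypothesis only controls $|G|\leq\e$ on the real line, so we must transfer this bound to the complex circle.

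\emph{Step 3: extending the bound off the real axis (main obstacle).} This step needs more than the stated hypotheses, and I would handle it in one of two ways.

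The first option is a Phragmén--Lindelöf / three-lines argument on the strip $S_{r}$, $r<c_1$. On the boundary lines $\Im\l=\pm r'$ (with $r'$ slightly below $c_1$) we only have the crude bound $|G|\leq \int (e^{-c_1|s|}+\phi_{\mu,\s})e^{r'|s|}\,ds=:M$. Interpolating between $\e$ on $\R$ and $M$ on the boundary gives $|G|\leq \e^{1-\theta}M^{\theta}$ in the interior, which is weaker than linear in $\e$.

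The second option, which I would follow to match the clean constant $k!/c_1^k$, is to use the structure that actually holds in the application: $\upsilon\ia\ti$ is analytic (Lemma~\ref{lemma:anal-t}) and is bounded by the same $\e$ on the strip. Indeed, the Fourier estimates of Lemma~\ref{lemma:approx=Gauss=t} apply verbatim with $e^{i\l s}$ replaced by $e^{i\l s}$ for complex $\l$ with $|\Im\l|<c_1$, because the densities there have Gaussian decay. So I would make the hypothesis effectively $|G|\leq\e$ on the closed strip, or on the disc of radius $c_1$ (interpreting the statement accordingly).

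\emph{Step 4: conclusion.} With $|G|\leq\e$ on the disc of radius $c_1$, take $r\uparrow c_1$ in Step 2 to get
\[
\left|\int ds\,(\nu(s)-\phi_{\mu,\s}(s))s^k\right|=|G^{(k)}(0)|\leq \frac{k!}{c_1^k}\,\e,
\]
as claimed. If only the real-line hypothesis is available, the same Cauchy estimate combined with the three-lines bound of Step 3 still yields $|G^{(k)}(0)|\lesssim k!\,c_1^{-k}\,\e^{1-\theta}$. That bound suffices for the $O_\PP(N^{-1/2+\e'})$ bounds used in Proposition~\ref{prop:gaussapprox-t}, after adjusting $\e$.
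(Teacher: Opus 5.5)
\textbf{Your diagnosis is correct.} Steps 1, 2 and 4 are exactly the paper's argument: extend $G$ analytically to a strip, then apply Cauchy's formula for $G^{(k)}(0)$ on the circle $|w|=c_1$. The paper disposes of your Step 3 in one line, by writing $|\hat f(w)|\le\varepsilon$ on that circle. The real-line hypothesis does not give this, and the circle even leaves the strip $|\Im z|\le\delta<c_1$ on which the paper established analyticity. The paper also asserts $|\nu-\phi_{\mu,\sigma}|\le e^{-c_1|s|}$; your bound $|g|\le e^{-c_1|s|}+\phi_{\mu,\sigma}$ is the correct one.

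So you have found the real gap, and it cannot be closed, because the lemma is false as stated. Take $c_1=\tfrac12$, $\phi_{\mu,\sigma}=\phi_{0,1}$, $\eta\in(0,1)$, $L=2\log(1/\eta)$ and
\[
\nu=(1-\eta)\,\phi_{0,1}+\tfrac{\eta}{2}\bigl(\phi_{L,1}+\phi_{-L,1}\bigr).
\]
Since $\phi_{0,1}(s)e^{|s|/2}\le(2\pi)^{-1/2}e^{1/8}<\tfrac12$ and $\phi_{\pm L,1}(s)e^{|s|/2}\le e^{L/2}\phi_{\pm L,1}(s)e^{|s\mp L|/2}<\tfrac{1}{2\eta}$, we get $\nu(s)\le e^{-|s|/2}$. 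Moreover
\[
\int(\nu-\phi_{0,1})(s)\,e^{i\lambda s}\,ds=\eta\,e^{-\lambda^2/2}\bigl(\cos(L\lambda)-1\bigr),
\]
so the hypothesis holds with $\varepsilon=2\eta$. On the other hand,
\[
\int(\nu-\phi_{0,1})(s)\,s^2\,ds=\eta L^2=2\varepsilon\log^2(2/\varepsilon).
\]
This exceeds $2!\,c_1^{-2}\varepsilon=8\varepsilon$ as soon as $\eta<e^{-2}$, and as $\eta\to0$ it is not bounded by any $C(k,c_1)\,\varepsilon$.

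\textbf{Your two options.} The same example shows that Option 2 is not a reading of the statement but a strictly stronger hypothesis: there $G(ic_1)\to e^{1/8}/2$ as $\eta\to0$. Your claim that the application supplies this hypothesis ``verbatim'' does not hold. Lemma~\ref{lemma:anal-t} only gives analyticity. The proof of Lemma~\ref{lemma:approx=Gauss=t} uses $|e^{i\lambda s}|\le1$, both to bound $\Delta_1^{(t)}$ like $\Delta_2^{(t)}$ and to bound $|[e^{i\lambda s}]_{\mu,\sigma}|\le1$. At $t=1$, Proposition~\ref{prop:Gaussian-approx} gives an error proportional to $\mathtt{A}[|F|]$, and $|e^{i\lambda s}|=e^{-(\Im\lambda)s}$ is not bounded when $\Im\lambda\neq0$. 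At best you would get $C\varepsilon$ on the strip, with $C$ depending on $\beta$, $c_1$ and the random $\mu,\sigma$, and that needs its own proof.

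Option 1 is the right repair, but the bookkeeping in your last sentence is off. Three lines on $0\le\Im\lambda\le r'$, where $|G|\le M$, gives $|G(\lambda)|\le\varepsilon^{1-|\Im\lambda|/r'}M^{|\Im\lambda|/r'}$. Hence, for $r\le r'$,
\[
|G^{(k)}(0)|\le k!\,r^{-k}\,\varepsilon^{1-r/r'}M^{r/r'}.
\]
The constant $r^{-k}$ and the exponent $\theta=r/r'$ are coupled, so you cannot have $c_1^{-k}$ and a small $\theta$ at the same time. The good choice is $r=r'/\log(1/\varepsilon)$ (for $\varepsilon\le e^{-1}$, using $M\ge1$), which gives
\[
\Bigl|\int g(s)\,s^k\,ds\Bigr|\le e\,M\,k!\,\Bigl(\frac{\log(1/\varepsilon)}{r'}\Bigr)^{k}\varepsilon .
\]
The example above shows that the $\log^k(1/\varepsilon)$ loss cannot be removed. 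This corrected statement is what Proposition~\ref{prop:gaussapprox-t} can legitimately use. There $\varepsilon$ is a negative power of $N$, so the extra $\log^k N$ is absorbed by the paper's $O_\PP$ convention, provided $\mu,\sigma$ (and hence $M$) stay bounded.
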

\begin{proof}
Let us set $f\coloneqq \nu-\phi_{\mu,\s}$. Then we have that $|f(s)|\leq e^{-c_1|s|}$ for some constants $c_1>0$. Therefore
\be
\hat f(\l)\coloneqq \int ds f(s)e^{i\l s}
\ee 
is bounded by $\e$ and analytic on $\R$ and it extends to an analytic function in a strip in the complex plane $\Sigma_\d\coloneqq \{z
\in\mathbb C,\quad |\Im(z)|\leq \d\}$ where  $0<\d<c_1$ (see e.g. \cite[Theorem 3.1, Chapter 4]{stein}). 
We have by the Cauchy formula 
\be
\hat f^{(k)}(z)=\frac{k!}{2\pi i}\int_\g\frac{f(w)}{(w-z)^{k+1}}dw\,
\ee
for any closed contour $\g\in\Sigma_\d$ oriented counterclockwise. We choose $\g\coloneqq \{w\in\C\,:\,|w|=c_1\}$ and we set $z=0$ in the formula above. We bound
\be
\absv{\hat f^{(k)}(0)}=\frac{k!}{2\pi i}\int_\g\frac{|\hat f(w)|}{|w|^{k+1}}dw\leq \frac{k!}{c_1^k}\e\,. \,
\ee
It follows that
\be
\absv{\int ds(\nu(s)-\phi_{\mu,\s}(s))s^k}=|\hat f(0)^{(k)}|\leq \frac{k!}{c_1^k}\e\,.
\ee
\end{proof}


\section{Tail estimates}\label{app:tail}
Here we prove the following estimate.
\begin{Lem}\label{prop:tail}
Let $p\geq2$ and $X_1,\ldots,X_N$ be independent symmetric sub-Gaussian r.vs. with unitary variance. It is for any real $x_1,\ldots, x_N$
\be\label{eq:tail}
P\left(\left|\sum_{j\in[N]}x_jX^p_j\right|\geq \l\right)\lesssim
\begin{cases}
2e^{-\frac{\l^2}{8(2K)^p\|x\|_2^2}}&\l\leq 4(2K)^p \|x\|_2^2 N^{\frac{2-p}{2(p-1)}} \\
2e^{-\frac18\parav{\frac{\l N}{4(2K)^p \|x\|_2^2}}^{\frac 2p}} &\l>4(2K)^p\|x\|_2^2 N^{\frac{2-p}{2(p-1)}}\,,
\end{cases}
\ee
where $K$ is a constant depending only on the distribution of $X_1,\ldots,X_N$. 
\end{Lem}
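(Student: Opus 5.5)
The plan is the standard moment-generating-function (Chernoff) argument, combined with truncation of the heavy power $X_j^p$. For $p\ge 2$ the variable $X_j^p$ is no longer sub-Gaussian. Since $X_j$ is sub-Gaussian with constant $K$, we have $\P(|X_j|^p\ge u)\le 2e^{-u^{2/p}/K^2}$, so $X_j^p$ has a stretched-exponential tail of order $2/p$. The two regimes in \eqref{eq:tail} are then the usual Gaussian-type regime for moderate deviations and the ``one big jump'' regime for large deviations, where the tail of a single summand dominates. If $p$ is odd, $X_j^p$ is symmetric; if $p$ is even, the term $\sum_j x_j\EE X_j^p$ has to be read as already absorbed, or the sum centred. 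I will treat the centred sum $\sum_j x_j(X_j^p-\EE X_j^p)$ and note that symmetry handles the odd case directly.

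\emph{Step 1 (moments).} From sub-Gaussianity, $\EE|X_j|^{pq}\le (2K)^{pq}(pq/2)^{pq/2}$. Hence $Y_j:=X_j^p-\EE X_j^p$ satisfies $\|Y_j\|_{L^q}\lesssim (2K)^p q^{p/2}$. This is the Orlicz $\psi_{2/p}$ property, and it is the source of the factor $(2K)^p$.

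\emph{Step 2 (truncation).} Fix a level $M$ and split $Y_j=Y_j\ind{|X_j|^p\le M}+Y_j\ind{|X_j|^p>M}$. The bounded part has variance at most $C(2K)^{2p}$ and summands bounded by $2M|x_j|$. Bernstein's inequality then gives
\[
2\exp\Bigl(-\frac{\l^2}{8(2K)^p\|x\|_2^2+ 4M\|x\|_\infty\l}\Bigr).
\]
For the unbounded part, a union bound gives $\P(\exists j:|X_j|^p>M)\le 2Ne^{-M^{2/p}/K^2}$.

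\emph{Step 3 (optimisation).} Choose $M$ so that the two exponents balance. In the regime where $M\l\|x\|_\infty\lesssim (2K)^p\|x\|_2^2$, the quadratic term wins and we obtain the first line of \eqref{eq:tail}. Bounding $\|x\|_\infty\le\|x\|_2$ and matching $\l^2/\|x\|_2^2$ against $M^{2/p}$ locates the crossover at $\l\sim (2K)^p\|x\|_2^2N^{\frac{2-p}{2(p-1)}}$; I expect the exact exponent to come out of this computation with the normalisation used for $x$. Beyond the crossover, the truncation term $e^{-cM^{2/p}}$ with $M\sim \l N/((2K)^p\|x\|_2^2)$ dominates and gives the second line. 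The factor $N$ from the union bound is absorbed by the symbol $\lesssim$.

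The main obstacle is making the crossover exponent $\frac{2-p}{2(p-1)}$ and the factor $\l N$ in the second regime come out exactly. This is purely bookkeeping in the choice of $M$. If the direct balancing does not reproduce these exponents, I would instead bound the moments $\|\sum_j x_jY_j\|_{L^q}$ by Lata{\l}a's or Rosenthal's inequality,
\[
\Bigl\|\sum_j x_jY_j\Bigr\|_{L^q}\lesssim (2K)^p\bigl(\sqrt q\,\|x\|_2+q^{p/2}\|x\|_q\bigr),
\]
and then apply Markov's inequality with the optimal $q$, exactly as in the proof of Proposition~\ref{prop:tails-xy}. The two terms in this moment bound reproduce the two regimes.
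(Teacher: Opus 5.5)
\textbf{There is a genuine gap in Steps 2--3.} A single truncation followed by Bernstein's inequality cannot produce the exponents in the statement, so choosing $M$ is not bookkeeping.

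\emph{Why Bernstein is too lossy.} Bernstein only uses $|x_jY_j\ind{|X_j|^p\le M}|\le 2M|x_j|$. This restricts the Chernoff parameter to $a\lesssim 1/(M\|x\|_\infty)$. Take $p>2$ (for $p=2$ the summands are sub-exponential and Bernstein applies without truncation) and unit weights $x_j\equiv 1$. There the stated crossover is $\lambda\asymp N^{\alpha}$ with $\alpha=\frac{p}{2(p-1)}$.
\begin{itemize}
\item The Gaussian term of Bernstein dominates only if $M\lesssim N/\lambda$.
\item The union-bound term $Ne^{-cM^{2/p}}$ is below $e^{-c\lambda^2/N}$ only if $M^{2/p}\gtrsim \lambda^2/N+\log N$.
\item These are compatible only for $\lambda\lesssim N^{\frac{p+2}{2(p+1)}}$, and $\frac{p+2}{2(p+1)}<\frac{p}{2(p-1)}$. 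For $p=3$ this is $N^{5/8}$ against $N^{3/4}$.
\item For larger $\lambda$ the best this scheme gives is $e^{-c\lambda^{2/(p+2)}}$ (with $M\asymp\lambda^{p/(p+2)}$), not $e^{-c\lambda^{2/p}}$.
\item Your choice $M\sim\lambda N/((2K)^p\|x\|_2^2)\asymp\lambda$ is worse still: the linear Bernstein exponent $\lambda/(M\|x\|_\infty)$ is then $O(1)$, so that term does not decay.
\end{itemize}
Separately, the factor $N$ from the union bound cannot be hidden in $\lesssim$; it has to be absorbed into the exponent.

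\emph{The missing idea is the one the paper uses.} Keep the truncation, but bound the moment generating function through the $\psi_{2/p}$ condition rather than through the crude bound by $M$. Use the paper's notation: $Y_j=x_jX_j^p$, $K_j=K|x_j|^{2/p}$, $\mathbb{E}\exp(|Y_j|^{2/p}/K_j)<2$, and $\dot Y_j$ the truncation at level $M$. If $|y|\le M$ and $a\le M^{2/p-1}/(2K_j)$, then $a|y|\le |y|^{2/p}/(2K_j)$. Expanding the exponential then gives $\mathbb{E}e^{a\dot Y_j}\le e^{C_p(2K_j)^pa^2}$ for centred $\dot Y_j$ on this whole range. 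With $M\asymp N^{\alpha}$ and unit weights, the range is $a\lesssim N^{\alpha-1}$, far larger than Bernstein's $N^{-\alpha}$. Chernoff with $a=\lambda/(4(2K)^p\|x\|_2^2)$ is then admissible exactly up to $\lambda\asymp(2K)^p\|x\|_2^2N^{\frac{2-p}{2(p-1)}}$. At that point the truncation error $Ne^{-cN^{1/(p-1)}}$ is of the same order as $e^{-c\lambda^2/N}$.

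\emph{Your fallback is a sound alternative, with two caveats.} First, it needs the sharp Hitczenko--Montgomery-Smith--Oleszkiewicz/Latala moment bound $\sqrt q\|x\|_2+q^{p/2}\|x\|_q$. A Rosenthal-type inequality loses a factor of order $q$ and reproduces the exponent $2/(p+2)$. Second, it is heavier than the paper's elementary MGF bound, but it handles general weights, giving $\exp(-c\min\{\lambda^2/\|x\|_2^2,(\lambda/\|x\|_\infty)^{2/p}\})$. That agrees with the statement only when $|x_j|\asymp 1$.

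No argument can do better, because the displayed bound is not invariant under $(x,\lambda)\mapsto(cx,c\lambda)$ and is false for general $x$. For example, take $N=1$, $X_1$ Rademacher and $x_1=\lambda=\varepsilon<1/(4(2K)^p)$. The second line applies and the probability equals $1$, while the bound $2\exp(-\frac18(4(2K)^p\varepsilon)^{-2/p})$ tends to $0$ as $\varepsilon\to0$.

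Your centring remark for even $p$ is correct. The paper's expansion of $\mathbb{E}e^{a\dot Y_j}$ drops the linear term $a\,\mathbb{E}\dot Y_j$, which vanishes only for odd $p$.
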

\begin{proof}
Let us set for brevity $Y_j\coloneqq x_jX^p_j$. 
Note that by assumption there is a constant $K>0$ such that
$$
\forall i\in[n]\qquad E\squarev{\exp\parav{\frac{|Y_i|^{\frac2p}}{Kx_i^{\frac2p}}}}<2\,. 
$$
Let $K_j:=Kx_j^{\frac2p}$, $\a\coloneqq \frac{p}{2(p-1)}\in[1/2,1]$ and
\be
\dot Y_j\coloneqq Y_j1_{\{|Y_j|\leq N^\a/(2K_j)^{\frac{2}{p-2}}\}}\,. 
\ee
Clearly
\be\label{eq:chiaro}
\max_{i\in[N]}E[e^{\frac{|\dot Y_i|^{\frac2p}}{K_i}}]<2\,. 
\ee
Then
\bea
P\left(\left|\sum_{j\in[N]}Y_j\right|\geq \l\right)&\leq&P\left(\left|\sum_{j\in[N]}Y_j\right|\geq \l,|Y_j|\leq N^\a/(2K_j)^{\frac{2}{p-2}}\quad\forall j\in[N]\right)\nn\\
&+&P\left(|Y_j|> N^\a/(2K_j)^{\frac{2}{p-2}}\quad\forall j\in[N]\right)\nn\\
&\leq&P\left(\left|\sum_{j\in[N]}\dot Y_j\right|\geq \l\right)+Ce^{-cN^{2\a/p}}\,. 
\eea
Let us now consider $0<a<N^{-\a(1-2/p)}=N^{\a-1}$. We note that
\be\label{eq:sotto}
a|\dot Y_j|\leq \frac{|\dot Y_j|^{\frac2p}}{2K_j}
\ee
We compute for every $j\in[N]$
\bea
E[e^{a\dot Y_j} ]&\leq&1+\sum_{n\geq2}\frac{a^n}{n!}E[|\dot Y_j|^n]\nn\\
&=&1+a^2\sum_{n\geq0}\frac{a^n}{(n+2)!}E[|\dot Y_j|^2|\dot Y_j|^n]\nn\\
&\leq&1+(2K)^pa^2\sum_{n\geq0}\frac{a^n}{(n+2)!}E[e^{\frac{|\dot Y_j|^{\frac{2}{p}}}{2K}}|\dot Y_j|^n]\nn\\
&\leq&1+(2K_j)^pa^2\sum_{n\geq0}\frac{a^n}{n!}E[e^{\frac{|\dot Y_j|^{\frac{2}{p}}}{2K_j}}|\dot Y_j|^n]\nn\\
&\leq&1+(2K_j)^pa^2E[e^{\frac{|\dot Y_j|^{\frac{2}{p}}}{K_j}}]\nn\\
&\leq&1+2(2K_j)^pa^2\leq e^{2(2K_j)^pa^2}\,. 
\eea
In the first inequality above we used that $x^2\leq e^{x^{\frac2p}}$, in the third one we summed the exponential series and used (\ref{eq:sotto}), we used \eqref{eq:chiaro} in the penultimate bound. 
Therefore
\be
E[e^{a\sum_{j=1}^N\dot Y_j} ]\leq e^{2^{1+p}a^2\sum_{j=1}^NK_j^p}=e^{2a^2(2K)^p\|x\|^2_2}\,.
\ee

By the Markov inequality
\be
P\left(\left|\sum_{j\in[N]}\dot Y_j\right|\geq \l\right)\leq
\begin{cases}
2e^{-\frac{\l^2}{8(2K)^p\|x\|_2^2}}&\l\leq 4(2K)^p \frac{\|x\|_2^2}{N} N^{\a} \\
2e^{-\frac18\parav{\frac{\l N}{4(2K)^p \|x\|_2^2}}^{\frac 2p}} &\l>4(2K)^p\frac{\|x\|_2^2}{N} N^{\a}\,.
\end{cases}
\ee
\end{proof}

We give also the following lemma as an easy consequence. 

\begin{Lem}\label{lemma:facilissimo}
Let $X_1,\ldots,X_N$ be independent sub-Gaussian r.vs. Let $p\in \N$, $t>0$. It holds for all $t\gtrsim \sum_{i\in[N]}\frac{E[|X_i|^p]}{N}$
\be\label{eq:P1}
P\parav{\sum_{i\in[N]}\frac{|X_i|}{N}\geq t}\leq e^{-ct^2N}\,. 
\ee
and for $p\geq2$
\be\label{eq:Pp}
P\parav{\sum_{i\in[N]}\frac{|X_i|^p}{N}\geq t}\leq e^{-c(tN)^{\frac2p}}\,. 
\ee
\end{Lem}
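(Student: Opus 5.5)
The statement is Lemma \ref{lemma:facilissimo}. I would prove it by centring the sum and applying a concentration inequality for sums of independent variables; the two parts differ only in which inequality is used.

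\emph{First bound \eqref{eq:P1}.} Set $Y_i\coloneqq |X_i|-E|X_i|$ and $\bar\mu\coloneqq \frac1N\sum_{i\in[N]} E|X_i|$. Since $X_i$ is sub-Gaussian, so is $|X_i|$, and centring changes the $\psi_2$-norm by at most an absolute factor, so $\|Y_i\|_{\psi_2}\le C K$ with $K$ the common sub-Gaussian constant. For $t\ge 2\bar\mu$ we have $t-\bar\mu\ge t/2$, hence
\[
P\Bigl(\frac1N\sum_{i\in[N]}|X_i|\ge t\Bigr)\le P\Bigl(\frac1N\sum_{i\in[N]} Y_i\ge \frac t2\Bigr).
\]
Hoeffding's inequality for sub-Gaussian sums (the tool already used in Lemma \ref{lemma:simpel}) bounds the right-hand side by $\exp(-c N^2t^2/(4\sum_i \|Y_i\|_{\psi_2}^2))\le e^{-ct^2N}$. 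This is \eqref{eq:P1}, with the threshold read as $t\ge 2\bar\mu$; for $p=1$ this is exactly the stated condition.

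\emph{Second bound \eqref{eq:Pp}, $p\ge2$.} Now $|X_i|^p$ has a Weibull-type tail of exponent $2/p$, i.e. $E\exp(|X_i|^{2}/K^2)\le 2$, which is the bound already exploited in the proof of Lemma \ref{prop:tail}. Let $\bar\mu_p\coloneqq \frac1N\sum_i E|X_i|^p$; under $t\ge 2\bar\mu_p$ it suffices to control $P(\sum_i Z_i\ge tN/2)$, where $Z_i\coloneqq |X_i|^p-E|X_i|^p$ are centred and independent. I would take the bound directly from Lemma \ref{prop:tail}, applied with all $x_j=1$ (so $\|x\|_2^2=N$) and $\lambda=tN/2$. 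The symmetry assumption there would be handled by the same truncation argument run on the centred variables, or one can assume the $X_i$ symmetric as in the paper's setting. In the large-$\lambda$ branch of Lemma \ref{prop:tail} the exponent is $\bigl(\lambda N/(4(2K)^p N)\bigr)^{2/p}\asymp (tN)^{2/p}$, which is the claim.

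In the small-$\lambda$ branch we get $e^{-c t^2 N}$. I would check that $t^2N\gtrsim (tN)^{2/p}$ there, which amounts to $t^{2-2/p}\gtrsim N^{2/p-1}$. This holds for $t$ bounded below by a constant, because $p\ge2$ makes the right-hand side at most $1$. The restriction $t\gtrsim \bar\mu_p\asymp 1$ provides exactly that lower bound, after adjusting constants.

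\emph{Main obstacle.} The only delicate point is this matching of regimes: making sure that the constant in the threshold $t\gtrsim \bar\mu_p$ is large enough that both branches of Lemma \ref{prop:tail} give at most $e^{-c(tN)^{2/p}}$. I would choose the threshold constant $\ge 2\max(1,\bar\mu_p)$ and absorb everything else into $c$.
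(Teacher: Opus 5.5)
Your proposal is correct and follows the paper's own argument. Centre each summand at its mean and apply Hoeffding for $p=1$; for $p\ge2$, rerun the truncated moment-generating-function argument of Lemma \ref{prop:tail} on the centred variables $|X_i|^p-E|X_i|^p$. Your additional check, that the small-$\lambda$ branch $e^{-ct^2N}$ is also dominated by $e^{-c(tN)^{2/p}}$ once $t$ is bounded below, makes explicit a point the paper passes over; this matters notably for $p=2$, where the threshold between the two branches of Lemma \ref{prop:tail} does not shrink with $N$.

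You should drop your fallback ``assume the $X_i$ symmetric''. Symmetry of $X_i$ does not put $|X_i|^p-E|X_i|^p$ in the form $x_jX_j^p$ with $X_j$ symmetric, so Lemma \ref{prop:tail} cannot be applied verbatim. Rerunning its proof for centred variables is the route you actually need.
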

\begin{proof}
We have
\be\label{eq:Pp2}
P\parav{\sum_{i\in[N]}\frac{|X_i|^p}{N}\geq t}=P\parav{\sum_{i\in[N]}\frac{|X_i|^p-E[|X_i|^p]}{N}\geq t-\frac1N\sum_{i\in[N]}E[|X_i|^p]}\,. 
\ee
Now we set $Y_i\coloneqq |X_i|^p-E[|X_i|^p]$ and $\t\coloneqq t-\frac1N\sum_{i\in[N]}E[|X_i|^p]$. Considering $t>\frac1N\sum_{i\in[N]}E[|X_i|^p]$ we have
\be
P\parav{\sum_{i\in[N]}\frac{|X_i|^p}{N}\geq t}=P\parav{\sum_{i\in[N]}\frac{Y_i}{N}\geq \t}\,,
\ee
where $Y_1\ldots Y_N$ are independent centred random variables. It is easy to verify that there is a constant $K>0$ such that for any $p\geq1$
$$
\max_{i\in[N]}E[e^{\frac{|Y_i|^{\frac2p}}{K}}]<2\,. 
$$
For $p=1$ The Hoeffding inequality gives (\ref{eq:P1}) for all $\t>0$. 

For $p\geq2$ the same proof of Lemma \ref{prop:tail} gives (\ref{eq:Pp}) provided that $\t\gtrsim N^{-\frac{p-2}{2(p-1)}}$, that is certainly satisfied if $t>2\frac1N\sum_{i\in[N]}E[|X_i|^p]$ for $N$ large enough.
\end{proof}


\section{Proof of Lemma \ref{lemma:Pprob}}\label{App:prooflemma}

Here we prove the central technical lemma used in the last section. 
Recalling \eqref{eq:cum-t-3}, we write
explicitly (bearing in mind \eqref{eq:xandv}-\eqref{eq:rhovarsigma})
\be
P\ai =-A_{ai}^{-3}\sum_{j\neq i}A_{aj}^3\left(\rho^{(0)}\ja -3x^{(0)}\ja v^{(0)}\ja -(x^{(0)}\ja )^3\right)\,.
\ee

We recall the statement of Lemma \ref{lemma:Pprob} and Lemma \ref{lemma:simpel} for the reader convenience. 

\begin{Lem}\label{lemma:simpelII}
There is a constant $c=c(\d, \nu^{(0)})$ such that the following holds: 
\bea
\mathbb{P}\left(\left|\sum_{j\neq i} A_{aj}x^{(0)}\ja  \right|\geq \l\right)&\leq& Ce^{-c\l^2}\,,\quad \l\geq0\label{eq:claimXkII}\\
\mathbb{P}\left(\left|A^3_{ai}P\ai \right|\geq \l\right)&\leq& \begin{cases}
Ce^{-c\l^{2} m^2}&\quad\mbox{$\l\leq \frac1{m^{\frac34}}$}\\
Ce^{-c\l^{\frac23} m}&\quad\mbox{$\l\geq \frac1{m^{\frac34}}$}
\end{cases}\label{eq:claimP3II}
\,.
\eea
\end{Lem}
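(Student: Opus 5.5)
Both bounds are tail estimates, with respect to the law of $A$, for weighted sums of independent sub-Gaussian variables in which the weights are deterministic functionals of the initial data $\nu^{(0)}$. The plan is therefore to condition on nothing but the fixed initial densities. These satisfy $x^{(0)}\ja=0$ and have uniformly bounded variance and third moment by the finite fourth moment assumption, so $\rho^{(0)}\ja$ and $v^{(0)}\ja$ are deterministic constants bounded uniformly in $(j,a)$. We then apply the scalar concentration inequalities already available.

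For \eqref{eq:claimXkII}, write $A_{aj}=X_{aj}/\sqrt m$ with $X_{aj}$ i.i.d. centred sub-Gaussian of unit variance. The sum $\sum_{j\neq i}A_{aj}x^{(0)}\ja$ is then a linear combination of independent sub-Gaussian variables with deterministic coefficients $c_j=x^{(0)}\ja/\sqrt m$. Hoeffding's inequality for sub-Gaussian sums gives a tail bound $2\exp(-c\lambda^2/\sum_j c_j^2)$. Since $\sum_{j\neq i} c_j^2\le (N/m)\max_j (x^{(0)}\ja)^2\lesssim \delta^{-1}$, we get \eqref{eq:claimXkII} with $c$ depending on $\delta$ and on the bound on the first moments of $\nu^{(0)}$. (With $x^{(0)}\ja=0$ the bound is even trivial, but the argument covers general bounded centres.)

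For \eqref{eq:claimP3II}, use the explicit formula just derived:
\[
A_{ai}^3P\ai=-\sum_{j\neq i}A_{aj}^3\,\kappa\ja,\qquad \kappa\ja:=\rho^{(0)}\ja-3x^{(0)}\ja v^{(0)}\ja-(x^{(0)}\ja)^3 .
\]
The $A_{ai}^{-3}$ cancels, and the remaining sum is independent of $A_{ai}$. Writing $A_{aj}^3=m^{-3/2}X_{aj}^3$, this is exactly of the form treated in Lemma \ref{prop:tail} with $p=3$ and weights $x_j=m^{-3/2}\kappa\ja$. The $X_{aj}$ are symmetric by assumption, so $X_{aj}^3$ is centred and the lemma applies directly. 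Then $\|x\|_2^2\lesssim N m^{-3}\max_j\kappa\ja^2\lesssim_\delta m^{-2}$.

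Plugging into \eqref{eq:tail}, the sub-Gaussian regime gives $\exp(-c\lambda^2 m^2)$. The threshold is $\lambda\lesssim \|x\|_2^2N^{(2-p)/(2(p-1))}=m^{-2}N^{-1/4}$. The heavy-tail regime gives $\exp(-c(\lambda N m^2)^{2/3})$.

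The main obstacle is matching these regimes to the stated threshold $m^{-3/4}$ and exponent $\lambda^{2/3}m$. Lemma \ref{prop:tail} as stated is somewhat loose in its normalisation, so I would rather rerun its truncation argument directly, with the natural normalisation, than quote its constants. The argument runs as follows.
\begin{itemize}
\item Truncate each $X_{aj}^3$ at level $L$. Its $\psi_{2/3}$-tail gives $P(|X_{aj}|^3>L)\le e^{-cL^{2/3}}$.
\item Apply Bernstein to the bounded part, with variance proxy $\sum_j x_j^2\asymp m^{-2}$ and bound $m^{-3/2}L$.
\item Choose $L\asymp \lambda m^{3/2}$ when $\lambda$ is large. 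The truncation loss is then $N e^{-c(\lambda m^{3/2})^{2/3}}=Ne^{-c\lambda^{2/3}m}$.
\item Balance the Gaussian term $e^{-c\lambda^2m^2}$ against $e^{-c\lambda^{2/3}m}$. The crossover occurs at $\lambda^{4/3}=m^{-1}$, i.e. $\lambda=m^{-3/4}$, which is precisely the threshold in the statement.
\end{itemize}
The polynomial prefactor $N$ is absorbed into $C$ and a smaller $c$, because in each regime the exponent is at least of order $m^{1/2}$ once $\lambda\gtrsim m^{-3/4}$. Below the threshold the prefactor is only absorbed when $\lambda\gtrsim m^{-1}\sqrt{\log N}$; for smaller $\lambda$ the claim is trivial after enlarging $C$. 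This yields \eqref{eq:claimP3II} uniformly in $(a,i)$.
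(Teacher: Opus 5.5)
Your Hoeffding argument for the first bound matches the paper's, and so does your reduction of $A_{ai}^3P_{a\to i}$ to $-m^{-3/2}\sum_{j\neq i}\kappa_{j\to a}X_{aj}^3$ (independent of $A_{ai}$, variance $\asymp m^{-2}$). The paper then simply invokes Lemma \ref{prop:tail}. You are right that its constants, taken literally with $p=3$ and $x_j=m^{-3/2}\kappa_{j\to a}$, do not produce the threshold $m^{-3/4}$, so redoing the truncation is sensible.

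\textbf{The gap.} It lies in how you redo the truncation. After cutting at $|X_{aj}|^3\le L$, plain Bernstein with sup-bound $m^{-3/2}L$ gives
\[
\mathbb{P}\Bigl(\Bigl|\sum_{j\neq i} m^{-3/2}\kappa_{j\to a}X_{aj}^3\mathbbm{1}_{\{|X_{aj}|^3\le L\}}\Bigr|\ge\lambda\Bigr)\le 2\exp\Bigl(-c\min\Bigl(\lambda^2m^2,\ \frac{\lambda m^{3/2}}{L}\Bigr)\Bigr).
\]
With your choice $L\asymp\lambda m^{3/2}$ the second entry is of order one. The bounded part then gives no decay at all, and your final balancing silently drops this term. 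Optimising $L$ within plain Bernstein does not rescue it. Balancing $\lambda m^{3/2}/L$ against the truncation loss $L^{2/3}$ gives only $L\asymp(\lambda m^{3/2})^{3/5}$ and a tail $\exp(-c\lambda^{2/5}m^{3/5})$. At $\lambda=m^{-3/4}$ that is $e^{-cm^{3/10}}$ instead of $e^{-cm^{1/2}}$. So neither the exponent $\lambda^{2/3}m$ nor the crossover at $m^{-3/4}$ follows from what you wrote.

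\textbf{The fix.} You need to use the $\psi_{2/3}$ integrability inside the truncation window, not just the sup bound. Set $W_j=\kappa_{j\to a}X_{aj}^3$ and $\dot W_j=W_j\mathbbm{1}_{\{|W_j|\le L\}}$, which is still symmetric.
\begin{enumerate}
\item For $0<\theta\le c_0L^{-1/3}$ you have $\theta|\dot W_j|\le c_0|\dot W_j|^{2/3}$. Hence $\mathbb{E}e^{\theta\dot W_j}\le 1+\theta^2\,\mathbb{E}[W_j^2e^{c_0|W_j|^{2/3}}]\le e^{C\theta^2}$, with $C$ independent of $L$ once $c_0$ is small.
\item A Chernoff bound with $u=\lambda m^{3/2}$ then gives $2\exp(-c\min(u^2/N,\,uL^{-1/3}))$.
\item Choosing $L=u$ turns this into $2\exp(-c\min(\delta\lambda^2m^2,\lambda^{2/3}m))$, which now matches the truncation loss $2Ne^{-c\lambda^{2/3}m}$.
\end{enumerate}
This is exactly the mechanism inside the proof of Lemma \ref{prop:tail} (the step $a|\dot Y_j|\le|\dot Y_j|^{2/p}/(2K_j)$), run with truncation level $\lambda m^{3/2}$.

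\textbf{The prefactor $N$.} Your handling of it also needs correcting. It multiplies $e^{-c\lambda^{2/3}m}$, which absorbs it as soon as $\lambda^{2/3}m\gtrsim\log N$. In particular it is absorbed throughout $m^{-1}\le\lambda\le m^{-1}\sqrt{\log N}$, where, contrary to what you say, the claim is not trivial. For $\lambda\lesssim(\log N)^{3/2}m^{-3/2}$ one has $\lambda^2m^2=o(1)$, and only there is the bound trivial.
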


\begin{Lem}\label{lemma:Pprob1-appd}
We have
\be\label{eq:Pprob1-D}
\sum_{b \neq a}\frac{\hat x\zzero \bi P\bi }{(\hat v\zzero \bi )^{3}}= O_\P\parav{\frac{1}{N}}\,,\qquad\mbox{and}\qquad
\sum_{b\neq a}\frac{(\hat x\zzero \bi )^2P\bi }{(\hat v\zzero \bi )^{3}}= O_\P\parav{\frac{1}{\sqrt N}}
\ee
and
\be\label{eq:Pprob2}
\sum_{b \neq a}\frac{P\bi }{(\hat v\zzero \bi )^{2}}\leq O_\P\parav{\frac{1}{\sqrt N}}\,,\quad \sum_{b \neq a}\frac{P\bi }{(\hat v\zzero \bi )^{3}}\leq O_\P\parav{\frac{1}{N^{\frac32}}}\,.
\ee
\end{Lem}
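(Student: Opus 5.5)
We substitute the explicit expressions at $t=0$ and reduce every sum to sums of products of independent factors.

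\emph{Explicit form of the summands.} From \eqref{eq:quartico} we have $\hat x\zzero\bi=y_b/A_{bi}$ and $\hat v\zzero\bi=A_{bi}^{-2}W_{bi}$, with $W_{bi}:=\sum_{j\neq i}A_{bj}^2v\zzero\ja$. By \eqref{eq:claimP2} of Lemma~\ref{lemma:simpelII}, $W_{bi}=V+O_\P(N^{-1/2})$ uniformly in $(b,i)$, where $V$ is a deterministic constant of order one (recall $v^{(0)}\ja=v^{(0)}$). We also write $P\bi=-A_{bi}^{-3}Q_{bi}$ with
\[
Q_{bi}:=\sum_{j\neq i}A_{bj}^3\kappa_j,\qquad \kappa_j:=\rho\zzero\jb-3x\zzero\jb v\zzero\jb-(x\zzero\jb)^3 .
\]
The $\kappa_j$ are deterministic and bounded. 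By \eqref{eq:claimP3II}, $Q_{bi}=O_\P(m^{-1})$. Moreover $Q_{bi}$ is independent of $A_{bi}$ and is centred, since the entries are symmetric.

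Substituting, the four sums become:
\begin{align*}
\sum_{b\neq a}\frac{\hat x\zzero\bi P\bi}{(\hat v\zzero\bi)^3}&=-\sum_{b\neq a}\frac{y_bA_{bi}^2\,Q_{bi}}{W_{bi}^3}, &
\sum_{b\neq a}\frac{(\hat x\zzero\bi)^2P\bi}{(\hat v\zzero\bi)^3}&=-\sum_{b\neq a}\frac{y_b^2A_{bi}\,Q_{bi}}{W_{bi}^3},\\
\sum_{b\neq a}\frac{P\bi}{(\hat v\zzero\bi)^2}&=-\sum_{b\neq a}\frac{A_{bi}\,Q_{bi}}{W_{bi}^2}, &
\sum_{b\neq a}\frac{P\bi}{(\hat v\zzero\bi)^3}&=-\sum_{b\neq a}\frac{A_{bi}^3\,Q_{bi}}{W_{bi}^3}.
\end{align*}

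\emph{Estimating the sums.} We first replace $W_{bi}$ by $V$. This costs a relative error $O_\P(N^{-1/2})$ per summand. Applied blindly through the triangle inequality it could lose the cancellation, so we treat $W_{bi}^{-k}$ as a bounded function of the row $b$ that is independent of $A_{bi}$. The summands are then independent across $b$, since distinct rows are independent.

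In the sums with an odd power of $A_{bi}$ (the second, third and fourth), each summand is centred conditionally on the rest of row $b$, because $A_{bi}$ is symmetric and independent of $Q_{bi}$ and $W_{bi}$. We therefore apply Hoeffding/Bernstein conditionally on $\{Q_{bi},W_{bi}\}_b$:
\begin{itemize}
\item Third sum: the variance is $\sum_b m^{-1}Q_{bi}^2=O_\P(m\cdot m^{-1}m^{-2})=O_\P(m^{-2})$, which gives the sum is $O_\P(N^{-1})$, better than the claimed $N^{-1/2}$.
\item Second sum: the same bound applies.
\item Fourth sum: we use Lemma~\ref{prop:tail} with $p=3$ for the $A_{bi}^3$ weights. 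The variance is $\sum_b m^{-3}Q_{bi}^2=O_\P(m^{-4})$, which gives $O_\P(N^{-2})\le O_\P(N^{-3/2})$.
\end{itemize}

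In the first sum, $A_{bi}^2$ is not centred, and we use the centring of $Q_{bi}$ instead. Write $A_{bi}^2=m^{-1}+(A_{bi}^2-m^{-1})$. The fluctuating part is a sum of independent centred terms of size $m^{-1}\cdot m^{-1}$, hence $O_\P(m^{-3/2})$. The main part is
\[
m^{-1}\sum_b y_bQ_{bi}/V^3=m^{-1}\sum_{j}\kappa_j\sum_b y_bA_{bj}^3/V^3 ,
\]
which is a sum of $mN$ independent centred terms with $\|y\|_\infty\le1$. Its variance is $m^{-2}\cdot mN\cdot m^{-3}=O(N^{-3})$, so this part is $O_\P(N^{-3/2})$. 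Hence the first sum is $O_\P(N^{-1})$.

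\emph{Main obstacle.} The delicate point is justifying the replacement of $W_{bi}^{-k}$ by $V^{-k}$ in the first sum, where the cancellation comes from summing $Q_{bi}$ over $b$ rather than from $A_{bi}$. The correction $(W_{bi}^{-3}-V^{-3})$ is correlated with $Q_{bi}$, since both depend on row $b$. Its size is $O_\P(N^{-1/2})$, so the crude bound on the correction term is
\[
m\cdot m^{-1}\cdot m^{-1}\cdot N^{-1/2}=O_\P(N^{-3/2}),
\]
which is still acceptable. We would also use a union bound over $b$, which costs only $\log$ factors absorbed into the $N^{\varepsilon}$ in the definition of $O_\P$.
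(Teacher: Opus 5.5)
Your argument is correct, but it follows a different route from the paper's.

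\textbf{The paper's route.} The paper never exploits cancellation in the sum over $b$. It writes each summand as $\pm|A_{bi}|^{r}$ (with $r=2,1,1,3$ for the four sums) times a row-$b$ factor independent of $A_{bi}$. It then bounds the sum by $\max_b$ of that factor times $\sum_b|A_{bi}|^{r}$. The pieces are controlled separately:
\begin{itemize}
\item Hoeffding \eqref{eq:claimXkII} for $y_b-\sum_{j}A_{bj}x^{(0)}_{j\to b}$;
\item Bernstein to localise $\sum_j A_{bj}^2v^{(0)}_{j\to b}$;
\item the tail \eqref{eq:questaladevipropriovedere} for $w_{b\to i}$, which is your $Q_{bi}$ up to sign;
\item Lemma~\ref{lemma:facilissimo} for $\sum_b|A_{bi}|^r$;
\item a union bound over $b$, with the exponents $\alpha,p,q$ tuned by hand.
\end{itemize}

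\textbf{The stated rates are the crude ones.} The rates in the lemma are exactly what this triangle-inequality bound gives. From $\max_b|Q_{bi}|\lesssim N^{\varepsilon}m^{-1}$, $\min_b W_{bi}\gtrsim 1$ and $\sum_b|A_{bi}|^r\lesssim N^{\varepsilon}m^{1-r/2}$ you get orders $m^{-1}$, $m^{-1/2}$, $m^{-1/2}$, $m^{-3/2}$ for the four sums. So the ``main obstacle'' you identify is not one: no cancellation is needed anywhere. In particular, the first sum is already $O_\P(N^{-1})$ from $\max_b|Q_{bi}|\sum_b A_{bi}^2$ alone, with no need to centre $A_{bi}^2$ or $Q_{bi}$.

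\textbf{What your route buys.} It gives sharper rates. Conditional symmetry of $A_{bi}$, given the rest of row $b$, gains a factor $m^{-1/2}$ in the second, third and fourth sums, giving $N^{-1}$, $N^{-1}$ and $N^{-2}$. Your decomposition actually yields $N^{-3/2}$ for the first sum.

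\textbf{What it costs.} It leans on the specialisation $x^{(0)}=0$, constant $v^{(0)}$, and symmetric entries. The paper's proof keeps $\hat x^{(0)}_{b\to i}=(y_b-S_{bi})/A_{bi}$ with $S_{bi}=\sum_{j\neq i}A_{bj}x^{(0)}_{j\to b}$. With nonzero initial means, your $N^{-3/2}$ for the main part of the first sum would break down. The reason is that $\EE[S_{bi}Q_{bi}]=\sum_j x^{(0)}_{j\to b}\kappa_j\,\EE[A_{bj}^4]$ is generically of order $m^{-1}$. The claimed $O_\P(N^{-1})$ still survives in that case.

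\textbf{Minor point.} \eqref{eq:claimP2} belongs to Lemma~\ref{lemma:simpel}, not Lemma~\ref{lemma:simpelII}.
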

\begin{proof}[Proof of Lemma \ref{lemma:Pprob}/\ref{lemma:Pprob1-appd}]
First we prove (\ref{eq:Pprob1-D}). We shorten
\be\label{eq:defw}
w\ai \coloneqq \sum_{j\neq i}A_{aj}^3\left(\rho^{(0)}\ja -3x^{(0)}\ja v^{(0)}\ja +(x^{(0)}\ja )^3\right)\,.
\ee
A rewriting of (\ref{eq:claimP3II}) gives
\be\label{eq:questaladevipropriovedere}
P\left(|w\ai |\geq \l\right)\leq  \begin{cases}
Ce^{-c\l^{2} m^2}&\quad\mbox{$\l\leq \frac1{m^{\frac34}}$}\\
Ce^{-c\l^{\frac23} m}&\quad\mbox{$\l\geq \frac1{m^{\frac34}}$}
\end{cases}\,. 
\ee
A direct computation gives
\be
\frac{P\ai }{(\hat v^{(0)}\ai )^{\frac32}}=-\sign A_{ai}\left(\sum_{j \neq i} A_{aj}^2 v\ja^{(0)}\right)^{-\frac32}w\ai \,
\ee
and for $k\in\{1,2\}$
\be
\frac{(\hat x^{(0)}\ai )^k}{(\hat v^{(0)}\ai )^{\frac32}}=(\sign A_{ai})^k|A_{ai}|^{3-k}\left(y_a-\sum_{j \neq i} A_{aj} x\ja^{(0)}\right)^k\left(\sum_{j \neq i} A_{aj}^2 v\ja^{(0)}\right)^{-\frac32}\,. 
\ee
Thus
\bea
&&\absv{\sum_{b\neq a}\frac{(\hat x^{(0)}\bi )^kP\bi }{(\hat v^{(0)}\bi )^{3}}}=\absv{\sum_{b \neq a}(\sign A_{bi})^{k+1}|A_{bi}|^{3-k}\left(\sum_{j \neq i} A_{bj}^2 v\jb^{(0)}\right)^{-3}\left(y_b-\sum_{j \neq i} A_{bj} x\jb^{(0)}\right)^kw\bi }\nn\\
&\leq&N^{\frac p2}\max_{b \neq a}\left[ \left(\sum_{j \neq i} A_{bj}^2 v\jb^{(0)}\right)^{-3}\left|y_b-\sum_{j \neq i} A_{bj} x\jb^{(0)}\right|^k|w\bi |\right]  \sum_{b \neq a}\frac{|A_{bi}|^{3-k}}{N^{\frac p2}}\,
\eea
for some $p>0$ to be determined later. Therefore
\bea
&&P\left(\absv{\sum_{b \neq a}\frac{(\hat x^{(0)}\bi )^kP\bi }{(\hat v^{(0)}\bi )^{3}}}\geq \l\right)\nn\\
&\leq& P\left(N^{\frac p2}\max_{b \neq a}\left[ \left(\sum_{j \neq i} A_{bj}^2 v\jb^{(0)}\right)^{-3}\left|y_b-\sum_{j \neq i} A_{bj} x\jb^{(0)}\right|^k|w\bi |\right]  \sum_{b\neq a}\frac{|A_{bi}|^{3-k}}{N^{\frac p2}}\geq \l\right)\nn\\
&\leq& P\left(N^{\frac p2}\max_{b \neq a}\left[ \left(\sum_{j \neq i} A_{bj}^2 v\jb^{(0)}\right)^{-3}\left|y_b-\sum_{j \neq i} A_{aj} x\jb ^{(0)}\right|^k|w\bi |\right] \geq \l^\a\right)\label{eq:Pprobline1}\\
&+&P\left(\sum_{b \neq a}\frac{|A_{bi}|^{3-k}}{N^{\frac p2}}\geq \l^{1-\a}\right)\label{eq:Pprobline2}
\eea
for some $\a\in(0,1)$. 
The term in (\ref{eq:Pprobline2}) is easily bound by Lemma \ref{lemma:facilissimo}. We have
\be\label{eq:lambda1}
\eqref{eq:Pprobline2}\leq 
\begin{cases}
k=1&e^{-c\l^{1-\a} N^{1+\frac p2}}\,,\qquad \l\gtrsim N^{-\frac{p}{2(1-\a)}}\\
k=2&e^{-c\l^{2(1-\a)} N^{p}}\,,\qquad \l\gtrsim N^{-\frac{p-1}{2(1-\a)}}\,.
\end{cases}
\ee

Let us set now $V\ai \coloneqq \frac1m\sum_{j\neq i}v^{(0)}\ja $. Clearly $|V\ai |\leq \d^{-1}\max_{a\in[m], i\in[N]}v^{(0)}\ja $. Next, we use that if $P(B)\geq\frac12$ then $P(A)\leq 2P(A\cap B)+P(B^c)$.
Thus, we have that, for a given $a>0$ small enough, 
\bea
\eqref{eq:Pprobline1}&\leq& 2P\left(N^{\frac p2}\max_{b \neq a}\left[\left|y_b-\sum_{j \neq i} A_{aj} x\jb^{(0)}\right|^k|w\bi |\right] \geq (V\ai (1-a))^3\l^\a\right)\nn\\
&+&P\left(\left|\sum_{j\neq i} A^2_{aj}v^{(0)}\ja -V\ai \right|\geq aV\ai \right)\nn\\
&\leq& 2P\left(N^{\frac p2}\max_{b \neq a}\left[\left|y_b-\sum_{j \neq i} A_{bj} x\jb^{(0)}\right|^k|w\bi |\right] \geq (V\ai )^3(1-a^2)\l^\a \right)+e^{-(V\ai )^2a^2m}\,.\nn\\
\eea
Take now ${\l'}^\a\coloneqq (V\ai )^3(1-a)^3\l^\a$ and $q\geq0$. We have
\bea
&&P\left(N^{\frac p2}\max_{b\neq a}\left[\left|y_b-\sum_{j \neq i} A_{aj} x\jb^{(0)}\right|^k|w\bi |\right] \geq {\l'}^\a \right)\nn\\
&\leq&P\left(\max_{b\neq a}N^{-\frac q2}\left|y_b-\sum_{j \neq i} A_{bj} x\jb^{(0)}\right|^k \geq {\l'}^{\frac\a2}\right)\label{eq:lambda1/6-2}\\
&+&P\left(\max_{b\neq a}N^{\frac {p+q}2}|w\ai |\geq {\l'}^{\frac\a2}\right)\label{eq:lambda1/6-3}\,. 
\eea
Recall that we assumed $\max_a|y_a|=1$. We bound for $\l'\geq  N^{-{\frac q\a}}$
\be\label{eq:choosepq2}
\eqref{eq:lambda1/6-2}\leq\sum_{b\neq a} P\left(\left|y_b-\sum_{j \neq i} A_{bj} x\jb^{(0)}\right|^k \geq N^{\frac q2}{\l'}^{\frac\a2}\right) \leq Cm e^{-c({\l'}^\a N^q)^{\frac 1k}}\,,
\ee
where we used (\ref{eq:claimXkII}), and for ${\l'}\lesssim N^{\frac{2(p+q)-3}{2\a}}$
\be\label{eq:choosepq3}
\eqref{eq:lambda1/6-3}\leq \sum_{b\neq a}P\left(|w\bi |\geq N^{-\frac {p+q}2}{\l'}^{\frac\a2}\right) \leq Cm e^{-c{\l'}^{\frac\a3}N^{1-\frac{q+p}{3}}}
\ee
where we used (\ref{eq:questaladevipropriovedere}). 

Summarising, we have that: 

(A) for $k=1$, if
\be\label{eq:condLambda1}
\min\parav{N^{-\frac{p}{2(1-\a)}}, N^{-{\frac q\a}}}\lesssim\l\lesssim N^{\frac{2(p+q)-3}{2\a}}
\ee
then
\be\label{eq:stimaP1}
P\left(\absv{\sum_{b\neq a}\frac{\hat x^{(0)}\bi P\bi }{(\hat v^{(0)}\bi )^{3}}}\geq \l\right)\lesssim e^{-c\l^{1-\a} N^{1+\frac p2}}+e^{-c{\l}^\a N^q}+e^{-c{\l}^{\frac\a3}N^{1-\frac{q+p}{3}}}\,;
\ee 
(B) for $k=2$, if
\be\label{eq:condLambda2}
\min\parav{N^{-\frac{p-1}{2(1-\a)}}, N^{-{\frac q\a}}}\lesssim\l\lesssim N^{\frac{2(p+q)-3}{2\a}}
\ee
then
\be\label{eq:stimaP2}
P\left(\absv{\sum_{b\neq a}\frac{(\hat x^{(0)}\bi )^2P\bi }{(\hat v^{(0)}\bi )^{3}}}\geq \l\right)\lesssim e^{-c\l^{2(1-\a)} N^{p}}+e^{-c({\l'}^\a N^q)^{\frac 12}}+e^{-c{\l'}^{\frac\a3}N^{1-\frac{q+p}{3}}}\,
\ee 
(recall that $\l$ and $\l'$ are proportional).
For $k=1$ we choose $\a=1/2$ and $p=1,q=1/2$. For $k=2$ we take $\a=3/4$, $p=5/4$ and $q=3/8$. These choices give the two inequalities in (\ref{eq:Pprob1}).

Next we prove (\ref{eq:Pprob2}) following the same procedure. We write for $h=2,3$
\be
\frac{P\ai }{(\hat v^{(0)}\ia )^{h}}=\sign A_{ai}|A_{ai}|^{2h-3}\left(\sum_{k \neq i} A_{ak}^2 v_{\ka}^{(0)}\right)^{-h}w\ai \,,
\ee
hence
\bea
\sum_{b \neq a}\frac{P\bi }{(\hat v^{(0)}\bi )^{h}}&\leq&\max_{b \neq a}\left(\sum_{j \neq i} A_{bj}^2 v\bj^{(0)}\right)^{-h}\absv{w\bi }\sum_{b\neq a}|A_{bi}|^{2h-3}\nn\,. 
\eea
Splitting as before we get
\bea
P\left(\sum_{b\neq a}\frac{P\bi }{(\hat v^{(0)}\bi )^{h}}\geq\l\right)&\leq& P\left(\frac{1}{N^{\frac p2}}\sum_{b\neq a}|A_{bi}|^{2h-3}\geq\l^{1-\a}\right)\label{eq:splitting2h-1}\\
&+&2P\left(N^{\frac p2}\max_{b\neq a}|w\bi |\geq\l^\a\right)+Ce^{-cm}\label{eq:splitting2h-2}\,,
\eea
for some $\a\in(0,1)$ and $p>0$ to be determined later. 
By Lemma \ref{lemma:facilissimo}, we have
\be\label{eq:facile1}
\eqref{eq:splitting2h-1}\leq 
\begin{cases}
h=2&e^{-c\l^{2(1-\a)} N^{p}}\,,\qquad \l\gtrsim N^{-\frac{p-1}{2(1-\a)}}\\
h=3&e^{-c\l^{\frac23 (1-\a)} N^{\frac{p+3}{3}}}\,,\qquad \l\gtrsim N^{-\frac{p+1}{2(1-\a)}}
\end{cases}\,. 
\ee

Moreover by (\ref{eq:questaladevipropriovedere}) for $\l\lesssim N^{\frac{2p-3}{4\a}}$ we get
\be  
\eqref{eq:splitting2h-2}\leq m e^{-c\l^{2\a} N^{2-p}}\,.
\ee

Taking $p=3/2$, $\a=1/2$ for $h=2$ and $p=1/2$ and $\a=1/2$ for $h=3$ gives (\ref{eq:Pprob2}). 
\end{proof}



\begin{thebibliography}{00}

\bibitem{yau} A. Adhikari, C. Brennecke, P. von Soosten, H.-T. Yau. {\em Dynamical approach to the TAP equations for the Sherrington-Kirkpatrick model}. Journal of Statistical Physics, 183(35):1-27, 2021.

\bibitem{tuca} A. Auffinger, A. Jagannath {\em Thouless-Anderson-Palmer equations for generic p-spin glasses}, Ann. Probab. 47(4): 2230-2256 (2019).

\bibitem{bayati1} M. Bayati and A. Montanari, {\em The Dynamics of Message Passing on Dense Graphs, with Applications to Compressed Sensing}, IEEE Transactions on Information Theory, vol. 57, no. 2, pp. 764-785, 2011
%
\bibitem{bayati2} M. Bayati, M. Lelarge, A. Montanari, {\em Universality In Polytope Phase Transitions And Message Passing Algorithms}, The Annals of Applied Probability 2015, Vol. 25, No. 2, 753-822.
%
\bibitem{far} R. Berthier, A. Montanari, P-M Nguyen. {\em State evolution for approximate message passing with non-separable functions}. Information and Inference, 01 (2019).
%
\bibitem{bobkov} S. Bobkov, G. Chistyakov, F. G\"otze. {\em Richter's local limit theorem, its refinement, and related results.} Lithuanian Mathematical Journal 63.2 (2023): 138-160

\bibitem{bolt} E. Bolthausen {\em An Iterative Construction of Solutions of the TAP Equations for the Sherrington-Kirkpatrick Model} Comm. Math. Phys. Vol. 325, pag. 333-366, (2014).

\bibitem{celentano} M. Celentano, A. Montanari, Y. Wu, {\em The estimation error of general first order methods}, in Conference Computational Learning Theory (COLT) 2020.

\bibitem{chat} S. Chatterjee. {\em Spin glasses and Stein's method}. Probability Theory and Related Fields, 148(3-4):567-600, 2010.

\bibitem{chen2} W.-K. Chen, S. Tang. {\em On Convergence of the Cavity and Bolthausen's TAP Iterations to the Local Magnetization}. Communications in Mathematical Physics, 386:1209-1242, 2021.

\bibitem{chen} W-K. Chen, S. Tang, {\em On the TAP equations via the cavity approach in the generic mixed p-spin models} Comm. Math. Phys., Vol. 405, No. 87 (2024)

\bibitem{coja} A. Coja-Oghlan, W. Perkins {\em Belief propagation on replica symmetric random factor graph models} Annales de l'institut Henri Poincare D 5 (2), 211-249.

\bibitem{DMM1} D. L. Donoho, A. Maleki, A. Montanari {\em Message-passing algorithms for compressed sensing}, Proceedings of the National Academy of Sciences, (2009). 
%
\bibitem{DMM2} D. L. Donoho, A. Maleki, A. Montanari {\em Message passing algorithms for compressed sensing: I. motivation and construction}, 2010 IEEE information theory workshop on information theory (ITW 2010, Cairo).
%
\bibitem{DMM3} D. Donoho, I Johnstone, A Maleki, A Montanari, {\em Compressed sensing over $\ell_p$-balls: Minimax mean square error} 2011 IEEE International Symposium on Information Theory Proceedings. IEEE, 2011.
%
\bibitem{tantoarkg5} Donoho, David, and Andrea Montanari. {\em High dimensional robust m-estimation: Asymptotic variance via approximate message passing.} Probability Theory and Related Fields 166.3 (2016): 935-969.
%
\bibitem{feller} W. Feller. {\em An introduction to probability theory and its applications} Vol. 2, John Wiley \& Sons, (2008).
%
\bibitem{CS} S. Foucart , H. Rauhut {\em A Mathematical Introduction to Compressive Sensing}, Springer (2013).
%
\bibitem{janson}S. Janson. {\em Gaussian Hilbert Spaces} Cambridge University Press, (1997). 
%
\bibitem{tantoarkg4} Javanmard, Adel, and Andrea Montanari. {\em State evolution for general approximate message passing algorithms, with applications to spatial coupling.} Information and Inference: A Journal of the IMA 2.2 (2013): 115-144.
%
\bibitem{k} F. Koehler {\em Fast Convergence of Belief Propagation to Global Optima: Beyond Correlation Decay}, Advances in Neural Information Processing Systems, 2019.
%
\bibitem{malekiT} A. Maleki, {\em Approximate message passing algorithms for compressed sensing}. Diss. Stanford University, 2010.
%
\bibitem{mezard} M. Mezard, A. Montanari, {\em Information, physics, and computation}, Oxford University Press 2009.
%
\bibitem{mez} M. Mezard, {\em Mean-field message-passing equations in the Hopfield model and its generalizations}, Phys. Rev. E 95, 022117  (2017) 
%
\bibitem{tantoarkg6} A. Montanari, R. Venkataramanan. {\em Estimation of low-rank matrices via approximate message passing.} The Annals of Statistics 49.1 (2021): 321-345.
%
\bibitem{mossel} E. Mossel, J. Xu. {\em Local algorithms for block models with side information.} Proceedings of the 2016 ACM Conference on Innovations in Theoretical Computer Science. 2016.
%
\bibitem{perla} J. Pearl, {\em Probabilistic reasoning in intelligent systems: networks of plausible inference}, Morgan Kaufmann, San Francisco, 1988.
%
\bibitem{sly} A. Sly {\em Reconstruction of symmetric Potts models}, The Annals of Probability, 39, 1365-1406 (2011).
%
\bibitem{stein} E. Stein, R. Shakarchi. Complex analysis. Vol. 2. Princeton University Press, 2010.
%
\bibitem{talabook} M. Talagrand. {\em Mean field models for spin glasses. Volume I. Basic examples}, volume 54 of Ergebnisse der Mathematik und ihrer Grenzgebiete. 3. Folge. Springer-Verlag, Berlin, 2011.
%
\bibitem{tap} D. J. Thouless, P. W. Anderson, R. G. Palmer. {\em Solution of 'solvable model of a spin glass'}. Philosphical Magazine, 35(3):593-601, 1977.
%
\end{thebibliography}
\end{document}